\newcommand{\bk}{\mathbbm{k}}
\newcommand{\cA}{\mathcal{A}}\newcommand{\cB}{\mathcal{B}}
\newcommand{\cC}{\mathcal{C}}
\newcommand{\cG}{\mathcal{G}}\newcommand{\cH}{\mathcal{H}}
\newcommand{\cL}{\mathcal{L}}
\newcommand{\cM}{\mathcal{M}}
\newcommand{\cO}{\mathcal{O}}\newcommand{\cP}{\mathcal{P}}
\newcommand{\cV}{\mathcal{V}}
\newcommand{\bC}{\mathbb{C}}
\newcommand{\bQ}{\mathbb{Q}}\newcommand{\bR}{\mathbb{R}}
\newcommand{\bZ}{\mathbb{Z}}
\newcommand{\gS}{\mathbf{S}}
\tikzset{->-/.style={decoration={
			markings,
			mark=at position #1 with {\arrow{>}}},postaction={decorate}}}
\tikzset{-<-/.style={decoration={
					markings,
					mark=at position #1 with {\arrow{<}}},postaction={decorate}}}
\newtheorem{theorem}{Theorem}[section]
\newtheorem{lemma}[theorem]{Lemma}
\newtheorem{proposition}[theorem]{Proposition}
\newtheorem{corollary}[theorem]{Corollary}
\newtheorem*{corollary*}{Corollary}
\newtheorem{atheorem}{Theorem}
\theoremstyle{definition}
\newtheorem{definition}[theorem]{Definition}
\newtheorem{assumption}[theorem]{Assumption}
\theoremstyle{remark}
\newtheorem{remark}[theorem]{Remark}
\newtheorem{claim}[theorem]{Claim}
\numberwithin{equation}{section}
\newcommand{\cat}[1]{\mathsf{#1}}
\newcommand{\mr}[1]{{\rm #1}}
\newcommand{\fS}{\mathfrak{S}}
\newcommand{\diff}{\mr{Diff}}
\newcommand{\lra}{\longrightarrow}
\newcommand{\Sp}{\mathbf{Sp}}
\newcommand{\OO}{\mathbf{O}}
\newcommand{\GG}{\mathbf{G}}
\newcommand{\Hom}{\mr{Hom}}
\DeclareMathOperator*{\colim}{colim}
\title{On the cohomology of Torelli groups}
 \author{Alexander Kupers}
 \email{kupers@math.harvard.edu}
 \address{Department of Mathematics \\
 	 One Oxford Street \\
 	 Cambridge MA, 02138 \\USA}
 \author{Oscar Randal-Williams}
 \email{o.randal-williams@dpmms.cam.ac.uk}
 \address{Centre for Mathematical Sciences\\
 Wilberforce Road\\
 Cambridge CB3 0WB\\
 UK}
\dedicatory{Dedicated to Shigeyuki Morita}
\subjclass[2010]{55R40, 11F75, 57S05, 18D10, 20G05}
\keywords{Cohomology of diffeomorphism groups, Torelli groups, cohomology of arithmetic groups, Miller-Morita-Mumford classes}
\begin{document}

\begin{abstract}
We completely describe the algebraic part of the rational cohomology of the Torelli groups of the manifolds $\#^g S^n \times S^n$ relative to a disc in a stable range, for $2n \geq 6$. Our calculation is also valid for $2n=2$ assuming that the rational cohomology groups of these Torelli groups are finite dimensional in a stable range.
\end{abstract}

\maketitle

\tableofcontents

\section{Introduction}
In the study of the cohomology of the mapping class group $\Gamma_g$ of the genus $g$ surface $\Sigma_g$, an important role is played by its normal subgroup $T_g$, the \emph{Torelli group}, consisting of those diffeomorphisms which act trivially on $H_1(\Sigma_g;\bZ)$. This is the kernel of the (surjective) homomorphism $\Gamma_g \to \mathrm{Sp}_{2g}(\bZ)$ which sends a diffeomorphism to the induced map on $H_1(\Sigma_g;\bZ)$, and so is equipped with an outer action of $\mathrm{Sp}_{2g}(\bZ)$. It is a fundamental problem to study the cohomology $H^*(T_g;\bQ)$ and its structure as a $\mathrm{Sp}_{2g}(\bZ)$-representation, cf.\ \cite{JohnsonAb, MoritaLift, HainTorelli, SakasaiThirdTorelli, BoldsenDollerup, ChurchFarbAJ, MPW}.

In this paper we will study the generalisation of this problem to all even dimensions $2n$, replacing the surface of genus $g$ by its $2n$-dimensional analogue $W_g \coloneqq \#^g S^n \times S^n$. Most of our results will be for $2n \geq 6$, though our results are also valid in the classical case $2n=2$ conditional on the conjecture that $H^*(T_g;\bQ)$ is finite-dimensional in a range of degrees for large enough $g$.

Let us explain the variant of the Torelli group we consider and the form of our main result. Let $\diff(W_g, D^{2n})$ denote the topological group of diffeomorphisms of $W_g$ which are equal to the identity near a specified disc $D^{2n} \subset W_g$, equipped with the $C^\infty$-topology. This acts on the middle homology group $H_n(W_g;\bZ)$, and the \emph{Torelli group}
\[\mr{Tor}(W_g, D^{2n})\leq \diff(W_g,  D^{2n})\]
is the normal subgroup of those diffeomorphisms which act trivially on $H_n(W_g;\bZ)$. In the case $2n=2$ this differs from the Torelli group $T_g$ described above, as we only consider those diffeomorphisms fixing a disc. However, the difference between the cohomology of these two groups is mild (and described in Section \ref{sec:variants}) and it is convenient to work with a fixed disc.

The automorphisms of the middle homology of $W_g$ which may be realised by diffeomorphisms are constrained: they must at least respect the intersection form, which is $(-1)^n$-symmetric and nondegenerate, giving a homomorphism
\[\alpha_g \colon \diff(W_g, D^{2n}) \lra G_g \coloneqq \begin{cases}
\mr{Sp}_{2g}(\bZ) & \text{ if $n$ is odd,}\\
\mr{O}_{g,g}(\bZ) & \text{ if $n$ is even}.
\end{cases}\]
The image of $\alpha_g$ is a certain finite index subgroup $G'_g \leq G_g$, which is an arithmetic subgroup associated to the algebraic group $\GG \in \{\Sp_{2g}, \OO_{g,g}\}$. This subgroup acts by outer automorphisms on $\mr{Tor}(W_g, D^{2n})$, and so $H^*(B\mr{Tor}(W_g, D^{2n});\bQ)$ has the structure both of a $\bQ$-algebra and of a $G'_g$-representation. Writing 
\[H^i(B\mr{Tor}(W_g, D^{2n});\bQ)^\mr{alg} \subseteq H^i(B\mr{Tor}(W_g, D^{2n});\bQ)\]
for the sum of all finite-dimensional $G'_g$-subrepresentations which extend to representations of $\GG$, the goal of this paper is to determine $H^*(B\mr{Tor}(W_g, D^{2n});\bQ)^\mr{alg}$ as a $\bQ$-algebra and a $G'_g$-representation in a range of degrees tending to infinity with $g$. 

\subsection{Some stable cohomology}\label{sec:IntroStabCoh}
Before describing $H^*(B\mr{Tor}(W_g, D^{2n});\bQ)^\mr{alg}$, let us recall the description of the stable cohomology of $BG'_g$ and $B\diff(W_g, D^{2n})$ for $2n \neq 4$. 

The rational cohomology of $G'_g$ has been determined by Borel \cite{borelstable} in a range of degrees tending to infinity with $g$. In this range it is given by
\[H^*(BG'_g;\bQ) = \begin{cases}
\bQ[\sigma_2, \sigma_6, \sigma_{10}, \ldots] & \text{ if $n$ is odd,}\\
\bQ[\sigma_4, \sigma_8, \sigma_{12}, \ldots] & \text{ if $n$ is even,}
\end{cases}\]
for certain classes $\sigma_{2i}$ of degree $|\sigma_{2i}| = 2i$. 

The rational cohomology of $B\diff(W_g, D^{2n})$ in a stable range has been determined by a combination of work by Harer and Madsen--Weiss \cite{Ha,MW} for $2n=2$ and by Galatius--Randal-Williams \cite{grwcob, grwstab1} for $2n \geq 6$. To give a uniform description, let us write $\mathcal{V}$ for the polynomial algebra in the Euler class $e$ of degree $2n$, and the Pontrjagin classes $p_i$ of degree $4i$, for $i= \lceil \frac{n+1}{4}\rceil, \ldots, n-2, n-1$, and $\cB$ for the set of monomials in these generators. If $c \in \cB$,
\[W_g \lra E \overset{\pi}\lra B\diff(W_g, D^{2n})\]
denotes the universal $W_g$-bundle over $B\diff(W_g, D^{2n})$, and $T_\pi E \to E$ denotes its vertical tangent bundle, then we define the \emph{Miller--Morita--Mumford class}
\[\kappa_c \coloneqq \int_\pi c(T_\pi E) \in H^{|c|-2n}(B\diff(W_g, D^{2n});\bQ).\]
Then as long as $2n \neq 4$ the natural map
\[\bQ[\kappa_c \, \mid \, c \in \cB_{> 2n}] \lra H^*(B\diff(W_g, D^{2n});\bQ)\]
is an isomorphism in a range of degrees tending to infinity with $g$.

The interaction between these two calculations is easy to describe. The Hirzebruch $L$-classes $\cL_i$ are certain polynomials in the Pontrjagin classes $p_i$, and we may write $\kappa_{\cL_i}$ for the associated linear combination of $\kappa_c$'s, which is a class of degree $4i-2n$. We choose the classes $\sigma_i$ in Borel's theorem to satisfy $\kappa_{\cL_i} = (\alpha_g)^*(\sigma_{2(2i-n)})$, which is possible by a theorem of Atiyah \cite{AtiyahFib}.

From this discussion we see that the Miller--Morita--Mumford classes $\kappa_{\cL_i}$ vanish in the rational cohomology of $B\mr{Tor}(W_g, D^{2n})$, so there is an induced map
\[\frac{\bQ[\kappa_c \, \mid \, c \in \cB_{> 2n}]}{(\kappa_{\cL_i} \, | \, 4i-2n > 0)} \lra H^*(B\mr{Tor}(W_g, D^{2n});\bQ).\]
This will give the $G'_g$-invariant part of the cohomology of $B\mr{Tor}(W_g, D^{2n})$ in a stable range---as was already shown in the pseudoisotopy stable range by Ebert--Randal-Williams \cite{oscarjohannestorelli}---but the full cohomology will be much larger.

\subsection{Twisted Miller--Morita--Mumford classes}\label{sec:intro:MMM}

Our description of (the algebraic part of) the cohomology of $B\mr{Tor}(W_g, D^{2n})$ will be in terms of certain variants of the Miller--Morita--Mumford classes. To describe them, now let
\[W_g \lra E \overset{\pi}\lra B\mr{Tor}(W_g, D^{2n})\]
denote the universal $W_g$-bundle over $B\mr{Tor}(W_g, D^{2n})$, and $s \colon B\mr{Tor}(W_g, D^{2n}) \to E$ denote the section determined by the centre of the disc $D^{2n} \subset W_g$. The Serre spectral sequence for $\pi$ degenerates at $E_2$, and the section $s$ determines a splitting of the short exact sequence
\[0 \lra H^n(B\mr{Tor}(W_g, D^{2n});\bQ) \overset{\pi^*}\lra H^n(E;\bQ) \lra H^n(W_g;\bQ) \lra 0,\]
and hence a map $\iota \colon H^n(W_g;\bQ) \to H^n(E;\bQ)$. Then for $v_1, \ldots, v_r \in H^n(W_g;\bQ)$ and $c \in \cB$ we define
\[\kappa_c(v_1 \otimes \cdots \otimes v_r) \coloneqq \int_\pi c(T_\pi E) \cdot \iota(v_1) \cdots \iota(v_r) \in H^{|c|+n(r-2)}(B\mr{Tor}(W_g, D^{2n});\bQ).\]
These classes generalise the Miller--Morita--Mumford classes, in the sense that $\kappa_c(1)= \kappa_c$ for $1 \in \bQ = H^n(W_g;\bQ)^{\otimes 0}$. Under the action of $G_g'$ on $H^{*}(B\mr{Tor}(W_g, D^{2n});\bQ)$ these classes transform via the action of $G'_g$ on the $v_i \in H^n(W_g;\bQ)$, which is identified with the dual $H_n(W_g;\bQ)^\vee$ of the standard representation of $G'_g$.

\subsection{The ring presentation} 

The easiest formulation of our results is as a presentation of the ring $H^{*}(B\mr{Tor}(W_g, D^{2n});\bQ)^\mr{alg}$ in a stable range of degrees, generated by the classes $\kappa_c(v_1 \otimes \cdots \otimes v_r)$ and subject to an explicit collection of relations. To formulate this theorem we write $a_1, a_2, \ldots, a_{2g}$ for a basis of $H^n(W_g;\bQ)$, and $a_1^\#, a_2^\#, \ldots, a_{2g}^\#$ for the Poincar{\'e} dual basis characterised by $\langle a_i^\# \cdot a_j, [W_g]\rangle=\delta_{ij}$.

\begin{atheorem}\label{MainThm:Ring}
If $2n \geq 6$ then in a range of degrees tending to infinity with $g$ the graded-commutative ring $H^{*}(B\mr{Tor}(W_g, D^{2n});\bQ)^\mr{alg}$ is generated by the classes
\[\kappa_c(v_1 \otimes \cdots \otimes v_r) \text{ with $r \geq 0$,  $c \in \cB$, and $|c|+ n(r-2)>0$}.\]
A complete set of relations in this range is given by
\begin{enumerate}[\indent (i)]
\item linearity in each $v_i$,
\item $\kappa_c(v_{\sigma(1)} \otimes \cdots \otimes v_{\sigma(k)}) = \mr{sign}(\sigma)^n \cdot \kappa_c(v_1 \otimes \cdots \otimes v_k)$,

\item $ \sum_i \kappa_x(v_1 \otimes \cdots \otimes v_j \otimes a_i) \cdot  \kappa_y(a_i^\# \otimes v_{j+1} \otimes \cdots \otimes v_r) = \kappa_{x \cdot y}(v_1 \otimes \cdots \otimes v_r)$,

\item $\sum_i \kappa_x(v_1 \otimes \cdots \otimes v_r \otimes a_i \otimes a_i^\#) = \kappa_{e \cdot x}(v_1 \otimes \cdots \otimes v_r)$,

\item $\kappa_{\cL_i}(1)=0$.
\end{enumerate}
In the case $2n=2$, if $H^*(B\mr{Tor}(W_g, D^{2});\bQ)$ is finite-dimensional for $* < N$ and $g \gg 0$, then this description is valid in degrees $* \leq N$ for $g \gg 0$.
\end{atheorem}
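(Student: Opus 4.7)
The plan is to define the graded-commutative $\bQ$-algebra $\cR_g$ by the presentation in the statement, construct a natural algebra homomorphism $\Phi_g \colon \cR_g \to H^*(B\mr{Tor}(W_g, D^{2n});\bQ)^\mr{alg}$, and prove that $\Phi_g$ is an isomorphism in a stable range of degrees. The argument splits into three parts: checking that relations (i)--(v) hold (so that $\Phi_g$ is well-defined), proving surjectivity in the stable range, and proving injectivity.

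For well-definedness I would verify each relation directly. Relations (i) and (ii) follow from the linearity of the splitting $\iota$ and graded commutativity of $H^*(E;\bQ)$. Relation (iii) is a K\"unneth-type identity, obtained by applying the projection formula to the fibrewise diagonal class of $\pi$, whose middle-degree component is $\sum_i \iota(a_i) \otimes \iota(a_i^\#)$. Relation (iv) follows from a fibre-integration argument using Poincar\'e duality in $W_g$ and the section $s$, which expresses $\sum_i \iota(a_i) \cdot \iota(a_i^\#)$ as $e(T_\pi E)$ modulo classes that vanish when integrated against the factor $x(T_\pi E) \cdot \iota(v_1)\cdots\iota(v_r)$. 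Relation (v) is immediate, since $B\mr{Tor}(W_g, D^{2n})$ is the homotopy fibre of $\alpha_g$, so any class pulled back from $BG'_g$ vanishes; by Atiyah's theorem $\kappa_{\cL_i}(1) = \alpha_g^*(\sigma_{2(2i-n)})$ is such a class.

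Both surjectivity and injectivity rest on an independent identification of $H^*(B\mr{Tor}(W_g,D^{2n});\bQ)^\mr{alg}$ in the stable range. The strategy is to combine Borel's and Galatius--Randal-Williams's computations recalled in Section~\ref{sec:IntroStabCoh} with the Serre spectral sequence of
\[ B\mr{Tor}(W_g,D^{2n}) \longrightarrow B\diff(W_g,D^{2n}) \overset{\alpha_g}{\longrightarrow} BG'_g, \]
run with algebraic coefficient systems, in order to describe the isotypic components of the algebraic cohomology in terms of equivariant cohomology of $B\diff(W_g,D^{2n})$. The twisted MMM classes $\kappa_c(v_1 \otimes \cdots \otimes v_r)$ appear as the images of an explicit Leray--Hirsch basis for $H^*(E;\bQ)$ over $H^*(B\mr{Tor}(W_g, D^{2n});\bQ)$, and from this one reads off that they generate, giving surjectivity. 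For injectivity, I would reduce every monomial in $\cR_g$ to a normal form: relation (iii) decomposes a monomial into ``connected'' generators, (iv) eliminates internal $(a_i,a_i^\#)$-pairings at the cost of multiplying the decoration by $e$, and (v) normalises $c$ modulo the ideal generated by the $\cL_i$.

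The main technical obstacle is to verify that the resulting normal form produces a spanning set whose equivariant character as a $\GG$-representation matches the independently computed character of the target. This reduces to a combinatorial identity relating the Brauer-algebra description of tensor invariants for $\GG$ to a partition-indexed basis for the normal-form monomials, and it is the step requiring the most care. In the case $2n=2$ the stated finiteness hypothesis enters precisely here, since it guarantees that $H^*(B\mr{Tor}(W_g,D^{2});\bQ)^\mr{alg}$ decomposes as a direct sum of finite-dimensional algebraic $\GG$-representations on which such a character comparison is meaningful.
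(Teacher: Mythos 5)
Your overall skeleton (define the presented ring, map it in by twisted MMM classes, verify the relations by fibre integration and a diagonal/contraction formula, identify the target via the fibration over $BG'_g$ and Borel vanishing, then compare by invariant theory) is the same as the paper's, but the central input for surjectivity is mis-identified, and this is a genuine gap. The untwisted stable computation of $H^*(B\diff(W_g,D^{2n});\bQ)$ recalled in Section \ref{sec:IntroStabCoh} does not determine the nontrivial isotypic pieces of $H^*(B\mr{Tor}(W_g,D^{2n});\bQ)$: what the Serre spectral sequence of $B\mr{Tor}\to B\diff\to BG'_g$ needs as abutment is $H^*(B\diff(W_g,D^{2n});\cH(g)_\bQ^{\otimes S})$ for \emph{all} finite sets $S$ in a stable range, and your claim that the twisted classes ``appear as the images of an explicit Leray--Hirsch basis for $H^*(E;\bQ)$ over $H^*(B\mr{Tor};\bQ)$'' produces no such computation --- $H^*(E;\bQ)$ is merely a free module of rank $2g+2$ and says nothing about whether the twisted classes span the invariants. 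The paper proves the twisted stable computation as Theorem \ref{thm:Iso}, by a genuinely separate mechanism: enlarge the tangential structure to $\theta\times K(W^\vee,n+1)$, apply Galatius--Randal-Williams to that structure, split the resulting spectral sequence by $GL(W)$-weight, and translate via $GL$ Schur--Weyl duality. Moreover, to run Borel vanishing with coefficients $H^q(B\mr{Tor};\bQ)\otimes H(g)^{\otimes S}$ you must know these are almost algebraic, which is exactly where finite-dimensionality (Kupers' theorem for $2n\ge6$, the hypothesis for $2n=2$) combined with Margulis' theorem enters --- not, as you suggest, merely to make a character comparison ``meaningful''; and the collapse of the spectral sequence further requires $H^*(B\diff)$ to be free over $\bQ[\kappa_{\cL_i}]$, which rests on the nonvanishing $\cL_i\neq0$ in $H^{4i}(B\mr{SO}(2n)\langle n\rangle;\bQ)$ (Berglund--Bergstr\"om), see Lemma \ref{lem:SScollapse} and the proof of Theorem \ref{thm:MainCalc}.

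The step you defer as ``requiring the most care'' is precisely where the remaining content lies, and no mechanism is offered for it. One must (a) know that killing the $\kappa_{\cL_i}$ removes exactly the expected amount, i.e.\ that they form a regular sequence in the stable range (Proposition \ref{prop:LsAreRegular}); and (b) prove that the presented ring agrees with the partition/coend model: the paper does this by a graphical calculus of labelled graphs with edge and loop contractions, using the first fundamental theorem of invariant theory (Theorem \ref{thm:FTInvariantTheory}) for surjectivity onto $[\,-\otimes H(g)^{\otimes S}]^{G'_g}$, the stability range $2g\ge|S|$ together with a dimension count after contracting all internal edges for injectivity (Theorem \ref{thm:BigRingStr}), and then the Brauer-category recognition result (Proposition \ref{prop:Recognition}, resting on Lemma \ref{lem:Detection} and Zariski density) to upgrade an isomorphism on $S$-invariants for all $S$ to an isomorphism onto the maximal algebraic subrepresentation. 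A bare ``character comparison'' cannot substitute for this last step without first establishing surjectivity and the equality of finite dimensions degreewise, which is exactly what the invariant-theoretic and regularity arguments provide; as written, the injectivity half of the theorem is not established.
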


\begin{remark}\mbox{}
\begin{enumerate}[\indent (i)]
\item The presentation in Theorem \ref{MainThm:Ring} is not supposed to be efficient. In Theorem \ref{thm:SmallRingStr} we give a smaller but somewhat more complicated presentation, in which the generators are just the classes $\kappa_c(1)$, $\kappa_c(v_1)$, and $\kappa_1(v_1 \otimes v_2 \otimes v_3)$.

\item We describe explicit stability ranges for all the results of this paper in Section \ref{sec:ranges}.

\item No assumption about the finiteness of the cohomology of $B\mr{Tor}(W_g, D^{2n})$ is required in the case $2n \geq 6$ because it is indeed finite-dimensional in each degree: this has been recently proved by the first author \cite{kupersdisk}. 

\item In a companion paper \cite{KR-WAlg} we prove that for $2n \geq 6$ the $G'_g$-representations $H^i(B\mr{Tor}(W_g, D^{2n});\bQ)$ are in fact algebraic. Thus in this case Theorem \ref{MainThm:Ring} in fact computes the whole cohomology ring in a stable range.

\item In dimension $2n=2$ the homology of $B\mr{Tor}(W_g, D^{2})$ cannot be finite-dimensional in every degree \cite{AkitaInfinite}. However it is a folk conjecture (see e.g.~\cite[p.\ 71]{HainFin}) that the cohomology of $B\mr{Tor}(W_g, D^{2})$ is finite-dimensional in a range of degrees tending to infinity with $g$; assuming this conjecture, Theorem \ref{MainThm:Ring} gives a complete description of the algebraic subrepresentation of the cohomology of $B\mr{Tor}(W_g, D^{2})$ in a stable range. We explain further consequences for the case $2n=2$ in Section \ref{sec:Dim2}.
\end{enumerate}
\end{remark}

\subsection{The categorical description}

While Theorem \ref{MainThm:Ring} is the most easily formulated of our results, it is often difficult to answer questions about an object described by a presentation. Our main result is a different description of $H^{*}(B\mr{Tor}(W_g, D^{2n});\bQ)^\mr{alg}$ in the stable range, of a categorical nature, which we shall explain in this section. 

Theorem \ref{MainThm:Ring} will be deduced from this categorical description, but using this description it is also mechanical to calculate the character of each $G'_g$-representation $H^{i}(B\mr{Tor}(W_g, D^{2n});\bQ)^\mr{alg}$ in the stable range (whereas it is not clear how to extract this from Theorem \ref{MainThm:Ring}). We will explain how to calculate such characters in Section \ref{sec:Char}, and give several examples.

Our categorical description will be in terms of Brauer categories, a notion which we learnt from Sam--Snowden \cite{SS}. The description we will give depends of course on the value of $n$, but its \emph{form} also depends on the parity of $n$. In this introduction for simplicity we describe the case $n$ even; the case $n$ odd is similar in spirit but requires a substantial discussion of signs, which we defer to the body of the text.

\begin{definition}
An \emph{unordered matching} of a finite set is a decomposition of that set into disjoint pairs. The \emph{downward Brauer category} $\cat{dBr}$ has objects finite sets. A morphism in $\cat{dBr}$ from $S$ to $T$ is a pair $(f, m_S)$ of an injection $f \colon T \hookrightarrow S$ along with an unordered matching $m_S$ of $S \setminus f(T)$. The composition of such a morphism with a morphism $(g, m_T) \colon T \to U$ is given by the injection $f \circ g \colon U \hookrightarrow S$ along with the unordered matching $m_S \sqcup f(m_T)$ of $S \setminus (f \circ g)(U)$. Disjoint union endows $\cat{dBr}$ with a symmetric monoidal structure.
\end{definition}

\begin{figure}[h]
	\begin{tikzpicture}
	\draw (0,3) to[out=-90,in=-90,looseness=1.5] (2,3);
	\draw (-1,3) to[out=-90,in=90] (0,0);
	\draw [line width=2mm,white] (1,3) to[out=-90,in=90] (3,0);
	\draw (1,3) to[out=-90,in=90] (3,0);
	\draw [line width=2mm,white] (3,3) to[out=-90,in=90] (1,0);
	\draw (3,3) to[out=-90,in=90] (1,0);
	\draw [line width=2mm,white] (4,3) to[out=-90,in=90] (2,0);
	\draw (4,3) to[out=-90,in=90] (2,0);
	\foreach \x in {-1,...,4}
	\node at (\x,3) {$\bullet$};
	\foreach \x in {-0,...,3}
	\node at (\x,0) {$\bullet$};
	\node at (-2,3) {$S$};
	\node at (-1,0) {$T$};
	\end{tikzpicture}
	\caption{A graphical representation of a morphism $(f,m_S)$ in $\mathsf{dBr}(S,T)$ from a $6$-element set $S$ to a $4$-element set $T$. The order of crossings is irrelevant.}
	\label{fig:brauerdownward}
\end{figure}
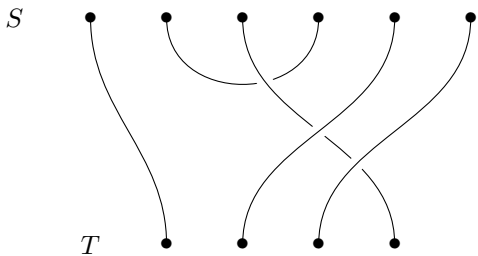

As we have supposed that $n$ is even for now, the fundamental representation $H(g)$ of $G_g'$ is equipped with a non-degenerate symmetric bilinear form $\lambda \colon H(g) \otimes H(g) \to \bQ$. Using it, we may define a functor
\[K \colon \cat{dBr} \lra \cat{Rep}(G_g')\]
to the category of $\bQ$-representations of $G'_g$, given on objects by $K(S) = H(g)^{\otimes S}$ and on a morphism $(f, m_S) \colon S \to T$ by
\[K(f, m_S) \colon H(g)^{\otimes S} \xrightarrow{m_S} H(g)^{\otimes f(T)} \xrightarrow{H(g)^{\otimes f^{-1}}} H(g)^{\otimes T},\]
where the first map applies the symmetric pairing $\lambda$ to the matched pairs of $S$. Taking $\bQ$-linear duals defines a functor $K^\vee \colon \cat{dBr}^\mr{op} \to \cat{Rep}(G_g')$.

Both $\cat{Rep}(G_g')$ and the category $\mathsf{Gr}(\bQ\text{-mod})$ of graded $\bQ$-modules may be considered as subcategories of the category $\mathsf{Gr}(\cat{Rep}(G_g'))$ of graded $\bQ$-representations of $G'_g$, as those graded representations which are concentrated in degree zero or are trivial respectively. We can thus use coends to define a functor
\[K^\vee \otimes^{\cat{dBr}} - \colon \mathsf{Gr}(\bQ\text{-mod})^{\cat{dBr}} \lra \mathsf{Gr}(\cat{Rep}(G_g')).\]
As $K$ is strong symmetric monoidal, $K^\vee \otimes^{\cat{dBr}} -$ is also strong symmetric monoidal when the functor category $\mathsf{Gr}(\bQ\text{-mod})^{\cat{dBr}}$ is equipped with the symmetric monoidal structure given by Day convolution. The categorical formulation of our main result for $n$ even is an identification of the commutative ring object
\[H^*(B\mr{Tor}(W_g, D^{2n});\bQ)^\mr{alg} \in \mathsf{Gr}(\cat{Rep}(G_g'))\]
with the value of the functor $K^\vee \otimes^{\cat{dBr}} -$ on a certain commutative ring object in $\mathsf{Gr}(\bQ\text{-mod})^{\cat{dBr}}$, which we now define. Recall that $\cB$ denotes the set of monomials in the Euler class $e$ and the Pontrjagin classes $p_i$ for $i = \lceil \frac{n+1}{4} \rceil,\cdots,n-2,n-1$, including the trivial monomial $1$.

\begin{definition}\label{def:PartAlg}
A \emph{partition} of a finite set $S$ is a finite collection of (possibly empty) subsets $\{S_\alpha\}_{\alpha \in I}$ of $S$ which are pairwise disjoint and whose union is $S$.

We write $\mathcal{P}(-; \mathcal{B})'_{\geq 0} \colon \cat{dBr} \to \mathsf{Gr}(\bQ\text{-mod})$ for the functor which assigns to a finite set $S$ the vector space with basis the set of partitions $\{S_\alpha\}_{\alpha \in I}$ of $S$ equipped with a labelling of each part $S_\alpha$ by an element $c_\alpha \in \cB$, such that
\begin{enumerate}[\indent (i)]
\item each part of size 0 has label of degree $>2n$,

\item each part of size 1 has label of degree $\geq n$,

\item each part of size 2 has label of degree $> 0$.
\end{enumerate}
We make this a graded vector space by declaring a part $S_\alpha$ labelled by $c_\alpha$ to have degree $|c_\alpha| + n(|S_\alpha|-2)$, and a labelled partition to have degree the sum of the degrees of its parts.

The linear map $\mathcal{P}(S; \mathcal{B})'_{\geq 0} \to \mathcal{P}(T; \mathcal{B})'_{\geq 0}$ induced by a bijection $(f, \varnothing) \colon S \to T$ in $\cat{dBr}$ is simply given by relabelling. The linear map induced by $(\mathrm{inc}, (x,y)) \colon  S \to S \setminus \{x,y\}$ sends a labelled partition $(\{S_\alpha\}, \{c_\alpha\})$ to the labelled partition given as follows:
\begin{enumerate}[\indent (i')]

\item if some $S_\alpha$ contains $\{x,y\}$ (and $|c_\alpha|>0$ if $S_\alpha=\{x,y\}$) then we change the part to $S_\alpha \setminus \{x,y\}$, and change the label to $e \cdot c_\alpha$, 

\item if $x$ and $y$ lie in different parts $S_\alpha$ and $S_\beta$, then we merge these into a new part $(S_\alpha \setminus \{x\}) \cup (S_\beta \setminus \{y\})$ labelled by $c_\alpha \cdot c_\beta$.
\end{enumerate}
On a more general morphism in $\cat{dBr}$ the effect of the functor $\mathcal{P}(-; \mathcal{B})'_{\geq 0}$ is determined by the above and functoriality.

The functor $\mathcal{P}(-; \mathcal{B})'_{\geq 0}$ has a lax symmetric monoidality given by disjoint union, making it into a commutative ring object in $\smash{\mathsf{Gr}(\bQ\text{-mod})^\cat{dBr}}$.
\end{definition}

When $n$ is odd we must instead consider a variant $\cat{dsBr}$, the downwards signed Brauer category, and the analogue of the functor of Definition \ref{def:PartAlg} must be twisted by a certain determinant line functor. Allowing for these differences, for all $n$ the categorical formulation of our result is as follows, where we identify an empty part labelled by $c \in \cB_{>2n}$ with the Miller--Morita--Mumford class $\kappa_c$.

\begin{atheorem}\label{thm:Main}
There is a morphism
\[\frac{K^\vee \otimes^{\cat{d(s)Br}} \left( \mathcal{P}(-; \mathcal{B})'_{\geq 0} \otimes {\det}^{\otimes n} \right)}{(\kappa_{\cL_i} \, | \, 4i-2n >0)} \lra H^*(B\mr{Tor}(W_g, D^{2n});\bQ)^\mr{alg}\]
of commutative ring objects in $\cat{Gr}(\cat{Rep}(G_g'))$, which if $2n \geq 6$ is an isomorphism in a range of degrees tending to infinity with $g$.

If $2n=2$ and $H^*(B\mr{Tor}(W_{g}, D^{2});\bQ)$ is finite dimensional for $* < N$ and $g \gg 0$, then this map is an isomorphism in degrees $* \leq N$, and is a monomorphism in degree $N+1$, for $g \gg 0$.
\end{atheorem}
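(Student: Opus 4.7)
The plan is to construct the map directly from the twisted MMM classes, verify it respects the coend relations coming from the (signed) downward Brauer category, and then prove it is an isomorphism in the stable range via descent from the known stable cohomology of $B\diff(W_g, D^{2n})$.

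The map sends a labelled partition $(\{S_\alpha\}, \{c_\alpha\})$ of $S$, paired against a dual tensor $\phi \in (H(g)^{\otimes S})^\vee$, to the class $\sum_I \phi(e_I) \prod_\alpha \kappa_{c_\alpha}(e_I|_{S_\alpha})$, where $e_I$ ranges over basis tensors of $H(g)^{\otimes S}$; when $n$ is odd, one uses the determinant line to absorb the sign in relation (ii). To see this descends to the coend it suffices to check invariance under the generating morphisms of $\cat{d(s)Br}$: relabellings of $S$ produce the $(\mr{sign})^n$-equivariance of relation (ii); the elementary contraction $(\mr{inc}, (x,y))\colon S \to S\setminus\{x,y\}$ gives relation (iii) when $x,y$ lie in distinct parts (by multiplicativity of fibre integration, using that $\sum_i a_i\otimes a_i^\#$ is the Poincar\'e dual of the diagonal in $W_g$), and relation (iv) when they lie in the same part (using that $\sum_i \iota(a_i)\cdot\iota(a_i^\#)$ equals $e(T_\pi E)$ modulo $\pi^*$-classes which fibre-integrate against $c_\alpha$ to give $\kappa_{e \cdot c_\alpha}$). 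Finally, $\kappa_{\cL_i}(1)=0$ in the Torelli cohomology holds because by Atiyah's signature formula these classes pull back from $BG'_g$ along $\alpha_g$, and the Torelli group lies in the kernel.

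The isomorphism in the stable range I would establish by descent through the Lyndon--Hochschild--Serre spectral sequence for the extension $1 \to \mr{Tor}(W_g, D^{2n}) \to \diff'(W_g, D^{2n}) \to G'_g \to 1$ (where $\diff'$ denotes the preimage of $G'_g$ under $\alpha_g$):
\[E_2^{p,q} = H^p(G'_g; H^q(B\mr{Tor}(W_g,D^{2n});\bQ)^\mr{alg}) \Rightarrow H^{p+q}(B\diff'(W_g,D^{2n});\bQ).\]
Borel's theorem and the first fundamental theorem of invariant theory for $\Sp_{2g}$ and $\OO_{g,g}$ together identify $H^*(G'_g; V)$ stably in terms of the downward (signed) Brauer category applied to algebraic representations $V$; this is the structural origin of the appearance of $\cat{d(s)Br}$ in our description, and explains why a coend against $K^\vee$ is the right way to package the data. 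On the abutment side, the stable cohomology of $B\diff(W_g, D^{2n})$ and its twisted version with coefficients in $H(g)^{\otimes k}$ are known by Galatius--Randal-Williams for $2n \geq 6$ (and Madsen--Weiss for $2n=2$), and surjectivity of our map follows because the twisted MMM classes account for the generators of these stable twisted cohomologies.

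The principal obstacle is injectivity: showing that no further relations hold among the $\kappa_c(v_1\otimes\cdots\otimes v_r)$ in the stable range beyond those encoded by $\cat{d(s)Br}$ and $\kappa_{\cL_i}(1)=0$. My strategy would be a Poincar\'e-series comparison: one computes the character of the coend in each stable degree combinatorially as a weighted sum over labelled partitions modulo Brauer contractions, and matches it against the character of $H^*(B\mr{Tor})^\mr{alg}$ extracted from the abutment via the spectral sequence. Degeneration of the descent spectral sequence in the relevant range, and the careful tracking of the determinant twist in the $n$ odd case (so that the sign conventions of Brauer composition, relation (ii), and fibre integration all align), are the main technical hurdles. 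For $2n=2$ the same argument applies in the promised range once finite-dimensionality of $H^*(B\mr{Tor}(W_g,D^{2});\bQ)$ is assumed, since this ensures the stability machinery on which the descent rests remains valid.
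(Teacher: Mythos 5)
Your overall architecture matches the paper's: construct the map from twisted Miller--Morita--Mumford classes and the contraction formulas, then descend along the fibration $B\mr{Tor}(W_g,D^{2n}) \to B\mr{Diff}(W_g,D^{2n}) \to BG'_g$ using finite-dimensionality, Margulis almost-algebraicity and Borel vanishing. But there are genuine gaps at the three places where the real work happens. First, the stable computation of the \emph{twisted} cohomology $H^*(B\mr{Diff}(W_g,D^{2n});\cH(g)_\bQ^{\otimes S})$, together with its Brauer-category functoriality, is not ``known by Galatius--Randal-Williams'': it is Theorem \ref{thm:Iso} of the paper, proved by introducing the auxiliary tangential structure $\theta \times K(W^\vee,n+1)$ (resp.\ $n+2$), applying the untwisted GRW theorem to $\mathrm{MT}\theta \wedge Y_+$, and extracting coefficients by $\mr{GL}(W)$-Schur--Weyl duality, including the nontrivial identification of the tautological class $\chi$ with $\epsilon$. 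Treating this as a citation hides a substantial step of the argument. Second, the collapse of the descent spectral sequence is not automatic: the paper's Lemma \ref{lem:SScollapse} needs $H^*(B\mr{Diff}(W_g,D^{2n});\cH(g)_\bQ^{\otimes S})$ to be a \emph{free} module over $H^*(BG'_g;\bQ)$ stably, which reduces (via the Milnor--Moore/Borel Hopf-algebra structure theorem) to the nonvanishing of $\cL_i$ in $H^{4i}(B\mr{SO}(2n)\langle n\rangle;\bQ)$ for all $i>n/2$; for general $n$ this requires the Berglund--Bergstr{\"o}m theorem on the coefficients of the $L$-classes. You flag degeneration as a ``hurdle'' but supply no mechanism.

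Third, and most seriously, your strategy for injectivity and for identifying the image with the maximal algebraic subrepresentation does not work as stated. The spectral sequence (run with $\cH(g)^{\otimes S}$-coefficients, not trivial ones, and with the full local system $H^q(B\mr{Tor};\bQ)$ on the $E_2$-page, not its algebraic part) only gives you an isomorphism of the \emph{invariants} $[H^*(B\mr{Tor}(W_g,D^{2n});\bQ)\otimes H(g)^{\otimes S}]^{G'_g}$ with the expected functor on $\cat{(s)Br}_{2g}$. A ``Poincar\'e-series comparison'' against the character of $H^*(B\mr{Tor})^{\mr{alg}}$ is circular, since that character is precisely what is being computed, and equality of characters would in any case not show that \emph{this} map is an isomorphism, nor that its image is exactly the algebraic part (a priori non-algebraic constituents are invisible to all such invariants). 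The paper closes this gap with the recognition machinery of Proposition \ref{prop:Recognition}: the fundamental theorems of invariant theory for $\Sp_{2g}$ and $\OO_{g,g}$ make the Brauer category surject onto $[K(S)\otimes K(T)^\vee]^{G}$, the detection Lemma \ref{lem:Detection} then shows kernel and cokernel of the coend-level map lie in the subcategory of representations with no algebraic subrepresentation, and algebraicity of the source forces the map to be injective with image the maximal algebraic subrepresentation. Some argument of this kind (or an equivalent semisimplicity-plus-multiplicity argument made rigorous via the same invariant theory) is indispensable and is missing from your proposal.
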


\begin{remark}\mbox{}
\begin{enumerate}[\indent (i)]

\item Many of the remarks after the statement of Theorem \ref{MainThm:Ring} apply here too.

\item Irreducible representations of the symmetric groups and of the algebraic groups $\{\Sp_{2g}, \OO_{g,g}\}$ are both indexed by partitions. In the stable range we will show that the multiplicity in $H^*(B\mr{Tor}(W_g, D^{2n});\bQ)$ of the irreducible algebraic $G'_g$-representation corresponding to a partition $\lambda \vdash q$  is the same as the multiplicity in \[\bQ \otimes_{\bQ[\kappa_{\cL_i} \, | \, 4i-2n >0]}\mathcal{P}(\{1,2,\ldots,q\}; \mathcal{B})'_{\geq 0} \otimes {\det(\bQ^q)}^{\otimes n}\]
of the irreducible $\Sigma_q$-representation corresponding to the partition $\lambda$. We explain how to calculate these multiplicities in Section \ref{sec:Char}.

\item Letting $\cH(g)$ denote the local coefficient system on $B\mr{Diff}(W_g, D^{2n})$ given by the action of diffeomorphisms on $H_n(W_g;\bQ)$, a key step in the proof of this theorem is to completely describe the bigraded ring $H^*(B\mr{Diff}(W_g, D^{2n}); \cH(g)^{\otimes \bullet})$ in a stable range, together with its behaviour in the variable $\bullet$ as a functor on the (signed) Brauer category. We do this in Section \ref{sec:isomorphism-theorem}. This description is valid in all dimensions $2n \neq 4$.

\end{enumerate}
\end{remark}

\subsection*{Acknowledgements}
The authors would like to thank M.\ Krannich and D.\ Petersen for their comments on earlier versions of this paper. AK was supported by the Danish National Research Foundation through the Centre for Symmetry and Deformation (DNRF92) and by the European Research Council (ERC) under the European Union's Horizon 2020 research and innovation programme (grant agreement No.\ 682922). AK is supported by NSF grant DMS-1803766. ORW was partially supported by EPSRC grant EP/M027783/1, and partially supported by the ERC under the European Union's Horizon 2020 research and innovation programme (grant agreement No.\ 756444), and by a Philip Leverhulme Prize from the Leverhulme Trust.

\section{Some background on representation theory}\label{sec:rep-theory}

\subsection{Arithmetic groups and their representations} Let $\epsilon \in \{-1,1\}$ and let $H(g)$ be a $2g$-dimensional rational vector space equipped with a nonsingular $\epsilon$-symmetric pairing $\lambda \colon H(g) \otimes H(g) \to \bQ$, of signature 0 if $\epsilon=1$. We denote the group of automorphisms of $H(g)$ which preserve this pairing $\mr{O}_{\epsilon}(H(g))$; this is usually denoted by $\mr{O}_{g,g}(\bQ)$ if $\epsilon=1$, and by $\mr{Sp}_{2g}(\bQ)$ if $\epsilon=-1$. These are the $\bQ$-points of algebraic groups $\OO_{g,g}$ and $\Sp_{2g}$ respectively. As $\OO_{g,g}$ is not Zariski connected we shall have to occasionally work with its index two connected subgroup $\gS\OO_{g,g} \leq \OO_{g,g}$, and in this case we will write $\gS\GG$ for $\Sp_{2g}$ or $\gS\OO_{g,g}$.

We shall need to consider \emph{arithmetic subgroups} $G$ of the algebraic groups $\GG \in \{\Sp_{2g}, \OO_{g,g}\}$ defined over $\bQ$, which we shall take to mean: a subgroup $G \leq \GG(\bQ)$ which is commensurable to $\GG(\bZ)$ and which, in the case $\GG = \OO_{g,g}$, is not entirely contained in $\mr{SO}_{g,g}(\bQ)$. The latter condition is non-standard, but holds for us and ensures that $G$ is Zariski dense in $\GG(\bQ)$, as we now explain.

\subsubsection{Zariski density}
Given an arithmetic subgroup $G$ of $\GG$ as above, write
\[SG \coloneqq \begin{cases}
G & \text{ if } \GG= \Sp_{2g},\\
G \cap \mr{SO}_{g,g}(\bQ) & \text{ if } \GG= \OO_{g,g}.
\end{cases}\]
As $\Sp_{2g}$ and $\gS\OO_{g,g}$ are connected semisimple algebraic groups defined over $\bQ$, it follows from a theorem of Borel--Harish-Chandra \cite[Theorem 7.8]{BHC} that $SG$ is a lattice in $\gS\GG(\bR)$, and hence by the Borel Density Theorem \cite{BorelDensity} that $SG$ is Zariski dense in $\gS\GG(\bR)$, so also in $\gS\GG(\bQ)$. As we have assumed in the case $\GG = \OO_{g,g}$ that $G$ does not lie entirely inside $\mr{SO}_{g,g}(\bQ)$, it follows that $G$ is Zariski dense in $\GG(\bQ)$.

\subsubsection{Algebraic and almost algebraic representations}

We consider an arithmetic group $G$ associated to $\GG \in \{\Sp_{2g}, \OO_{g,g}\}$ as defined above.

\begin{definition}
A representation $\phi \colon G \to GL(V)$ on an $n$-dimensional $\bQ$-vector space $V$ is \emph{algebraic} if it is the restriction of a finite-dimensional representation of the algebraic group $\GG$, i.e.\ there is a morphism of algebraic groups $\varphi \colon \GG \to \mathbf{GL}(V)$ which on taking $\bQ$-points and restricting to $G$ yields $\phi$.

More generally the representation $(\phi, V)$ is \emph{almost algebraic} if there is a finite index subgroup $G' \leq G$ such that the restriction of $\phi$ to $G'$ is algebraic.
\end{definition}

We usually denote a representation $(\phi, V)$ by $V$, leaving the action of $G$ on $V$ implicit.

If $V$ is an algebraic representation of $G$ and $W \leq V$ is a $G$-subrepresentation, then, as $G$ is Zariski dense in $\GG(\bQ)$, the subspace $W$ is also $\GG(\bQ)$-invariant so $W$ is again an algebraic representation. Similarly, $V/W$ is again algebraic. If $V$ is a (not necessarily finite-dimensional) $G$-representation, we let $V^\mr{alg} \leq V$ be the union of its algebraic subrepresentations; this need not be itself algebraic, but it is if it is finite-dimensional: in any case we call it the \emph{maximal algebraic subrepresentation} of $V$.

The following appears in page 109 of \cite{serrearithmetic} and is a consequence of a theorem of Margulis \cite[Theorem (2)]{margulis}; see Raghunathan \cite{RaghunathanCoh} for a special case.

\begin{theorem}\label{thm.margulis}
If $\GG$ is a simple algebraic group of $\bQ$-rank $\geq 2$ defined over $\bQ$, $G$ is an arithmetic subgroup of $\GG$ and $V$ is a finite-dimensional representation of $G$, then $V$ is almost algebraic.
\end{theorem}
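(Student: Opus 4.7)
The plan is to deduce this directly from the form of Margulis's super-rigidity theorem cited in the statement. That theorem asserts precisely: for a simple $\bQ$-algebraic group $\GG$ of $\bQ$-rank at least $2$, an arithmetic subgroup $G \leq \GG(\bQ)$, and any finite-dimensional $\bQ$-linear representation $\phi \colon G \to \mr{GL}(V)$, there exist a finite-index subgroup $G' \leq G$ and a morphism $\bar{\phi} \colon \GG \to \mathbf{GL}(V)$ of $\bQ$-algebraic groups whose restriction to $G'$ on $\bQ$-points agrees with $\phi|_{G'}$. Unwinding the definition of almost algebraic given immediately before the theorem statement, the pair $(G', \bar{\phi})$ is exactly the data witnessing that $V$ is almost algebraic as a $G$-representation, so there is nothing further to prove beyond invoking the cited result.

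All of the real content therefore sits inside Margulis's theorem itself, and I would not attempt to reprove it. For orientation only, the overall strategy of Margulis's argument is: consider the Zariski closure $\mathbf{H}$ of the image $\phi(G)$ inside $\mathbf{GL}(V)$; reduce to the case that $\mathbf{H}$ is semisimple (using, among other things, that on a suitable finite-index subgroup an arithmetic subgroup of a higher-rank simple $\bQ$-group admits no nontrivial homomorphisms to a unipotent target, so the unipotent radical of $\mathbf{H}$ can be ignored); and then apply super-rigidity in its homomorphism form to lift the map $G \to \mathbf{H}(\bQ)$ to a $\bQ$-morphism $\GG \to \mathbf{H}$ of algebraic groups, composing with $\mathbf{H} \hookrightarrow \mathbf{GL}(V)$ to obtain $\bar{\phi}$.

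The main obstacle, which we are allowed to bypass by citation, is of course super-rigidity itself; its proof is what requires $\bQ$-rank $\geq 2$, placing us in the higher-rank regime (via boundary-action, ergodic and harmonic-map methods, plus a delicate analysis at non-archimedean places). The conclusion is known to fail without the rank hypothesis, as exhibited by the many non-algebraic representations of $\mr{SL}_2(\bZ)$, so there is no hope of a softer argument avoiding Margulis's theorem.
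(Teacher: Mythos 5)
Your proposal is correct and matches the paper exactly: the paper gives no proof of this theorem, simply citing it as appearing on page 109 of Serre's article as a consequence of Margulis's Theorem (2), which is precisely the reduction you describe. Unwinding the definition of almost algebraic and invoking super-rigidity is all that is intended here, and your sketch of Margulis's argument and the $\mr{SL}_2(\bZ)$ counterexample are accurate context, not required content.
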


This for example applies to $\GG = \Sp_{2g}$ or $\gS\OO_{g,g}$ for $g \geq 2$, but the conclusion then easily follows for $\GG=\OO_{g,g}$ too, as this contains $\gS\OO_{g,g}$ with finite index.

For the algebraic groups under consideration Borel \cite{borelstable, borelstable2} proved a cohomological vanishing result, the following strong version of which we shall use:

\begin{theorem}\label{thm.borelvanishingweak}
Let $G$ be an arithmetic subgroup of $\GG \in \{ \Sp_{2g},\OO_{g,g}\}$, and set $e=0$ if $\GG = \Sp_{2g}$ and $e=1$ if $\GG = \OO_{g,g}$. Then for $g \geq 3+e$ and $V$ an almost algebraic representation of $G$, the natural maps
\[H^*(\GG_\infty;\bQ) \otimes V^G \lra H^*(G;\bQ) \otimes V^{G} \xrightarrow{-\cdot-} H^*(G ; V)\]
are both isomorphisms for $* < g-e$, where
\[H^*(\GG_\infty;\bQ) \coloneqq \begin{cases} \bQ[\sigma_2,\sigma_6,\ldots] & \text{if $\GG = \Sp_{2g}$,} \\
\bQ[\sigma_4,\sigma_8,\ldots] & \text{if $\GG = \OO_{g,g}$}.\end{cases}\]
\end{theorem}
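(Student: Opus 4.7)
The leftmost map being an isomorphism in the range $*<g-e$ is the classical Borel stable cohomology theorem for arithmetic groups with trivial rational coefficients, applied to $G$, so the content lies in showing that the cup-product map $H^*(G;\bQ) \otimes V^G \to H^*(G;V)$ is an isomorphism in the same range. My strategy is to reduce to the algebraic case, then decompose into irreducibles and invoke the twisted-coefficient analogue of Borel's vanishing.

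\emph{Step 1: Reduction to the algebraic case.} Choose a finite index subgroup $G' \leq G$ on which $V$ is algebraic. Replacing $G'$ by $\bigcap_{h \in G} h G' h^{-1}$ I may assume $G' \trianglelefteq G$ with quotient $F := G/G'$ finite. Since $|F|$ is invertible in $\bQ$, the transfer identifies $H^*(G;V) = H^*(G';V)^{F}$ and $H^*(G;\bQ) = H^*(G';\bQ)^{F}$, and by compatibility $V^G = (V^{G'})^F$. Because the conjugation action of $G$ on $H^*(G;\bQ)$ is trivial, the action of $F$ on $H^*(G';\bQ)$ is trivial, so taking $F$-invariants commutes with the relevant tensor products; it therefore suffices to prove the theorem for the algebraic representation $V$ of $G'$.

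\emph{Step 2: Decomposition into irreducibles.} Since $\GG$ is reductive and we are in characteristic zero, $V$ splits as a direct sum of irreducible algebraic $\GG$-subrepresentations, and the stated maps are natural in $V$. It therefore suffices to treat one irreducible $V_\lambda$ at a time. If $V_\lambda = \bQ$ is trivial, the statement is just Borel's classical theorem $H^*(G';\bQ) \cong H^*(\GG_\infty;\bQ)$ in the range $*<g-e$, applied to $G'$ (an arithmetic subgroup of the same $\GG$). If $V_\lambda$ is nontrivial, then by Zariski density of $G'$ in $\GG(\bQ)$ one has $V_\lambda^{G'} = V_\lambda^{\GG(\bQ)} = 0$, and the claim reduces to showing $H^*(G'; V_\lambda) = 0$ for $* < g-e$.

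\emph{Step 3: Twisted vanishing.} This last vanishing is the twisted-coefficient version of Borel's stability theorem from \cite{borelstable, borelstable2}, which asserts precisely such a vanishing range for $H^*$ of an arithmetic subgroup of a reductive $\bQ$-group with coefficients in a nontrivial irreducible algebraic representation. One must check that for $\GG \in \{\Sp_{2g},\OO_{g,g}\}$ the quantitative range extracted from Borel's argument gives the claimed bound $*<g-e$ with $e\in\{0,1\}$. For $\GG = \OO_{g,g}$, the additional unit in $e$ reflects the passage to $\gS\OO_{g,g}$ discussed earlier in the section. Assembling Steps 1--3 yields both isomorphisms.

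The main obstacle will be Step 3: carefully extracting the quantitative vanishing range for twisted coefficients from Borel's work and matching it with the range $*<g-e$ appearing in the statement. Everything else (the transfer, semisimplicity of algebraic representations, Zariski density) is formal once the input on the stable and vanishing ranges is in place.
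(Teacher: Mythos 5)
Your overall strategy (transfer down to a finite-index normal subgroup on which $V$ is algebraic, decompose into irreducibles, and quote Borel's vanishing for nontrivial irreducible coefficients) is essentially the paper's route: its appeal to Borel's Theorem 4.4(i) together with the stable computation already packages your Steps 2--3, and the paper likewise handles almost algebraic $V$ by a transfer diagram. However, there is a genuine gap in the orthogonal case. $\OO_{g,g}$ is not Zariski connected, and Borel's theorems apply to arithmetic subgroups of the connected group $\gS\OO_{g,g}$. Your finite-index subgroup $G'$ may very well be contained in $\mr{SO}_{g,g}(\bQ)$ --- the paper's non-standard definition of ``arithmetic'' excludes this for $G$ itself precisely to guarantee Zariski density, but the property is not inherited by finite-index subgroups. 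Then $G'$ is Zariski dense only in $\gS\OO_{g,g}$, and your claim $V_\lambda^{G'}=V_\lambda^{\GG(\bQ)}=0$ for nontrivial irreducible $V_\lambda$ fails: for $V_\lambda=\det$ the restriction to such a $G'$ is trivial, so $V_\lambda^{G'}=\bQ$ and $H^*(G';V_\lambda)=H^*(G';\bQ)\neq 0$; likewise ``Borel's classical theorem applied to $G'$, an arithmetic subgroup of the same $\GG$'' is not available verbatim for the disconnected group. The paper's fix is to pass to $SG=G\cap\mr{SO}_{g,g}(\bQ)$, prove the statement for the connected group, and recover $G$ by taking $C_2$-invariants of $H^*(SG;V)$; the essential extra input is that $C_2$ acts trivially on the stable ring $\bQ[\sigma_4,\sigma_8,\ldots]$, which is justified by inspecting Borel's construction of these classes. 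Your remark that ``the additional unit in $e$ reflects the passage to $\gS\OO_{g,g}$'' gestures at this but supplies neither the reduction nor the $C_2$-triviality argument.

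Two smaller points. First, in Step 1 the inference ``the conjugation action of $G$ on $H^*(G;\bQ)$ is trivial, hence $F$ acts trivially on $H^*(G';\bQ)$'' is a non sequitur: outer conjugation on the cohomology of a normal subgroup need not be trivial. The conclusion does hold in the stable range (e.g.\ because restriction $H^*(G;\bQ)\to H^*(G';\bQ)$ is injective by transfer and both sides are the stable ring, or because Borel's classes are commensurator-invariant), but this needs an argument and, for $\OO_{g,g}$, again runs into the connectedness issue above. Second, the range $*<g-e$ cannot be ``extracted from Borel's argument'': Borel's original ranges are much weaker, and the stated bound is the improvement announced by Hain and proved by Tshishiku, which is what the paper cites.
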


Here $H^*(\GG_\infty;\bQ)$ is simply notation for the graded ring indicated in the statement, and the classes $\sigma_i \in H^i(G;\bQ)$ are to be interpreted as described in Section \ref{sec:IntroStabCoh}.

\begin{proof}
The groups $\Sp_{2g}$ and $\gS\OO_{g,g}$ are connected and simple, so the claim for arithmetic subgroups of these groups and algebraic $V$ follows in some range of degrees by combining \cite[Theorem 4.4 (i)]{borelstable2} and the main result of \cite{borelstable}, with $H^*((\gS\OO_{g,g})_\infty;\bQ) = \bQ[\sigma_4,\sigma_8,\ldots]$. The ranges we have stated are improvements of those given by Borel, and were stated in \cite{HainTorelli} without proofs, and proven in \cite{tshishikuBorel}, Theorem 17 for $\gS\OO_{g,g}$ and Theorem 29 for $\Sp_{2g}$.

To deal with the case that $V$ is almost algebraic, suppose that $G' \lhd G$ is a finite index normal subgroup such that the restriction of $V$ to $G'$ is algebraic. Then there is a commutative diagram
\[\begin{tikzcd} H^*(G;\bQ) \otimes V^G \rar \dar{\cong} & H^*(G;V) \dar{\cong} \\
(H^*(G';\bQ) \otimes V^{G'})^{G/G'} \rar{\cong} & H^*(G';V)^{G/G'},\end{tikzcd}\]
with bottom map an isomorphism by the previous case, and the vertical maps isomorphisms by a transfer argument. 

To deduce the result for $\OO_{g,g}$ from that for $\gS \OO_{g,g}$, we observe that if $G$ is an arithmetic subgroup of $\mr{O}_{g,g}(\bQ)$ then by our slightly non-standard definition $SG \coloneqq G \cap \mr{SO}_{g,g}(\bQ)$ is a proper subgroup and there is an extension
\begin{equation*}
1 \lra SG \lra G \lra C_2 \lra 1.
\end{equation*}
The spectral sequence for this extension collapses to $H^*(SG ; V)^{C_2} \cong H^*(G;V)$. Using the result for $SG$, we find that the maps
\[H^*((\gS\OO_{g,g})_\infty;\bQ) \otimes V^{SG} \lra H^*(SG;\bQ) \otimes V^{SG} \xrightarrow{-\cdot-} H^*(SG ; V)\]
are isomorphisms in the given range. But $C_2$ acts trivially on $H^*((\gS\OO_{g,g})_\infty;\bQ) = \bQ[\sigma_4,\sigma_8,\ldots]$, by considering Borel's proof of this identity, so taking $C_2$-invariants therefore gives the required conclusion.
\end{proof}

A consequence of this theorem is that as long as $g \geq 3+e$ taking $G$-invariants is exact on the category of almost algebraic representations of $G$. However, by \cite{raghunathan} this is in fact true for $g \geq 2$ already (see also \cite[Theorem (3)]{margulis}). More generally, if $V$ and $W$ are almost algebraic representations then so is $W^\vee \otimes V$, so
\[\mr{Ext}_G^1(W, V) \cong H^1(G ; W^\vee \otimes V)=0\]
for $g \geq 2$, and hence every short exact sequence of almost algebraic representations splits.

\subsubsection{Orthogonal and symplectic representation theory} \label{sec:ortho-sympl-rep-theory} 

The non-singular $\epsilon$-symmetric pairing $\lambda$ is dual to an $\epsilon$-symmetric form $\omega \colon \bQ \to H(g) \otimes H(g)$, which is characterised by $(\lambda \otimes \mr{id})(- \otimes \omega) = \mr{id}(-)$. If $\{a_i\}$ is a basis of $H(g)$ and $\{a_i^\#\}$ is the dual basis determined by $\lambda(a^\#_i \otimes a_j) = \delta_{ij}$, then
\[\omega = \sum_i a_i \otimes a_i^\#.\]
For each $i$ and $j$ in $\{1,2,\ldots, q\}$ there is a map
\[\lambda_{i,j} \colon H(g)^{\otimes q} \lra H(g)^{\otimes q-2}\]
given by applying the pairing to the $i$th and $j$th factors, and dually a map
\[\omega_{i,j} \colon H(g)^{\otimes q-2} \lra H(g)^{\otimes q}\]
which inserts the form $\omega$ at the $i$th and $j$th factors. 

Weyl constructed irreducible representations of $\mr{O}_\epsilon(H(g))$ as follows. Let us write
\begin{align*}
H(g)^{[q]} &\coloneqq \mr{Ker}\left(H(g)^{\otimes q} \overset{\lambda_{i,j}}\lra \bigoplus_{i, j} H(g)^{\otimes q-2} \right),\\
H(g)_{[q]} &\coloneqq \mr{Cok}\left(\bigoplus_{i, j} H(g)^{\otimes q-2} \overset{\omega_{i,j}}\to H(g)^{\otimes q}\right).
\end{align*}
These have an action of the symmetric group $\Sigma_q$ by permuting factors, and the composition $H(g)^{[q]} \to H(g)^{\otimes q} \to H(g)_{[q]}$ is an isomorphism. Furthermore, the self-duality $x \mapsto \lambda (x,-) \colon H(g) \overset{\sim}\to H(g)^\vee$ induces an isomorphism $(H(g)_{[q]})^\vee \cong H(g)^{[q]}$. 

The irreducible $\bQ$-representations of the symmetric group $\Sigma_q$ are in bijection with partitions of $\lambda$ of the number $q$; the construction sends each partition $\lambda$ to an irreducible module $S^\lambda$ given by the image of the Young symmetriser acting on $\bQ[\Sigma_q]$, see Section 9.2.4 of \cite{Procesi}. For each partition $\lambda$ of $q$ we then define a $\mr{O}_\epsilon(H(g))$-representation
\[V_\lambda(H(g)) \coloneqq [S^\lambda \otimes H(g)^{[q]}]^{\Sigma_q},\]
which we shall usually shorten to $V_\lambda$. In particular, we have a decomposition
\begin{equation}\label{eq:SchurWeyl}
H(g)^{[q]} \cong \bigoplus_{\lambda \vdash q} S^\lambda \otimes V_\lambda(H(g))
\end{equation}
as a $\Sigma_q \times \mr{O}_\epsilon(H(g))$-representation, cf.~\cite[Section 9.9.2]{Procesi}.

The following theorems are consequences of the representation theory of the Lie groups $\mr{Sp}_{2g}(\bC)$ and $\mr{O}_{g,g}(\bC)$ (note that $\mr{O}_{g,g}(\bC) \cong \mr{O}_{2g}(\bC)$), which may be extracted from Section 11.6.4 and 11.6.5 of \cite{Procesi}, and of the Zariski density of $\mr{Sp}_{2g}(\bQ)$ and $\mr{O}_{g,g}(\bQ)$ inside these groups.

\begin{theorem}\label{thm:RepsOfSpAndO}
The representation $V_\lambda(H(g))$ of $\mr{O}_\epsilon(H(g))$ is zero or irreducible. If $2|\lambda| \leq \dim(H(g)) = 2g$ then it is irreducible, and such irreducibles are all distinct. 
\end{theorem}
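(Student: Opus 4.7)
The plan is to reduce the claim to the analogous statement over $\bC$---which is the classical Weyl construction for the complex symplectic and complex orthogonal groups, available in the cited sections of Procesi---and then transfer the conclusion back to $\bQ$ using Zariski density, exactly as flagged in the sentence preceding the theorem.

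First I would observe that the Weyl construction commutes with base change. Setting $H(g)_\bC \coloneqq H(g) \otimes_\bQ \bC$, the contractions $\lambda_{i,j}$ and the Young symmetriser $S^\lambda$ are defined by $\bQ$-linear operations, so
\[V_\lambda(H(g)) \otimes_\bQ \bC \;\cong\; V_\lambda(H(g)_\bC),\]
compatibly with the inclusion $\mr{O}_\epsilon(H(g)) \hookrightarrow \mr{O}_\epsilon(H(g)_\bC)$. The complex group on the right is $\mr{Sp}_{2g}(\bC)$ when $\epsilon=-1$ and $\mr{O}_{2g}(\bC)$ when $\epsilon=+1$, and for these groups Procesi's Sections 11.6.4--11.6.5 assert that $V_\lambda(H(g)_\bC)$ is either zero or an irreducible complex representation, with non-vanishing criterion $\ell(\lambda) \leq g$, and that distinct partitions of length at most $g$ yield pairwise non-isomorphic irreducibles. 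The hypothesis $2|\lambda|\leq 2g$ gives $\ell(\lambda) \leq |\lambda| \leq g$, so both non-vanishing and distinctness hold.

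Next I would transfer these statements from $\bC$ to $\bQ$. By Section~\ref{sec:rep-theory}, $\mr{Sp}_{2g}(\bQ)$ is Zariski dense in $\mr{Sp}_{2g}(\bC)$, and the same argument together with the existence of a $\bQ$-rational reflection (for instance in the vector $a_1+a_1^\#$, which has $\lambda$-norm $2$) shows that $\mr{O}_{g,g}(\bQ)$ is Zariski dense in $\mr{O}_{g,g}(\bC)$. Given any $\bQ$-subrepresentation $U \leq V_\lambda(H(g))$, the $\bC$-subspace $U \otimes_\bQ \bC \leq V_\lambda(H(g)_\bC)$ is invariant under $\mr{O}_\epsilon(H(g))$; its stabiliser in $\mr{GL}(V_\lambda(H(g)_\bC))$ is Zariski closed and contains a Zariski dense set, so it is all of $\mr{O}_\epsilon(H(g)_\bC)$. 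Complex irreducibility then forces $U \otimes_\bQ \bC \in \{0, V_\lambda(H(g)_\bC)\}$, hence $U \in \{0, V_\lambda(H(g))\}$. An equally formal argument shows that non-isomorphism of $V_\lambda(H(g)_\bC)$ for distinct $\lambda$ with $|\lambda|\leq g$ implies non-isomorphism of the $V_\lambda(H(g))$ over $\bQ$.

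The main point requiring care is the disconnectedness of $\OO_{g,g}$, which means Zariski density has to be verified on the non-identity component separately; this is routine once a $\bQ$-rational reflection is exhibited. Otherwise the proof is a pure base-change-plus-density reduction to the complex theory, with no substantial new content beyond what is recalled in Procesi.
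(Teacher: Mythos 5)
Your proof is correct and is essentially the paper's own argument: the paper justifies this theorem only by appealing to Procesi's Sections 11.6.4--11.6.5 for the complex groups $\mr{Sp}_{2g}(\bC)$ and $\mr{O}_{2g}(\bC)$ together with Zariski density of the rational points, which is exactly the base-change-plus-density reduction you spell out. The one small slip is that for $\mr{O}_{2g}(\bC)$ the precise non-vanishing criterion is $\lambda'_1+\lambda'_2\leq 2g$ rather than $\ell(\lambda)\leq g$, but since the hypothesis $2|\lambda|\leq 2g$ implies both, this does not affect the validity of your argument.
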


The $V_\lambda(H(g))$ are representations of the algebraic groups $\OO_{g,g}$ or $\Sp_{2g}$, so their restrictions to an arithmetic subgroup $G$ of $\mr{O}_{g,g}(\bQ)$ or $\mr{Sp}_{2g}(\bQ)$ are by definition algebraic representations. 

\begin{theorem}\label{thm:RepsOfSpAndO2}
Every algebraic representation of an arithmetic subgroup $G$ of $\mr{O}_{g,g}(\bQ)$ or $\mr{Sp}_{2g}(\bQ)$ is a sum of $V_\lambda(H(g))$'s.
\end{theorem}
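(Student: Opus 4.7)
The plan is to reduce from the arithmetic subgroup $G$ to the ambient algebraic group $\GG$, then use complete reducibility to reduce to the case of irreducible representations, and finally compare with the known complex classification via base change.

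First, by definition an algebraic representation $V$ of $G$ is the restriction of a morphism of algebraic groups $\varphi \colon \GG \to \mathbf{GL}(V)$ defined over $\bQ$, and by the Zariski density of $G$ in $\GG(\bQ)$ established earlier in Section \ref{sec:rep-theory}, any $G$-stable subspace of $V$ is automatically $\GG$-stable. It therefore suffices to show that every finite-dimensional algebraic representation of $\GG$ over $\bQ$ is a direct sum of $V_\lambda(H(g))$'s. Both $\Sp_{2g}$ and $\gS\OO_{g,g}$ are semisimple over $\bQ$, so by the characteristic zero version of Weyl's complete reducibility theorem their finite-dimensional rational representations are semisimple. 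For $\GG = \OO_{g,g}$ the same conclusion follows because $\OO_{g,g}/\gS\OO_{g,g} \cong C_2$ has order invertible in $\bQ$, so averaging promotes a $\gS\OO_{g,g}$-equivariant splitting to an $\OO_{g,g}$-equivariant one. This reduces the problem to showing that every irreducible rational algebraic $\GG$-representation is isomorphic to some $V_\lambda(H(g))$.

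Next, I would base change to $\bC$. The construction $V_\lambda(H(g)) = [S^\lambda \otimes H(g)^{[q]}]^{\Sigma_q}$ is functorial in $H(g)$ and commutes with extension of scalars, so $V_\lambda(H(g)) \otimes_{\bQ} \bC \cong V_\lambda(H(g) \otimes_{\bQ} \bC)$. The complex Lie group representation theory cited from Procesi (Sections 11.6.4, 11.6.5) identifies the nonzero $V_\lambda(H(g) \otimes \bC)$ with the complete list of irreducible complex algebraic representations of $\Sp_{2g}(\bC)$ and of $\OO_{2g}(\bC) \cong \OO_{g,g}(\bC)$. Combined with complete reducibility over $\bC$, this yields a decomposition of $V \otimes_{\bQ} \bC$ into copies of various $V_\lambda(H(g) \otimes \bC)$.

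Finally, I would descend back to $\bQ$. Each $V_\lambda(H(g))$ is defined over $\bQ$ and is absolutely irreducible, since it is cut out of the rational tensor power $H(g)^{\otimes q}$ by the rational idempotents provided by the Young symmetriser for $S^\lambda$ together with the projector onto $H(g)^{[q]}$, so its endomorphism ring is already $\bQ$. Absolute irreducibility, together with the fact that the nonzero $V_\lambda(H(g))$ are pairwise non-isomorphic, forces the $\bC$-decomposition from the previous step to be Galois-equivariant and thus to descend to a rational decomposition $V \cong \bigoplus_\lambda V_\lambda(H(g))^{\oplus m_\lambda}$. The main obstacle I anticipate is exactly this descent: a priori an irreducible rational $\GG$-representation could split into multiple complex irreducibles governed by a nontrivial division-algebra commutant, and ruling this out depends on the explicit rational construction of the $V_\lambda(H(g))$ rather than any abstract classification.
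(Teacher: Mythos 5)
Your argument is correct and is essentially the route the paper intends: the paper gives no written-out proof, only a citation to the complex representation theory in Procesi \S 11.6.4--11.6.5 together with the Zariski density of $G$ in $\GG(\bQ)$, which is exactly the reduction--complete-reducibility--base-change--descent chain you spell out. One small repair: the absolute irreducibility of $V_\lambda(H(g))$ is not a formal consequence of its being cut out of $H(g)^{\otimes q}$ by rational idempotents (a rational idempotent can perfectly well carve out a representation with a nontrivial division-algebra commutant); it follows instead because the construction commutes with extension of scalars and $V_\lambda(H(g))\otimes_{\bQ}\bC \cong V_\lambda(H(g)\otimes_{\bQ}\bC)$ is irreducible by the cited complex theory, a fact you have already established earlier in your own argument.
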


\subsubsection{Invariant theory}

The map $\omega \colon \bQ \to H(g) \otimes H(g)$ gives an invariant $\omega \in (H(g) \otimes H(g))^{\mr{O}_\epsilon(H(g))}$, which is sent to $\epsilon \cdot \omega$ under swapping the two factors. More generally, to each perfect ordered matching $m=((a_1, b_1), \ldots, (a_p, b_p))$ of a set $S = \{a_1, b_1, a_2, \ldots, a_p, b_p\}$ there is an associated invariant
\[\omega_m \coloneqq \bigotimes_{i=1}^p \omega_{a_i, b_i} \in (H(g)^{\otimes S})^{\mr{O}_\epsilon(H(g))}\]
and if $m'$ differs from $m$ by changing the order of $k$ pairs, then $\omega_{m'} = \epsilon^k \cdot \omega_m$. This observation provides a linear map
\begin{equation}\label{eq:invariant}
\frac{\bQ\{\text{perfect ordered matchings on $S$}\}}{\langle m'- \epsilon^k \cdot m \rangle} \lra  (H(g)^{\otimes S})^{\mr{O}_\epsilon(H(g))}
\end{equation}
We may summarise the first and second fundamental theorems of invariant theory for $\mr{O}_\epsilon(H(g))$ as follows.

\begin{theorem}\label{thm:FTInvariantTheory}
The map \eqref{eq:invariant} is surjective, and is injective as long as $2g \geq \vert S \vert$.
\end{theorem}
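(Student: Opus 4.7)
The plan is to reduce the statement to the classical first and second fundamental theorems of invariant theory for the complex orthogonal and symplectic groups; see \cite[Sections 11.2.1 and 11.6.4--11.6.5]{Procesi} or Weyl's original account. First I would observe that both the source and target of \eqref{eq:invariant} are finite-dimensional $\bQ$-vector spaces, so that surjectivity and injectivity may be checked after base change to $\bC$.

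Writing $H(g)_\bC \coloneqq H(g) \otimes_\bQ \bC$, the pairing $\lambda$ extends to a non-degenerate $\epsilon$-symmetric pairing on $H(g)_\bC$, and the next step is to identify the $\bC$-extension of the target with invariants under the complex group via the natural map
\[\bigl(H(g)^{\otimes S}\bigr)^{\mr{O}_\epsilon(H(g))} \otimes_\bQ \bC \lra \bigl((H(g)_\bC)^{\otimes S}\bigr)^{\mr{O}_\epsilon(H(g)_\bC)}.\]
To show this is an isomorphism, I would invoke the Zariski density of $\mr{O}_\epsilon(H(g))$ in the complex algebraic group $\GG_\bC$ for $\GG \in \{\Sp_{2g}, \OO_{g,g}\}$: for $\Sp_{2g}$ and $\gS\OO_{g,g}$ this follows from the same Borel density argument as used earlier in this section, and the full $\OO_{g,g}$ case follows because $\mr{O}_\epsilon(H(g))$ contains elements of determinant $-1$ and so exhausts both components of $\OO_{g,g}(\bC)$ in the Zariski topology.

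After these reductions the claim becomes the classical invariant theory for $\Sp_{2g}(\bC)$ or $\mr{O}_{2g}(\bC)$ acting on tensor powers of its standard representation. Surjectivity is precisely the content of the first fundamental theorem: the invariant subspace is spanned by the complete contractions $\omega_m$ indexed by perfect matchings, valid for every $g$. Injectivity in the stable range $2g \geq |S|$ is the second fundamental theorem: the only relations among the $\omega_m$ are the sign-symmetries of $\omega$, coming from the fact that swapping the two factors of $H(g)\otimes H(g)$ sends $\omega$ to $\epsilon\cdot\omega$, together with the freedom to permute the commuting tensor factors in $\bigotimes_i \omega_{a_i,b_i}$.

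The main obstacle is not substantive but rather careful bookkeeping: verifying that the sign relations in the source of \eqref{eq:invariant} (identifying $m'$ with $\epsilon^k\cdot m$ when the two differ by a swap in $k$ pairs) exactly match the classical sign-symmetries of $\omega$ and the permutation invariance of the tensor product of pair contractions, so that no relations are over- or under-counted.
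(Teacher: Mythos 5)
Your proposal is correct and follows essentially the same route as the paper, which simply cites Section 11.6.3 of Procesi for the classical fundamental theorems, applies $-\otimes_\bQ \bC$, uses Zariski density of $\mr{O}_\epsilon(H(g))$ in the complex group, and the identification $\mr{O}_{g,g}(\bC) \cong \mr{O}_{2g}(\bC)$. Your write-up just makes these reduction steps (and the sign bookkeeping) explicit.
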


For a proof see Section 11.6.3 of \cite{Procesi}, apply $- \otimes_\bQ \bC$, use Zariski density and again that $\mr{O}_{g,g}(\bC) \cong \mr{O}_{2g}(\bC)$. The range for injectivity we have given is coarser than what is known to hold, see Section \ref{sec:PropTransfOfTransf} for a discussion.

\subsection{Representations of categories}\label{sec:RepOfCat}

Our strategy for approaching the cohomology of Torelli groups as $G'_g$-representations will be via symplectic or orthogonal Schur--Weyl duality. However as we wish to recover the ring structure too it is not enough to simply obtain the characters of these representations, or what is the same, their isomorphism class: one must work in a more categorified way. In this section we describe the required background on categorical representation theory. We were influenced, as is this exposition, by the treatment of Sam--Snowden \cite{SS}, which we shall attempt to follow closely, adapting slightly to fit our needs. 

We shall often work in the category $\mathsf{Gr}(\bQ\text{-mod})$ of non-negatively graded $\bQ$-vector spaces, equipped with the monoidal structure given by graded tensor product, and with symmetry given by the Koszul sign rule.

We let $\cA$ be a $\bQ$-linear abelian symmetric monoidal category (in our applications it will usually be the category of finite dimensional representations of a fixed arithmetic group $G$). We shall assume $\cA$ has all finite enriched colimits. We often impose one of the following two finiteness conditions on objects of $\cA$:

\begin{definition}An object $X$ of abelian category has \emph{finite length} if it admits a finite filtration with simple filtration quotients, i.e.\ there exists a finite sequence of monomorphisms $0 \hookrightarrow X_1 \hookrightarrow X_2 \hookrightarrow \cdots \hookrightarrow X$ such that each cokernel $X_{i+1}/X_i$ only has $0$ and itself as quotients. We let $(-)^f \subset (-)$ denote the full subcategory of finite length objects.\end{definition}

\begin{definition}An object $X$ of a symmetric monoidal category is a \emph{dualisable object} if there exists an object $X^\vee$ with a map $\eta \colon 1 \to X \otimes X^\vee$ called coevaluation and a map $\epsilon \colon X^\vee \otimes X \to 1$ called evaluation, satisfying the triangle identities. If it exists, the \emph{dual} $X^\vee$ is unique up to isomorphism. We let $(-)^d \subset (-)$ denote the full subcategory of dualisable objects.\end{definition}

The category $\cA$ is tensored over $(\bQ\text{-mod})^f$, the category of finite-dimensional vector spaces: for $V \in (\bQ\text{-mod})^f$ and $A \in \cA$ there is an object $A \odot V \in \cA$ characterised by a natural isomorphism $\Hom_\cA(A \odot V, -) \cong \Hom_\bQ(V, \Hom_\cA(A, -))$. 
In particular we have a functor $V \mapsto 1_\cA \odot V \colon (\bQ\text{-mod})^f \to \cA^f$, which has a right adjoint $A \mapsto \Hom_\cA(1_\cA, A)\colon \cA^f \to (\bQ\text{-mod})^f$.

\begin{definition}\label{def:2point9}
Let $\Lambda$ denote a $\bQ$-linear category such that all vector spaces of morphisms are finite-dimensional, such that the relation $[x] \leq [y] \Leftrightarrow \mr{Hom}_\Lambda(x,y) \neq 0$ on the set of isomorphism classes of objects of $\Lambda$ is a well-defined partial order, and and for which each object only admits nonzero morphisms to finitely many other objects up to isomorphism.
\end{definition}

We shall consider the categories $\cA^\Lambda$ and $(\bQ\text{-mod})^\Lambda$ of $\bQ$-linear functors. Objectwise tensor product gives a pairing $- \otimes - \colon \cA^\Lambda \times \cA \to \cA^\Lambda$. In particular, we may fix a $K \in \cA^\Lambda$ to get a functor $K \otimes - \colon \cA \to \cA^\Lambda$. When $K$ has dualisable values, this has an enriched right adjoint. We call such an object $K \in (\cA^d)^\Lambda$ with dualisable values a \emph{kernel}; taking the objectwise duals defines a functor $K^\vee \colon \Lambda^\mr{op} \to \cA^d$, which we may also consider as a functor to $\cA$. For any $M \in (\cA^\Lambda)^f$ we may therefore form the coend
\[K^\vee \otimes^\Lambda M \coloneqq \int^{x \in \Lambda} K^\vee(x) \otimes M(x) \in \cA.\]
This coend is formed in the enriched sense, and exists because it may be expressed as the coequaliser of
\[
\begin{tikzcd} 
\bigoplus_{x , y \in \Lambda} (K(y)^\vee \otimes M(x)) \odot {\Hom_\Lambda(x,y)} \arrow[r, shift left]
\arrow[r, shift right]& \bigoplus_{x \in \Lambda} K(x)^\vee \otimes M(x),
\end{tikzcd}
\]
which is equivalent to a finite colimit by the assumption that $M$ has finite length (so in particular $M(x)=0$ for all but finitely many isomorphism classes of $x \in \Lambda$; this is a simple consequence of the second assumption of Definition \ref{def:2point9}) and that objects $x$ admit morphisms only to finitely-many isomorphism classes of objects.

\begin{proposition}
The functors $K^\vee \otimes^\Lambda - \colon (\cA^\Lambda)^f \to \cA$ and $K \otimes - \colon \cA \to \cA^\Lambda$ participate in a natural isomorphism
\[\Hom_{\cA^\Lambda}(K \otimes -, -) \cong \Hom_\cA(-, K^\vee \otimes^\Lambda -) \colon \cA \times (\cA^\Lambda)^f \lra \bQ\text{-mod}.\]
\end{proposition}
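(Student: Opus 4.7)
The plan is to prove the adjunction by a standard enriched tensor-hom style calculation, reducing the identification to the pointwise dualisability of the values of $K$.

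First, I would use the end presentation of natural transformations in $\cA^\Lambda$ to obtain
\[ \Hom_{\cA^\Lambda}(K \otimes A, M) \cong \int_{z \in \Lambda} \Hom_\cA(K(z) \otimes A, M(z)). \]
The pointwise duality adjunction $\Hom_\cA(K(z) \otimes A, B) \cong \Hom_\cA(A, K(z)^\vee \otimes B)$, available because each $K(z)$ is dualisable and natural in all variables, converts this to $\int_z \Hom_\cA(A, K(z)^\vee \otimes M(z))$. Since $\Hom_\cA(A, -)$ preserves limits (as a right adjoint to the copower $A \odot -$), it passes through the end to yield $\Hom_\cA(A, \int_z K(z)^\vee \otimes M(z))$.

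The main remaining step, and the hardest part, is to identify this end $\int_z K(z)^\vee \otimes M(z)$ with the coend $K^\vee \otimes^\Lambda M = \int^z K^\vee(z) \otimes M(z)$ appearing in the statement. Using the coequaliser presentation from the excerpt together with the dual equaliser presentation of the end, I would verify by direct comparison that for each $\phi \colon x \to y$ in $\Lambda$ the cowedge relation (expressed via $K(\phi)^\vee \otimes \mathrm{id}_{M(x)}$ and $\mathrm{id}_{K(y)^\vee} \otimes M(\phi)$) matches the wedge relation of the end under the duality coherences of the dual pairs $K(x) \dashv K(x)^\vee$ and $K(y) \dashv K(y)^\vee$. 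Concretely, the naturality of a family $\tilde f_z \colon A \to K(z)^\vee \otimes M(z)$ at $\phi \colon x \to y$ gives a relation in $\Hom_\cA(A, K(x)^\vee \otimes M(y))$, which under the unit/counit manipulations of the dual pairs is equivalent to the cowedge compatibility in the coequaliser defining $K^\vee \otimes^\Lambda M$. The finite-length hypothesis on $M$ together with the upward-finite morphism condition from Definition \ref{def:2point9} ensures that $M(z)$ vanishes on all but finitely many iso classes of $z$, so the comparison reduces to a finite (co)equaliser and can be made explicit.

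Naturality of the resulting bijection in both $A \in \cA$ and $M \in (\cA^\Lambda)^f$ then follows formally from the naturality of the constituent pointwise adjunctions and of the end/coend formation.
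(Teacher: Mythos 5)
Your steps (1)--(3) are correct as far as they go: the end presentation of $\Hom_{\cA^\Lambda}$, the pointwise duality $\Hom_\cA(K(z)\otimes A,B)\cong\Hom_\cA(A,K(z)^\vee\otimes B)$, and pulling $\Hom_\cA(A,-)$ through the limit identify $\Hom_{\cA^\Lambda}(K\otimes A,M)$ with $\Hom_\cA\bigl(A,\int_{z}K(z)^\vee\otimes M(z)\bigr)$, i.e.\ with maps into an \emph{end}. The genuine gap is your final step: this end is not isomorphic to the coend $K^\vee\otimes^\Lambda M=\int^{z}K(z)^\vee\otimes M(z)$, and no manipulation of the duality units/counits will match the wedge relations (a subobject of a product) with the cowedge relations (a quotient of a coproduct) when $\Lambda$ is not a groupoid. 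Concretely, let $\Lambda$ have two objects $x,y$ with $\Hom_\Lambda(x,y)=\bQ\{\phi\}$ and no nonzero morphisms $y\to x$ (this satisfies Definition \ref{def:2point9}), let $\cA$ be finite-dimensional vector spaces, $K(x)=K(y)=\bQ$ with $K(\phi)=\mr{id}$, and $M(x)=\bQ$, $M(y)=0$ (a simple, hence finite length, functor). Then $K^\vee\otimes^\Lambda M=0$ while $\int_{z}K(z)^\vee\otimes M(z)=\bQ$, and indeed $\Hom_{\cA^\Lambda}(K\otimes\bQ,M)=\bQ$ whereas $\Hom_\cA(\bQ,K^\vee\otimes^\Lambda M)=0$. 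So the displayed formula, read literally with the coend on the right-hand side of a map from $A$, cannot be reached by your route (or any other) in this generality; note also that for $\Lambda=\cat{dBr}$ the end is not even a finite limit, since infinitely many objects admit nonzero morphisms to a given one, so it is the coend, not the end, which is guaranteed to exist for finite length $M$.

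The paper's own proof takes a different, and orientation-revealing, route: it builds $\epsilon_V\colon K^\vee\otimes^\Lambda(K\otimes V)\cong(K^\vee\otimes^\Lambda K)\otimes V\to V$ from the evaluation maps and $\eta_M\colon M\to K\otimes(K^\vee\otimes^\Lambda M)$ from the coevaluation maps, and verifies the triangle identities. These exhibit $K^\vee\otimes^\Lambda-$ as a (partial) \emph{left} adjoint of $K\otimes-$, i.e.\ they establish the natural isomorphism $\Hom_\cA(K^\vee\otimes^\Lambda M,V)\cong\Hom_{\cA^\Lambda}(M,K\otimes V)$, and this is exactly the form of the adjunction used later: in Propositions \ref{prop:Recognition} and \ref{prop:TransfOfTransf} a map $A\to[K\otimes B]^G$, equivalently $1_\cA\odot A\to K\otimes B$, is converted into a map $K^\vee\otimes^\Lambda(1_\cA\odot A)\to B$. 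Your computation instead targets the \emph{right} adjoint of $K\otimes-$, which is the end and genuinely differs from the coend for directed categories such as $\cat{dBr}$. So the failure to identify end with coend is not a patchable technicality: the statement must be understood with the two sides' roles exchanged (as the paper's unit/counit argument proves it), and a correct write-up should either reproduce that unit/counit argument or run your Hom-calculation starting from $\Hom_{\cA^\Lambda}(M,K\otimes V)$, where the duality step produces $\int_{z}\Hom_\cA(K(z)^\vee\otimes M(z),V)\cong\Hom_\cA\bigl(\int^{z}K(z)^\vee\otimes M(z),V\bigr)$ and the coend appears for the right reason.
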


\begin{proof}
The collection of evaluation maps $K(x)^\vee \otimes K(x) \to 1_\cA$ coequalises the two maps expressing $K^\vee \otimes^\Lambda K$ as a coequaliser, so determine a map $K^\vee \otimes^\Lambda K \to 1_\cA$. For $V \in \cA$ we have $K^\vee \otimes^\Lambda (K \otimes V) \cong (K^\vee \otimes^\Lambda K) \otimes V$, and using the morphism $K^\vee \otimes^\Lambda K \to 1_\cA$ constructed above gives a morphism $\epsilon_V \colon (K^\vee \otimes^\Lambda K) \otimes V \to V$ natural in $V$. 

As each $K(x)$ is dualisable, there are coevaluation maps $\eta_x \colon 1_\cA \to K(x) \otimes K(x)^\vee$ expressing this duality. This gives morphisms
\[M(x) \xrightarrow{\eta_x \otimes M(x)} K(x) \otimes K(x)^\vee \otimes M(x) \lra K(x) \otimes (K^\vee \otimes^\Lambda M)\]
natural in $x$, and hence a natural transformation $\eta_M \colon M \to K \otimes (K^\vee \otimes^\Lambda M)$.

One can verify that the compositions
\[K^\vee \otimes^\Lambda M \xrightarrow{K^\vee \otimes^\Lambda \eta_M} K^\vee \otimes^\Lambda K \otimes (K^\vee \otimes^\Lambda M) \xrightarrow{\epsilon_{K^\vee \otimes^\Lambda M}} K^\vee \otimes^\Lambda M\]
and
\[K \otimes V \xrightarrow{\eta_{K \otimes V}} K \otimes (K^\vee \otimes^\Lambda (K \otimes V)) =K \otimes (K^\vee \otimes^\Lambda K) \otimes V \xrightarrow{K \otimes \epsilon_V} K \otimes V\]
are the identity, which gives the required natural isomorphism.
\end{proof}

\subsubsection{Multiplicativity}

We shall now suppose that $\Lambda$ is equipped with a symmetric monoidal structure $\oplus$, in which case $(\cA^\Lambda)^f$ and $((\bQ\text{-mod})^\Lambda)^f$ have symmetric monoidal structures $- \otimes_\Lambda -$ given by Day convolution. That is, we first form the external product $M \boxtimes N \colon \Lambda \times \Lambda \to \cA$, and then take its left Kan extension $M \otimes_\Lambda N = \oplus_*(M \boxtimes N)$ along $\oplus \colon \Lambda \times \Lambda \to \Lambda$. Concretely, we have
\[(M \otimes_\Lambda N)(x) = \colim_{f \colon a \oplus b \to x} M(a) \otimes N(b),\]
which again exists because it is equivalent to a finite colimit.

There are several equivalent conditions we can impose on a $K \in (\cA^d)^\Lambda$ so that the above defined transformations $\epsilon$ and $\eta$ have good multiplicativity properties. The condition which is simplest to state and which we shall usually verify, is that $K \colon \Lambda \to \cA$ is a strong symmetric monoidal functor. This is equivalent to asking for a natural isomorphism $\oplus^* K \overset{\sim}\to K \boxtimes K \colon \Lambda \times \Lambda \to \cA$ which is associative and commutative in the evident sense. We call a $K$ satisfying any of these equivalent conditions a \emph{tensor kernel}.

\begin{proposition}
If $K \in (\cA^d)^\Lambda$ has the structure of a tensor kernel, then the functor $K^\vee \otimes^\Lambda - \colon (\cA^\Lambda)^f \to \cA$ has a strong symmetric monoidality.
\end{proposition}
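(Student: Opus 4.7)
The plan is to construct, for each pair $M, N \in (\cA^\Lambda)^f$, a natural isomorphism
$$\mu_{M,N} \colon (K^\vee \otimes^\Lambda M) \otimes (K^\vee \otimes^\Lambda N) \xrightarrow{\sim} K^\vee \otimes^\Lambda (M \otimes_\Lambda N),$$
together with a unit isomorphism $1_\cA \xrightarrow{\sim} K^\vee \otimes^\Lambda \mathbbm{1}$ for $\mathbbm{1}$ the Day convolution unit in $(\cA^\Lambda)^f$, and then to check that these satisfy the usual coherence for a strong symmetric monoidal functor. The essential input is the tensor kernel structure: the natural isomorphism $\oplus^* K \cong K \boxtimes K$ of functors $\Lambda \times \Lambda \to \cA$ dualises componentwise, since the values of $K$ are dualisable and dualisation is a contravariant equivalence on $\cA^d$, to give a natural isomorphism $\oplus^* K^\vee \cong K^\vee \boxtimes K^\vee$ of functors $(\Lambda \times \Lambda)^\mr{op} \to \cA$.

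The construction of $\mu_{M,N}$ then proceeds by a Fubini manipulation of coends. First I would use the definition of $M \otimes_\Lambda N = \oplus_*(M \boxtimes N)$ as a left Kan extension together with the standard coend change-of-variables identity to rewrite
$$K^\vee \otimes^\Lambda (M \otimes_\Lambda N) \;\cong\; \int^{(a,b) \in \Lambda \times \Lambda} K^\vee(a \oplus b) \otimes M(a) \otimes N(b).$$
Inserting the tensor kernel isomorphism $K^\vee(a \oplus b) \cong K^\vee(a) \otimes K^\vee(b)$ and splitting the iterated coend gives
$$\int^{a} K^\vee(a) \otimes M(a) \;\otimes\; \int^{b} K^\vee(b) \otimes N(b) \;=\; (K^\vee \otimes^\Lambda M) \otimes (K^\vee \otimes^\Lambda N);$$
the splitting is legitimate because $X \otimes -$ in $\cA$ preserves all colimits whenever $X$ is dualisable (by the evaluation/coevaluation adjunction), and because the coends in question are finite colimits by the length hypothesis on $M, N$ combined with Definition \ref{def:2point9}. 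The unit isomorphism is an immediate application of the co-Yoneda lemma together with the unit component $K(0_\Lambda) \cong 1_\cA$ of the tensor kernel structure.

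What remains is to verify the coherence diagrams: associativity of $\mu$, the two unit conditions, and symmetry. Each reduces mechanically to the corresponding coherence for the tensor kernel structure on $K$ (and hence on $K^\vee$), together with the coherences of Day convolution $\otimes_\Lambda$ and of $\otimes$ on $\cA$. I do not anticipate any serious obstacle, as the construction is entirely formal once the Fubini interchange and the dualisability of the values of $K$ are in place. The most delicate point is the symmetry of $\mu$: it should be arranged so that swapping the two outer tensor factors is transported, under the Fubini identification, to the action on $K^\vee \boxtimes K^\vee$ induced by the symmetry of $\Lambda \times \Lambda$, whereupon the commutativity half of the tensor kernel structure on $K$ finishes the check.
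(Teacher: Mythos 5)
Your proposal is correct and follows essentially the same route as the paper: both arguments combine Fubini/co-Yoneda for the coend, the dualised tensor-kernel isomorphism $K^\vee(a\oplus b)\cong K^\vee(a)\otimes K^\vee(b)$, and the base-change identity $(\oplus^*K^\vee)\otimes^{\Lambda\times\Lambda}(M\boxtimes N)\cong K^\vee\otimes^\Lambda\oplus_*(M\boxtimes N)$, the only difference being that you run the chain of isomorphisms starting from $K^\vee\otimes^\Lambda(M\otimes_\Lambda N)$ whereas the paper starts from the tensor product of the two coends (and, like you, relegates unit, associativity and symmetry coherences to a routine check).
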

\begin{proof}
Note that 
\begin{align*}(K^\vee \otimes^\Lambda A) \otimes (K^\vee \otimes^\Lambda B) &\cong \int^{x \in \Lambda, y \in \Lambda} \!\!\!\!\!\!\!\! K(x)^\vee \otimes A(x) \otimes K(y)^\vee \otimes B(y) \\
&\cong (K^\vee \boxtimes K^\vee) \otimes^{\Lambda \times \Lambda} (A \boxtimes B).\end{align*}
By dualising we obtain an isomorphism $K^\vee \boxtimes K^\vee \overset{\sim}\to \oplus^*K^\vee$, so write the above as
\[(\oplus^*K^\vee) \otimes^{\Lambda \times \Lambda} (A \boxtimes B) \cong K^\vee \otimes^\Lambda \oplus_*(A \boxtimes B) = K^\vee \otimes^\Lambda (A \otimes_\Lambda B).\]
This gives a strong monoidality, and it is routine to check that it is symmetric.
\end{proof}

\subsubsection{Detecting isomorphisms}

For a kernel $K$ we shall be interested in using the composition
\[\Delta \colon \cA \xrightarrow{K \otimes -} \cA^\Lambda \xrightarrow{\Hom_\cA(1_\cA, -)} (\bQ\text{-mod})^\Lambda\]
to test whether morphisms in $\cA$ are isomorphisms. As each $K(x)$ is a dualisable object, the functor $K(x) \otimes - \colon \cA \to \cA$ has $K(x)^\vee \otimes -$ as both a left and a right adjoint, and so is exact; thus $K \otimes - \colon \cA \to \cA^\Lambda$ is an exact functor. The functor $\Hom_\cA(1_\cA, -)$ is left exact, but will not typically be right exact.

Let $\cA_K \subset \cA$ be the subcategory of those objects which occur as sums of summands of $K(x)^\vee$'s. Let $\cA_K^\circ \subset \cA$ be the subcategory of those objects $X$ such that $\Hom_\cA(1_\cA, X \otimes K(x))=0$ for all $x \in \Lambda$. Note that if $Y$ is a summand of some $K(x)^\vee$ then it is also dualisable, and its dual $Y^\vee$ is a summand of $K(x)$: then $\Hom_\cA(1_\cA, Y \otimes K(x))\neq 0$, as it contains the non-zero morphism $1_\cA \overset{\eta_Y}\to Y \otimes Y^\vee \to Y \otimes K(x)$, so $\cA_K \cap \cA_K^\circ=\{0\}$.

\begin{lemma}\label{lem:Detection}
Let $f \colon A \to B$ be a morphism in $\cA$.
\begin{enumerate}[(i)]
\item If $\Delta(f)$ is injective, then $\mr{Ker}(f) \in \cA_K^\circ$.

\item If $A \in \cA_K$, $\mr{Ext}^1_\cA(K(x)^\vee, K(y)^\vee)=0$ for all $x, y \in \Lambda$, and $\Delta(f)$ is bijective, then $\mr{Cok}(f) \in \cA_K^\circ$.
\end{enumerate}
\end{lemma}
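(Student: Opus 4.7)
The plan rests on two observations about $\Delta$. First, since each $K(x)$ is dualisable, $-\otimes K(x)$ is both a left and a right adjoint of $-\otimes K(x)^\vee$ and hence exact, so $\Delta = \Hom_\cA(1_\cA, - \otimes K(-))$ is left exact. Second, the duality identification $\mr{Ext}^1_\cA(K(x)^\vee, K(y)^\vee) \cong \mr{Ext}^1_\cA(1_\cA, K(y)^\vee \otimes K(x))$ coming from dualisability shows that the Ext-vanishing hypothesis of (ii) is equivalent to the vanishing of the first derived functor $\Delta^1$ on every $K(y)^\vee$, which by passing to summands and (finite) sums extends to $\Delta^1(A) = 0$ for every $A \in \cA_K$.

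Part (i) is immediate from the first observation: applying the left exact functor $\Delta$ to the exact sequence $0 \to \ker f \to A \xrightarrow{f} B$ gives $0 \to \Delta(\ker f) \to \Delta(A) \xrightarrow{\Delta(f)} \Delta(B)$, and the injectivity of $\Delta(f)$ forces $\Delta(\ker f) = 0$, i.e.\ $\ker f \in \cA_K^\circ$.

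For part (ii) I would factor $f$ canonically as $A \twoheadrightarrow \mr{Im}(f) \hookrightarrow B$ and run the long exact sequences associated to the two short exact sequences $0 \to \ker f \to A \to \mr{Im}(f) \to 0$ and $0 \to \mr{Im}(f) \to B \to \mr{Cok}(f) \to 0$. Applying $\Delta$ to the first and combining $\Delta(\ker f) = 0$ from (i) with $\Delta^1(A) = 0$ from the Ext-vanishing yields the four-term exact sequence
\[
0 \to \Delta(A) \to \Delta(\mr{Im}(f)) \to \Delta^1(\ker f) \to 0.
\]
The composite $\Delta(A) \to \Delta(\mr{Im}(f)) \hookrightarrow \Delta(B)$ equals the bijection $\Delta(f)$, so the first map is bijective and consequently $\Delta^1(\ker f) = 0$. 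Applying $\Delta$ to the second short exact sequence then yields an injection $\Delta(\mr{Cok}(f)) \hookrightarrow \Delta^1(\mr{Im}(f))$.

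The main obstacle will be to force this injection to be zero. The natural approach is elementwise: given a nonzero $g \colon K(x)^\vee \to \mr{Cok}(f)$, form the pullback $X = B \times_{\mr{Cok}(f)} K(x)^\vee$ producing $0 \to \mr{Im}(f) \to X \to K(x)^\vee \to 0$. Any lift $K(x)^\vee \to B$ of $g$ would, by bijectivity of $\Delta(f)$, factor through $f$ and hence project to zero in $\mr{Cok}(f)$, so $g = 0$ if and only if this extension splits. I would prove the latter by pulling $X$ back further along $A \twoheadrightarrow \mr{Im}(f)$ to obtain a four-term exact sequence $0 \to \ker f \to A \to X \to K(x)^\vee \to 0$, whose Yoneda-product class lies in $\mr{Ext}^2(K(x)^\vee, \ker f)$, and trivialise this class using the vanishing $\Delta^1(\ker f) = 0$ established above in conjunction with the hypothesis that $A \in \cA_K$ enjoys the full Ext-vanishing against $K(x)^\vee$.
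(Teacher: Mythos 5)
Your part (i) is exactly the paper's argument (left exactness of $\Delta$ applied to $0 \to \mr{Ker}(f) \to A \to B$), and your intermediate reductions in (ii) are also sound: $\Delta(A)\to\Delta(\mr{Im}(f))$ is bijective, hence $\mr{Ext}^1_\cA(K(x)^\vee,\mr{Ker}(f))=0$, and the connecting map embeds $\Delta(\mr{Cok}(f))(x)$ into $\mr{Ext}^1_\cA(K(x)^\vee,\mr{Im}(f))$. The divergence, and the gap, is in the last step. You are trying to prove (ii) for an arbitrary $f$, and everything hinges on splitting the pullback extension $0\to\mr{Im}(f)\to X\to K(x)^\vee\to 0$, whose class is $\partial(g)\in\mr{Ext}^1_\cA(K(x)^\vee,\mr{Im}(f))$. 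Since $\mr{Ext}^1_\cA(K(x)^\vee,A)=0$, the long exact sequence for $0\to\mr{Ker}(f)\to A\to\mr{Im}(f)\to 0$ shows that $\partial(g)$ vanishes \emph{if and only if} its image in $\mr{Ext}^2_\cA(K(x)^\vee,\mr{Ker}(f))$ vanishes; so "trivialising the $\mr{Ext}^2$ class" is not a tool for obtaining the splitting, it is a restatement of it. Moreover the ingredients you invoke for that trivialisation, namely $\mr{Ext}^1_\cA(K(x)^\vee,\mr{Ker}(f))=0$ and the $\mr{Ext}^1$-vanishing against objects of $\cA_K$, say nothing about $\mr{Ext}^2$ groups: the lemma's hypotheses only concern $\mr{Ext}^1$, and in a general $\cA$ there is no reason for $\mr{Ext}^2_\cA(K(x)^\vee,\mr{Ker}(f))$, or the particular class in it, to vanish. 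So the proof as planned is circular at the decisive point.

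The paper's proof of (ii) is much shorter because it works with the sequence $0 \to A \xrightarrow{f} B \to \mr{Cok}(f) \to 0$, i.e.\ it treats $f$ as a monomorphism (which is the only case in which the lemma is used: in the proof of Proposition \ref{prop:Recognition} the kernel is killed by part (i) together with the fact that it is algebraic, hence lies in $\cA_K$, and $\cA_K\cap\cA_K^\circ=\{0\}$). Applying the exact functor $K(x)\otimes-$ and then $\Hom_\cA(1_\cA,-)$, surjectivity of $\Delta(f)(x)$ forces $\Delta(B)(x)\to\Delta(\mr{Cok}(f))(x)$ to be zero, so the connecting map $\Delta(\mr{Cok}(f))(x)\to\mr{Ext}^1_\cA(1_\cA,K(x)\otimes A)\cong\mr{Ext}^1_\cA(K(x)^\vee,A)$ is injective, and the target vanishes because $A$ is a sum of summands of $K(y)^\vee$'s. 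To repair your write-up, either restrict (as the paper implicitly does) to the case $\mr{Ker}(f)=0$ and run this one-line connecting-map argument, or supply an additional hypothesis/argument forcing the kernel to vanish; your Yoneda-class manoeuvre cannot close the gap with only the stated $\mr{Ext}^1$ assumptions.
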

\begin{proof}
Consider the left exact sequence $0 \to \mr{Ker}(f) \to A \overset{f}\to B$, which remains left exact after applying $\Delta$. As $\Delta(f)$ is injective it follows that $\Delta(\mr{Ker}(f))=0$, i.e.\ that $\Hom_\cA(1_\cA, K(x) \otimes \mr{Ker}(f))=0$ for all $x \in \Lambda$. This is the definition of being in $\cA_K^\circ$.

Consider the exact sequence $0 \to A \overset{f}\to B \to \mr{Cok}(f) \to 0$. This remains exact after applying $K \otimes -$, so gives a long exact sequence
\[0 \to \Delta(A)(x) \xrightarrow{\Delta(f)(x)} \Delta(B)(x) \to \Delta(\mr{Cok}(f))(x) \overset{\partial}\to \mr{Ext}^1_\cA(1_\cA, K(x) \otimes A) \to \cdots\]
where the morphism $\Delta(f)(x)$ is surjective, so the connecting map is injective. But 
\[\mr{Ext}^1_\cA(1_\cA, K(x) \otimes A) \cong \mr{Ext}^1_\cA(K(x)^\vee, A)\]
and as $A$ is a sum of summands of $K(y)^\vee$'s this group vanishes by assumption.
\end{proof}

\subsection{The representation theory of Brauer categories}\label{sec:RepOfBrauer}

\subsubsection{The orthogonal group}

Let $G \subset \OO_{g,g}(\bQ)$ be an arithmetic subgroup (and recall that we write $SG = G \cap \mr{SO}_{g,g}(\bQ)$, which by our definition of arithmetic group is an index 2 subgroup of $G$). Let $\cA = \cat{Rep}(G)$ denote the category of finite dimensional representations of $G$, which is easily seen to have all finite $\bQ$-enriched colimits. We shall assume that $g \geq 2$ so that the functor $[-]^G$ is exact on this category and all extensions split, as discussed after Theorem \ref{thm.borelvanishingweak}.

Let us write $H(g) \in \cat{Rep}(G)$ for the standard $2g$-dimensional representation, which is isomorphic to $V_1$ as defined in Section \ref{sec:ortho-sympl-rep-theory}. It is equipped with a symmetric pairing $\lambda \colon H(g) \otimes H(g) \to \bQ$ and, dual to this, a symmetric form $\omega \colon \bQ \to H(g) \otimes H(g)$.

\begin{definition}
A \emph{matching} of a finite set $S$ is a partition of $S$ into disjoint ordered pairs. If $(a,b)$ is such a pair, its \emph{reverse} is the pair $(b,a)$.
\end{definition}

\begin{definition}The \emph{Brauer category} $\mathsf{Br}_d$ of charge $d \in \bQ$ is the following $\bQ$-linear category:
	\begin{itemize}
		\item The objects of $\cat{Br}_d$ are the finite sets.
		\item The morphisms $\mathsf{Br}_d(S,T)$ are given by the following $\bQ$-vector space. First, let $\mathsf{Br}_d(S,T)'$ be the vector space with basis given by triples $(f, m_S, m_T)$ of a bijection $f$ from a subset of $S^\circ \subset S$ to a subset of $T^\circ \subset T$, a matching $m_S$ of $S \setminus S^\circ$, and a matching $m_T$ of $T \setminus T^\circ$. Let $\mathsf{Br}_d(S,T)$ be the quotient vector space by the subspace generated by $(f, m_S, m_T) - (f, m'_S, m'_T)$ where $m'_S$ and $m'_T$ differ from  $m_S$ and $m_T$ by reversing some pairs. We consider it as being spanned by pictures as in Figure \ref{fig:brauerorthogonal}.

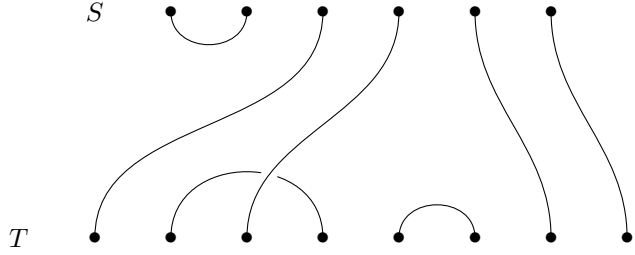
\begin{figure}[h]
	\begin{tikzpicture}
		\draw (-1,0) to[out=90,in=90,looseness=1.5] (1,0);
		\draw (3,0) to[out=90,in=90,looseness=1.5] (2,0);
		\draw (-1,3) to[out=-90,in=-90,looseness=1.5] (0,3);
		\draw (1,3) to[out=-90,in=90] (-2,0);
		\draw [line width=2mm,white] (2,3) to[out=-90,in=90] (0,0);
		\draw (2,3) to[out=-90,in=90] (0,0);
		\draw (3,3) to[out=-90,in=90] (4,0);
		\draw (4,3) to[out=-90,in=90] (5,0);
		\foreach \x in {-1,...,4}
		\node at (\x,3) {$\bullet$};
		\foreach \x in {-2,...,5}
		\node at (\x,0) {$\bullet$};
		\node at (-2,3) {$S$};
		\node at (-3,0) {$T$};
	\end{tikzpicture}
	\caption{A morphism $(f,m_S,m_T)$ in $\mathsf{Br}_d(S,T)$ from a $6$-element set $S$ to a $8$-element set $T$. Here $f$ is a bijection between $4$-element subsets of $S$ and $T$, $m_S$ is a matching on two elements of $S$ and $m_T$ is a matching on four elements of $T$. This is not the downward Brauer category, as the bottom pairing $m_T$ is not $\varnothing$.}	\label{fig:brauerorthogonal}
\end{figure}

\item Composition $\mathsf{Br}_d(S,T) \otimes \mathsf{Br}_d(T,U) \to \mathsf{Br}_d(S,U)$ is given in terms of such pictures by concatenating, then removing the closed components and multiplying by $d^{\text{number of closed components}}$.\end{itemize}\end{definition}

\begin{definition}
The \emph{downward Brauer category} $\mathsf{dBr} \subset \mathsf{Br}_d$ contains all objects but only those morphisms with $m_T = \varnothing$. We consider it as being spanned by pictures as in Figure \ref{fig:brauerdownward}. In this case concatenation can never form closed components, so this category is independent of the charge $d$. We write $i \colon \mathsf{dBr} \to \mathsf{Br}_d$ for the inclusion.
\end{definition}

Both of these categories are symmetric monoidal under disjoint union. It is $\cat{dBr}$ that will serve the role of $\Lambda$ in the general framework discussed in Section \ref{sec:RepOfCat}; it is easily seen to satisfy the assumptions of Definition \ref{def:2point9}.

Consider the functor $K \colon \mathsf{Br}_{2g} \to \cat{Rep}(G)$ given on objects by $K(S) = H(g)^{\otimes S}$. On a morphism $(f, m_S, m_T) \colon S \to T$, with bijection $f \colon S^\circ \to T^\circ$ between the complement of the matchings, it is given by
\[H(g)^{\otimes S} \xrightarrow{m_S} H(g)^{\otimes S^\circ} \xrightarrow{H(g)^{\otimes f}} H(g)^{\otimes T^\circ} \xrightarrow{m_T} H(g)^{\otimes T}\]
where the first map applies $\lambda$ to the pairs in $m_S$, and the last map applies $\omega$ to create the pairs in $m_T$. This functor has an evident symmetric monoidality. By taking linear duals of the values of $K$ on objects as well as its value of morphisms, we get a functor $K^\vee \colon (\mathsf{Br}_{2g})^\mr{op} \to \cat{Rep}(G)$. Restricting this functor along $i \colon \mathsf{dBr} \to \mathsf{Br}_d$ gives a functor $i^* K^\vee \colon \mathsf{dBr}^\mr{op} \to \cat{Rep}(G)$.

\begin{proposition}\label{prop:Recognition}
Let $B \in \cat{Rep}(G)$, $A \in  (\bQ\text{-mod})^\mathsf{dBr}$ have finite length, and there be given a map
\[\phi^\mathsf{Br_{2g}} \colon i_*(A) \lra [K \otimes B]^G \in (\bQ\text{-mod})^{\mathsf{Br}_{2g}}.\]
Then there is an induced map
\[\phi \colon i^*(K^\vee) \otimes^{\mathsf{dBr}} (1_{\cat{Rep}(G)} \odot A) \lra B  \in \cat{Rep}(G)\]
which is an isomorphism onto the maximal algebraic subrepresentation of $B$ if $\phi^\mathsf{Br_{2g}}$ an isomorphism, and is a monomorphism if $\phi^\mathsf{Br_{2g}}$ is a monomorphism.

If $\phi^\mathsf{Br_{2g}}$ is an isomorphism, then for a partition $\lambda$ of $q$ the multiplicity of the irreducible $G$-representation $V_\lambda(H(g))$ in $B$ is the same as the multiplicity of the irreducible $\Sigma_q$-representation $S^\lambda$ in $A(\{1,2,\ldots, q\})$.\footnote{Part of the claim is that if $V_\lambda(H(g))$ is not irreducible, so is zero by Theorem \ref{thm:RepsOfSpAndO}, then $S^\lambda$ does not occur in $A(\{1,2,\ldots, q\})$.}
\end{proposition}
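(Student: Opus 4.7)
The plan is to construct $\phi$ by assembling two adjunctions, identify $[K \otimes \phi]^G$ with $\phi^{\cat{Br}_{2g}}$ via invariant theory, and then apply Lemma \ref{lem:Detection}.

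To construct $\phi$, first use that the inclusion $i \colon \cat{dBr} \to \cat{Br}_{2g}$ is fully faithful, so $i^\ast i_\ast A \cong A$, and restricting $\phi^{\cat{Br}_{2g}}$ along $i$ gives a map $A \to [i^\ast K \otimes B]^G$ in $(\bQ\text{-mod})^{\cat{dBr}}$. Chaining the adjunctions $1_{\cat{Rep}(G)} \odot - \dashv [-]^G$ and $i^\ast K^\vee \otimes^{\cat{dBr}} - \dashv i^\ast K \otimes -$ from Section \ref{sec:RepOfCat} (the second valid because $i^\ast K$ is a kernel) converts this into the desired morphism $\phi$.

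Next, compute $[K(T) \otimes \phi]^G$. Since $K(T)$ is dualisable, $K(T) \otimes -$ is exact and cocontinuous; and $G$-invariants are exact on almost algebraic representations for $g \geq 2$ (see the discussion after Theorem \ref{thm.borelvanishingweak}). Applying these to the source of $\phi$ identifies the domain with the coend
\[\int^{S \in \cat{dBr}} [K(T) \otimes K(S)^\vee]^G \otimes A(S).\]
Theorem \ref{thm:FTInvariantTheory} provides a surjection $\cat{Br}_{2g}(S,T) \twoheadrightarrow [K(T) \otimes K(S)^\vee]^G$ (an isomorphism when $|S| + |T| \leq 2g$), which in turn furnishes a surjection from $(i_\ast A)(T) = \int^S \cat{Br}_{2g}(S,T) \otimes A(S)$ onto the coend above. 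A diagram chase unwinding both adjunctions shows that the composition with $[K(T) \otimes \phi]^G$ is exactly $\phi^{\cat{Br}_{2g}}(T)$. Consequently, if $\phi^{\cat{Br}_{2g}}$ is a monomorphism (resp.\ isomorphism), this composition forces the surjection to be an isomorphism and $[K(T) \otimes \phi]^G$ to be a monomorphism (resp.\ isomorphism) at every $T$.

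Applying Lemma \ref{lem:Detection} then yields $\mr{Ker}(\phi) \in \cA_K^\circ$, and additionally $\mr{Cok}(\phi) \in \cA_K^\circ$ in the isomorphism case (using the $\mr{Ext}^1$-vanishing between almost algebraic representations for $g \geq 2$). As the source of $\phi$ lies in $\cA_K$ and $\cA_K \cap \cA_K^\circ = \{0\}$, we obtain $\mr{Ker}(\phi) = 0$. In the isomorphism case, $\mr{Im}(\phi) \subseteq B^\mr{alg}$ because the source is algebraic, while $B^\mr{alg}/\mr{Im}(\phi)$ is an algebraic subrepresentation of $\mr{Cok}(\phi)$; but any $V_\lambda(H(g))$ embedding into an object of $\cA_K^\circ$ must be zero (using the self-duality $K(x) \cong K(x)^\vee$ induced by $\lambda$ and the fact that every $V_\lambda(H(g))$ is a summand of some $K(x)$), so $\mr{Im}(\phi) = B^\mr{alg}$. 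For the multiplicity, the resulting isomorphism onto $B^\mr{alg}$ reduces the claim to computing $\Hom_G(V_\lambda(H(g)), -)$ of the coend; the orthogonal Schur--Weyl decomposition \eqref{eq:SchurWeyl} shows that the matchings parametrising morphisms in $\cat{dBr}$ correspond exactly to the $\omega$-insertions by which $V_\lambda(H(g))$ sits inside $H(g)^{\otimes |S|}$ for $|S| > q$, collapsing the coend to its contribution at $S = \{1,\ldots, q\}$ and producing $\Hom_{\Sigma_q}(S^\lambda, A(\{1,\ldots,q\}))$. The main obstacle is the diagram chase identifying the factorisation in the third paragraph with $\phi^{\cat{Br}_{2g}}(T)$: the argument is essentially formal, but requires simultaneously unwinding both adjunctions and the coend structure used to define $\phi$.
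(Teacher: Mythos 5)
Your construction of $\phi$ and your treatment of the isomorphism/monomorphism statements follow the paper's own proof: restrict along $i$, use the adjunctions to get $\phi$, identify the composite of the invariant-theory surjection (Theorem \ref{thm:FTInvariantTheory}) with $[K(T)\otimes\phi]^G$ as $\phi^{\cat{Br}_{2g}}(T)$, and feed this into Lemma \ref{lem:Detection} together with the characterisation of $\cat{Rep}(G)^\circ_K$ as the representations with no algebraic subrepresentation. One slip in the first step: $i\colon\cat{dBr}\to\cat{Br}_{2g}$ is faithful but \emph{not} full (all morphisms with $m_T\neq\varnothing$ are missing), so $i^*i_*A\cong A$ is false in general --- for $A$ supported on $\varnothing$ one has $(i_*A)(S)\cong\cat{Br}_{2g}(\varnothing,S)\neq 0$ for every even $|S|$. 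What you need is only the unit $A\to i^*i_*A$ of the Kan-extension adjunction composed with $i^*\phi^{\cat{Br}_{2g}}$, which produces exactly the paper's $\phi^{\cat{dBr}}$; with that correction the rest of your argument for the first two claims goes through unchanged.

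The genuine gap is in the multiplicity statement. You propose to compute $\Hom_G(V_\lambda(H(g)),-)$ of the coend and assert that Schur--Weyl "collapses the coend to its contribution at $S=\{1,\ldots,q\}$". This collapse is not justified: the copies of $V_\lambda(H(g))$ sitting inside $K(S)^\vee$ for $|S|>q$ are identified in the coend via the structure maps of $A$, not simply discarded, and making such a collapse precise is essentially the content of Proposition \ref{prop:TransfOfTransf}/Corollary \ref{cor:TransfDetectsZero}, which require bounds on the support of $A$ relative to $g$ that are absent from the proposition. Moreover the footnote is part of the claim: when $V_\lambda(H(g))=0$ you must show that $S^\lambda$ does \emph{not} occur in $A(\{1,\ldots,q\})$, and no computation of $\Hom_G(V_\lambda,-)$ of the coend or of $B^\mr{alg}$ can detect this, since that part of $A$ is invisible there. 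The isomorphism hypothesis must be used again, and the paper does so directly: evaluate $\phi^{\cat{Br}_{2g}}$ at $\{1,\ldots,q\}$, pass to the common kernel of the maps induced by $(\mr{inc},m_S,\varnothing)$ with $m_S$ nontrivial to obtain a $\Sigma_q$-equivariant isomorphism $A(\{1,\ldots,q\})\cong[H(g)^{[q]}\otimes B]^G$, and then apply \eqref{eq:SchurWeyl} to rewrite the right-hand side as $\bigoplus_{\lambda\vdash q}S^\lambda\otimes[V_\lambda(H(g))\otimes B]^G$; comparing $S^\lambda$-isotypic pieces gives both the multiplicity equality and the footnote, with no stable-range assumption. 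You should replace your final paragraph by this argument, or else supply the missing justification for the collapse together with a separate argument for the case $V_\lambda(H(g))=0$.
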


\begin{proof}
The map $\phi^\mathsf{Br_{2g}}$ is adjoint to a map $\phi^\mathsf{dBr} \colon A \to i^*([K \otimes B]^G) = [i^*(K \otimes B)]^G$, and as $\Hom_{\cat{Rep}(G)}(1_{\cat{Rep}(G)}, -) = [-]^G$ this is adjoint to a map $1_{\cat{Rep}(G)} \odot A \to i^*(K \otimes B)$ in $\cat{Rep}(G)^\mathsf{dBr}$, whose adjoint is the map $\phi$ in the statement.

We apply the criterion of Lemma \ref{lem:Detection} to $\phi$. As discussed above, we will take $\cA = \cat{Rep}(G)$ the category of finite dimensional representations of $G$, and $\Lambda = \mathsf{dBr}$ the downward Brauer category. The functor $\Delta$ will be given by $[i^*(K) \otimes -]^G$, and hence we must verify that the morphism
\[[i^*(K) \otimes \phi]^G \colon [i^*(K) \otimes (i^*(K^\vee) \otimes^{\mathsf{dBr}} (1_{\cat{Rep}(G)} \odot A))]^G \lra [i^*(K) \otimes B]^G \in (\bQ\text{-mod})^\mathsf{dBr}\]
is an isomorphism or monomorphism. We will do this by relating it to $\phi^{\mathsf{Br}_{2g}}$, which is an isomorphism or monomorphism by assumption. Using the coend formula for $(i^*(K^\vee) \otimes^{\mathsf{dBr}} (1_{\cat{Rep}(G)} \odot A))$, we can write the source evaluated at $S \in \mathsf{dBr}$ as
\[\left[\int^{T \in \mathsf{dBr}} K(S) \otimes K(T)^\vee \odot A(T) \right]^G,\]
and as $[-]^G$ is an exact functor on $\cat{Rep}(G)$ we can evaluate this as
\[\int^{T \in \mathsf{dBr}} \left[K(S) \otimes K(T)^\vee\right]^G \otimes_\bQ A(T).\]
Now there is a natural transformation of two variables
\[\kappa \colon \mathsf{Br}_{2g}(T, S) \lra \left[K(S) \otimes K(T)^\vee\right]^G\]
given by the functoriality of $K$, which is surjective by Theorem \ref{thm:FTInvariantTheory}. This gives a surjection
\[\kappa: i_*(A)(S) = \int^{T \in \mathsf{dBr}} \mathsf{Br}_{2g}(T, S) \otimes_\bQ A(T) \lra \int^{T \in \mathsf{dBr}} \left[K(S) \otimes K(T)^\vee\right]^G \otimes_\bQ A(T).\]
As the composition
\[\phi^{\mathsf{Br}_{2g}}(S) \colon i_*(A)(S) \overset{\kappa}\lra \int^{T \in \mathsf{dBr}} \left[K(S) \otimes K(T)^\vee\right]^G \otimes_\bQ A(T) \xrightarrow{[K(S) \otimes \phi]^G} [K(S) \otimes B]^G\]
is a monomorphism by assumption, this shows that the first map is also injective and so in fact an isomorphism, from which it follows that $[K(S) \otimes \phi]^G$ an isomorphism or monomorphism whenever $\phi^{\mathsf{Br}_{2g}}(S)$ is.

It then follows from Lemma \ref{lem:Detection} that if $\phi^{\mathsf{Br}_{2g}}$ is a monomorphism then the kernel of $\phi$ lies in $\cat{Rep}(G)_K^\circ$, and if it is an isomorphism then the cokernel of $\phi$ does too. Unwrapping the definition, $\cat{Rep}(G)_K^\circ$ is precisely the category of finite dimensional $G$-representations $V$ which contain no algebraic subrepresentation (by Theorem \ref{thm:RepsOfSpAndO2}). The kernel of $\phi$ is a subrepresentation of $i^*(K^\vee) \otimes^{\mathsf{dBr}} (1_{\cat{Rep}(G)} \odot A)$, which is algebraic, so $\mr{Ker}(\phi)$ is also algebraic: if it lies in $\cat{Rep}(G)_K^\circ$ it is therefore zero, so $\phi$ is injective. If the cokernel of $\phi$ lies in $\cat{Rep}(G)_K^\circ$ then it contains no algebraic subrepresentations, so the image of $\phi$ is the maximal algebraic subrepresentation of $B$.

For the last part, we use the isomorphism
\[\phi^{\mathsf{Br}_{2g}} \colon i_*(A)(\{1,2,\ldots, q\}) \lra [H(g)^{\otimes q} \otimes B]^G\]
of $\Sigma_q$-representations. Taking the kernels of all the maps induced by $(\mr{inc} \colon S \to S', m_S, \varnothing)$ with $m_S$ nontrivial, we get an isomorphism of $\Sigma_q$-representations
\[A(\{1,2,\ldots, q\}) \lra [H(g)^{[q]} \otimes B]^G.\]
By \eqref{eq:SchurWeyl} we may write the right-hand side as $\bigoplus_{\lambda \vdash q} S^\lambda \otimes [V_\lambda(H(g)) \otimes B]^G$, so as the $S^\lambda$ are distinct irreducible $\Sigma_q$-representations we have
\[\dim_\bQ[S^\lambda \otimes A(\{1,2,\ldots, q\})]^{\Sigma_q} = \dim_\bQ [V_\lambda(H(g)) \otimes B]^G\]
as required.
\end{proof}

\begin{proposition}\label{prop:TransfOfTransf}
For $A \in  (\bQ\text{-mod})^\mathsf{dBr}$ there is a morphism
\[\psi^{\mathsf{Br_{2g}}} \colon i_*(A) \lra [K \otimes (i^*(K^\vee) \otimes^{\mathsf{dBr}} (1_{\cat{Rep}(G)} \odot A))]^G \in (\bQ\text{-mod})^{\mathsf{Br_{2g}}}\]
which is an epimorphism and, if $A$ satisfies $A(T)=0$ for all finite sets $T$ with $|T| \geq N$, is an isomorphism when evaluated on sets $S$ with $|S| \leq 2g-N+1$.
\end{proposition}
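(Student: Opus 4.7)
Both sides of $\psi^{\mathsf{Br}_{2g}}$ can be expressed as coends over $\mathsf{dBr}$, and the map will be induced by the natural transformation $\kappa$ already considered in the proof of Proposition~\ref{prop:Recognition}. The proposition will then follow directly from the two halves of the invariant-theoretic Theorem~\ref{thm:FTInvariantTheory}.

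First I would rewrite the left-hand side using the defining coend for the left Kan extension:
\[i_*(A)(S) = \int^{T \in \mathsf{dBr}} \mathsf{Br}_{2g}(T, S) \otimes_{\bQ} A(T).\]
For the right-hand side, as recalled in Section~\ref{sec:RepOfCat}, the coend $i^*(K^\vee) \otimes^{\mathsf{dBr}} (1_{\cat{Rep}(G)} \odot A)$ is a finite colimit of finite-dimensional $G$-representations (using the finite length of $A$ and the finiteness conditions in Definition~\ref{def:2point9}). Because $g \geq 2$ ensures that $[-]^G$ is exact on finite-dimensional $G$-representations (by the discussion following Theorem~\ref{thm.borelvanishingweak}), invariants commute with this coend and I obtain
\[\bigl[K(S) \otimes \bigl(i^*(K^\vee) \otimes^{\mathsf{dBr}} (1_{\cat{Rep}(G)} \odot A)\bigr)\bigr]^G \cong \int^{T \in \mathsf{dBr}} \bigl[K(S) \otimes K(T)^\vee\bigr]^G \otimes_{\bQ} A(T).\]
The natural transformation $\kappa(-, S) \colon \mathsf{Br}_{2g}(-, S) \Rightarrow [K(S) \otimes K(-)^\vee]^G$ of functors on $\mathsf{dBr}^{\mathrm{op}}$ then induces $\psi^{\mathsf{Br}_{2g}}(S)$ after applying the coend.

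For the epimorphism claim I invoke the surjectivity half of Theorem~\ref{thm:FTInvariantTheory}: every $\kappa(T, S)$ is surjective, and since coends preserve objectwise surjections, $\psi^{\mathsf{Br}_{2g}}(S)$ is surjective for every $S$. For the isomorphism in the stated range, suppose $A(T) = 0$ for $|T| \geq N$. A morphism $T_1 \to T_2$ in $\mathsf{dBr}$ consists of an injection $T_2 \hookrightarrow T_1$ together with a matching, hence requires $|T_2| \leq |T_1|$; so every relation appearing in the coend involves only sets of size at most $N-1$, and the coend is computed on the full subcategory of such sets. For $|S| \leq 2g - N + 1$ and $|T| \leq N - 1$ one then has $|S \sqcup T| \leq 2g$, so the injectivity half of Theorem~\ref{thm:FTInvariantTheory} upgrades each relevant $\kappa(T, S)$ to a bijection, and hence so is the induced map $\psi^{\mathsf{Br}_{2g}}(S)$.

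The only non-mechanical ingredient is the interchange of $[-]^G$ with the coend, which is where exactness of invariants (hence the assumption $g \geq 2$) enters; everything else is a direct application of the fundamental theorems of invariant theory to a natural transformation I already have in hand.
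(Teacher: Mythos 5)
Your proposal is correct and follows essentially the same route as the paper: both sides are rewritten as coends over $\mathsf{dBr}$ (using exactness of $[-]^G$ for $g \geq 2$), the map is identified with the one induced by the bifunctor $\kappa \colon \mathsf{Br}_{2g}(T,S) \to [K(S)\otimes K(T)^\vee]^G$, and the two halves of Theorem \ref{thm:FTInvariantTheory} give surjectivity in general and bijectivity when $|S|+|T| \leq 2g$ (with the vanishing of $A(T)$ for $|T|\geq N$ handling the remaining $T$). The only cosmetic difference is that the paper first defines $\psi^{\mathsf{Br}_{2g}}$ via its adjoint built from coevaluation maps and then identifies its evaluation at $S$ with the coend-induced map, whereas you define that map directly; these agree, so nothing is lost.
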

\begin{proof}
We define $\psi^{\mathsf{Br_{2g}}}$ by declaring its adjoint to be the map
\[\psi^{\mathsf{dBr}} \colon A \lra [i^*(K) \otimes (i^*(K^\vee) \otimes^{\mathsf{dBr}} (1_{\cat{Rep}(G)} \odot A))]^G \in (\bQ\text{-mod})^{\mathsf{dBr}}\]
which at the object $S \in \mathsf{dBr}$ is
\[A(S) \xrightarrow{\mr{coev} \otimes A(S)} [K(S) \otimes K(S)^\vee]^G \otimes A(S) 
\xrightarrow{\mr{inc}} \left[\int^{T \in \mathsf{dBr}} K(S) \otimes K(T)^\vee \otimes A(T) \right]^G.\]
One may verify that these form the components of a natural transformation of functors, i.e.\ a morphism in $(\bQ\text{-mod})^{\mathsf{dBr}}$.

As in the proof of Proposition \ref{prop:Recognition}, there is a natural transformation of two variables
\[\kappa \colon \mathsf{Br}_{2g}(T, S) \lra \left[K(S) \otimes K(T)^\vee\right]^G\]
given by the functoriality of $K$, which is an epimorphism by Theorem \ref{thm:FTInvariantTheory} and is an isomorphism if $2g \geq |S| + |T|$. Evaluating the map $\psi^{\mathsf{Br_{2g}}}$ at $S \in \mathsf{dBr}$, using the coend formula for left Kan extension, gives
\[i_*(A)(S) = \int^{T \in \mathsf{dBr}} \mathsf{Br}_{2g}(T, S) \otimes A(T) \lra \int^{T \in \mathsf{dBr}} \left[K(S) \otimes K(T)^\vee \right]^G \otimes A(T)\]
and this is identified with the map on coends induced by the bifunctor $\kappa$. As $\kappa$ is an epimorphism, so is $\psi^{\mathsf{Br_{2g}}}$. The map
\[\mathsf{Br}_{2g}(T, S) \otimes A(T) \lra  \left[K(S) \otimes K(T)^\vee \right]^G \otimes A(T)\]
is an isomorphism if $|T| \geq N$, as then both sides are zero because $A(T)=0$. It is also an isomorphism if $2g \geq |S| + |T|$. Thus it is an isomorphism for all sets $T$ as long as $|S| \leq 2g-N+1$, and so $\psi^{\mathsf{Br_{2g}}}(S)$ is also an isomorphism under this condition.
\end{proof}

\begin{corollary}\label{cor:TransfDetectsZero}
If $A \in  (\bQ\text{-mod})^\mathsf{dBr}$ is such that $A(T)=0$ for all finite sets $T$ with $|T| \geq g+1$, then $A=0$ if and only if $i^*(K^\vee) \otimes^{\mathsf{dBr}} (1_{\cat{Rep}(G)} \odot A)=0$. 

More generally, if $\phi$ is a map between such objects, then it is an epimorphism (resp.\ monomorphism) if and only if $(i^*(K^\vee) \otimes^{\mathsf{dBr}} (1_{\cat{Rep}(G)} \odot \phi))$ is.
\end{corollary}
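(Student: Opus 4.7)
The plan is to leverage Proposition \ref{prop:TransfOfTransf}: with the vanishing hypothesis $A(T)=0$ for $|T|\geq g+1$ one takes $N = g+1$, making $\psi^{\mathsf{Br}_{2g}}(S)$ an isomorphism for $|S| \leq 2g-N+1 = g$. First I record an explicit formula for $i_*$ on finite sets. Decomposing a basis element $(f,m_T,m_S) \in \mathsf{Br}_{2g}(T,S)$ by the target matching $m_S$ on $S \setminus S^\circ$, and writing $S_{m_S} \coloneqq S \setminus \mathrm{supp}(m_S)$, the remaining data $(f,m_T)$ is precisely a morphism in $\mathsf{dBr}(T, S_{m_S})$. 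This gives a natural-in-$T$ decomposition $\mathsf{Br}_{2g}(T,S) \cong \bigoplus_{m_S} \mathsf{dBr}(T, S_{m_S})$, and the co-Yoneda lemma then yields
\[ i_*(A)(S) = \int^{T \in \mathsf{dBr}} \mathsf{Br}_{2g}(T,S) \otimes A(T) \;\cong\; \bigoplus_{m_S} A(S_{m_S}), \]
with the empty matching $m_S = \varnothing$ exhibiting $A(S)$ as a direct summand. In particular $i_*$ is exact, $i_*(A) = 0$ iff $A=0$, and $i_*(\phi)$ is mono (resp.\ epi) iff $\phi$ is.

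For the first statement of the corollary, one direction is trivial. For the other, if $i^*(K^\vee) \otimes^{\mathsf{dBr}} (1 \odot A) = 0$, then the target of $\psi^{\mathsf{Br}_{2g}}$ is zero, and the isomorphism range gives $i_*(A)(S) = 0$ for all $|S|\leq g$; the summand decomposition then forces $A(S_{m_S}) = 0$ for all such $S$ and $m_S$, in particular $A(S)=0$ for $|S|\leq g$. The hypothesis handles $|S|>g$, so $A=0$. For the second statement, naturality of $\psi^{\mathsf{Br}_{2g}}$ in the argument $A$ produces the isomorphism
\[ i_*(\phi)(S) \;\cong\; \bigl[K(S) \otimes (i^*(K^\vee) \otimes^{\mathsf{dBr}} (1 \odot \phi))\bigr]^{G} \qquad \text{for } |S| \leq g. \]
Setting $\psi \coloneqq i^*(K^\vee) \otimes^{\mathsf{dBr}}(1\odot\phi)$, and using that $\phi(T) = 0 \to 0$ is trivially mono/epi for $|T|>g$, the direct sum formula implies that $i_*(\phi)(S)$ is mono (resp.\ epi) on all $S$ iff it is so on $|S| \leq g$. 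Combined with exactness of $i_*$, this reduces the second statement to showing that $\psi$ is mono (resp.\ epi) iff $[K(S)\otimes\psi]^G$ is mono (resp.\ epi) for every $|S|\leq g$.

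This last equivalence is a Schur--Weyl reduction. Only $|T|\leq g$ contributes to the coend defining the source and target of $\psi$, and each $K(T)^\vee \cong H(g)^{\otimes T}$ is a sum of $V_\lambda(H(g))$ with $|\lambda|\leq|T|\leq g$, so both endpoints of $\psi$ are algebraic $G$-representations whose irreducible constituents have $|\lambda|\leq g$. Since $K(S)\otimes -$ is exact (its values are dualisable) and $[-]^G$ is exact on (almost) algebraic representations for $g\geq 2$ (as recalled after Theorem \ref{thm.borelvanishingweak}), and since $K(S)\cong K(S)^\vee$ via self-duality of $H(g)$, the functor $[K(S)\otimes -]^G$ identifies with $\mathrm{Hom}_G(H(g)^{\otimes S}, -)$, and \eqref{eq:SchurWeyl} exhibits each $V_\lambda(H(g))$ with $|\lambda|\leq g$ as a direct summand of $H(g)^{\otimes|\lambda|}$. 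The collection $\{\mathrm{Hom}_G(H(g)^{\otimes S}, -)\}_{|S|\leq g}$ is therefore jointly faithful on algebraic representations with such constituents, and being exact detects both monomorphisms and epimorphisms of $\psi$. The main obstacle is precisely this final Schur--Weyl step, where one must combine the rather disparate ingredients of self-duality, exactness of $[-]^G$ in the almost algebraic range, and the structure of $H(g)^{\otimes S}$ to conclude faithfulness; the hypothesis $A(T)=0$ for $|T|\geq g+1$ is essential to keep the relevant $|\lambda|$ within Weyl's stable range.
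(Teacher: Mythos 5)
Your proof is correct. For the first statement you follow essentially the paper's own route: Proposition \ref{prop:TransfOfTransf} with $N=g+1$ makes $\psi^{\mathsf{Br}_{2g}}(S)$ an isomorphism for $|S|\leq g$, and your decomposition $i_*(A)(S)\cong\bigoplus_{m_S}A(S_{m_S})$ over partial matchings of $S$ is the same observation the paper packages as $i_*(A)([n])\cong\bigoplus_{2k\leq n}\mathsf{uBr}([n-2k],[n])\otimes_{\fS_{n-2k}}A([n-2k])$, whose only role is that $A(S)$ splits off $i_*(A)(S)$. Where you genuinely diverge is the second statement: the paper disposes of it in one line by applying the first statement to $\mr{Ker}(\phi)$ and $\mr{Cok}(\phi)$, whereas you argue directly, using naturality of $\psi^{\mathsf{Br}_{2g}}$ in its argument to identify $i_*(\phi)(S)$ with $[K(S)\otimes(i^*(K^\vee)\otimes^{\mathsf{dBr}}(1_{\cat{Rep}(G)}\odot\phi))]^G$ for $|S|\leq g$, and then a Schur--Weyl/semisimplicity argument: the exact functors $[K(S)\otimes-]^G\cong\Hom_G(H(g)^{\otimes S},-)$ with $|S|\leq g$ jointly detect monomorphisms and epimorphisms among algebraic representations all of whose constituents $V_\lambda$ have $|\lambda|\leq g$, because each such $V_\lambda$ is a summand of $H(g)^{\otimes|\lambda|}$ by \eqref{eq:SchurWeyl} and extensions of algebraic representations split for $g\geq 2$ (Theorems \ref{thm:RepsOfSpAndO} and \ref{thm:RepsOfSpAndO2}). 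The trade-off is clear: the paper's kernel/cokernel reduction is shorter, with the cokernel case needing only right-exactness of the coend functor (automatic, as it is a left adjoint), while the kernel case implicitly needs a further input which your detection argument supplies explicitly; your route costs more representation-theoretic machinery, but all of it is already set up in Section \ref{sec:rep-theory}, and contrary to the hedging in your final sentence there is no gap in that last step.
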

\begin{proof} The implication $\Rightarrow$ is obvious, so we prove $\Leftarrow$ and suppose $i^*(K^\vee) \otimes^{\mathsf{dBr}} (1_{\cat{Rep}(G)} \odot A)=0$. Under the given condition, by Proposition \ref{prop:TransfOfTransf} the map
\[\psi^{\mathsf{Br_{2g}}} \colon i_*(A) \lra [K \otimes (i^*(K^\vee) \otimes^{\mathsf{dBr}} (1_{\cat{Rep}(G)} \odot A))]^G \in (\bQ\text{-mod})^{\mathsf{Br_{2g}}}\]
is an isomorphism when evaluated on sets $S$ with $|S|\leq g$, and so $i_*(A)(S)=0$ for such sets. But as every morphism in $\mathsf{Br}_{2g}$ factors uniquely as a morphism in the downward Brauer category followed by a morphism in the analogous upward Brauer category $\mathsf{uBr}$, up to isomorphisms of the intermediate object, we have
\[i_*(A)([n]) \cong \bigoplus_{2k \leq n} \mathsf{uBr}([n-2k], [n]) \otimes_{\fS_{n-2k}} A([n-2k])\]
and in particular $A(S)$ injects into $i_*(A)(S)$, so $A$ vanishes on sets of size at most $g$. But by assumption it also vanish on sets of size at least $g+1$, so $A=0$.

For the more general case, apply the above to the kernel and cokernel of $\phi$.
\end{proof}

\subsubsection{The symplectic group}

The discussion in the previous section goes through for symplectic groups rather than orthogonal groups with some minor changes, which we record here. Let $G \subset \Sp_{2g}(\bQ)$ be an arithmetic subgroup and $\cA=\cat{Rep}(G)$ its category of finite-dimensional representations. We shall suppose that $g \geq 2$ so that the functor $[-]^G$ is exact and all extensions split. The standard $2n$-dimensional representation $H(g) \in \cat{Rep}(G)$ is equipped with an antisymmetric pairing $\lambda$, and dually an alternating form $\omega$ (characterised by $(\lambda \otimes \mr{id})(- \otimes \omega) = \mr{id}(-)$).

\begin{definition}
The \emph{signed Brauer category} $\mathsf{sBr}_d$ of charge $d \in \bQ$ is the following $\bQ$-linear category.
\begin{itemize}
	\item The objects of $\cat{sBr}_d$ are the finite sets.
	\item The morphisms of $\mathsf{sBr}_d(S,T)$ are given by the following $\bQ$-vector space. First, let $\mathsf{sBr}_d(S,T)'$ be the vector space with basis given by triples $(f, m_S, m_T)$ of a bijection $f$ from a subset $S^\circ \subset S$ to a subset $T^\circ \subset T$, a matching $m_S$ of $S \setminus S^\circ$, and a matching $m_T$ of $T \setminus T^\circ$. Let $\mathsf{sBr}_d(S,T)$ be the quotient vector space by the subspace generated by $(f, m_S, m_T) - (-1)^r(f, m'_S, m'_T)$ where $m'_S$ and $m'_T$ differ from  $m_S$ and $m_T$ by reversing precisely $r$ pairs. We consider it as being spanned by pictures as in Figure \ref{fig:brauersymp}, where reversing a matched edge changes the picture by a sign.
	\begin{figure}[h]
		\begin{tikzpicture}
		\draw [->-=.5] (-1,0) to[out=90,in=90,looseness=1.5] (1,0);
		\draw [->-=.5] (3,0) to[out=90,in=90,looseness=1.5] (2,0);
		\draw [->-=.5] (-1,3) to[out=-90,in=-90,looseness=1.5] (0,3);
		\draw (1,3) to[out=-90,in=90] (-2,0);
		\draw [line width=2mm,white] (2,3) to[out=-90,in=90] (0,0);
		\draw (2,3) to[out=-90,in=90] (0,0);
		\draw (3,3) to[out=-90,in=90] (4,0);
		\draw (4,3) to[out=-90,in=90] (5,0);
		\foreach \x in {-1,...,4}
		\node at (\x,3) {$\bullet$};
		\foreach \x in {-2,...,5}
		\node at (\x,0) {$\bullet$};
		\node at (-2,3) {$S$};
		\node at (-3,0) {$T$};
		\end{tikzpicture}
		\caption{A morphism $(f,m_S,m_T)$ in $\mathsf{sBr}_d(S,T)$ from a $6$-element set $S$ to a $8$-element set $T$. Here $f$ is a bijection between $4$-element subsets of $S$ and $T$, $m_S$ is a matching on two elements of $S$ and $m_T$ is a matching on four elements of $T$.}\label{fig:brauersymp}
\end{figure}
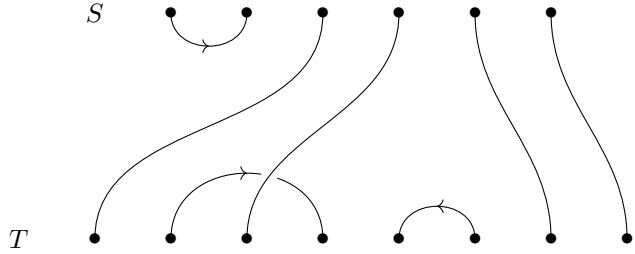

\item Composition $\mathsf{sBr}_d(S,T) \otimes \mathsf{sBr}_d(T,U) \to \mathsf{sBr}_d(S,U)$ is given in terms of such pictures by concatenating (arranging that any matched edges that are concatenated have compatible orientations), then removing the closed components and multiplying by $d^c$ with $c$ the number of closed components.
\end{itemize}  
\end{definition}

\begin{definition}

The \emph{downward signed Brauer category} $\mathsf{dsBr} \subset \mathsf{sBr}_d$ contains all objects but only those morphisms with $m_T = \varnothing$. In this case concatenation can never form closed components, so this category is independent of the charge $d$. We write $i \colon \mathsf{dsBr} \to \mathsf{sBr}_d$ for the inclusion.\end{definition}

Both of these categories are symmetric monoidal under disjoint union. Just as in the orthogonal case, there is a symmetric monoidal functor $K \colon \mathsf{sBr}_{2g} \to \cat{Rep}(G)$ given by the same formula. Using this object, the statements of Proposition \ref{prop:Recognition}, Proposition \ref{prop:TransfOfTransf}, and Corollary \ref{cor:TransfDetectsZero} hold verbatim, and are proved completely analogously.

\section{Twisted Miller--Morita--Mumford classes}\label{sec:TwistedMMM}

Recall that $W_{g}$ denotes the manifold $\#^g S^n \times S^n$.  Fix a fibration $\theta \colon B \to B\mr{SO}(2n)$. In this section we wish to attach characteristic classes in twisted cohomology to the following data: a smooth oriented $W_g$-bundle $\pi \colon E \to X$ with section $s \colon X \to E$, and a choice of lift $\ell \colon E \to B$ of the map $\tau_\pi \colon E \to B\mr{SO}(2n)$ classifying the oriented vertical tangent bundle $T_\pi E \to E$. We can summarise this data in the following diagram:
\begin{equation}\label{eq:BundleData}
\begin{tikzcd}
W_g \dar{i} & B \dar{\theta}\\
E \arrow[ru, "\ell"]\dar{\pi}\rar[swap]{\tau_\pi}& B\mr{SO}(2n)\\
X. \arrow[bend left=60]{u}{s}
\end{tikzcd}
\end{equation}

We will write $\cH(g)$ for the local coefficient system on $X$ with $\cH(g)_x = H_n(\pi^{-1}(x) ; \bZ)$, which is equipped with a $(-1)^n$-symmetric nondegenerate pairing $\lambda \colon \cH(g) \otimes \cH(g) \to \bZ$ given by the intersection form (with respect to the given orientations of the fibres). For a commutative ring $\bk$ we write $\cH(g)_\bk \coloneqq \cH(g) \otimes_\bZ \bk$ for the associated local system of $\bk$-modules. 

We shall explain how to construct certain characteristic classes with coefficients in tensor powers of $\cH(g)$, following Kawazumi \cite{KawazumiInvent, Kawazumi} (see also Kawazumi--Morita \cite{KM, KMunpub}) who considered this situation for $2n=2$. Our goal is to associate to the data above and to any partition $\{P_i\}_{i \in I}$ of a finite set $S$ and label $c_i \in H^{2d_i}(B;\bk)$ of each part $P_i$, an element
\[\kappa(\{P_i\}, \{c_i\}) \in H^{*}(X ; \cH(g)^{\otimes S}_\bk) \otimes (\det \bk^{S})^{\otimes n}\]
of degree $\sum_{i \in I} n(|P_i|-2)+2d_i$, which transforms under the symmetric group of $S$ in the expected way. Here and later for a finitely-generated free $\bk$-module $M$ we write $\det M$ for its top exterior power.

\subsection{Gysin homomorphism}

For any local coefficient system of $\bk$-modules $\cM$ on $X$, the fibration sequence
\[W_{g} \lra E \overset{\pi}\lra X\]
has an associated cohomological Serre spectral sequence
\begin{equation}\label{eqn:gysin-relative-serre-ss}
E^{p,q}_2 = H^p(X; \cH^q(W_{g};\bk) \otimes_{\bk} \cM) \Longrightarrow H^{p+q}( E ; \pi^* \cM)
\end{equation} 
with three non-zero rows, the $0$th, $n$th and $2n$th.

The map 
\[\pi^* \colon H^*(X;\cM) \lra H^*(E, \pi^*\cM)\]
is split injective, as $s^*$ gives a one-sided inverse for it. This splits off the $q=0$ row of the spectral sequence.

The local coefficient system $\cH^{2n}(W_{g};\bk)$ is trivial, because we have assumed that the bundle $\pi \colon E \to X$ is oriented. It follows that the Serre spectral sequence has $E_2^{p, 2n} = H^p(X ; \cH^{2n}(W_{g};\bk) \otimes_{\bk} \cM)$ canonically identified with $H^p(X ; \cM)$, and so as usual projection to the $2n$th row defines a Gysin homomorphism
\[\pi_! \colon H^*(E;\pi^* \cM) \lra H^{*-2n}(X;\cM),\]
which is a homomorphism of right $H^{*}(X;\bk)$-modules.

\begin{lemma}\label{lem:VolClass}
There is a class $v \in H^{2n}(E;\bk)$ which restricts to a generator of the top cohomology of each fibre.
\end{lemma}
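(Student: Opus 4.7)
The plan is to construct $v$ geometrically as the Poincaré–Lefschetz dual, along the fibres, of the image of the section $s$. The idea is that $s(X) \subset E$ is a fibrewise codimension $2n$ submanifold meeting each fibre transversely in a single point, so its fibrewise Thom class should restrict to the local Thom class of a point in $W_g$, which is exactly the fundamental cohomology class.

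More concretely, since $\pi \colon E \to X$ is a smooth fibre bundle and $s$ is a smooth section, choosing a fibrewise Riemannian metric on the vertical tangent bundle and using the fibrewise exponential map produces an open neighbourhood $U$ of $s(X)$ in $E$ together with a homeomorphism of pairs
\[(U, U \setminus s(X)) \;\cong\; (D(s^*T_\pi E),\; S(s^*T_\pi E))\]
over $X$. The bundle $s^* T_\pi E$ is an oriented rank $2n$ vector bundle (oriented by the orientation of the fibres), and hence carries a Thom class $\tau \in H^{2n}(D(s^* T_\pi E), S(s^* T_\pi E); \bk)$. I then define $v \in H^{2n}(E;\bk)$ as the image of $\tau$ under the composite
\[H^{2n}(D(s^*T_\pi E), S(s^*T_\pi E);\bk) \xrightarrow{\cong} H^{2n}(U, U \setminus s(X);\bk) \xrightarrow{\cong} H^{2n}(E, E \setminus s(X);\bk) \lra H^{2n}(E;\bk),\]
where the middle isomorphism is excision.

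To verify the restriction property, fix $x \in X$ and let $i \colon W_g \hookrightarrow E$ denote the inclusion of the fibre over $x$. The restriction $i^* v$ factors through $H^{2n}(W_g, W_g \setminus \{s(x)\};\bk)$, and the construction is natural enough that $i^* v$ is precisely the local Thom class at $s(x)$ of the oriented $2n$-plane $(s^*T_\pi E)_x \subset T_{s(x)} W_g$. This is the standard generator of $H^{2n}(W_g, W_g \setminus \{s(x)\};\bk) \cong \bk$, whose image in $H^{2n}(W_g;\bk) \cong \bk$ is, by the definition of the orientation, the fundamental cohomology class. The only nontrivial ingredient is the existence of a fibrewise tubular neighbourhood of $s(X)$ over a possibly infinite-dimensional base such as $B\diff(W_g, D^{2n})$, but this is standard for smooth fibre bundles (it is constructed locally in $X$ and patched using a partition of unity), and no other step involves more than elementary manipulations with the Thom isomorphism.
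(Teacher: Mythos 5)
Your construction is correct and is essentially the paper's own argument: the paper also defines $v$ as the pullback of the Thom class of $s^*T_\pi E$ along the collapse map $E \to \mathrm{Th}(s^*T_\pi E)$ (i.e.\ the image of the relative Thom class in $H^{2n}(E, E\setminus s(X);\bk)$), and checks that it restricts to the Poincar\'e dual of a point in each fibre. Your tubular-neighbourhood/excision phrasing is just an explicit unwinding of that collapse map, so there is nothing to add.
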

\begin{proof}
The homotopy cofibre of the inclusion $E \setminus s(X) \to E$ is identified with the Thom space of the normal bundle of $s(X) \subset E$, which is the restriction of $T_\pi E$ to $s(X)$. This yields a map
\[ E \lra \mathrm{Th}(s^* T_\pi E \to X),\]
and the pullback of the Thom class---which exists because $T_\pi E$ is oriented---along this map defines a class $v \in H^{2n}(E;\bk)$. This restricts to the Poincar{\'e} dual of a point in any fibre, which is a generator of the top cohomology.
\end{proof}

By pulling back to each point $* \in X$, we see that this class satisfies
\[\pi_!(v) = 1 \in H^0(X;\bk),\]
so for any $x \in H^{*-2n}(X;\cM)$ we have
\[\pi_!(v \cdot \pi^*(x)) = \pi_!(v) \cdot x = x,\]
and hence $v \cdot \pi^*(-)$ shows that $\pi_!(-)$ is split surjective. 

Using the above two splittings we see that the Serre spectral sequence (\ref{eqn:gysin-relative-serre-ss}) collapses, and under the identification $\cH^n(W_g ; \bk) = \cH(g)^\vee$ we obtain a preferred decomposition
\begin{equation}\label{eq:KMDecomposition}
H^*(E;\pi^*\cM) = H^*(X;\cM) \oplus H^{*-n}(X; \cH(g)^\vee \otimes \cM) \oplus H^{*-2n}(X ; \cM).
\end{equation}

\subsection{A twisted cohomology class} \label{sec:semi-euler-class} 

The Serre spectral sequence (\ref{eqn:gysin-relative-serre-ss}) with coefficients in the local coefficient system $\cH(g)_\bk$ on $X$ has the form
\[E^{p,q}_2 = H^p(X; \cH^q(W_{g};\bk) \otimes \cH(g)_\bk) \Longrightarrow H^{p+q}( E ; \pi^* \cH(g)_\bk).\]
As $\cH^n(W_{g,1};\bk) = \cH(g)_\bk^\vee$ we have $E^{0,n}_2 = H^0(X ; \cH(g)_\bk^\vee \otimes \cH(g)_\bk)$, which contains a canonical element given by coevaluation; that is, the adjoint to the identity map of $\cH(g)_\bk$. Using the decomposition \eqref{eq:KMDecomposition} for this spectral sequence, $coev$ defines a unique class 
\[\epsilon \in H^n( E ; \pi^* \cH(g)_\bk).\]
(This extends to higher dimensions a class constructed by Morita \cite[Section 6]{MoritaJacobian1}.) By construction, $\epsilon$ is characterised by its restriction to any fibre and the properties $s^*(\epsilon)=0$ and $\pi_!(\epsilon)=0$.

\subsection{Defining twisted Miller--Morita--Mumford classes}

Given the data in \eqref{eq:BundleData} and a class $c \in H^{2d}(B;\bk)$, we can define
\[{\pi}_!({\epsilon}^k \cdot \ell^*c) \in H^{n(k-2)+2d}(X;\cH(g)_\bk^{\otimes k}).\]
This will be an example of a twisted Mumford--Morita--Miller class. More generally, to a partition $(p_1, p_2, \ldots, p_r)$ of the number $k$ with $p_i \geq p_{i+1}$, in which $p_i=0$ is allowed, we associate the standard partition $\cP_\mr{std}(p_1, p_2, \ldots, p_r)$ of the set $\{1,2,\ldots, k\}$ given by
\[\{\{1,2,\ldots, p_1\}, \{p_1+1, \ldots, p_1+p_2\}, \ldots, \{p_1 + \cdots + p_{r-1}+1, \ldots, p_1 + \cdots + p_r\}\},\]
where the $i$th subset is taken to be empty if $p_i=0$. Given classes $c_i \in H^{2d_i}(B;\bk)$ for $i=1,2,\ldots, r$, we assign the class $\kappa((p_i); (c_i))$ of degree $\sum_{i=1}^r n(p_i-2) + 2d_i$ defined as
\begin{align*}
&{\pi}_!({\epsilon}^{p_1} \cdot \ell^*c_1)  \cdots {\pi}_!({\epsilon}^{p_r} \cdot \ell^*c_r) \otimes (e_1 \wedge \cdots \wedge e_k)^{\otimes n} \in H^{*}(X ; \cH(g)^{\otimes k}_\bk) \otimes (\det \bk^{k})^{\otimes n}.\end{align*}

For a set $S$ of cardinality $k$ and a partition $\{P_1, \ldots, P_r\}$ of $S$ into parts of sizes $p_1, \ldots, p_r$ with $p_i \geq p_{i+1}$ and where empty parts are allowed, we may choose a bijection $\phi \colon [k] \overset{\sim}\to X$ sending each $P_i$ to $\{p_1 + \cdots + p_{i-1}+1, \ldots, p_1 + \cdots + p_i\}$, and hence sending the partition $\{P_1, \ldots, P_r\}$ of $X$ to the standard partition $\cP_{std}(p_1, p_2, \ldots, p_r)$ of $[k]$; there is an induced isomorphism
\[\phi_* \colon H^{*}(X ; \cH(g)_\bk^{\otimes k}) \otimes (\det \bk^{k})^{\otimes n} \overset{\sim}\lra H^{*}(X ; \cH(g)_\bk^{\otimes S})  \otimes (\det \bk^{S})^{\otimes n}.\]
We wish to define
\[\kappa(\{P_i\}, \{c_i\}) \coloneqq \phi_*(\kappa((p_i); (c_i))) \in H^{*}(X ; \cH(g)_\bk^{\otimes S})  \otimes (\det \bk^{S})^{\otimes n}.\]

\begin{lemma}\label{lem:twisted-mmm-welldefined}
This is well-defined.
\end{lemma}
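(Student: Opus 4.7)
The plan is to identify the ambiguity in the choice of $\phi$ and show that the class is invariant under it. Any two bijections $\phi, \phi' \colon [k] \overset{\sim}\to S$ that send each $P_i$ to the $i$th block of the standard partition differ by precomposition with an element of the stabiliser $\prod_{i=1}^{r} \fS_{p_i}$ of the standard partition on $[k]$. It therefore suffices to show that $\kappa((p_i); (c_i))$ is invariant under the diagonal action of this subgroup on
\[H^{*}(X; \cH(g)^{\otimes k}_\bk) \otimes (\det \bk^{k})^{\otimes n}.\]

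First I would compute the sign picked up by the cohomological factor. Since $\epsilon \in H^n(E; \pi^* \cH(g)_\bk)$ has degree $n$, graded-commutativity of the cup product (combined with the sign-free swap of tensor factors in the ungraded coefficient system $\cH(g)_\bk$) gives $\tau_*(\epsilon \cdot \epsilon) = (-1)^n \epsilon \cdot \epsilon$ in $H^{2n}(E; \pi^*\cH(g)_\bk^{\otimes 2})$. Iterating, a permutation $\sigma_i \in \fS_{p_i}$ acting on the tensor factors of $\pi_!(\epsilon^{p_i} \cdot \ell^* c_i) \in H^{*}(X ; \cH(g)_\bk^{\otimes p_i})$ multiplies it by $\mr{sign}(\sigma_i)^n$. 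Hence the full product $\prod_i \pi_!(\epsilon^{p_i} \cdot \ell^* c_i)$ transforms under $\sigma = (\sigma_1, \ldots, \sigma_r)$ by $\mr{sign}(\sigma)^n$.

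Next I would observe that $\sigma$ acts on $(e_1 \wedge \cdots \wedge e_k)^{\otimes n}$ by $\mr{sign}(\sigma)^n$, since the exterior algebra of $\bk^k$ realises the sign representation. The two contributions combine to $\mr{sign}(\sigma)^{2n} = 1$, giving the required invariance. The determinant twist by $(\det \bk^k)^{\otimes n}$ in the definition is tailored precisely to effect this cancellation: the exponent $n$ is forced by the degree of $\epsilon$. In particular, the proof makes essential use of the fact that $\epsilon$ is homogeneous of degree exactly $n$, which is also why the answer only depends on $n \bmod 2$ (and is unsigned when $n$ is even).

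The main obstacle is purely bookkeeping: one has to disentangle the Koszul signs coming from graded-commutativity of cup products from the (sign-free) symmetric-group action on the ungraded coefficient system $\cH(g)_\bk$, and verify that the intermediate ordering conventions used to form $\epsilon^{p_i}$ do not contribute spurious signs. If one additionally wishes to allow permutations of blocks of equal size sharing the same label (so that the construction descends to unordered labeled partitions), the same cancellation between the swap of tensor factors and the determinant twist handles this case, the only extra input being that the Mumford--Morita--Miller classes being commuted past each other have total degree congruent to $n p_i$ modulo $2$.
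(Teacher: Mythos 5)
Your proof is correct and follows essentially the same route as the paper: identify the ambiguity with the stabiliser of the standard partition acting diagonally, and check that the sign $\mr{sign}(\sigma)^n$ picked up by the cohomology factor (because $\epsilon$ has degree $n$ and the labels have even degree) cancels against the sign $\mr{sign}(\sigma)^n$ on $(\det \bk^{k})^{\otimes n}$. The one adjustment is that, since $\{P_i\}$ is an unordered labelled partition, invariance under permuting parts of equal size --- with possibly \emph{different} labels carried along, since the enumeration of equal-size parts is itself a choice --- is part of well-definedness rather than an optional extra; but your closing sign count, which only uses that $|\pi_!(\epsilon^{j}\cdot\ell^*c)| \equiv nj \pmod 2$ for even $|c|$, already handles this case exactly as the paper's proof does.
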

\begin{proof}
If $\psi$ is another such bijection, then $\psi^{-1} \circ \phi \colon [k] \to [k]$ is a bijection which preserves the partition $\cP_\mr{std}(p_1, p_2, \ldots, p_r)$. If $s_j \coloneqq |\{1 \leq i \leq r \, \vert \, p_i = j\}|$ denotes the number of parts of size $j$, then the subgroup of $\Sigma_k$ of permutations which preserve the partition $\cP_\mr{std}(p_1, p_2, \ldots, p_r)$ may be identifed with
\[\prod_{j=1}^k \Sigma_{j} \wr \Sigma_{s_j} \leq \Sigma_k.\]
Thus it is generated by arbitrary permutations of the elements of the parts
\[Q_i \coloneqq \{p_1 + \cdots + p_{i-1}+1, \ldots, p_1 + \cdots + p_i\},\]
as well as permutations of non-empty parts $Q_i$ having the same cardinality. 

A permutation $\sigma$ of $Q_i$ acts on ${\epsilon}^{p_i}$ by permuting the factors, and as $\epsilon$ has degree $n$ it therefore acts by $\mr{sign}(\sigma)^n$. Hence it acts on ${\pi}_!({\epsilon}^{p_i} \cdot \ell^*c_i)$ by $\mr{sign}(\sigma)^n$ too, so acts on $\kappa((p_i); (c_i))$ trivially.

A permutation of the set $\{Q_i \, \vert \, \vert Q_i\vert=j\}$ acts on ${\pi}_!({\epsilon}^{p_1} \cdot \ell^*c_1) \cdot {\pi}_!({\epsilon}^{p_2} \cdot \ell^*c_2) \cdots {\pi}_!({\epsilon}^{p_r} \cdot \ell^*c_r)$ by permuting the terms, and the group of such permutations is generated by transpositions of adjacent parts. A transposition $\sigma$ of adjacent $Q_i$'s involves $j^2$ transpositions in $\Sigma_k$, so has $\mr{sign}(\sigma)=(-1)^j$. On the other hand $\vert {\pi}_!({\epsilon}^{j} \cdot \ell^*c)\vert = n(j-2) + |c|$, so transposing two copies incurs a sign of $(-1)^{n(j-2)+|c|} = ((-1)^{j})^n$, as $|c|=|c_i|$ is even by assumption. Hence the subgroup of $\Sigma_k$ which preserves the standard partition acts trivially on the class $\kappa((p_i);(c_i))$.
\end{proof}

We have thus defined for each bundle as in \eqref{eq:BundleData}, and each partition $\{P_i\}_{i \in I}$ of a finite set $S$ and labels $c_i \in H^{2d_i}(B;\bk)$ of each part $P_i$, a \emph{twisted Miller--Morita--Mumford class}
\[\kappa(\{P_i\}, \{c_i\}) \in H^{*}(X ; \cH(g)^{\otimes S}_\bk) \otimes (\det \bk^{S})^{\otimes n}\]
of degree $\sum_{i \in I} n(|P_i|-2)+2d_i$.

For the remainder of this section we will write
\[\cV \coloneqq H^{*}(B;\bk),\]
for the graded $\bk$-module of labels, and suppose that it is concentrated in even degrees.

\begin{definition}
For a finite set $S$, let $\cP(S, \cV)$ be the graded $\bk$-module generated by partitions of $S$ (recall from Definition \ref{def:PartAlg} that partitions may have empty parts) with a labelling of each part by a homogeneous element of $\cV$, modulo $\bk$-linearity with respect to the labels. This module is graded by declaring a labelled partition $(\{P_i\}, \{c_i\})$ to have degree $\sum_{i \in I} n(|P_i|-2)+|c_i|$.
\end{definition}

\begin{remark}\label{rem:Bases1}
It is sometimes useful (when $\bk$ is a field) to choose a homogeneous basis $\cB$ of $\cV$, which gives a homogeneous basis of $\cP(S, \cV)$ given by those partitions of $S$ where each part is labelled by an element of $\cB$. 
\end{remark}

The above construction defines a $\Sigma_S$-equivariant map
\[\Phi''_S \colon \mathcal{P}(S, \cV) \lra H^{*}(X ; \cH(g)_\bk^{\otimes S})  \otimes (\det \bk^{S})^{\otimes n},\]
and hence by adjunction a $\Sigma_S$-equivariant map
\[\Phi'_S \colon \mathcal{P}(S, \cV)  \otimes (\det \bk^{S})^{\otimes n} \lra H^{*}(X ; \cH(g)_\bk^{\otimes S}).\]
(The map $\Phi''_\varnothing$ sends the empty partition of the empty set to $1 \otimes 1 \in H^0(X ; \bk) \otimes \bk$.)

By definition $\kappa(\{P_i\}, \{c_i\})$ is a cup product of classes, one for each part $P_i$ which up to the symmetric group action can be taken to be ${\pi}_!({\epsilon}^{p_i} \cdot \ell^*c_i)$. This has degree $n(p_i-2) + |c_i|$, so if $p_i=0$ and $|c_i| < 2n$, or $p_i=1$ and $|c_i| < n$, then it gives a cohomology class of negative degree and so vanishes. Furthermore if $p_i=0$ and $|c_i| = 2n$ it gives a degree zero cohomology class with $\bk$-coefficients, i.e.\ a scalar.

\begin{definition}
Writing $\varphi \colon \cV_{2n} \overset{\ell^*}\to H^{2n}({E};\bk) \overset{i^*}\lra H^{2n}(W_g;\bk) = \bk$, we let $\mathcal{P}(S, \cV)_{\geq 0}$ be the quotient of $\mathcal{P}(S, \cV)$ by the submodule generated by those labelled partitions having some part of size 0 and label of degree $<2n$, or some part of size 1 and label of degree $<n$, as well as by the differences
\[(\{P_i\}_{i \in I}, \{c_i\}_{i \in I}) - (\{P_i\}_{i \in I \setminus j}, \{c_i\}_{i \in I \setminus j}) \cdot \varphi(c_j)\]
whenever $P_j = \{\varnothing\}$ and $c_j$ has degree $2n$.
\end{definition}

\begin{remark}\label{rem:Bases2}
As in Remark \ref{rem:Bases1}, if we choose a homogeneous basis $\cB$ for $\cV$ then we obtain a homogeneous basis for $\mathcal{P}(S, \cV)_{\geq 0}$ given by those partitions of $S$ where each part is labelled by elements of $\cB$, having no parts (i) of size 0 with label of degree $ \leq 2n$, or (ii) of size 1 with label of degree $<n$. This description presents $\mathcal{P}(S, \cV)_{\geq 0}$ as a \emph{subspace} of $\mathcal{P}(S, \cV)$.
\end{remark}

By the discussion above the map $\Phi_S'$ factors over a map
\[\Phi_S \colon \mathcal{P}(S, \cV)_{\geq 0}  \otimes (\det \bk^{S})^{\otimes n} \lra H^{*}(X ; \cH(g)_\bk^{\otimes S}).\]

\begin{remark}\label{rem:bundle-types}
The construction of the twisted Miller--Morita--Mumford classes can be done with weaker input than \eqref{eq:BundleData}. All that is required is a family $\pi \colon E \to X$ with general fibre $W_g$ and section, regular enough to have a Serre spectral sequence, and a source of cohomology classes on $E$.
	
For example, we may take PL or topological $W_g$-bundles with section instead of smooth $W_g$-bundles at the cost of replacing $B\mr{SO}(2n)$ with $B\mr{SPL}(2n)$ or $B\mr{STop}(2n)$ respectively and (vertical) tangent bundles with (vertical) tangent microbundles. More generally, we may take (smooth, PL, or topological) block $W_g$-bundles with section: in \cite[Proposition 2.8]{ebertrwmmm} it is shown that a block bundle is a weak quasifibration so has a Serre spectral sequence; in \cite[Proposition 3.2]{ebertrwmmm} it is shown that a smooth block bundle has a stable vertical tangent bundle, and in \cite[Section 2]{hllrw} this is extended to PL or topological block bundles; finally, in \cite[Section 3]{hllrw} it is shown that a block bundle (and even a fibration with Poincar{\'e} fibre) has a fibrewise Euler class. Then the construction of $\kappa_{\epsilon^k c}$ with $c$ a monomial in Euler and Pontrjagin classes can be made.
\end{remark}

\subsection{Functoriality with respect to bijections}\label{sec:functoriality-FB}

Let $\cat{FB}$ denote the category of finite sets and bijections. Define a functor
\[\mathcal{P}(-, \cV)_{\geq 0} \colon \mathsf{FB} \lra \mathsf{Gr}(\bk\text{-mod})\]
by sending a finite set $S$ to the $\bk$-module $\mathcal{P}(S, \cV)_{\geq 0}$, and sending a bijection $f \colon S \to T$ to the $\bk$-linear map induced by relabelling elements. Taking the objectwise tensor product with the $n$th power of the sign functor gives a functor
\[\mathcal{P}(-, \cV)_{\geq 0} \otimes {\det}^{\otimes n} \colon \mathsf{FB} \lra \mathsf{Gr}(\bk\text{-mod}).\]
It follows from Lemma \ref{lem:twisted-mmm-welldefined} that the $\Phi_S$ determine a natural transformation of functors
\[\Phi \colon \mathcal{P}(-, \cV)_{\geq 0} \otimes {\det}^{\otimes n} \Longrightarrow H^{*}(B ; \cH(g)_\bk^{\otimes -}) \colon \mathsf{FB} \lra \mathsf{Gr}(\bk\text{-mod}).\]

\subsection{Functoriality on the Brauer category}\label{sec:functoriality-brauer}

We now wish to determine how the maps $\Phi_S$, the pairing $\lambda \colon \cH(g)_\bk \otimes \cH(g)_\bk \to \bk$, and its dual, the form $\omega \colon \bk \to \cH(g)_\bk \otimes \cH(g)_\bk$, interact. More precisely, for an ordered pair of elements $x, y \in S$ there is a map
\[\lambda_{x,y} \colon \cH(g)_\bk^{\otimes S} \lra \cH(g)_\bk^{\otimes S \setminus \{x,y\}}\]
of local coefficient systems on $X$ given by applying $\lambda$ to the $x$th and $y$th factors, and a map
\[\omega_{x,y} \colon \cH(g)_\bk^{\otimes S \setminus \{x,y\}} \lra \cH(g)_\bk^{\otimes S}\]
given by inserting $\omega$ in these factors, and we wish to determine the induced maps
\begin{align*}
\lambda_{x,y} \colon H^{*}(X ; \cH(g)_\bk^{\otimes S}) \lra H^{*}(X ; \cH(g)_\bk^{\otimes S \setminus \{x,y\}})\\
\omega_{x,y} \colon H^{*}(X ; \cH(g)_\bk^{\otimes S \setminus \{x,y\}}) \lra H^{*}(X ; \cH(g)_\bk^{\otimes S}) 
\end{align*}
on the classes we have just defined. By the equivariance and multiplicativity results we have already established, it is enough to
\begin{enumerate}[\indent (i)]
\item consider only the case $S=\{1,2,\ldots, k\}$,

\item determine $\omega_{1,2}(1)$,

\item determine $\lambda_{1,2}({\pi}_!({\epsilon}^k \cdot \ell^*c))$,

\item determine $\lambda_{a, a+1}({\pi}_!({\epsilon}^a \cdot \ell^*c) \cdot {\pi}_!({\epsilon}^{k-a} \cdot \ell^*c'))$. 
\end{enumerate}

In the following we will make use of the cap product. For the avoidance of doubt we emphasise that we adopt sign conventions such that the cap product makes homology into a left module over the cohomology ring.

\begin{proposition}
We have $\omega_{1,2}(1) =  {\pi}_!({\epsilon}^2) \in H^0(X ; \cH(g)_\bk^{\otimes 2})$.
\end{proposition}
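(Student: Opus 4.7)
My plan is to reduce the statement to a fibrewise computation and then identify both sides via the universal coefficient isomorphism $H^n(W_g;\cH(g)_\bk|_x) \cong \mathrm{End}(H_n(W_g;\bk))$.

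First I would observe that $H^0(X;\cH(g)_\bk^{\otimes 2})$ is the space of horizontal sections of the local system $\cH(g)_\bk^{\otimes 2}$, so two such classes coincide as soon as they coincide at a single basepoint $x_0 \in X$. Both $\omega_{1,2}(1)$ and $\pi_!(\epsilon^2)$ are clearly flat (the former because it is constructed from $\omega$ which is preserved by the monodromy, and the latter because $\pi_!$ and cup product are natural with respect to pullback). It therefore suffices to compare the restrictions to the fibre $W_g = \pi^{-1}(x_0)$.

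Next I would pin down $\epsilon|_{W_g}$. By construction $\epsilon$ corresponds, under the edge map of the Serre spectral sequence and the splitting \eqref{eq:KMDecomposition}, to the coevaluation class in $H^0(X;\cH(g)_\bk^\vee \otimes \cH(g)_\bk)$. Restricted to the fibre and using the identification
\[H^n(W_g;\cH(g)_\bk|_{x_0}) \cong H^n(W_g;\bk) \otimes H_n(W_g;\bk) \cong \mathrm{End}(H_n(W_g;\bk)),\]
this forces $\epsilon|_{W_g}$ to be the identity endomorphism, which in a basis $\{a_i\}$ of $H_n(W_g;\bk)$ with dual cohomology basis $\{\alpha_i\} \subset H^n(W_g;\bk)$ reads $\epsilon|_{W_g} = \sum_i \alpha_i \otimes a_i$.

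Then I would compute, using that cup product on $W_g$ distributes through the tensor factors in $\cH(g)_\bk^{\otimes 2}$,
\[\epsilon^2|_{W_g} = \sum_{i,j} \alpha_i\,\alpha_j \otimes a_i \otimes a_j \;\in\; H^{2n}(W_g;\bk) \otimes H_n(W_g;\bk)^{\otimes 2}.\]
Applying $\pi_!|_{x_0}$, which is evaluation against the fundamental class $[W_g]$, this becomes
\[\pi_!(\epsilon^2)|_{x_0} = \sum_{i,j} \langle \alpha_i\,\alpha_j,[W_g]\rangle\, a_i \otimes a_j.\]
The pairing $\langle \alpha_i\alpha_j,[W_g]\rangle$ is precisely $\lambda(a_i^\#,a_j^\#)$, where $\{a_i^\#\}$ is the Poincar\'e dual basis characterised by $\lambda(a_i^\#,a_j)=\delta_{ij}$. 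A standard manipulation using this defining property then rewrites the sum as $\sum_i a_i \otimes a_i^\#$, which is exactly the value of $\omega$ at $x_0$.

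The main obstacle is bookkeeping: one must check that the sign conventions in the definition of $\epsilon$, in the Koszul-signed identification of $\cH(g)_\bk \otimes \cH(g)_\bk$-valued cohomology, and in the intersection form all line up, especially when $n$ is odd so that $\lambda$ and $\omega$ are antisymmetric. Everything else is a routine unwinding of definitions, and the computation is independent of $X$ because it takes place entirely in the fibre.
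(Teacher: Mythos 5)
Your overall strategy coincides with the paper's: restrict to a single fibre (the paper phrases this as naturality under pulling back to a point), identify the restriction of $\epsilon$ with the coevaluation $\sum_i \alpha_i \otimes a_i$, square, and evaluate against $[W_g]$. Up to the formula $\pi_!(\epsilon^2)|_{x_0} = \sum_{i,j}\langle \alpha_i\alpha_j,[W_g]\rangle\, a_i\otimes a_j$ and the observation that $\alpha_i$ is Poincar\'e dual to $a_i^\#$, so that $\langle\alpha_i\alpha_j,[W_g]\rangle=\lambda(a_i^\#,a_j^\#)$, this is the same computation the paper performs.

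The gap is in your last sentence of the computation: the ``standard manipulation'' rewriting $\sum_{i,j}\lambda(a_i^\#,a_j^\#)\,a_i\otimes a_j$ as $\sum_i a_i\otimes a_i^\#$ is exactly where the content of the lemma sits, and as stated it cannot be combined with your previous step when $n$ is odd. Indeed, writing $a_j^\#=\sum_k c_{jk}a_k$ one gets $\lambda(a_i^\#,a_j^\#)=c_{ji}$, hence
\[
\sum_{i,j}\lambda(a_i^\#,a_j^\#)\,a_i\otimes a_j \;=\; \sum_j a_j^\#\otimes a_j ,
\]
which is $\omega$ with its two tensor factors transposed, i.e. $(-1)^n\omega$. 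So for $n$ odd at least one of your two asserted identifications hides a sign or a transposition (which copy of $\epsilon$ feeds which factor of $\cH(g)_\bk^{\otimes 2}$, and the cap-product/orientation conventions), and this is precisely the ``bookkeeping'' you defer; deferring it leaves the proof incomplete at its only nontrivial point. The paper's proof is organised to avoid ever touching the Gram matrix $\lambda(a_i^\#,a_j^\#)$: it expands the two copies of $\epsilon$ differently --- one as $\sum_i b_i^*\otimes b_i$ in the Kronecker-dual basis, the other as $\sum_j a_j\otimes a_j^*$ using Poincar\'e duals --- so that every cup-product evaluation collapses to $\langle b_i^*\cdot a_j,[W_g]\rangle=\delta_{ij}$, and then it identifies the resulting element $\sum_i b_i\otimes a_i^*$ with $\omega$ by verifying the characterising property $(\lambda\otimes\mr{id})(-\otimes\omega)=\mr{id}(-)$ directly. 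To repair your argument you should either adopt that device, or carry out the odd-$n$ sign analysis explicitly, keeping track of the ordering of the two factors throughout.
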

\begin{proof}
By naturality, we can test this identity by restricting to a point $* \in X$, i.e.\ considering the fibre bundle $\pi \colon W_{g} \to *$. In this case $\pi_!(\epsilon^2)$ is $\langle \epsilon \cdot \epsilon, [W_g] \rangle$. Writing $\{b_i\}$ for a basis of $H_n(W_g;\bk)$, and $\{b_i^*\}$ for the dual basis of $H^n(W_g;\bk)$, the class $\epsilon \in H^n(W_g ; \bk) \otimes H_n(W_g ; \bk)$ may be written as $\sum_i b_i^* \otimes b_i$. Let $a_i \in H^n(W_g;\bk)$ be Poincar{\'e} dual to $b_i$, so that $b_i = a_i \frown [W_g]$, and $\{a_i^*\}$ be the corresponding dual basis for $H_n(W_g;\bk)$. Then $\epsilon$ may also be written as $\sum_i a_i \otimes a_i^*$. Thus
\[\langle \epsilon \cdot \epsilon, [W_g] \rangle = \sum_{i,j} \langle b_i^* \cdot a_j, [W_g] \rangle \cdot b_i \otimes a_j^*.\]
Now $\langle b_i^* \cdot a_j, [W_g] \rangle = \langle b_i^* , a_j \frown [W_g] \rangle = \langle b_i^*, b_j \rangle = \delta_{ij}$, so
\[\langle \epsilon \cdot \epsilon, [W_g] \rangle = \sum_i b_i \otimes a_i^* \in H_n(W_{g};\bk) \otimes H_n(W_{g};\bk).\]
On the other hand $\omega_{1,2}(1) = \omega \in H_n(W_{g};\bk) \otimes H_n(W_{g};\bk)$. Let $\{b_i^\#\}$ be the $\lambda$-dual basis of $H_n(W_{g};\bk)$, characterised by $\lambda(b_i^\#, b_j)=\delta_{ij}$. Then
\[(\lambda \otimes \mr{id}) \left(b_j^\# \otimes \sum_i b_i \otimes a_i^* \right) = \sum_i \lambda(b_j^\#, b_i) a_i^* = a_j^*.\]
However as $b_i = a_i \frown [W_g]$ we have $\lambda(a_j^*, b_i) = \langle a_j^*, a_i \rangle = \delta_{ij}$, so $a_j^* = b_j^\#$. Hence $(\lambda \otimes \mr{id})(b_j^\# \otimes \sum_i b_i \otimes a_i^*) = b_j^\#$ and so $\sum_i b_i \otimes a_i^* = \omega$ by the characterisation $(\lambda \otimes \mr{id})(- \otimes \omega) = \mr{id}(-)$ of $\omega$.
\end{proof}

In order to state the following lemma, recall that ${v} \in H^{2n}({E} ; \bk)$ is the class constructed in Lemma \ref{lem:VolClass}, which is fibrewise Poincar{\'e} dual to the section $s \colon X \to E$. In particular, if $T_\pi E \to E$ denotes the vertical tangent bundle, then $s^*(v) = s^*(e(T_\pi E))$. Write $p \colon E \times_X E \to X$ for the projection map of the fibre product of $\pi \colon E \to X$ with itself.

\begin{lemma}\label{lem:ContractionExterior}
We have
\[\lambda_{1,2}({\epsilon} \times {\epsilon}) = \Delta_!(1) - 1 \times {v} - {v} \times 1 + p^*s^*(e(T_\pi E)) \in H^{2n}(E \times_X E ;\bk).\]
\end{lemma}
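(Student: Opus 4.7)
The strategy is to pin down a class in $H^{2n}(E \times_X E; \bk)$ by its fibrewise restriction together with its images under a small family of natural operations, and then to verify that both sides of the asserted identity agree under each. The bundle $p \colon E \times_X E \to X$ has fibre $W_g \times W_g$; since it is the fibre product of two copies of $\pi$, whose own Serre spectral sequences collapse as in \eqref{eq:KMDecomposition}, the Serre spectral sequence of $p$ also collapses at $E_2$ and yields a nine-piece K\"unneth-type decomposition of $H^{2n}(E \times_X E; \bk)$. The summands are separated by the restriction $j_x^*$ to a fibre, the section-pullbacks $(s\pi, \mathrm{id})^*$, $(\mathrm{id}, s\pi)^*$ and $(s\times_X s)^*$, the fibre-Gysin maps $(\pi_1)_!$ and $(\pi_2)_!$, and their composites, so it suffices to match values under these.

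\textbf{Left-hand side.} Using $s^*\epsilon = 0$ and $\pi_!\epsilon = 0$, together with the base-change identity $(\pi_i)_! \pi_j^* = \pi^* \pi_!$ for the defining Cartesian square of $E \times_X E$, one verifies directly that $\lambda_{1,2}(\epsilon \times \epsilon)$ vanishes under every section-pullback above and under both Gysin maps. For the fibre restriction, starting from $\epsilon|_{W_g} = \sum_i b_i^* \otimes b_i$ as in the preceding proposition and inserting the Koszul sign for commuting two degree-$n$ factors gives
\[
j_x^*\lambda_{1,2}(\epsilon \times \epsilon) = (-1)^n \sum_{i,j} \lambda(b_i, b_j)\, b_i^* \times b_j^*.
\]

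\textbf{Right-hand side.} The key input is excess intersection for the Cartesian square
\[
\begin{tikzcd}
X \arrow[r, "s"] \arrow[d, "s"'] & E \arrow[d, "\Delta"] \\
E \arrow[r, "(s\pi,\mathrm{id})"'] & E \times_X E
\end{tikzcd}
\]
which is transverse because $(s\pi, \mathrm{id})$ and $\Delta$ are regular embeddings of codimension $2n$ meeting in $s(X)$ of codimension $4n$. Hence $(s\pi,\mathrm{id})^*\Delta_!(1) = s_!(1) = v$ by Lemma \ref{lem:VolClass}, and $(\pi_1)_!\Delta_!(1) = \mathrm{id}_!(1) = 1$. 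For the other three terms: $(s\pi,\mathrm{id})^*(1\times v) = v$ and $(s\pi,\mathrm{id})^*(v\times 1) = \pi^*s^*v = \pi^*s^*e(T_\pi E)$ (again by Lemma \ref{lem:VolClass}), while $(s\pi,\mathrm{id})^*(p^*s^*e(T_\pi E)) = \pi^*s^*e(T_\pi E)$; likewise $(\pi_1)_!(1\times v) = \pi^*\pi_!v = 1$ and $(\pi_1)_!(v\times 1) = (\pi_1)_!(p^*s^*e(T_\pi E)) = 0$ (both factoring through $(\pi_1)_!(1) = 0$). Combining with the signs of the identity gives $(s\pi,\mathrm{id})^*(\mathrm{RHS}) = v - v - \pi^*s^*e + \pi^*s^*e = 0$ and $(\pi_1)_!(\mathrm{RHS}) = 1 - 1 = 0$; the remaining operations vanish by symmetry, with $(s\times_X s)^*(\mathrm{RHS})=0$ following from an analogous excess-intersection computation $(s\times_X s)^*\Delta_!(1) = s^*e(T_\pi E)$. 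Finally, the classical fibrewise K\"unneth formula
\[
\Delta_!(1)|_{W_g\times W_g} = 1\times[\mathrm{pt}]^\# + [\mathrm{pt}]^\#\times 1 + (-1)^n \sum_i a_i \times a_i^\#
\]
has its first two summands cancelled by $-(1\times v + v\times 1)|_{W_g\times W_g}$, and the remaining middle-degree part $(-1)^n \sum_i a_i \times a_i^\#$ is identified with the LHS fibre restriction by the Poincar\'e-duality change of basis used in the preceding proposition.

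\textbf{Main obstacle.} The hardest step is the excess-intersection calculation $(s\pi,\mathrm{id})^*\Delta_!(1) = v$, which requires identifying the normal bundle of $s \colon X \hookrightarrow E$ with the restriction of $T_\pi E$ and checking transversality. Close behind is the sign bookkeeping: one must confirm that the $(-1)^n$ on the left exactly matches the $(-1)^n$ coming from the diagonal's middle-degree K\"unneth piece on the right, and that the section and Gysin operations listed really do form a complete detection set for classes in $H^{2n}(E\times_X E;\bk)$ under the spectral-sequence collapse.
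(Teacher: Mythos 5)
Your proposal is correct and follows essentially the same route as the paper: both pin down the class in $H^{2n}(E\times_X E;\bk)$ by its restriction to a fibre together with its behaviour over the two section slices $s(X)\times_X E$ and $E\times_X s(X)$, and both identify the fibre restriction of $\Delta_!(1)$ via the Milnor--Stasheff diagonal formula and the Poincar\'e-duality change of basis. The differences are only in packaging: the paper encodes the slice conditions as a lift to the relative cohomology of the pair (whose relative Serre spectral sequence has lowest row $2n$), whereas you work with the collapsed absolute decomposition and a slightly redundant detection set (the Gysin maps $(\pi_i)_!$ and $(s\times_X s)^*$ are not actually needed), and you justify $(s\pi,\mathrm{id})^*\Delta_!(1)=v$ by a transverse base-change argument where the paper reads it off directly from the definition of $v$ as the fibrewise Poincar\'e dual of the section.
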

\begin{proof}
The class $\epsilon$ satisfies $s^*(\epsilon)=0$, by definition, so lifts to a class $\rho \in H^n(E, s(X) ; \cH(g)_\bk)$. Thus the class $\epsilon \times \epsilon$ lifts to the class
\[\rho \times \rho \in H^{2n}(E \times_X E, (E \times_X s(X)) \cup(s(X) \times_X E) ; \cH(g)_\bk^{\otimes 2}).\]

The Serre spectral sequence for the relative fibration
\[(W_g \times W_g, W_g \vee W_g) \lra (E \times_B E, (E \times_X s(X)) \cup(s(X) \times_X E)) \overset{p}\lra X\]
has lowest row the $2n$th, so there is an isomorphism
\[H^0(X; \cH^n(W_{g}, *;\bk) \otimes \cH^n(W_{g}, *;\bk)) \overset{\sim}\lra H^{2n}(E \times_X E, (E \times_X s(X)) \cup(s(X) \times_X E); \bk),\]
and hence the class $\lambda_{1,2}(\rho \times \rho)$ is characterised by its restriction to a single fibre. Thus the class $\lambda_{1,2}({\epsilon} \times {\epsilon})$ is characterised by its restriction to a single fibre and the fact that it lifts to a class in $H^{2n}(E \times_X E , (E \times_X s(X)) \cup(s(X) \times_X E) ; \bk)$.

By definition, the restriction of $\epsilon$ to a fibre of $\pi$ corresponds, under the universal coefficient isomorphism
\[H^n(W_g; H_n(W_{g};\bk)) \cong \mr{Hom}(H_n(W_{g};\bk),H_n(W_{g};\bk)),\]
to the identity map $\mr{id}$. Thus the restriction of $\epsilon \times \epsilon$ to a fibre of $p$ corresponds, under the universal coefficient isomorphism
\[H^n(W_g \times W_g; H_n(W_{g};\bk)^{\otimes 2}) \cong \mr{Hom}(H_n(W_{g};\bk)^{\otimes 2},H_n(W_{g};\bk)^{\otimes 2}),\]
to $\mr{id} \otimes \mr{id}$, and so the restriction of $\lambda_{1,2}(\epsilon \times \epsilon)$ to a fibre of $p$ corresponds, under the universal coefficient isomorphism
\[H^n(W_g \times W_g; \bk) \cong \mr{Hom}(H_n(W_{g};\bk)^{\otimes 2}, \bk),\]
to the map $\lambda$. Concretely for classes $x, y \in H_n(W_g;\bk)$ we evaluate this by writing $x = X \frown [W_g]$ and $y = Y \frown [W_g]$ and then
\begin{equation}\label{eq:IntForm}
\lambda(x, y) = \langle X \cdot Y, [W_g] \rangle.
\end{equation}

Our strategy will now be to show that $\Delta_!(1) - 1 \times {v} - {v} \times 1 + p^*s^*(v)$ also lifts to a class in $H^{2n}(E \times_X E , (E \times_X s(X)) \cup(s(X) \times_X E) ; \bk)$, and that its restriction to a single fibre is also, under the universal coefficient isomorphism, the map $\lambda$.

The map $\Delta \colon E \to E \times_X E$ has oriented normal bundle and so a normal Thom class which may be extended to a class $\Delta_!(1) \in H^{2n}({E} \times_X {E} ; \bk)$. Let $x_1 \coloneqq \Delta_!(1)$. Pulled back along 
\[f_1 \coloneqq s \circ \pi \times \mr{id} \colon E \overset{\cong}\lra s(X) \times_X {E} \subset E \times_X E\]
the class $x_1$ is $v$, the the fibrewise Poincar{\'e} dual to  $s(X) \subset E$, and so the class $x_2 \coloneqq x_1 - 1 \times v$ vanishes when pulled back along $f_1$. Pulled back along 
\[f_2 \coloneqq \mr{id} \times s \circ \pi \colon E \overset{\cong}\lra {E} \times_X s(X) \subset E \times_X E\]
the class $x_2$ is $v - \pi^*s^*(v)$, and so the class $x_3 \coloneqq x_2 - v \times 1 + p^*s^*(v)$ vanishes when pulled back along $f_2$. Pulling $x_3$ back along $f_1$ again the term $x_2$ vanishes, and $- v \times 1 + p^*s^*(v)$ becomes $- \pi^*s^*(v) + \pi^*s^*\pi^*s^*(v)=0$ (as $s^*\pi^*=\mr{id}$). Thus
\[\Delta_!(1) - v \times 1 - 1 \times v + p^*s^*(v)\]
vanishes on $(E \times_X s(X)) \cup(s(X) \times_X E)$.

If we restrict to a single fibre $W_g \times W_g$ we may use the usual formula for the decomposition of the diagonal. Write $\{a_i\}$ for a basis for $H^n(W_{g};\bk)$ and $\{a_i^\#\}$ for the dual basis, characterised by $\langle a_i \cdot a_j^\#, [W_g] \rangle = \delta_{ij}$. Then, by \cite[Theorem 11.11]{MilnorStasheff} the class $\Delta_!(1)$ restricts to
\[{v} \otimes 1 + 1 \otimes {v} + \sum_{i} (-1)^n a_i \otimes a_i^\#\]
on the fibre $W_g \times W_g$. Thus the class $\Delta_!(1) - 1 \times {v} - {v} \times 1 + p^*s^*(v)$ restricts to $\sum_{i} (-1)^n a_i \otimes a_i^\# \in H^n(W_g;\bk) \otimes H^n(W_g;\bk)$. Evaluating this on classes $x = X \frown [W_g]$ and $y = Y \frown [W_g]$ as above gives
\begin{align*}
\left\langle \sum_{i} (-1)^n a_i \otimes a_i^\#, x \otimes y \right\rangle &= \sum_i \langle a_i, x \rangle  \langle a_i^\#, y \rangle\\
&= \sum_i \langle a_i \cdot X, [W_g] \rangle  \langle a_i^\# \cdot Y, [W_g] \rangle. 
\end{align*}
Evaluated at $X=a_k^\#$ and $Y= a_l$ this gives
\[\left\langle \sum_{i} (-1)^n a_i \otimes a_i^\#, x \otimes y \right\rangle= \sum_i \delta_{ik} (-1)^n \delta_{il} = (-1)^n \delta_{kl}\]
which is the same as \eqref{eq:IntForm} evaluated on these elements. As $\{a_k^\# \otimes a_l\}$ form a basis of $H^n(W_{g};\bk)^{\otimes 2}$ it follows that the restriction of $\Delta_!(1) - 1 \times {v} - {v} \times 1 + p^*s^*(v)$ to a single fibre also corresponds, under the universal coefficient isomorphism, to the map $\lambda$.

By the characterisation of $\lambda_{1,2}({\epsilon} \times {\epsilon})$ above, we therefore have
\[\lambda_{1,2}({\epsilon} \times {\epsilon}) = \Delta_!(1) - 1 \times {v} - {v} \times 1 + p^*s^*(v).\]
Finally, we have $s^*(v) = s^*(e(T_\pi E))$.
\end{proof}

The following proposition generalises the Contraction Formula of Kawazumi and Morita \cite[Theorem 6.23]{KMunpub} to higher dimensions.\footnote{There is an overall difference of sign from \cite{KMunpub}. This seems to be due to identification $\mu' \colon H_n(W_g;\bk) \overset{\sim}\to H_n(W_g;\bk)^\vee$ used by Kawazumi and Morita (pp.\ 16-17), which in our notation is given by the formula $\mu'(v)(u) = \lambda(u, v)$. Under the universal coefficient isomorphism $H_n(W_g;\bk)^\vee \cong H^n(W_g;\bk)$ this is not the inverse Poincar{\'e} duality isomorphism, but rather is $(-1)^n$ times it. We instead use the more natural identification given by Poincar{\'e} duality.}

\begin{proposition}\label{prop:ContractEpsilon}
For $k \geq 2$ and $|c| + n(k-2) \geq 0$ we have
\begin{align*}
\lambda_{1,2}({\pi}_!({\epsilon}^k \cdot \ell^*c)) &= {\pi}_!({\epsilon}^{k-2} \cdot e(T_{{\pi}} {E}) \cdot \ell^*c) + s^* e(T_\pi E) \cdot {\pi}_!({\epsilon}^{k-2} \cdot \ell^*c)\\
& \qquad- \begin{cases}
2s^*(\ell^*c) & \text{ if } k=2 ,\\
0 & \text{ else.}
\end{cases}
\end{align*}

For $a \geq 1$ and $|c| + n(a-2) \geq 0$, and $b \geq 1$ and $|c'| + n(b-2) \geq 0$, we have
\begin{align*}
\lambda_{a, a+1}({\pi}_!({\epsilon}^a \cdot \ell^*c) \cdot {\pi}_!({\epsilon}^{b} \cdot \ell^*c'))& = {\pi}_!({\epsilon}^{\{1,2,\ldots, a-1\}} \cdot {\epsilon}^{\{a+2, \ldots, a+b\}}\cdot \ell^*(c \cdot c'))\\
&\qquad + s^*e(T_\pi E) \cdot \pi_!(\epsilon^{a-1} \cdot \ell^*c) \cdot \pi_!(\epsilon^{b-1} \cdot \ell^*c') \\
&\qquad -
\begin{cases}
s^*(\ell^*c) \cdot {\pi}_!({\epsilon}^{b-1} \cdot \ell^*c') & \text{ if } a=1,\\
0 & \text{ else,}
\end{cases}\\
&\qquad -
\begin{cases}
{\pi}_!({\epsilon}^{a-1} \cdot \ell^*c) \cdot s^*(\ell^*c') & \text{ if } b=1,\\
0 & \text{ else.}
\end{cases}
\end{align*}
\end{proposition}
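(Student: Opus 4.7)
The plan is to reduce both identities to the computation of $\lambda_{1,2}(\epsilon \times \epsilon)$ on $E \times_X E$ established in Lemma~\ref{lem:ContractionExterior}, combined with the projection formula for $\pi_!$ and the observation that the class $v$ of Lemma~\ref{lem:VolClass} satisfies
\[\pi_!(v \cdot \alpha) = s^*(\alpha) \qquad \text{for all } \alpha \in H^*(E;\bk).\]
This identity holds because $v$ is pulled back from the Thom class of the normal bundle of $s(X) \subset E$, so $v = s_*(1)$ and hence $v \cdot \alpha = s_*(s^*\alpha)$ by the projection formula for $s$; then $\pi_! s_* = (\pi \circ s)_* = \mr{id}$. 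Combined with $s^*(\epsilon) = 0$, this kills most terms involving $v$.

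For the first identity, I pull Lemma~\ref{lem:ContractionExterior} back along $\Delta \colon E \to E \times_X E$. Using $\Delta^*(\epsilon \times \epsilon) = \epsilon^2$, the self-intersection formula $\Delta^*\Delta_!(1) = e(T_\pi E)$ (the normal bundle of $\Delta$ is $T_\pi E$), the identities $\Delta^*(1 \times v) = \Delta^*(v \times 1) = v$, and $p \circ \Delta = \pi$, I obtain
\[\lambda(\epsilon \cdot \epsilon) = e(T_\pi E) - 2v + \pi^*s^*(e(T_\pi E)) \in H^{2n}(E;\bk).\]
Then $\lambda_{1,2}(\pi_!(\epsilon^k \cdot \ell^* c)) = \pi_!\bigl(\lambda(\epsilon \cdot \epsilon) \cdot \epsilon^{k-2} \cdot \ell^* c\bigr)$; expanding the three terms and applying the projection formula together with $\pi_!(v \cdot \epsilon^{k-2} \cdot \ell^* c) = s^*(\epsilon^{k-2} \cdot \ell^* c)$, which by $s^*(\epsilon) = 0$ vanishes unless $k = 2$ and in that case equals $s^*(\ell^* c)$, yields the first formula.

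For the second identity, I work on the fibre product $E \times_X E$ with projections $q_1, q_2$ and $p = \pi \circ q_i$. The base-change isomorphism $\pi^*\pi_! = (q_1)_! q_2^*$ for this cartesian square, combined with the projection formula, rewrites
\[\pi_!(\epsilon^a \cdot \ell^* c) \cdot \pi_!(\epsilon^b \cdot \ell^* c') = p_!\bigl(q_1^*(\epsilon^a \cdot \ell^* c) \cdot q_2^*(\epsilon^b \cdot \ell^* c')\bigr).\]
Under this identification the contraction $\lambda_{a, a+1}$ applies $\lambda$ to one copy of $q_1^*\epsilon$ and one copy of $q_2^*\epsilon$, i.e.\ to $\epsilon \times \epsilon$ on $E \times_X E$. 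Substituting the four terms of Lemma~\ref{lem:ContractionExterior}: the $\Delta_!(1)$ term collapses via the projection formula for $\Delta$ and $p \circ \Delta = \pi$ to $\pi_!(\epsilon^{a+b-2} \cdot \ell^*(c c'))$; the $1 \times v$ and $v \times 1$ terms, using base change and $\pi_!(v \cdot -) = s^*(-)$, produce $\pi_!(\epsilon^{a-1} \ell^* c) \cdot s^*(\epsilon^{b-1} \ell^* c')$ and $s^*(\epsilon^{a-1} \ell^* c) \cdot \pi_!(\epsilon^{b-1} \ell^* c')$ respectively, which by $s^*(\epsilon) = 0$ survive only when $b = 1$ or $a = 1$; and the $p^*s^*(e(T_\pi E))$ term pulls out of $p_!$ by the projection formula. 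Assembling these four contributions gives the second formula.

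The main technical obstacle is bookkeeping: keeping the four terms of Lemma~\ref{lem:ContractionExterior} aligned with the correct base-change and projection-formula identities on $E \times_X E$, and verifying that the signs in the statement are consistent with the Koszul conventions used in defining the twisted classes. All substantive geometric input is already contained in Lemma~\ref{lem:ContractionExterior}; what remains is multilinear manipulation.
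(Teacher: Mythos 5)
Your proposal is correct and follows essentially the same route as the paper: both parts are reduced to Lemma \ref{lem:ContractionExterior}, the first by pulling back along the diagonal with $\Delta^*\Delta_!(1)=e(T_\pi E)$, the second by rewriting the product of Gysin integrals as $p_!\bigl(\pi_1^*(\epsilon^a\ell^*c)\cdot\pi_2^*(\epsilon^b\ell^*c')\bigr)$ on $E\times_X E$, and then expanding using the projection formula, $\pi_!(v\cdot -)=s^*(-)$, and $s^*(\epsilon)=0$. The only difference is cosmetic: you justify the fibre-product identity via base change and the identity $\pi_!(v\cdot\alpha)=s^*(\alpha)$ via $v=s_!(1)$, steps the paper asserts directly.
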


\begin{proof}
The class $\lambda_{1,2}(\epsilon \cdot \epsilon)$ is obtained from $\lambda_{1,2}(\epsilon \times \epsilon)$ by pulling back along $\Delta \colon {E} \to {E}\times_X {E}$. As $\Delta^*\Delta_!(1) = e(T_\pi E)$, the Euler class of the vertical tangent bundle of $\pi$, by Lemma \ref{lem:ContractionExterior} it is given by
\[\lambda_{1,2}({\epsilon}^2) = e(T_{{\pi}} {E}) + \pi^* s^* e(T_\pi E) - 2{v} \in H^{2n}({E} ; \bk).\]
Thus we have
\[\lambda_{1,2}({\pi}_!({\epsilon}^k \cdot \ell^*c)) = {\pi}_!((e(T_{{\pi}} {E})+ \pi^* s^*e(T_\pi E) - 2{v}) \cdot {\epsilon}^{k-2} \cdot \ell^*c).\]
Expanding this out, the first two terms give (using the projection formula), the first two claimed terms, and we also obtain a term $-{\pi}_!(2{v} \cdot {\epsilon}^{k-2} \cdot \ell^*c)$. As $v$ is the fibrewise Poincar{\'e} dual of $s \colon X \to E$, we have 
\[-\pi_!(2v \cdot \epsilon^{k-2} \cdot \ell^*c) = -s^*(2 \cdot \epsilon^{k-2} \cdot \ell^*x).\]
As $s^*(\epsilon)=0$, if $k > 2$ this vanishes, and if $k=2$ it is $-2s^*(\ell^*c)$. This leads to the formula stated above.

For the second part, there are projection maps $\pi_1, \pi_2 \colon {E} \times_X {E} \to {E}$ and hence classes
\[{\epsilon}_1 = \pi_1^*({\epsilon}) \in H^n({E} \times_X {E} ; \cH(g)_\bk) \quad\text{ and }\quad {\epsilon}_2 = \pi_2^*({\epsilon}) \in H^n({E} \times_X {E} ; \cH(g)_\bk),\]
and if we write $p \colon {E} \times_X {E} \to X$ then we have
\[{\pi}_!({\epsilon}^a \cdot \ell^*c) \cdot {\pi}_!({\epsilon}^b \cdot \ell^*c') = p_!({\epsilon}_1^a \cdot {\epsilon}_2^{b} \cdot \pi_1^*(\ell^*c) \cdot \pi_2^*(\ell^*c'))\]
so we must calculate
\[\lambda_{1,2}({\epsilon}_1 \cdot {\epsilon}_2) \in H^{2n}({E} \times_X {E}; \bQ).\]
But this is precisely what was called $\lambda_{1,2}({\epsilon} \times {\epsilon})$ in Lemma \ref{lem:ContractionExterior}, and was shown there to be $\Delta_!(1) - {v} \times 1 - 1 \times {v} + p^*s^*(v)$, so we get
\begin{align*}
&\lambda_{a,a+1}(\pi_!(\epsilon^a \cdot \ell^*c) \cdot \pi_!(\epsilon^{b} \cdot \ell^*c')) =\\ &\quad\quad p_!({\epsilon}_1^{a-1} \cdot (\Delta_!(1) - {v} \times 1 - 1 \times {v}  + p^*s^*(v)) \cdot {\epsilon}_2^{b-1} \cdot \pi_1^*(\ell^*c) \cdot \pi_2^*(\ell^*c')).
\end{align*}
When we expand this, the first term simplifies to ${\pi}_!({\epsilon}^{a-1} \cdot {\epsilon}^{b-1} \cdot \ell^*(c \cdot c')) = \pi_!(\epsilon^{\{1,2,\ldots, a-1\}} \cdot \epsilon^{\{a+2, \ldots, a+b\}}\cdot \ell^*(c \cdot c'))$, and the last term, using the identity $s^*(v) = s^* e(T_\pi E)$, simplifies to $s^* e(T_\pi E) \cdot \pi_!(\epsilon^{a-1} \cdot \ell^*c) \cdot \pi_!(\epsilon^{b-1} \cdot \ell^*c')$, so it remains to analyse the other two terms.

We can write the second term as
\[p_!(({\epsilon}^{a-1} \cdot \ell^*c \cdot {v}) \times ({\epsilon}^{b-1} \cdot \ell^*c')) = {\pi}_!({\epsilon}^{a-1} \cdot \ell^*c \cdot {v}) \cdot {\pi}_!({\epsilon}^{b-1} \cdot \ell^*c')\]
and we can evaluate the first factor, as $v$ is the fibrewise Poincar{\'e} dual of $s \colon X \to E$ so
\[{\pi}_!({\epsilon}^{a-1} \cdot \ell^*c \cdot {v}) = s^*({\epsilon}^{a-1} \cdot \ell^*c)\]
As $s^*(\epsilon)=0$ this vanishes for $a>1$, and is $s^*(\ell^*c)$ for $a=1$, in which case the second term is
\[p_!((\ell^*c \cdot {v}) \times ({\epsilon}^{b-1} \cdot \ell^*c')) = s^*(\ell^*c) \cdot {\pi}_!({\epsilon}^{b-1} \cdot \ell^*c').\]
 The third term can be analysed analogously.
\end{proof}

At this point we add a further assumption to our bundle \eqref{eq:BundleData}, namely that the composition $\ell \circ s \colon X \to B$ is nullhomotopic. This means that the terms in Proposition \ref{prop:ContractEpsilon} involving $s^*e(T_\pi E)$ vanish, and the terms involving $s^*(\ell^*c)$ vanish (if $|c| + n(1-2) \geq 0$ then $|c| \geq n$ and so $s^*(\ell^*c)=0$). Under this assumption, we define an extension
\[\mathcal{P}(-, \cV)_{\geq 0}^{2g} \otimes {\det}^{\otimes n} \colon \mathsf{(s)Br}_{2g} \lra \mathsf{Gr}(\bk\text{-mod})\]
of the functor $\mathcal{P}(-, \cV)_{\geq 0} \otimes {\det}^{\otimes n}$ defined on $\mathsf{FB}$, in the following way. We first extend $\det$ by
\begin{enumerate}[\indent (i)]

\item sending $(\mr{id}_{S \setminus \{x,y\}} , \varnothing, (x,y)) \colon S\setminus \{x,y\} \to S$ to the map  $x \wedge y \wedge -$,

\item sending $(\mr{id}_{S \setminus \{x,y\}}, (x,y), \varnothing) \colon S \to S \setminus \{x,y\}$ to the inverse of the map in (i),

\item extending to general morphisms in $\mathsf{(s)Br}_{2g}$ by writing them as the composition of bijections and morphisms of the above two types.
\end{enumerate}
We next extend $\mathcal{P}(-, \cV)_{\geq 0}$ to $\mathcal{P}(-, \cV)_{\geq 0}^{2g}$ by
\begin{enumerate}[\indent (i)]
\item sending $(\mr{id}_{S \setminus \{x,y\}} , \varnothing, (x,y)) \colon S\setminus \{x,y\} \to S$ to the map which adds the labelled part $(\{x,y\}, 1)$,

\item sending $(\mr{id}_{S \setminus \{x,y\}}, (x,y), \varnothing) \colon S \to S \setminus \{x,y\}$ to the map which sends $(\{P_i\}, \{c_i\})$ to
\begin{enumerate}[(a)]
\item if some $P_i$ is $\{x,y\}$ and $|c_i|=0$, so $c_i=\lambda \cdot 1$, then we remove this part and multiply by the scalar $\lambda\cdot(-1)^n \cdot 2g$,

\item if some $P_i$ contains $\{x,y\}$ (and $|c_i|>0$ if $P_i=\{x,y\}$) then we change the part to $P_i \setminus \{x,y\}$ and change the label to $e \cdot c_i$, 

\item if $x$ and $y$ lie in different parts $P_i$ and $P_j$, then we merge these into a new part $(P_i \setminus \{x\}) \cup (P_j \setminus \{y\})$ labelled by $c_i \cdot c_j$.
\end{enumerate}
\item extending to general morphisms in $\mathsf{(s)Br}_{2g}$ by writing them as the composition of bijections and morphisms of the above two types.
\end{enumerate}

\begin{proposition}
The $\Phi_S$ determine a natural transformation of functors
\[\Phi \colon \mathcal{P}(-, \cV)_{\geq 0}^{2g} \otimes {\det}^{\otimes n} \Longrightarrow H^{*}(X ; \cH(g)_\bk^{\otimes -}) \colon \mathsf{(s)Br}_{2g} \lra \mathsf{Gr}(\bk\text{-mod}).\]
\end{proposition}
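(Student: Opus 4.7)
The plan is to verify naturality on a set of generators of $\mathsf{(s)Br}_{2g}$ and to deduce naturality in general by functoriality. Every morphism $(f, m_S, m_T)\colon S\to T$ factors as a composition of bijections, of morphisms $(\mathrm{id}_{S\setminus\{x,y\}}, (x,y), \varnothing)\colon S \to S\setminus\{x,y\}$ (which act topologically by $\lambda_{x,y}$), and of morphisms $(\mathrm{id}_{S\setminus\{x,y\}}, \varnothing, (x,y))\colon S\setminus\{x,y\}\to S$ (which act topologically by $\omega_{x,y}$). Naturality with respect to bijections is already established in Section \ref{sec:functoriality-FB}, so it suffices to verify naturality on the two remaining classes of generators and to check that the resulting assignment respects the defining relations of $\mathsf{(s)Br}_{2g}$.

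For the $\omega$-generator, the key input is the identity $\omega_{x,y}(1) = \pi_!(\epsilon^{\{x,y\}})$ proved at the start of Section \ref{sec:functoriality-brauer}. Combined with the multiplicativity and $\Sigma_S$-equivariance that are essentially built into Lemma \ref{lem:twisted-mmm-welldefined}, this says that $\omega_{x,y}\circ \Phi_{S\setminus\{x,y\}}$ multiplies $\kappa(\{P_i\},\{c_i\})$ by the new factor $\pi_!(\epsilon^{\{x,y\}})$, so sends it to the class of the labelled partition with an added part $\{x,y\}$ with label $1$. This matches rule (i) of the extension of $\mathcal{P}(-, \cV)_{\geq 0}^{2g}$ on $\omega$-generators, and the wedge factor $x\wedge y\wedge(-)$ on the $\det^{\otimes n}$ side records the Koszul sign appearing when $n$ is odd and one permutes the two new odd-degree $\epsilon$ classes past the existing ones.

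For the $\lambda$-generator the work is done by the Contraction Formula (Proposition \ref{prop:ContractEpsilon}). Under the standing assumption that $\ell\circ s$ is nullhomotopic, all terms involving $s^*e(T_\pi E)$ vanish, and the terms involving $s^*(\ell^*c)$ with $c$ a label of a size-$1$ part also vanish because such a label has $|c|\geq n$ and $s^*(\epsilon)=0$. The three remaining cases match the three clauses (a)--(c) in the definition of the extended functor on $\lambda$-generators: if $x,y$ lie in distinct parts one uses the second half of Proposition \ref{prop:ContractEpsilon} to produce the merged part with product label $c\cdot c'$; if $x,y$ lie in a common part of size $\geq 3$, or of size $2$ with nontrivial label, one uses the first half of Proposition \ref{prop:ContractEpsilon} to produce the shrunken part with label multiplied by $e$; and the exceptional case of a part $\{x,y\}$ with label $\lambda\cdot 1$ reduces to $\lambda_{x,y}(\omega_{x,y}(1)) = \lambda\cdot(-1)^n\cdot 2g$, recovering the charge $2g$ of the Brauer category up to the expected sign.

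The main obstacle is bookkeeping Koszul signs, particularly in the signed case $n$ odd: reversing an ordered pair in a matching of $\mathsf{sBr}_{2g}$ produces a sign that must be cancelled against signs from the $(-1)^n$-symmetry of $\lambda$ and from permuting odd-degree $\epsilon$ classes. These are precisely the signs absorbed by the $\det^{\otimes n}$ twist, so the signed case follows from the unsigned one by a routine sign-tracking argument. Functoriality of the extended assignment reduces to a combinatorial check that the rules defining the functor on the $\omega$- and $\lambda$-generators respect the defining relations of $\mathsf{(s)Br}_{2g}$ (associativity of merging parts, commutation of disjoint contractions, and the closed-loop rule $\lambda_{x,y}\omega_{x,y}(1) = (-1)^n\cdot 2g$); each of these is immediate from clauses (a)--(c).
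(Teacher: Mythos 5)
Your proposal is correct and follows essentially the paper's own argument: naturality is reduced to the generators of $\mathsf{(s)Br}_{2g}$, where it is exactly the content of the identity $\omega_{x,y}(1)=\pi_!(\epsilon^{\{x,y\}})$, of Proposition \ref{prop:ContractEpsilon}, and of the vanishing of the $s^*e(T_\pi E)$- and $s^*\ell^*c$-terms coming from the nullhomotopy of $\ell\circ s$ (note the vanishing is purely for this reason, not because $s^*(\epsilon)=0$, and it is also needed for size-$2$ parts with nontrivial label, not only size-$1$ parts). The only divergence is your derivation of the scalar in clause (ii)(a) from the fibrewise composite $\lambda_{x,y}\circ\omega_{x,y}=(-1)^n\cdot 2g$ applied to $\omega_{x,y}(1)=\pi_!(\epsilon^{\{x,y\}})$, whereas the paper computes $\lambda_{x,y}(\pi_!(\epsilon^{\{x,y\}}\cdot\ell^*c_i))=\pi_!(e(T_\pi E)\cdot\ell^*c_i)-2s^*\ell^*c_i=\lambda(\chi(W_g)-2)$; both give $\lambda\cdot(-1)^n\cdot 2g$.
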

\begin{proof}
This follows almost tautologically from Proposition \ref{prop:ContractEpsilon}, because we have defined the functor $\mathcal{P}(-, \cV)_{\geq 0}^{2g}$ to transform in the way the twisted Miller--Morita--Mumford classes do. The only subtle point is the scalar in (ii) (a) above, but that this is correct comes from the following calculation, when $c_i = \lambda \cdot 1$:
\begin{equation*}
\lambda_{x,y}(\pi_!(\epsilon^{\{x,y\}} \cdot \ell^*c_i)) = \pi_!(e(T_\pi E)\cdot\ell^*c_i)-2s^*\ell^*c_i = \lambda(\chi(W_g)-2) = \lambda \cdot (-1)^n \cdot 2g.\qedhere
\end{equation*}
\end{proof}

Finally, we recognise that $\mathcal{P}(-, \cV)_{\geq 0}^{2g} \otimes {\det}^{\otimes n}$ is the left Kan extension to $\mathsf{(s)Br}_{2g}$ of the completely analogous functor
\[\mathcal{P}(-, \cV)_{\geq 0}' \otimes {\det}^{\otimes n}  \colon \mathsf{d(s)Br} \lra \mathsf{Gr}(\bk\text{-mod}),\]
where $\mathcal{P}(S, \cV)_{\geq 0}'$ is the submodule of $\mathcal{P}(S, \cV)_{\geq 0}$ generated by those labelled partitions of $S$ having no part of size 2 labelled by the multiplicative unit $1 \in \cV$. Note that by this condition the scalar $2g$ no longer arises when applying structure map, so is neglected from the notation.

\begin{remark}\label{rem:Bases3}
As in Remarks \ref{rem:Bases1} and \ref{rem:Bases2}, if we choose a homogeneous basis $\cB$ of $\cV$ containing the multiplicative unit $1 \in \cV$ as an element, then $\mathcal{P}(S, \cV)_{\geq 0}'$ has a homogeneous basis given by partitions of $S$ labelled by elements of $\cB$, having no parts (i) of size 0 with label of degree $ \leq 2n$, (ii) of size 1 with label of degree $<n$, or (iii) of size 2 labelled by $1 \in \cB$. These remarks show that the $\Sigma_S$-action on $\mathcal{P}(S, \cV)$, $\mathcal{P}(S, \cV)_{\geq 0}$, and $\mathcal{P}(S, \cV)_{\geq 0}'$ makes them all into permutation modules.
\end{remark}

\subsection{Multiplication}\label{sec:functoriality-mult}
The functor
\[H^{*}(X ; \cH(g)_\bk^{\otimes -}) \colon \mathsf{(s)Br}_{2g} \lra \mathsf{Gr}(\bk\text{-mod})\]
has the structure of a commutative ring object in this category of functors, under the Day convolution product. This is equivalent to saying that it may be equipped with a lax symmetric monoidality. To do so, for $S, T \in \mathsf{(s)Br}_{2g}$ we let
\[H^{*}(X ; \cH(g)_\bk^{\otimes S}) \otimes H^{*}(X ; \cH(g)_\bk^{\otimes T}) \lra H^{*}(X ; \cH(g)_\bk^{\otimes S \sqcup T})\]
be given by the cup product. It is an elementary verification that this defines a symmetric lax monoidality, recalling that the symmetry for the monoidal structure on $\mathsf{Gr}(\bk\text{-mod})$ includes the Koszul sign rule.

The functor $\mathcal{P}(-, \cV)_{\geq 0}^{2g} \otimes {\det}^{\otimes n}$ may also be equipped with the structure of a commutative ring object, making $\Phi$ a morphism of commutative rings. It is easiest to describe commutative ring structures on $\mathcal{P}(-, \cV)_{\geq 0}^{2g}$ and $\det$ separately, and then take their product. For $S, T \in \mathsf{(s)Br}_{2g}$ we let
\[\mathcal{P}(S, \cV)_{\geq 0}^{2g} \otimes \mathcal{P}(T, \cV)_{\geq 0}^{2g} \lra \mathcal{P}(S \sqcup T, \cV)_{\geq 0}^{2g}\]
be given by disjoint union of partitions, and we let
\[(\det \bk^S) \otimes (\det \bk^T) \lra \det \bk^{S \sqcup T}\]
be given by exterior product of volume forms. It is again elementary to verify that these define symmetric lax monoidalities. By our description of $\Phi$ it commutes with these symmetric lax monoidalities, and hence is a morphism of commutative ring objects.

Finally, the analogous discussion provides
\[\mathcal{P}(-, \cV)_{\geq 0}' \otimes {\det}^{\otimes n}  \colon \mathsf{d(s)Br} \lra \mathsf{Gr}(\bk\text{-mod})\]
with a commutative ring structure.

\subsection{Stabilisation}\label{sec:Stabilisation}
If we have a smooth $W_g$-bundle $\pi \colon E \to X$ with section $s \colon X \to E$, and this section has an extension to a fibrewise embedding $d \colon X \times D^{2n} \to E$, then we can form the fibrewise connected sum of $E$ and $X \times W_1$ to obtain a smooth $W_{g+1}$-bundle $\pi' \colon E' \to X$, which is again equipped with a fibrewise embedding $d' \colon X \times D^{2n} \subset X \times W_{1,1} \to E'$. In this situation we may ask if the twisted Miller--Morita--Mumford classes of $\pi$ and $\pi'$ can be compared, and we will now show how. 

There is an identification of coefficient systems $\cH'(g+1)_\bk = \cH(g)_\bk \oplus \bk^2$ and so a projection map $r \colon \cH'(g+1)_\bk \to \cH(g)_\bk$ and an inclusion map $i \colon \cH(g)_\bk \to \cH'(g+1)_\bk$. Recall that $s^*(\epsilon)=0$, so $\epsilon$ lifts to a class $\rho \in H^n(E, X \times D^{2n};\cH(g)_\bk)$, which is in fact unique. Now under the maps
\begin{equation*}
\begin{tikzcd}
\rho \in H^n(E, X \times D^{2n};\cH(g)_\bk)  & H^n(E', X \times W_{1,1} ; \cH(g)_\bk) \dar{(\mr{id}, d')^*} \lar{\sim}[swap]{\mr{exc}}\\
\rho' \in H^n(E', X \times D^{2n}; \cH'(g+1)_\bk) \rar{r_*}& H^n(E', X \times D^{2n} ; \cH(g)_\bk)
\end{tikzcd}
\end{equation*}
the classes $\rho$ and $\rho'$ correspond, because just as in the proof of Lemma \ref{lem:ContractionExterior} these classes are determined by their restriction to a fibre and $(\mr{id}_{\cH'} \otimes r)(\omega') = (i \otimes \mr{id}_{\cH})(\omega) \in H^0(B;\cH'(g+1)_\bk \otimes \cH(g)_\bk)$.

Now if there are lifts $\ell' \colon E' \to B$ and $\ell \colon E \to B$ of the maps classifying the respective vertical tangent bundles of these two fibre bundles, which agree when restricted to the common subspace
\[E' \supset E \setminus X \times D^{2n} \subset E,\]
then for $k>0$ and $c \in H^{2*}(B;\bk)$ we may calculate
\begin{align*}
r_*\left(\pi'_!((\epsilon')^k \cdot (\ell')^*c)\right) &= \pi'_!(r_*(\rho')^k \cdot (\ell')^*c)\\
&= \pi'_!\left(((\mr{id}, d')^* \circ \mr{exc}^{-1}(\rho))^k \cdot (\ell')^*c\right)\\
&= \pi'_!\left((\mr{exc}^{-1}(\rho))^k \cdot  (\ell')^*c\right)\\
&= \pi_!\left(\rho^k \cdot \ell^*c\right)\\
&= \pi_!\left(\epsilon^k \cdot \ell^*c\right).
\end{align*}

For $k=0$ these are the standard Miller--Morita--Mumford classes, and their behaviour under fibrewise stabilisation is well understood.

\subsection{The isomorphism theorem}\label{sec:isomorphism-theorem}

We will now apply the constructions of the previous sections to certain universal bundles. See \cite[Definition 1.5]{grwcob} for more details on the following construction. To define these bundles, note that the fibration $\theta \colon B \to B\mr{SO}(2n)$ classifies an oriented vector bundle $\theta^*\gamma_{2n} \to B$, and a \emph{$\theta$-structure} on a $2n$-dimensional vector bundle is a bundle map to $\theta^*\gamma_{2n}$ (i.e.\ a continuous map which is a linear isomorphism on each fibre). Fix a $\theta$-structure $\hat{\ell}_{D^{2n}} \colon TD^{2n} \to \theta^*\gamma_{2n}$, and let
\[\mr{Bun}^\theta(TW_{g}, D^{2n}; \hat{\ell}_{D^{2n}})\]
denote the space of all $\theta$-structures $\hat{\ell} \colon TW_g \to \theta^*\gamma$ which are equal to $\hat{\ell}_{D^{2n}}$ when restricted to $D^{2n} \subset W_g$. This space has an action of the group $\diff(W_g, D^{2n})$ of diffeomorphisms which are the identity on $D^{2n} \subset W_g$, and we define
\[B\mr{Diff}^\theta(W_{g}, D^{2n}; \hat{\ell}_{D^{2n}}) \coloneqq \mr{Bun}^\theta(TW_{g}, D^{2n};  \hat{\ell}_{D^{2n}}) \sslash \mr{Diff}(W_{g}, D^{2n}).\]
This space carries a smooth $W_g$-bundle given by
\[E^\theta \coloneqq (\mr{Bun}^\theta(TW_{g}, D^{2n}; \hat{\ell}_{D^{2n}}) \times W_g) \sslash \mr{Diff}(W_{g}, D^{2n})\]
with $\pi \colon E^\theta \to B\mr{Diff}^\theta(W_{g}, D^{2n}; \hat{\ell}_{D^{2n}})$ given by projection to the first factor. This has a section $s$ given by the $\diff(W_g, D^{2n})$-equivariant map $\{*\} \subset D^{2n} \subset W_g$. The bundle $\pi$ has a vertical tangent bundle, which may be described as
\[T_\pi E^\theta \coloneqq (\mr{Bun}^\theta(TW_{g}, D^{2n}; \hat{\ell}_{D^{2n}}) \times TW_g) \sslash \mr{Diff}(W_{g}, D^{2n})\]
using the action of $\diff(W_g, D^{2n})$ on $TW_g$ via the derivative. Evaluation defines a bundle map $\hat{\ell} \colon T_\pi E^\theta \to \theta^*\gamma_{2n}$, which has an underlying map $\ell \colon E^\theta \to B$. The composition $\ell \circ s \colon B\mr{Diff}^\theta(W_{g}, D^{2n}; \hat{\ell}_{D^{2n}}) \to B$ is constant, as it underlies the $\theta$-structure on the bundle $s^*T_\pi E^\theta$, but this is trivial by our definition of the space of bundle maps. 

This discussion shows that we are in the position to apply the constructions of the previous sections, giving maps
\begin{equation}\label{eq:PhiS}
\Phi_S \colon \mathcal{P}(S, \cV)_{\geq 0}^{2g} \otimes (\det \bk^S)^{\otimes n} \lra H^{*}(B\mr{Diff}^\theta(W_{g}, D^{2n}; \hat{\ell}_{D^{2n}}) ; \cH(g)_\bk^{\otimes S})
\end{equation}
for each finite set $S$. The goal of this section is to show that these maps are isomorphisms in a range of degrees when $\bk=\bQ$, we restrict to a certain path-component of $B\mr{Diff}^\theta(W_{g}, D^{2n}; \hat{\ell}_{D^{2n}})$, and the following technical assumptions on $\theta \colon B \to B\mr{SO}(2n)$ are made:

\begin{assumption}\label{ass:1}
$B$ is $n$-connected, $H^*(B;\bQ)$ is concentrated in even degrees and is finite-dimensional in each degree, and any $\theta$-structure on $D^{2n}$ extends to one on $S^{2n}$.
\end{assumption}

\begin{remark}
One can reduce to the case that $B$ is $n$-connected without loss of generality, as in \cite[Lemma 7.16]{grwcob}; allowing $B$ to have cohomology in odd degrees is surely possible, but will require a more careful discussion of signs when defining twisted Miller--Morita--Mumford classes; the last condition is called being spherical in \cite{grwcob,grwstab1,grwstab2}, and is standard.
\end{remark}

We let $\hat{\ell}_g \in \mr{Bun}^\theta(W_{g}, D^{2n}; \hat{\ell}_{D^{2n}})$ be a $\theta$-structure which is standard (in the sense of \cite[Definition 7.2]{grwstab1}) when restricted to $W_{g,1} = W_g \setminus \mathrm{int}(D^{2n}) \subset W_g$. Under the assumption that $\theta$ is spherical such a $\hat{\ell}_g$ exists, by the evident generalisation of \cite[Lemma 7.9]{grwstab1} to arbitrary genus. We let
\[B\mr{Diff}^\theta(W_{g}, D^{2n}; \hat{\ell}_{D^{2n}})_{\hat{\ell}_g} \subset B\mr{Diff}^\theta(W_{g}, D^{2n}; \hat{\ell}_{D^{2n}})\]
denote the path component of $\hat{\ell}_g$.

\begin{theorem}\label{thm:Iso}
Let $2n >0$ and $2n \neq 4$, and suppose that $\theta \colon B \to B\mr{SO}(2n)$ is a tangential structure satisfying Assumption \ref{ass:1}. For any finite set $S$ the map
\[\mathcal{P}(S, \cV)_{\geq 0}  \otimes (\det \bQ^{S})^{\otimes n} \lra H^*(B\mr{Diff}^\theta(W_{g}, D^{2n}; \hat{\ell}_{D^{2n}})_{\hat{\ell}_g} ; \cH(g)_\bQ^{\otimes S}),\]
induced by $\Phi_S$, is an isomorphism in a range of cohomological degrees tending to infinity with $g$.\footnote{If $2n=4$ then the argument we will give shows that map is an isomorphism after taking the limit as $g \to \infty$. That one can sensibly form an induced map between limits depends on the discussion in Section \ref{sec:Stabilisation}.}
\end{theorem}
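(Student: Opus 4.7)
The plan is to deduce this twisted isomorphism from the Galatius--Randal-Williams stable cohomology theorem for $B\mr{Diff}^\theta(W_g, D^{2n};\hat{\ell}_{D^{2n}})_{\hat{\ell}_g}$ with constant coefficients, via the $S$-fold fibre product of the universal bundle and the identification of $\cH(g)_\bQ^{\otimes S}$-cohomology as a natural summand.

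First I would exploit the canonical splitting \eqref{eq:KMDecomposition}. Applied iteratively to the projection $E^{\times_X S} \to X$, whose fibre is $W_g^S$ with cohomology a tensor power of $H^*(W_g;\bQ) = \bQ \oplus \cH(g)^\vee_\bQ[-n] \oplus \bQ[-2n]$, and combined with the self-duality $\cH(g)_\bQ \cong \cH(g)^\vee_\bQ$ induced by the intersection form, this exhibits $H^*(X; \cH(g)_\bQ^{\otimes S})$ as the natural summand of $H^{*+n|S|}(E^{\times_X S};\bQ)$ cut out by the $\epsilon$-classes of Section 3.2 inserted into each factor. Thus it suffices to compute $H^*(E^{\times_X S};\bQ)$ in a stable range and extract the relevant summand.

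Second I would identify $E^{\times_X S}$, in the stable range, with the classifying space of $W_g$-bundles equipped with the fixed disc $D^{2n}$ together with $|S|$ ordered marked points in its complement. The discrepancy between the fibre product and this moduli space is the fat diagonal, whose contribution lies below the stable range by rational homological stability for $B\mr{Diff}^\theta$ with marked points (established for $2n \geq 6$ by \cite{grwstab1}, and for $2n=2$ by \cite{Ha}; this is exactly where the exclusion $2n \neq 4$ enters). The cohomology of the moduli space with marked points is then computed in a stable range by the Galatius--Randal-Williams parametrised Pontryagin--Thom construction \cite{grwcob} applied to an augmented tangential structure, and Assumption~\ref{ass:1} (spherical, $n$-connected, evenly-graded $H^*(B;\bQ)$) ensures that the answer is a free graded-commutative algebra on the expected tautological generators. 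Matching those generators against the $\epsilon$-summand from the first step recovers exactly the labelled partitions generating $\mathcal{P}(S,\cV)_{\geq 0}$, with the degree constraints (i)--(iii) of Definition~\ref{def:PartAlg} corresponding to the positivity conditions inherent in the infinite loop space formula.

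The main obstacle is the second step: extending the GRW machine to moduli of $W_g$-bundles with marked points in a form precise enough to match generators with $\mathcal{P}(S,\cV)_{\geq 0}$. An alternative that sidesteps introducing a new moduli space is to prove the case $S = \varnothing$ directly from \cite{grwcob} and then induct on $|S|$, using the Serre spectral sequence for $\pi \colon E \to X$ with coefficients in tensor powers of $\cH(g)_\bQ$. The Contraction Formula of Proposition~\ref{prop:ContractEpsilon}, combined with the triviality of $s^*\epsilon$ and the nullhomotopy of $\ell \circ s$, determines the structure maps on the Serre $E_2$-page exactly as in the definition of $\mathcal{P}(-,\cV)_{\geq 0}^{2g}$ of Section~\ref{sec:functoriality-brauer}; checking that no further differentials or extensions appear in the stable range is the principal technical burden, but is consistent with the functoriality already recorded in Sections~\ref{sec:functoriality-FB}--\ref{sec:functoriality-mult}.
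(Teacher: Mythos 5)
There is a genuine gap, and it sits exactly where the paper's key idea lives. The paper computes the twisted groups by introducing the auxiliary tangential structure $\theta \times Y$ with $Y = K(W^\vee, n+1)$ (resp.\ $K(W^\vee,n+2)$ for $n$ even), applying the Galatius--Randal-Williams theorem to $B\mr{Diff}^{\theta\times Y}(W_g,D^{2n};\hat{\ell}^Y_{D^{2n}})$, and then using the $GL(W)$-weight decomposition of the Serre spectral sequence of the fibration with fibre $\mr{map}_*((W_g,D^{2n}),(Y,y_0))$, together with Schur--Weyl duality for $GL$, to extract $H^*(X;\cH(g)_\bQ^{\otimes S})$ for all $S$ simultaneously. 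Neither of your routes supplies a substitute for this external input. In your first route the crucial reduction --- that $E^{\times_X S}$ agrees with the moduli space of bundles with $|S|$ \emph{distinct} marked points up to error ``below the stable range'' --- is false: the Thom classes of the diagonals (e.g.\ $\Delta_!(1)\in H^{2n}(E\times_X E;\bQ)$, which dies on the complement of the diagonal by the Gysin sequence) witness a genuine discrepancy already in degree $2n$, well inside the stable range once $g$ is large; homological stability does not remove it. Moreover the GRW theorems compute moduli of $\theta$-manifolds with boundary condition, not moduli with marked points, and producing the stable cohomology of $E^{\times_X S}$ is, via the very splitting \eqref{eq:KMDecomposition} you iterate in your first step, \emph{equivalent} to knowing $H^*(X;\cH(g)_\bQ^{\otimes T})$ for $|T|\le |S|$ --- so as it stands the argument is circular.

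The inductive alternative has the same circularity: the Serre spectral sequence of $\pi$ with coefficients in $\pi^*\cH(g)_\bQ^{\otimes S}$ only relates $H^*(E;\pi^*\cH(g)_\bQ^{\otimes S})$ to the groups $H^*(X;\cH(g)_\bQ^{\otimes T})$ with $|T|\le |S|+1$, and there is no independent stable computation of the former to feed into the induction; the degeneration and absence of extension problems that you defer as ``the principal technical burden'' is precisely what the paper obtains from the $GL(W)$-weight splitting, for which you offer no replacement. Finally, even granting the additive answer, the theorem asserts that the isomorphism is induced by the specific map $\Phi_S$ built from the classes $\pi_!(\epsilon^k\cdot\ell^*c)$; the paper needs a further argument for this (the evaluation map $ev\colon E^{\theta\times Y}\to Y$ and the identification of the resulting class $\chi$ with $\epsilon$, followed by a surjectivity-plus-equal-dimensions step), and your phrase ``matching those generators against the $\epsilon$-summand'' does not address how the tautological generators of whatever moduli space you compute correspond to twisted Miller--Morita--Mumford classes.
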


In the rest of this paper we will be interested in the case where $\theta \colon B\mr{SO}(2n)\langle n \rangle \to B\mr{SO}(2n)$ is the $n$-connected cover, in which case it follows from obstruction theory that there is an equivalence $B\mr{Diff}^\theta(W_{g}, D^{2n}; \hat{\ell}_{D^{2n}})_{\hat{\ell}_g} \simeq B\mr{Diff}(W_{g}, D^{2n})$, as we shall explain in Section \ref{sec:cohomologytorelli}. In this case Theorem \ref{thm:Iso} may be considered as the analogue of the Madsen--Weiss theorem (for $2n=2$) or Theorem 1.1 of \cite{grwcob} for twisted coefficients. In the case $2n=2$ this result is due to Kawazumi \cite{Kawazumi}, though phrased a little differently. As the left-hand term is independent of $g$, it in particular recovers homological stability for the right-hand term: this was already known to hold by \cite{boldsen, Ivanov} for $2n=2$, and by \cite{Krannich} for $2n \geq 6$.

\begin{proof}[Proof of Theorem \ref{thm:Iso}]
We apply the method introduced in \cite{R-Wtwist}. Suppose for concreteness that $n$ is odd. Let $W$ be a finite-dimensional rational vector space and $Y = K(W^\vee, n+1)$ be a functorial model for the associated Eilenberg--MacLane space. Then $\theta \times Y \coloneqq \theta \circ \mathrm{pr}_B \colon B \times Y \to B\mr{SO}(2n)$ is a new tangential structure, and we may consider the moduli space $B\mr{Diff}^{\theta \times Y}(W_{g}, D^{2n}; \hat{\ell}_{D^{2n}}^Y)$ of manifolds equipped with a $\theta$-structure satisfying the boundary condition $\hat{\ell}_{D_{2n}}$ and a map to $Y$ which sends $D^{2n}$ to the basepoint $y_0 \in Y$. Forgetting the map to $Y$ gives a fibration sequence
\begin{equation}\label{eq:FibSeq}
\mr{map}_*((W_{g}, D^{2n}), (Y, y_0)) \overset{i} \lra B\mr{Diff}^{\theta \times Y}(W_{g}, D^{2n}; \hat{\ell}_{D^{2n}}^Y) \lra B\mr{Diff}^{\theta}(W_{g}, D^{2n}; \hat{\ell}_{D^{2n}})
\end{equation}
and the fibre is path-connected, so $B\mr{Diff}^{\theta \times Y}(W_{g}, D^{2n}; \hat{\ell}_{D^{2n}}^Y)$ has the same path-components as $B\mr{Diff}^{\theta}(W_{g}, D^{2n}; \hat{\ell}_{D^{2n}})$. In fact there is a canonical isomorphism
\[\pi_1(\mr{map}_*((W_{g}, D^{2n}), (Y, y_0)), *) \cong \tilde{H}^{n+1}(S^1 \wedge W_g/D^{2n}; W^\vee) \cong H^n(W_{g}, D^{2n};\bQ) \otimes W^\vee\]
which induced a canonical map
\[\Lambda^*(H(g) \otimes W[1]) \lra H^*(\mr{map}_*((W_{g}, D^{2n}), (Y, y_0));\bQ).\]
This map is easily checked to be an isomorphism, as this mapping space is a $K(\pi, 1)$. The identification is one of $\pi_1(B\mr{Diff}^{\theta}(W_{g}, D^{2n}; \hat{\ell}_{D^{2n}}), \hat{\ell}_g)$-modules, and we have used Poincar{\'e} duality to identify $H(g)^\vee$ with $H(g)$.

When $2n \neq 4$, by the main theorems of \cite{boldsen,oscarresolutions,grwcob,grwstab1} there is a map
\[\alpha \colon B\mr{Diff}^{\theta \times Y}(W_{g}, D^{2n}; \hat{\ell}^Y_{D^{2n}})_{\hat{\ell}_g} \lra \Omega^{\infty}_0(\mathrm{MT}\theta \wedge Y_+)\]
which is an isomorphism on cohomology in a range of degrees tending to infinity with $g$. (For $2n=4$ it is an isomorphism on homology upon taking the colimit as $g \to \infty$, by \cite[Theorem 1.5]{grwstab2}.) By our finite type assumption on $H^*(B;\bQ)$, and the fact that $W$ is finite-dimensional, we have that
\[H^*(\mathrm{MT}\theta \wedge Y_+;\bQ) = H^*(\mathrm{MT}\theta;\bQ) \otimes \mr{Sym}^{*}(W[n+1])\]
has finite type, and so the natural map
\[\mr{Sym}^*\left([H^*(\mathrm{MT}\theta;\bQ) \otimes \mr{Sym}^{*}(W[n+1])]_{>0}\right) \lra H^*(\Omega^\infty_0(\mathrm{MT}\theta \wedge Y_+);\bQ )\]
is an isomorphism.

Thus the Serre spectral sequence for \eqref{eq:FibSeq} is a spectral sequence of $GL(W)$-modules, of the form
\begin{equation}\label{eq:Collapse} \begin{tikzcd}
E_2^{p,q} = H^p\left(B\mr{Diff}^\theta(W_{g}, D^{2n}; \hat{\ell}_{D^{2n}})_{\hat{\ell}_g}; \Lambda^q(\cH(g)_\bQ \otimes W [1])\right) \dar[Rightarrow] \\ 
\mr{Sym}^*\left([H^*(\mathrm{MT}\theta;\bQ) \otimes \mr{Sym}^{*}(W[n+1])]_{>0}\right),
\end{tikzcd}
\end{equation}
where the target is as indicated only in a range of degrees. Different rows of this spectral sequence are $GL(W)$-representations of different weights, so it  collapses. Giving $W$ on the target $q$-grading 1, this is an isomorphism of bigraded rings in a range of degrees.

We now wish to apply Schur--Weyl duality for the general linear group. For further details on the following we refer to Sam--Snowden \cite{SS}, particularly Section 2.2. In the setting described in our Section \ref{sec:RepOfCat} this may be done as follows. We let $\Lambda \coloneqq \cat{FB}$ be the ($\bQ$-linearisation of the) category of finite sets and bijections. We let $\mr{GL} \coloneqq \colim_{n \to \infty} \mr{GL}_n(\bQ)$, let $\cat{Rep}(\mr{GL})$ be the $\bQ$-linear abelian symmetric monoidal category of all representations of the group $\mr{GL}$, and let $\cA \coloneqq \cat{Rep^{pol}}(\mr{GL})$ be the $\bQ$-linear abelian symmetric monoidal category of polynomial representations of the group $\mr{GL}$, i.e.\ those representations arising as finite direct sums of summands of tensor powers of the standard representation $W \coloneqq \colim_{n \to \infty} \bQ^n$. Similarly, we let $\hat{W}_* \coloneqq \lim_{n \to \infty} \mr{Hom}(\bQ^n, \bQ)$, a pro-algebraic representation of $\mr{GL}$, and let $\widehat{\cat{Rep^{pol}}}(GL)$ denote the category of polynomial pro-algebraic representations of $\mr{GL}$, equipped with the completed tensor product. Continuous dual gives an identification $\widehat{\cat{Rep^{pol}}}(\mr{GL})^\mr{op} \cong {\cat{Rep^{pol}}}(\mr{GL})$. 

We let $K \colon \cat{FB} \to \widehat{\cat{Rep^{pol}}}(\mr{GL})$ be defined as $K(S) = \hat{W}_*^{\otimes S}$, with its evident symmetric monoidality, which has the structure of a tensor kernel. Taking the continuous dual gives the functor $K^\vee \colon \cat{FB}^\mr{op} \to {\cat{Rep^{pol}}}(GL)$ with $K^\vee(S) = {W}^{\otimes S}$. It follows from \cite[Section 2.2.9]{SS} that the functor
\[K^\vee \otimes^{\cat{FB}} - \colon (\bQ\text{-mod}^\cat{FB})^f \lra \cat{Rep^{pol}}(\mr{GL})\]
is a symmetric monoidal equivalence of categories, with inverse given by
\begin{equation}\label{eq:trans}
\begin{aligned}
\cat{Rep^{pol}}(\mr{GL}) &\lra (\bQ\text{-mod}^\cat{FB})^f\\
U &\longmapsto (S \mapsto [\hat{W}_*^{\otimes S} \otimes U]^{\mr{\mr{GL}}}).
\end{aligned}
\end{equation}

We apply this discussion as follows. Taking the limit as $\mathrm{dim}(W) \to \infty$, the collapsing spectral sequence \eqref{eq:Collapse} gives an identification of (bi)graded objects in $\cat{Rep^{pol}}(GL)$, and hence of (bi)graded objects in $\bQ\text{-mod}^\cat{FB}$, which we now identify. Recall that we have
\[H^*(\mathrm{MT}\theta;\bQ) \cong H^{*}(B ;\bQ)[-2n] = \cV[-2n].\]
 We may write the abutment of \eqref{eq:Collapse} as
\[\mr{Sym}^*([H^*(\mathrm{MT}\theta;\bQ)]_{>0}) \otimes \mr{Sym}^*\left([H^*(\mathrm{MT}\theta;\bQ) \otimes \mr{Sym}^{*>0}(W[n+1])]_{>0}\right).\]
The transformation \eqref{eq:trans} of the second term is $\mathcal{P}_{>0}(-, \cV)_{\geq 0} \colon \mathsf{FB} \to \mathsf{Gr}(\bQ\text{-mod})$, by \cite[Proposition 5.1]{R-Wtwist}, and the first term is $\mathcal{P}(\varnothing, \cV)_{\geq 0}$. Thus the transformation \eqref{eq:trans} of the right-hand side is the functor
\[\mathcal{P}(-, \cV)_{\geq 0}  \colon \mathsf{FB} \lra \mathsf{Gr}(\bQ\text{-mod})\]
in a range of degrees. We recognise the $E_2$-page of \eqref{eq:Collapse} as 
\[E_2^{p,q} = \left[H^p(B\mr{Diff}^\theta(W_{g}, D^{2n}; \hat{\ell}_{D^{2n}})_{ \hat{\ell}_g}; \cH(g)_\bQ^{\otimes q}) \otimes \det(\bQ^q) \otimes W^{\otimes q} \right]^{\Sigma_q},\]
so its transformation under \eqref{eq:trans} is $H^*(B\mr{Diff}^\theta(W_{g}, D^{2n};\ell_{D^{2n}})_{\hat{\ell}_g}; \cH(g)_\bQ^{\otimes -}) \otimes {\det} \colon \mathsf{FB} \to \mathsf{Gr}(\bQ\text{-mod})$. Carrying the factor $\det$ to the other side, this shows that there is a natural isomorphism
\[\mathcal{P}(-, \cV)_{\geq 0} \otimes {\det} \cong H^*(B\mr{Diff}^\theta(W_{g}, D^{2n}; \hat{\ell}_{D^{2n}})_{\hat{\ell}_g} ; \cH(g)_\bQ^{\otimes -})\]
in a range of degrees.

Unfortunately it is not yet clear that it is the map we have constructed which yields this isomorphism. To see that it is, we go into the construction in more detail. Write $E^\theta \to B\mr{Diff}^\theta(W_{g}, D^{2n}; \hat{\ell}_{D^{2n}})_{\hat{\ell}_g}$, and $E^{\theta \times Y} \to B\mr{Diff}^{\theta \times Y}(W_{g}, D^{2n}; \hat{\ell}_{D^{2n}}^Y)_{\hat{\ell}_g}$ for the $W_g$-bundles over these spaces. There is a fibration sequence
\[\mr{map}_*((W_{g}, D^{2n}), (Y, y_0)) \overset{i} \lra E^{\theta \times Y} \lra E^\theta\]
and hence a spectral sequence of $GL(W)$-representations
\[E^{p,q}_2 = H^p(E^\theta; \Lambda^q(\cH(g)_\bQ \otimes W [1])) \Longrightarrow H^{p+q}(E^{\theta \times Y} ; \bQ),\]
which again collapses and splits as different rows are $GL(W)$-representations of different weights. Taking weight 1 pieces gives a canonical identification
\begin{equation}\label{eq:Wt1}
H^p(E^\theta; \cH(g)_\bQ) \otimes W \cong H^p(E^\theta; \cH(g)_\bQ \otimes W) \cong H^{p+1}(E^{\theta \times Y} ; \bQ)^{(1)}.
\end{equation}

Evaluation defines a map 
\[ev \colon E^{\theta \times Y} \lra Y\]
and so determines a $GL(W)$-equivariant map 
\[\rho \colon W = {H}^{n+1}(Y;\bQ) \lra H^{n+1}(E^{\theta \times Y};\bQ)^{(1)}\]
landing in the weight 1 piece. In terms of the identification \eqref{eq:Wt1} above the map $\rho$ must be given by $\rho(w)= \chi \otimes w \in H^n(E^\theta ; \cH(g)_\bQ) \otimes W$ for some class $\chi \in H^n(E^\theta; \cH(g)_\bQ)$.

\begin{claim} 
The class $\chi$ is the class $\epsilon$ defined in Section \ref{sec:semi-euler-class}.
\end{claim}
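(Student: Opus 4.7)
The plan is to verify that the class $\chi$ satisfies the three defining properties of $\epsilon$ recorded in Section \ref{sec:semi-euler-class}, which uniquely characterise such a class:
(a) $\pi_!(\chi)=0$; (b) $s^*(\chi)=0$; and (c) $\chi$ restricts on each fibre $W_g\hookrightarrow E^\theta$ to the coevaluation element of $H^n(W_g;\bQ)\otimes \cH(g)_\bQ \cong \mr{End}(\cH(g)_\bQ)$.

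Property (a) is automatic on degree grounds: $\pi_!(\chi)$ lies in $H^{-n}(B\mr{Diff}^\theta(W_g,D^{2n};\hat\ell_{D^{2n}})_{\hat\ell_g}\,;\,\cH(g)_\bQ)=0$ for $n>0$. For (b), observe that the section $s$ corresponds to the fixed basepoint in $D^{2n}\subset W_g$, and there is a compatible section $s^Y$ of $\pi^Y\colon E^{\theta\times Y}\to B\mr{Diff}^{\theta\times Y}(\cdots)$ coming from the same basepoint, under which the projection $E^{\theta\times Y}\to E^\theta$ carries $s^Y$ to $s$. Because every $f\in\mr{map}_*((W_g,D^{2n}),(Y,y_0))$ sends $D^{2n}$ to $y_0$, the composition $ev\circ s^Y$ is constant at $y_0$, so $(s^Y)^*ev^*(w)=0$ for every $w\in W$. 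The identification \eqref{eq:Wt1} is natural along $s$, so pulling back yields $s^*\chi\otimes w=0$, and since $w$ is arbitrary, $s^*\chi=0$.

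For (c), restrict to the fibre $W_g\subset E^\theta$ over a point $b\in B\mr{Diff}^\theta(\cdots)$; the preimage in $E^{\theta\times Y}$ is $W_g\times M$ with $M=\mr{map}_*((W_g,D^{2n}),(Y,y_0))$, and $ev$ restricts to the honest evaluation $(x,f)\mapsto f(x)$. Under the exponential law $\mr{map}_*(W_{g,1}\wedge M_+,Y)\cong \mr{map}(M,M)$, this map corresponds to the identity self-map of $M$, and hence induces the identity on $\pi_1(M)$ and therefore on $H^1(M;\bQ)$. Spelled out via the identifications $\pi_1(M)\cong \cH(g)_\bQ\otimes W^\vee$ already used in the excerpt, $H^1(M;\bQ)\cong \cH(g)_\bQ\otimes W$, and $H^n(W_g;\bQ)\cong \cH(g)_\bQ^\vee$ (Poincar{\'e} duality), the K{\"u}nneth component of $ev^*(w)\in H^{n+1}(W_g\times M;\bQ)$ in bidegree $(n,1)$ is exactly $\mr{id}_{\cH(g)_\bQ}\otimes w\in \cH(g)_\bQ^\vee\otimes\cH(g)_\bQ\otimes W$. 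Thus $\chi|_{W_g}$ is the coevaluation, and combining (a)--(c) identifies $\chi$ with $\epsilon$.

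The main obstacle is the bookkeeping in step (c): one must verify that the adjunction identifying the evaluation with $\mr{id}_M$, together with the Poincar{\'e} duality used to identify $H^n(W_g,D^{2n};\bQ)$ with $\cH(g)_\bQ$ and the self-duality of $\cH(g)_\bQ$ induced by $\lambda$, are all compatibly normalised---so that $\mr{id}_M$ really produces $\mr{id}_{\cH(g)_\bQ}$ rather than its transpose or a sign-twist. Once the conventions are pinned down consistently with those used to define $\epsilon$ in Section \ref{sec:semi-euler-class} this is routine, but without care a stray sign or dual could corrupt the comparison.
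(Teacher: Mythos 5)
Your proposal is correct and follows essentially the same route as the paper: it uses the characterisation of $\epsilon$ by $s^*=0$, $\pi_!=0$ and its fibre restriction, kills the $s^*$-component because $ev$ composed with the section is constant at $y_0$, and identifies the fibre restriction with the coevaluation by computing the $(n,1)$ K{\"u}nneth component of $ev^*$ on $W_g\times \mr{map}_*(W_g/D^{2n},Y)$. The only (inessential) difference is that the paper first reduces by naturality to the trivial tangential structure $\theta=\mathrm{id}$ and phrases the fibre computation via the decomposition \eqref{eq:KMDecomposition}, while you make the automatic vanishing of $\pi_!(\chi)$ explicit.
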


\begin{proof}[Proof of claim]
By naturality we may restrict to the trivial tangential structure $\theta = \mathrm{id} \colon B\mr{SO}(2n) \to B\mr{SO}(2n)$ to prove this. The decomposition \eqref{eq:KMDecomposition} in this case is
\[H^n(E ; \cH(g)_\bQ) = H^n(B\mr{Diff}(W_{g}, D^{2n}); \cH(g)_\bQ) \oplus H^0(B\mr{Diff}(W_{g}, D^{2n}) ; \cH(g)_\bQ^{\otimes 2}).\]
The component of $\chi$ in the first factor is given by pulling back along the section $s$, but the composition $B\mr{Diff}(W_{g}, D^{2n}) \overset{s}\to E^Y \overset{ev}\to Y$ is constant, and the section lands in the disc $D^{2n} \subset W_g$ on which the maps to $Y$ are constantly $y_0$. It remains to determine the component of $\chi$ in the second factor.

The restriction map to a single fibre
\[H^0(B\mr{Diff}(W_{g}, D^{2n}) ; \cH(g)_\bQ \otimes \cH(g)_\bQ) \lra \cH^n(W_g;\bQ) \otimes \cH^n(W_g;\bQ)\]
is injective and has image $\bQ\{\omega\}$, and under the identification above $\epsilon$ maps to the class $\omega$ by definition.

Restricting the previous discussion to a single fibre, we are considering the spectral sequence of the (trivial) fibration
\[\mr{map}_*(W_{g}/D^{2n}, Y) \lra W_{g}\times \mr{map}_*(W_g/D^{2n}, Y) \lra W_{g}.\]
The evaluation map gives
\[\rho \colon W \lra H^{n+1}(W_{g} \times \mr{map}_*(W_{g}/D^{2n}, Y);\bQ) \cong H^n(W_{g};\bQ) \otimes H^1(\mr{map}_*(W_{g}/D^{2n}, Y);\bQ)\]
which is the map
\[W \lra H(g) \otimes (H(g) \otimes W)\]
given by $\omega \otimes -$. This shows that $\chi$ restricts to $\omega$, so $\chi=\epsilon$.
\end{proof}

Consider the commutative diagram
\[\begin{tikzcd} 
H^{2d}(E^{\theta \times Y};\bQ) \otimes W^{\otimes k} \dar{\rho^{\otimes k}} &[-12pt] H^{2d}(B;\bQ) \otimes W^{\otimes k} \lar[swap]{\ell^* \otimes W^{\otimes k}}\\
H^{k(n+1)+2d}(E^{\theta \times Y};\bQ)^{(k)} \rar{\pi^Y_{!}} & H^{k(n+1)+2d-2n}(B\mr{Diff}^{\theta \times Y}(W_{g}, D^{2n}; \hat{\ell}^Y_{D^{2n}})_{\hat{\ell}_g} ;\bQ)^{(k)}\\
H^{kn+2d}(E^\theta ;\Lambda^k(\cH(g)_\bQ \otimes W)) \rar{\pi_{!}} \uar[equals] & H^{kn+2d-2n}(B\mr{Diff}^\theta(W_{g}, D^{2n};\hat{\ell}_{D^{2n}})_{\hat{\ell}_g}; \Lambda^k(\cH(g)_\bQ \otimes W)) \uar[equals]
\end{tikzcd}\]

If $x \in H^{2d}(B;\bQ)$, going along the top sends $x \otimes w_1 \otimes \cdots \otimes w_k$ to
\[\pi_{!}^Y(\rho(w_1) \cdots \rho(w_k) \cdot \ell^*x) \in H^{k(n+1)+2d-2n}(B\mr{Diff}^{\theta \times Y}(W_{g}, D^{2n}; \hat{\ell}_{D^{2n}}^Y)_{\hat{\ell}_g};\bQ)^{(k)}.\]
On the other hand, as $\rho(w_i) = \epsilon \otimes w_i$ this corresponds to
\[\pi_{!}((\epsilon \otimes w_1) \wedge \cdots \wedge (\epsilon \otimes w_k) \cdot \ell^*x) \in H^{kn+2d-2n}(B\mr{Diff}^\theta(W_{g}, D^{2n}; \hat{\ell}_{D^{2n}})_{\hat{\ell}_g}; \Lambda^k(\cH(g)_\bQ \otimes W)),\]
the result of antisymetrising the class
\[\pi_{!}(\epsilon^k \cdot \ell^*x) \otimes w_1 \otimes \cdots \otimes w_k \in H^*(B\mr{Diff}^\theta(W_{g}, D^{2n}; \hat{\ell}_{D^{2n}})_{\hat{\ell}_g}; \cH(g)_\bQ^{\otimes k}) \otimes W^{\otimes k}.\]
That is, the result may be expressed in terms of our twisted Miller--Morita--Mumford classes $\pi_!(\epsilon^k \cdot \ell^*x)$, showing that the map we have constructed is surjective in the stable range: as the source and target are graded vector spaces having the same finite dimension in each degree, it follows that the map in the statement of the theorem is an isomorphism.

\vspace{1ex}

Finally, if $n$ is even then we take instead $Y = K(W^\vee, n+2)$. Then there is an equivalence $\mr{map}((W_{g}, D^{2n}), (Y, y_0)) \simeq K(H(g) \otimes W^\vee, 2)$ and so the relevant spectral sequence of $GL(W)$-modules has the form
\begin{equation*}
\begin{tikzcd}
E_2^{p,q} = H^p(B\mr{Diff}^\theta(W_{g}, D^{2n}; \hat{\ell}_{D^{2n}})_{\hat{\ell}_g}; \mr{Sym}^q(\cH(g)_\bQ \otimes W [2])) \dar[Rightarrow] \\ 
\mr{Sym}^*([H^*(\mathrm{MT}\theta;\bQ) \otimes \mr{Sym}^{*}(W[n+2])]_{>0}),
\end{tikzcd}
\end{equation*}
The discussion then goes through as above, except that we now recognise the $E_2$-page as 
\[E_2^{p,q} = \left[H^p(B\mr{Diff}^\theta_\partial(W_{g}, D^{2n}; \hat{\ell}_{D^{2n}})_{\hat{\ell}_g} ; \cH(g)_\bQ^{\otimes q}) \otimes W^{\otimes q} \right]^{\Sigma_q},\]
so that its transform is now given by $H^*(B\mr{Diff}^\theta_\partial(W_{g}, D^{2n}; \hat{\ell}_{D^{2n}})_{\hat{\ell}_g} ; \cH(g)_\bQ^{\otimes -})$, without the sign representation. We then proceed as above.
\end{proof}

\section{The cohomology of the Torelli space}\label{sec:cohomologytorelli}

In this section we work with the tangential structure $\theta \colon B\mr{SO}(2n)\langle n \rangle \to B\mr{SO}(2n)$, in which case the forgetful map
\[B\mr{Diff}^\theta(W_{g}, D^{2n}; \hat{\ell}_{D^{2n}}) \lra B\mr{Diff}(W_{g}, D^{2n})\]
is a weak equivalence, because the space $\mr{Bun}^\theta(TW_{g}, D^{2n}; \hat{\ell}_{D^{2n}})$ is equivalent to the space of relative lifts
\begin{equation*}
\begin{tikzcd} 
D^{2n} \dar \rar{\ell_{D^{2n}}}& B\mr{SO}(2n)\langle n \rangle \dar{\sigma \circ \theta}\\
W_g \rar[swap]{TW_g} \arrow[ru, dashed] & B\mr{O}(2n),
\end{tikzcd} 
\end{equation*}
where $\sigma \colon B\mr{SO}(2n) \to B\mr{O}(2n)$ is the double cover, and this space of lifts is easily seen to be contractible by obstruction theory. We will therefore write $B\mr{Diff}(W_{g}, D^{2n})$ instead of $B\mr{Diff}^\theta(W_{g}, D^{2n}; \hat{\ell}_{D^{2n}})$, for simplicity.

As stated in the introduction, the action of diffeomorphisms on the middle-dimensional homology gives a homomorphism
\[\alpha_g \colon \mr{Diff}(W_{g}, D^{2n}) \lra G_g \coloneqq \begin{cases} \mr{Sp}_{2g}(\bZ) & \text{if $n$ is odd,} \\
\mr{O}_{g,g}(\bZ) & \text{if $n$ is even.}\end{cases}\]
We denote its image $G'_g$. It is often surjective, but further restrictions can arise from a quadratic refinement of the intersection form. A result of Kreck \cite{kreckisotopy} tells us that
\[G'_g = \begin{cases}G_g & \text{if $n=1,3,7$ or $n$ is even,} \\
\mr{Sp}^q_{2g}(\bZ) & \text{otherwise,}\end{cases}\]
where $\mr{Sp}^q_{2g}(\bZ) \leq \mr{Sp}_{2g}(\bZ)$ is the proper subgroup of those symplectic matrices which preserve the quadratic refinement $q \colon \bZ^{2g} \to \bZ/2$ of the bilinear form determined in terms of the standard symplectic basis by $q(e_i) = q(f_i) = 0$. In particular $G_g'$ always has finite index in $G_g$, so is an arithmetic group.

The classifying space of the Torelli group $\mr{Tor}(W_{g}, D^{2n}) \coloneqq \mathrm{Ker}(\alpha_g)$ therefore fits into a fibration sequence
\begin{equation}\label{eq:TorelliFib}
B\mr{Tor}(W_{g}, D^{2n}) \overset{i}\lra B\mr{Diff}(W_{g}, D^{2n}) \lra BG_g',
\end{equation}
so there is an action (up to homotopy) of $G_g'$ on $B\mr{Tor}(W_{g}, D^{2n})$. Hence the cohomology groups $H^*(B\mr{Tor}(W_{g}, D^{2n});\bQ)$ form a commutative ring object in the category of graded $G'_g$-representations (with the Koszul sign rule).

The local coefficient system $\cH(g)_\bQ$ on $B\mr{Diff}(W_{g}, D^{2n})$ is equipped with a canonical trivialisation $i^*\cH(g)_\bQ \overset{\sim}\to H(g)$ when pulled back to $B\mr{Tor}(W_{g}, D^{2n})$, where we recall that $H(g)$ denotes the standard $2g$-dimensional representation of $G'_g$. For any finite set $S$ the edge homomorphism for the spectral sequence of the fibration \eqref{eq:TorelliFib} with $\cH(g)_\bQ^{\otimes S}$-coefficients is then
\begin{equation*}
H^*(B\mr{Diff}(W_{g}, D^{2n}); \cH(g)_\bQ^{\otimes S}) \lra \left[H^*(B\mr{Tor}(W_{g}, D^{2n});\bQ) \otimes H(g)^{\otimes S}\right]^{G_g'}.
\end{equation*}
Composing this with the maps $\Phi_S$ given in \eqref{eq:PhiS}, and writing as usual 
\[\cV = H^*(B\mr{SO}(2n)\langle n \rangle;\bQ) = \bQ[e, p_{\lceil \frac{n+1}{4}\rceil}, \ldots, p_{n-1}]\]
with homogeneous basis of monomials $\cB$, we obtain maps
\[\Phi_S^t \colon \mathcal{P}(S, \cV)_{\geq 0}  \otimes (\det \bQ^{S})^{\otimes n} \lra [H^*(B\mr{Tor}(W_{g}, D^{2n});\bQ) \otimes H(g)^{\otimes S}]^{G_g'}\]
and hence, by adjunction, $G_g'$-equivariant maps
\[\Psi_S^t \colon (H(g)^{\otimes S})^\vee \otimes \mathcal{P}(S, \cV)_{\geq 0}  \otimes (\det \bQ^{S})^{\otimes n} \lra H^*(B\mr{Tor}(W_{g}, D^{2n});\bQ).\]
We now adopt the functorial perspective of Sections \ref{sec:RepOfCat} and \ref{sec:RepOfBrauer}. As the $\Phi_S^t$ are the components of a natural transformation of functors $\mathsf{(s)Br}_{2g} \to \mathsf{Gr}(\bQ\text{-mod})$, the $\Psi_S^t$ extend to a map
\[\Psi^t \colon K^\vee \otimes^{\mathsf{(s)Br}_{2g}} (\mathcal{P}(-, \cV)_{\geq 0}^{2g}  \otimes {\det}^{\otimes n}) \lra H^*(B\mr{Tor}(W_{g}, D^{2n});\bQ).\]
As in Section \ref{sec:functoriality-brauer} we recognise the term $\mathcal{P}(-, \cV)_{\geq 0}^{2g}  \otimes {\det}^{\otimes n}$ as being left Kan extended along $i \colon \cat{d(s)Br} \to \cat{(s)Br}_{2g}$, and we can rewrite the domain to get
\[\Psi^t \colon i^*(K^\vee) \otimes^{\mathsf{d(s)Br}} (\mathcal{P}(-, \cV)'_{\geq 0}  \otimes {\det}^{\otimes n}) \lra H^*(B\mr{Tor}(W_{g}, D^{2n});\bQ).\]
Recall that $\cP(-,\cV)'$ is distinguished from $\cP(-,\cV)$ by not allowing parts of size $2$ labelled by $1 \in \cV$. 

In particular, restricting to $S=\varnothing$ gives a ring homomorphism
\[\begin{tikzcd}\mathcal{P}(\varnothing, \cV)'_{\geq 0} = (H(g)^{\otimes \varnothing})^\vee \otimes \mathcal{P}(\varnothing, \cV)'_{\geq 0}  \otimes (\det \bQ^{\varnothing})^{\otimes n} \dar \\ i^*(K^\vee) \otimes^{\mathsf{d(s)Br}} (\mathcal{P}(-, \cV)'_{\geq 0}  \otimes {\det}^{\otimes n})\end{tikzcd}\]
which sends the labelled partition $(\varnothing, c)$ of $\varnothing$ to a class we shall call $\kappa_c$, as it maps to the Miller--Morita--Mumford class of this name under $\Psi^t$. In particular, taking the labels to be the Hirzebruch $L$-classes $\cL_i$ defines classes
\[\kappa_{\cL_i} \in i^*(K^\vee) \otimes^{\mathsf{d(s)Br}} (\mathcal{P}(-, \cV)'_{\geq 0}  \otimes {\det}^{\otimes n})\]
of degree $4i-2n$. These lie in the kernel of $\Psi^t$, as they are defined on $B\diff(W_g, D^{2n})$ and are pulled back from $BG'_g$ by a theorem of Atiyah \cite{AtiyahFib}, so vanish on $B\mr{Tor}(W_{g}, D^{2n})$ by the fibration sequence \eqref{eq:TorelliFib}. Thus the ideal generated by these classes also lies in the kernel of $\Psi^t$.

\begin{theorem}\label{thm:MainCalc}
If $2n \geq 6$ the ring homomorphism
\[\frac{i^*(K^\vee) \otimes^{\mathsf{d(s)Br}} \left(\mathcal{P}(-, \cV)_{\geq 0}'  \otimes {\det}^{\otimes n}\right)}{(\kappa_{\cL_i} \, | \, 4i-2n > 0)} \lra H^*(B\mr{Tor}(W_{g}, D^{2n});\bQ)\]
induced by $\Psi^t$ is an isomorphism onto the maximal algebraic $G_g'$-subrepresentation of $H^*(B\mr{Tor}(W_{g}, D^{2n});\bQ)$ in a range of degrees tending to infinity with $g$.

If $2n=2$ and $H^*(B\mr{Tor}(W_{g}, D^{2});\bQ)$ is finite dimensional in degrees $* < N$ for all large enough $g$, then this homomorphism is an isomorphism onto the maximal algebraic $G_g'$-subrepresentation in degrees $* \leq N$, and is a monomorphism in degree $N+1$, for all large enough $g$.
\end{theorem}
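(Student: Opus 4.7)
The plan is to verify the hypotheses of Proposition~\ref{prop:Recognition} applied degree-by-degree with $B = H^*(B\mr{Tor}(W_g, D^{2n}); \bQ)$ and
\[
A = \bigl(\mathcal{P}(-, \cV)'_{\geq 0}/(\kappa_{\cL_i})\bigr) \otimes {\det}^{\otimes n}
\]
viewed as a functor $\mathsf{d(s)Br} \to \mathsf{Gr}(\bQ\text{-mod})$. The output of that proposition then directly gives an isomorphism onto the maximal algebraic subrepresentation, once I check that the resulting morphism coincides with the geometrically defined $\Psi^t$ (a routine adjunction unwinding using the naturality established in Sections~\ref{sec:functoriality-FB}--\ref{sec:functoriality-brauer}).

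Constructing the required map $\phi^{\mathsf{(s)Br}_{2g}} \colon i_*(A) \to \bigl[K \otimes H^*(B\mr{Tor}(W_g, D^{2n}); \bQ)\bigr]^{G'_g}$ is the main technical content. I will obtain it as the composition of two stable-range identifications. First, Theorem~\ref{thm:Iso} gives, naturally in $S$, an isomorphism
\[
\mathcal{P}(S, \cV)_{\geq 0} \otimes (\det \bQ^S)^{\otimes n} \xrightarrow{\sim} H^*(B\mr{Diff}(W_g, D^{2n}); \cH(g)^{\otimes S}_\bQ),
\]
and by the extension of functoriality from Section~\ref{sec:functoriality-brauer} this is natural over $\mathsf{(s)Br}_{2g}$, identifying the source with $i_*\mathcal{P}(-, \cV)'_{\geq 0} \otimes {\det}^{\otimes n}$ evaluated at $S$. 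Second, I run the Serre spectral sequence of the fibration \eqref{eq:TorelliFib} with $\cH(g)^{\otimes S}_\bQ$-coefficients, whose pullback to $B\mr{Tor}$ is constant at the $G'_g$-representation $H(g)^{\otimes S}$. To invoke Borel's vanishing theorem~\ref{thm.borelvanishingweak} I need the coefficient system $H^q(B\mr{Tor};\bQ) \otimes H(g)^{\otimes S}$ to be almost algebraic, which by Theorem~\ref{thm.margulis} follows from finite-dimensionality: automatic for $2n \geq 6$ by \cite{kupersdisk}, and available in degrees $<N$ for $2n=2$ by hypothesis. Borel's theorem then yields, in the stable range,
\[
H^*(B\mr{Diff}; \cH(g)^{\otimes S}_\bQ) \cong H^*(BG'_g; \bQ) \otimes \bigl[H^*(B\mr{Tor}; \bQ) \otimes H(g)^{\otimes S}\bigr]^{G'_g}.
\]

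The two identifications combine to an isomorphism of $\mathsf{(s)Br}_{2g}$-functors, and using Atiyah's theorem to identify Borel's generator $\sigma_{2(2i-n)}$ with $\kappa_{\cL_i}$, quotienting by the ideal $(\kappa_{\cL_i})$ on both sides produces the desired $\phi^{\mathsf{(s)Br}_{2g}}$; here I use that left Kan extension commutes with quotients, so that $i_*(A)$ is identified with $\bigl(\mathcal{P}(S, \cV)_{\geq 0}/(\kappa_{\cL_i})\bigr) \otimes (\det \bQ^S)^{\otimes n}$. Proposition~\ref{prop:Recognition} then yields both the isomorphism statement and the multiplicity computation. For the boundary statement when $2n=2$, the same machinery combined with the monomorphism case of Proposition~\ref{prop:Recognition} gives the claim in degree $N+1$: the spectral sequence identification still provides a monomorphism there, since one may write $H^{N+1}(B\mr{Tor};\bQ)$ as a filtered union of finite-dimensional almost algebraic subrepresentations and apply Borel degree by degree.

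The hardest step will be establishing the requisite collapse and splitting of the Serre spectral sequence with twisted coefficients in the stable range. For the case $S = \varnothing$ this is precisely what is proved in \cite{oscarjohannestorelli}; I plan to bootstrap to general $S$ by exploiting the $H^*(BG'_g; \bQ)$-module structure and the fact that the twisted spectral sequence is a module over the untwisted one, together with the dimension count forced by Theorem~\ref{thm:Iso} on the abutment. A secondary but delicate bookkeeping issue, alluded to above, is confirming that all of these identifications are natural for the full structure of $\mathsf{(s)Br}_{2g}$ — in particular for the $\omega_{i,j}$ morphisms — so that the Proposition~\ref{prop:Recognition} hypothesis is really a morphism of $\mathsf{(s)Br}_{2g}$-functors and not just of $\mathsf{dFB}$-functors; this reduces to the contraction calculations of Section~\ref{sec:functoriality-brauer} (Proposition~\ref{prop:ContractEpsilon}) together with the functoriality of Borel's isomorphism in the coefficients.
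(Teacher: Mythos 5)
Your skeleton is the same as the paper's: run the Serre spectral sequence of \eqref{eq:TorelliFib} with $\cH(g)^{\otimes S}_\bQ$-coefficients, use finite-dimensionality plus Theorem \ref{thm.margulis} and Theorem \ref{thm.borelvanishingweak} to control the $E_2$-page, feed in Theorem \ref{thm:Iso} for the abutment, check Brauer-naturality, and finish with Proposition \ref{prop:Recognition}. But the step you defer — the collapse — is exactly where the real content lies, and your plan for it has a genuine gap. First, your base case is not available: \cite{oscarjohannestorelli} only establishes the invariant-part computation (equivalently the relevant degeneration for $S=\varnothing$) in the \emph{pseudoisotopy} stable range, a range that does not tend to infinity with $g$; the whole point of the theorem is to go beyond that range, so there is nothing to bootstrap from. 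Second, the module structure over the untwisted spectral sequence does not by itself kill differentials (module differentials over a collapsing multiplicative base can be nonzero), and the proposed dimension count on the abutment cannot be run: the $E_2$-page is $H^p(BG'_g;\bQ)\otimes[H^q(B\mr{Tor};\bQ)\otimes H(g)^{\otimes S}]^{G'_g}$, and the second tensor factor is precisely the unknown being computed, so you have no independent count of $\dim E_2$ to compare with $\dim H^*(B\mr{Diff};\cH(g)^{\otimes S}_\bQ)$.

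What the paper actually uses is Lemma \ref{lem:SScollapse}: differentials are excluded because the abutment is a \emph{free} $H^*(BG'_g;\bQ)$-module in the stable range. This freeness is not formal. It combines (a) Theorem \ref{thm:Iso}, which shows $H^*(B\mr{Diff}(W_g,D^{2n});\cH(g)^{\otimes S}_\bQ)$ is free over $H^*(B\mr{Diff}(W_g,D^{2n});\bQ)$, with (b) the Hopf-algebra structure theorem applied to $\bQ[\sigma_\bullet]\to\bQ[\kappa_c\mid c\in\cB_{>2n}]$, which gives freeness provided each $\kappa_{\cL_i}$ is nonzero, i.e.\ provided $\cL_i\neq 0$ in $H^{4i}(B\mr{SO}(2n)\langle n\rangle;\bQ)$ for all $i>n/2$; since the low Pontrjagin classes die in the $n$-connected cover, this is a nontrivial statement, settled for general $n$ by the Berglund--Bergstr{\"o}m theorem \cite{BB} that every coefficient of $\cL_i$ is nonzero. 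None of this appears in your proposal, and without it the "Borel's theorem then yields $H^*(B\mr{Diff};\cH(g)^{\otimes S}_\bQ)\cong H^*(BG'_g;\bQ)\otimes[\cdots]^{G'_g}$" step is unjustified — Borel only identifies $E_2$, not $E_\infty$. A smaller but real problem is your treatment of degree $N+1$ in the $2n=2$ case: an arbitrary vector of $H^{N+1}(B\mr{Tor}(W_g,D^2);\bQ)$ need not lie in any finite-dimensional $G'_g$-subrepresentation, so the "filtered union of almost algebraic subrepresentations" argument does not apply; the paper instead gets the monomorphism in total degree $N+1$ from the specific shape of Lemma \ref{lem:SScollapse} together with $H^1(G'_g;\bQ)=0$, which only requires the product structure in bidegrees with $q<N$.
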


\begin{remark}
In \cite{KR-WAlg} we shall prove that $H^*(B\mr{Tor}(W_{g}, D^{2n});\bQ)$ is an algebraic $G_g'$-representation when $2n \geq 6$, so this theorem identifies the target completely in a stable range.
\end{remark}

As part of the proof of this theorem, we will need the following condition guaranteeing collapse of a Serre spectral sequence in a range of degrees.

\begin{lemma}\label{lem:SScollapse}
Let $F \to E \to X$ be a Serre fibration with $X$ path-connected, $\cM$ a local system of $\bQ$-module coefficients on $E$, and suppose that
\begin{enumerate}[\indent (i)]
\item $H^*(E;\cM)$ is a free $H^*(X;\bQ)$-module in degrees $* \leq N+1$,

\item the Serre spectral sequence has a product structure in a range, in the sense that the cup product map
\[H^p(X;\bQ) \otimes H^0(X ; \cH^q(F ; \cM)) \lra H^p(X; \cH^q(F;\cM)) = E^{p,q}_2\]
is an isomorphism when $q < N$ and $p+q \leq N+1$.
\end{enumerate}
Then there are no differentials out of $E_r^{p,q}$ for $p+q \leq N$ and any $r \geq 2$.
\end{lemma}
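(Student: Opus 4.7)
The plan is to prove the lemma by induction on $n$ in the range $0 \leq n \leq N$, showing at each step that no differentials originate at total degree $\leq n$ in the Serre spectral sequence. The base case $n=0$ is immediate since the target $E_r^{r,1-r}$ of any potential $d_r$ out of $E_r^{0,0}$ has negative fibre degree and hence vanishes for $r \geq 2$.

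The main structural input is the $H^*(X;\bQ)$-module structure on the Serre spectral sequence: the edge inclusion $H^*(X;\bQ) \hookrightarrow E_r^{*,0}$ lands in permanent cocycles, and by the derivation property the differentials $d_r$ are $H^*(X;\bQ)$-linear. Combined with (ii), which identifies $E_2^{p,q} \cong H^p(X;\bQ) \otimes E_2^{0,q}$ in the range, this reduces the vanishing of $d_r|_{E_r^{p,q}}$ to the vanishing of $d_r|_{E_r^{0,q}}$. Under the inductive hypothesis, $H^*(X;\bQ)$-linearity then implies that at total degree $n$ the only position admitting nonvanishing outgoing differentials is $(0,n)$: for $p \geq 1$ with $p+q=n$ the differential $d_r|_{E_r^{p,q}}$ is determined by $d_r|_{E_r^{0,q}}$ with $q<n$, which vanishes by the inductive hypothesis.

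To show $d_r|_{E_r^{0,n}}=0$ for all $r \geq 2$, I would set up a dimension count. Writing $v_q$ for the number of degree-$q$ generators in the free $H^*(X;\bQ)$-module description of $H^*(E;\cM)$ supplied by (i), and $w_q := \dim E_2^{0,q}$, an auxiliary induction on $q$ simultaneously establishes $v_q=w_q$ for $q<n$: the Serre filtration on $H^q(E;\cM)$ together with the inductive hypothesis identifies $\dim E_\infty^{p,q-p} = \dim H^p(X;\bQ) \cdot w_{q-p}$ for all $p$ when $q<n$, and matching the resulting total dimension against (i) yields the equality. Applied at total degree $n$, the same computation gives $\dim E_\infty^{0,n} = v_n$; since $E_\infty^{0,n}$ is a quotient of $E_2^{0,n}$, one obtains $v_n \leq w_n$ with equality precisely when the outgoing differentials from $(0,n)$ vanish.

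The main obstacle I anticipate is the reverse inequality $v_n \geq w_n$, which closes the induction. My approach would be via an Eilenberg--Moore-type identification: the freeness (i) means that in the range, $\bQ \otimes^{\mathbb L}_{H^*(X;\bQ)} H^*(E;\cM)$ is concentrated in Tor-degree zero and isomorphic to $V^*$, so that this quotient computes $H^n(F;\cM|_F)$ and gives $\dim H^n(F;\cM|_F) = v_n$. Since $w_n = \dim H^0(X; \cH^n(F;\cM))$ is the dimension of the $\pi_1(X)$-invariants in the stalk $\cH^n(F;\cM) = H^n(F;\cM|_F)$, one concludes $w_n \leq v_n$; combined with $v_n \leq w_n$, this forces $v_n=w_n$ and completes the induction.
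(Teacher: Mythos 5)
Your first reduction is fine and is the same as the paper's opening move: hypothesis (ii) plus the $H^*(X;\bQ)$-module structure on the spectral sequence shows that any nonzero differential in total degree $\leq N$ forces a nonzero differential out of some $E_r^{0,q}$ (the paper takes $q$ minimal rather than running an induction, but this is cosmetic). The genuine gap is your route to the ``reverse inequality'' $v_n \geq w_n$. The identification of $\bQ \otimes_{H^*(X;\bQ)} H^*(E;\cM)$ (freeness killing the higher Tor) with $H^*(F;\cM|_F)$ via an Eilenberg--Moore argument requires the base to be simply connected, or at least the $\pi_1(X)$-action on the fibre cohomology to be nilpotent; here that action is exactly the nontrivial local system $\cH^*(F;\cM)$, and in the intended application $X = BG'_g$ with a highly non-nilpotent action. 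The claimed identity $\dim H^n(F;\cM|_F) = v_n$ is simply false in this generality: take $E \to S^1$ the mapping torus of a hyperbolic element of $\mr{SL}_2(\bZ)$ acting on $F = T^2$, with constant $\bQ$-coefficients. Then $H^*(E;\bQ)$ is free over $H^*(S^1;\bQ)$ on generators in degrees $0$ and $2$, and hypotheses (i) and (ii) hold, yet $v_1 = 0$ while $\dim H^1(F;\bQ) = 2$. (Here $w_1 = \dim H^0(S^1;\cH^1(F;\bQ)) = 0$, so the inequality you actually need happens to hold --- but not by your argument.) Worse, the inequality $w_n \leq v_n$ is essentially equivalent to what is being proved: it amounts to surjectivity of $\bQ \otimes_{H^*(X;\bQ)} H^n(E;\cM) \to E_2^{0,n}$, i.e.\ to $E_\infty^{0,n} = E_2^{0,n}$, i.e.\ to the vanishing of the outgoing differentials at $(0,n)$. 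So it cannot be supplied by an independent computation of the fibre; the dimension count cannot close on itself.

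There are secondary problems as well: the whole counting scheme needs $\dim H^p(X;\bQ)$, $\dim E_2^{0,q}$ and the number of free generators in each degree to be finite, which is not part of the statement (and in the application finite-dimensionality is not assumed in degree $N$ itself); and $E_\infty^{0,n}$ is a subspace, not a quotient, of $E_2^{0,n}$ (there are no incoming differentials), though the inequality you extract is unaffected. The paper's proof avoids all of this: assuming a nonzero differential out of $(0,q)$ with $q$ minimal, it notes that $E_2^{0,*}$ is generated in degrees $<q$ by the restrictions $b_i$ of the free module generators $\bar{b}_i$, so the class hit by the differential has the form $\sum x_i \otimes b_i$ with target in total degree $q+1 \leq N+1$; its death at $E_\infty$ means $\sum x_i \cdot \bar{b}_i$ lies in higher Serre filtration, which produces a nontrivial $H^*(X;\bQ)$-linear relation among the $\bar{b}_i$, contradicting freeness. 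That argument uses no finiteness and no Eilenberg--Moore input, and is the step your proposal is missing.
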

\begin{proof}
Suppose that that $d_r \colon E_r^{p,q} \to E_r^{p+r, q-r+1}$ is non-zero, with $p+q \leq N$. Then, by the product structure, the differential $d_r \colon E_r^{0,q} \to E_r^{r, q-r+1}$ is also non-zero. Without loss of generality we may suppose that $q$ is minimal with this property. Let $\{\bar{b}_i\}$ be free $H^*(X;\bQ)$-module generators for $H^*(E;\cM)$ in degrees $\leq N+1$. As $E_2^{0,*}=H^0(X ; \cH^*(F ; \cM))$ consists of permanent cycles for $* < q$, the map
\[\bQ \otimes_{H^*(X;\bQ)} H^*(E;\cM) \lra H^0(X ; \cH^*(F;\cM))\]
is surjective in degrees $* < q$, and so the restrictions $b_i$ of the $\bar{b}_i$ to $H^0(X;\cH^*(F;\cM))$ generate it in degrees $* < q$. A non-zero differential $d_r \colon E_r^{0,q} \to E_r^{r, q-r+1}$ would hit some 
\[\sum x_i \otimes b_i \in E_r^{r, q-r+1}=H^r(X;\bQ) \otimes H^0(X ; \cH^{q-r+1}(F ; \cM))\]
in total degree $q+1 \leq N+1$, which would say that $\sum x_i \cdot \bar{b}_i \in H^{q+1}(E;\cM)$ is zero modulo elements of Serre filtration $> r$. But in total degree $(q+1)$ all such elements are contained in the submodule $H^*(X;\bQ)\{\bar{b}_i \, | \, |\bar{b}_i| \leq q-r\} \leq H^*(E;\cM)$, which would say that there was a non-trivial linear dependence $\sum y_i \cdot \bar{b}_i=0 \in H^*(X;\bQ)\{\bar{b}_i\}$, a contradiction.
\end{proof}

\begin{proof}[Proof of Theorem \ref{thm:MainCalc}]
To give a unified treatment of the cases $2n=2$ and $2n \geq 6$, we proceed under the assumption that $H^*(B\mr{Tor}(W_{g}, D^{2n});\bQ)$ is finite dimensional in degrees $* < N$ for all $g$ large enough, and we shall establish the conclusion in degrees $* \leq N$. The first author has shown \cite[Corollary 5.5]{kupersdisk} that $H^*(B\mr{Tor}(W_{g}, D^{2n});\bQ)$ is finite dimensional in all degrees for $2n \geq 6$, giving the claimed conclusion in this case.

Consider the Serre spectral sequence with $\cH(g)^{\otimes S}_\bQ$-coefficients for the fibration \eqref{eq:TorelliFib}, which takes the form
\[E_2^{p,q} = H^p(BG'_g ; \cH^q(B\mr{Tor}(W_{g}, D^{2n}); \bQ) \otimes \cH(g)_\bQ^{\otimes S}) \Longrightarrow H^{p+q}(B\mr{Diff}(W_{g}, D^{2n}); \cH(g)_\bQ^{\otimes S}).\]
We wish to apply Lemma \ref{lem:SScollapse} to this spectral sequence, so must verify its hypotheses.

As $H^q(B\mr{Tor}(W_{g}, D^{2n}); \bQ) \otimes H(g)^{\otimes S}$ is finite dimensional for $q < N$ and $g$ large enough, by assumption, Theorem \ref{thm.margulis} implies that it is an almost algebraic representation of $G'_g$. Hence by Theorem \ref{thm.borelvanishingweak} the cup-product map
\begin{equation}\label{eq:EdgeComp}
\begin{tikzcd}H^p(BG_g';\bQ) \otimes [H^q(B\mr{Tor}(W_{g}, D^{2n}); \bQ) \otimes H(g)^{\otimes S}]^{G_g'} \dar \\ H^p(BG'_g ; \cH^q(B\mr{Tor}(W_{g}, D^{2n}); \bQ) \otimes \cH(g)_\bQ^{\otimes S})\end{tikzcd}
\end{equation}
is an isomorphism if both $q < N$ and $p+q \leq N+1$, for $g$ sufficiently large. This shows that the Serre spectral sequence has the required product structure. (This map is also clearly an isomorphism for $(p,q)=(0,N)$, so it is an isomorphism in total degrees $p+q \leq N$. Furthermore, Theorem \ref{thm.borelvanishingweak} also says $H^1(G'_g ; \bQ)=0$, so it is also a monomorphism in total degrees $p+q \leq N+1$.)

On the other hand we have computed $H^{*}(B\mr{Diff}(W_{g}, D^{2n}); \cH(g)^{\otimes S}_\bQ)$ for $2n \neq 4$ in a range of degrees in Theorem \ref{thm:Iso}. We saw there that it is a free $H^{*}(B\mr{Diff}(W_{g}, D^{2n});\bQ)$-module in a range of degrees tending to $\infty$ with $g$. The first hypothesis of Lemma \ref{lem:SScollapse} will therefore be fulfilled as long as $H^{*}(B\mr{Diff}(W_{g}, D^{2n});\bQ)$ is a free $H^*(BG_g';\bQ)$-module in a range of degrees tending to $\infty$ with $g$.

Stably we have
\[\lim_{g \to \infty}H^{*}(B\mr{Diff}(W_{g}, D^{2n});\bQ) \cong H^*(\Omega^\infty_0 \mathrm{MT}\theta_n;\bQ) = \bQ[\kappa_c \, \mid \, c \in \cB_{>2n}]\]
and by Theorem \ref{thm.borelvanishingweak} we have
\[\lim_{g \to \infty}H^{*}(BG'_g;\bQ) \cong H^*(BG'_\infty;\bQ) \cong \begin{cases} \bQ[\sigma_2,\sigma_6,\ldots] & \text{if $n$ is odd,} \\
\bQ[\sigma_4,\sigma_8,\ldots] & \text{if $n$ is even}.\end{cases}\]
In both cases these are $\bQ$-cohomologies of infinite loop spaces, so have the structure of primitively generated Hopf algebras. As we described in Section \ref{sec:IntroStabCoh}, the class $\sigma_{4i-2n}$ is chosen so that it pulls back under $\alpha_g$ to $\kappa_{\cL_i}$, the Miller--Morita--Mumford class associated to the $i$th Hirzebruch $\cL$-class (this choice is possible by a theorem of Atiyah \cite{AtiyahFib}). The pullback defines a map of commutative and cocommutative connected Hopf algebras of finite type, so by Borel's structure theorem \cite[Theorem 7.11]{milnormoore} these are free graded-commutative algebras freely generated their sets of primitive elements \cite[Corollary 4.18 (2)]{milnormoore}. Thus $\lim_{g \to \infty}H^{*}(B\mr{Diff}(W_{g}, D^{2n});\bQ)$ is a free $\lim_{g \to \infty}H^{*}(BG'_g;\bQ)$-module if each $\kappa_{\cL_i} \in \bQ\{\kappa_c \, \mid \, c \in \cB\}$ is non-zero, or in other words if $\cL_i \in H^{4i}(B\mr{SO}(2n)\langle n \rangle;\bQ)$ is non-zero for each $i > n/2$.

When $n \leq 3$ this is easy, as then $B\mr{SO}(2n)\langle n \rangle = B\mr{SO}(2n)$ and $\cL_i$ contains $p_1^i$ with non-zero coefficient and $p_1 \in H^4(B\mr{SO}(2n);\bQ)$ is a non-zero polynomial generator. For the general case we rely on the recent theorem of Berglund--Bergstr{\"o}m \cite{BB} that $\cL_i$ has every possible coefficient non-zero. As there is a monomial in $\smash{p_{\lceil \frac{n+1}{4}\rceil}}, \ldots, p_{n}$ having degree $4i$ for every $4i > 2n$, it follows that $\cL_i \neq 0 \in H^{4i}(B\mr{SO}(2n)\langle n \rangle;\bQ)$ for each $i > n/2$.

We have verified the hypotheses of Lemma \ref{lem:SScollapse}, so for large enough $g$ the spectral sequence has no differentials starting in total degree $p+q \leq N$. The spectral sequence is one of $H^*(BG'_g;\bQ)$-modules, and tensoring down gives a map
\[\bQ \otimes_{\bQ[\kappa_{\cL_i} \, | \, 4i-2n>0]} H^*(B\mr{Diff}(W_{g}, D^{2n});\cH(g)_\bQ^{\otimes S}) \lra [H^*(B\mr{Tor}(W_{g}, D^{2n}); \bQ) \otimes H(g)^{\otimes S}]^{G_g'}\]
which is an isomorphism in degrees $* \leq N$ and a monomorphism in degree $N+1$. Using Theorem \ref{thm:Iso} this shows that the natural map
\[\bQ \otimes_{\bQ[\kappa_{\cL_i} \, | \, 4i-2n>0]} \mathcal{P}(S, \cV)_{\geq 0}  \otimes (\det \bQ^{S})^{\otimes n} \lra [H^*(B\mr{Tor}(W_{g}, D^{2n}); \bQ) \otimes H(g)^{\otimes S}]^{G_g'}\]
is an isomorphism in degrees $* \leq N$ and a monomorphism in degree $N+1$.

Tracing through the maps involved shows that this map is induced by $\Phi^t_S$. In particular it shows that the natural transformation
\[ \bQ \otimes_{\bQ[\kappa_{\cL_i} \, \mid \, 4i-2n >0]} \left(\mathcal{P}(-, \cV)_{\geq 0}^{2g}  \otimes {\det}^{\otimes n}\right) \Longrightarrow [H^*(B\mr{Tor}(W_{g}, D^{2n}); \bQ) \otimes H(g)^{\otimes -}]^{G_g'}\]
of functors $\mathsf{(s)Br}_{2g} \to \mathsf{Gr}(\bQ\text{-mod})$ is an isomorphism in degrees $* \leq N$ and a monomorphism in degree $N+1$. The left-hand side is the Kan extension from $\cat{d(s)Br}$ to $\mathsf{(s)Br}_{2g}$ of the functor
\[\bQ \otimes_{\bQ[\kappa_{\cL_i} \, \mid \, 4i-2n >0]}\left(\mathcal{P}(-, \cV)_{\geq 0}'  \otimes {\det}^{\otimes n} \right) \colon \mathsf{d(s)Br} \lra \mathsf{Gr}(\bQ\text{-mod}).\]

To finish the argument we apply Proposition \ref{prop:Recognition} with $B = H^i(B\mr{Tor}(W_{g}, D^{2n});\bQ)$ for any $i \leq N$, $A$ the degree $i$ part of $\bQ \otimes_{\bQ[\kappa_{\cL_i} \, \mid \, 4i-2n >0]}\left(\mathcal{P}(-, \cV)_{\geq 0}' \otimes {\det}^{\otimes n} \right)$, and $\phi^{\mathsf{Br}_{2g}}$ given by the natural isomorphism above. 
\end{proof}

There is a final consequence of the proof of this theorem which it is useful to record.

\begin{proposition}\label{prop:LsAreRegular}
The sequence $\{\kappa_{\cL_i}\}_{i > \frac{2n}{4}}$ acts regularly on the ring $i^*(K^\vee) \otimes^{\mathsf{d(s)Br}} \left(\mathcal{P}(-, \cV)_{\geq 0}'  \otimes {\det}^{\otimes n}\right)$ in a range of degrees tending to infinity with $g$.
\end{proposition}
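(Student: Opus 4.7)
My plan is to verify the regular-sequence property objectwise at the level of the functor $F := \mathcal{P}(-, \cV)_{\geq 0}' \otimes {\det}^{\otimes n}$ on $\mathsf{d(s)Br}$, and then transfer it through the coend $R := i^*(K^\vee) \otimes^{\mathsf{d(s)Br}} F$ using Corollary \ref{cor:TransfDetectsZero}.

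First I would show that each value $F(S)$ is a free module over the polynomial subring $\bQ[\kappa_{\cL_i} \mid 4i-2n>0]$, which sits inside $F(\varnothing)$ and acts on $F(S)$ via the ring structure of Section \ref{sec:functoriality-mult} (namely, by adding empty parts). Using the basis of Remark \ref{rem:Bases3}, the space $\mathcal{P}(S, \cV)_{\geq 0}'$ factors, as a module over $\mathcal{P}(\varnothing, \cV)_{\geq 0}'$, into $\mathcal{P}(\varnothing, \cV)_{\geq 0}' \otimes \mathcal{P}^{\mathrm{ne}}(S, \cV)'$, where the second tensor factor is the subspace spanned by labelled partitions with no empty parts; it therefore suffices to prove that $\mathcal{P}(\varnothing, \cV)_{\geq 0}' = \bQ[\kappa_c \mid c \in \cB_{>2n}]$ is free over $\bQ[\kappa_{\cL_i}]$. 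For this I would reuse the Hopf-algebra argument from the proof of Theorem \ref{thm:MainCalc}: each $\kappa_{\cL_i}$ is a primitive element and, by Berglund--Bergstr{\"o}m~\cite{BB}, is non-zero; since the $\kappa_{\cL_i}$ for $i>n/2$ sit in distinct positive degrees, they extend to a homogeneous basis of the primitive vector space $\bQ\{\kappa_c \mid c \in \cB_{>2n}\}$, and Borel's structure theorem then identifies $\bQ[\kappa_c \mid c \in \cB_{>2n}]$ with the polynomial algebra on this basis, manifestly free over its polynomial subalgebra generated by the $\kappa_{\cL_i}$.

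Second I would transfer this objectwise regularity through the coend. Write $F_k := F/(\kappa_{\cL_{i_1}},\ldots,\kappa_{\cL_{i_k}})F$ and let $\mu_k \colon F_k \to F_k$ denote multiplication by $\kappa_{\cL_{i_{k+1}}}$ in the functor category. Right exactness of the coend, together with the strong monoidality of $i^*(K^\vee) \otimes^{\mathsf{d(s)Br}} -$ recalled in Section \ref{sec:RepOfCat}, identify $i^*(K^\vee) \otimes^{\mathsf{d(s)Br}} F_k$ with $R/(\kappa_{\cL_{i_1}},\ldots,\kappa_{\cL_{i_k}})$ and identify the coend of $\mu_k$ with multiplication by $\kappa_{\cL_{i_{k+1}}}$ on this quotient (using that each $\kappa_{\cL_i}$ lies in $F(\varnothing)$, so that ring-level and functor-level multiplication coincide). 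The first step shows that $\mu_k$ is objectwise injective, hence a monomorphism in the functor category. Moreover, in each fixed cohomological degree $d$ the functor $F_k$ is supported on sets $S$ of size bounded linearly in $d$, since in $\mathcal{P}_{\geq 0}'$ each part of a labelled partition contributes strictly positive degree (singletons at least $n$, pairs at least the minimal positive degree of $\cV$, parts of size $\geq 3$ at least $n$, and empty parts at least $1$). Hence for $g$ sufficiently large the support condition of Corollary \ref{cor:TransfDetectsZero} is satisfied, so $\mu_k$ remains a monomorphism after taking the coend; this is the desired regularity in a range of degrees tending to infinity with $g$.

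I expect the main obstacle to be the bookkeeping required to match ring-level and functor-level structures—namely, checking that the ideal $(\kappa_{\cL_{i_1}},\ldots,\kappa_{\cL_{i_k}}) \subset R$ is the coend of the corresponding functor-theoretic ideal in $F$, and that multiplication by $\kappa_{\cL_{i_{k+1}}}$ on $R$ is the coend of $\mu_k$. These identifications follow formally from the strong symmetric monoidality of $i^*(K^\vee) \otimes^{\mathsf{d(s)Br}} -$ combined with the fact that each $\kappa_{\cL_i}$ lies in $F(\varnothing)$ and so acts via the lax monoidality of $F$, but must be spelled out carefully—especially in the signed-Brauer case, where the determinant twist must be tracked.
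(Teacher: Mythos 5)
Your proposal is correct and follows essentially the same route as the paper: objectwise regularity via the freeness of each $\mathcal{P}(S,\cV)'_{\geq 0}$ over $\cP(\varnothing,\cV)_{\geq 0}=\bQ[\kappa_c \mid c\in\cB_{>2n}]$ together with the Hopf-algebra/Berglund--Bergstr{\"o}m argument for freeness over $\bQ[\kappa_{\cL_i}]$, and then transfer through the coend using Corollary \ref{cor:TransfDetectsZero} and the support estimate of Section \ref{sec:SupportEstimate}. The extra bookkeeping you flag (successive quotients, right-exactness of the coend, ring- versus functor-level multiplication) is left implicit in the paper but is the same argument.
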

\begin{proof}
As discussed in the proof of Theorem \ref{thm:MainCalc}, $\bQ[\kappa_c \, | \, \cB_{>2n}]$ is a free module over $\bQ[\kappa_{\cL_i} \, | \, 4i-2n > 0]$. In addition, each $\mathcal{P}(S, \cV)_{\geq 0}'$ is a free module over the subring $\cP(\varnothing, \cV)_{\geq 0} = \bQ[\kappa_c \, | \, \cB_{>2n}]$, so the sequence $\{\kappa_{\cL_i}\}_{i > \frac{2n}{4}}$ acts regularly on each $\mathcal{P}(S, \cV)_{\geq 0}'$, so also on each $\mathcal{P}(S, \cV)_{\geq 0}'  \otimes (\det \bQ^S)^{\otimes n}$.

By Corollary \ref{cor:TransfDetectsZero}, the functor
\[i^*(K^\vee) \otimes^{\mathsf{d(s)Br}} - \colon ((\bQ\text{-mod})^{\mathsf{d(s)Br}})^f \lra \mathsf{Rep}(G'_g)\]
detects whether a morphism between objects which are supported on finite sets of cardinality $\leq g$ is a monomorphism. In a range of homological degrees tending to infinity with $g$ the object $\mathcal{P}(S, \cV)_{\geq 0}'  \otimes (\det \bQ^S)^{\otimes n}$ has such support (see Section \ref{sec:SupportEstimate} for a quantitative discussion of this), so the claim follows.
\end{proof}

\section{Ring structure}\label{sec:GraphicalCalc}

We may abstract some of the constructions made so far as follows. Let $\cV$ be a graded $\bQ$-algebra of finite type and concentrated in even degrees, and let $e \in \cV_{2n}$. Using this data we may construct a lax symmetric monoidal functor $\mathcal{P}(-, \cV)_{\geq 0}' \colon \mathsf{d(s)Br} \to \mathsf{Gr}(\bQ\text{-mod})$ by analogy with Sections \ref{sec:functoriality-FB}, \ref{sec:functoriality-brauer}, and \ref{sec:functoriality-mult}, and hence form the ring
\[R^\cV \coloneqq K^\vee \otimes^{\mathsf{d(s)Br}}\left(\mathcal{P}(-, \cV)_{\geq 0}' \otimes  {\det}^{\otimes n}\right).\]
One may rephrase Theorem \ref{thm:MainCalc} as saying that for $\cV = H^*(B\mr{SO}(2n) \langle n \rangle;\bQ)$ with $e \in H^{2n}(B\mr{SO}(2n) \langle n \rangle;\bQ)$ the Euler class there is a ring homomorphism
\[\frac{R^\cV}{(\kappa_{\cL_i} \, | \, 4i-2n > 0)} \overset{\cong}{\lra} H^*(B\mr{Tor}(W_{g}, D^{2n});\bQ)\]
which is an isomorphism in a range of degrees tending to infinity with $g$. Here the element $\kappa_{\cL_i}$ corresponds to the part of size 0 labelled by $\cL_i$; these form a regular sequence in a stable range by Proposition \ref{prop:LsAreRegular}. In order to make computational use of Theorem \ref{thm:MainCalc} it is useful to identify the ring $R^\cV$ with something more palatable. 

This is a purely algebraic question which can be asked for any $\cV$: in this section we will provide a generators and relations description of the ring $R^\cV$.

\subsection{Generators}
In this section we will freely identify $H(g)^\vee$ with $H(g)$ using Poincar{\'e} duality. We have been considering $H(g)$ as $H_n(W_g;\bQ)$, so the identification
\[H(g)^\vee = H_n(W_g;\bQ)^\vee = H^n(W_g;\bQ) \xrightarrow{- \frown [W_g]} H_n(W_g;\bQ) = H(g)\]
is inverse to $v \mapsto \lambda (v,-) \colon H(g) \overset{\sim}\lra H(g)^\vee$.

By the universal property of coends, for any finite set $S$ there are $\Sigma_S \times G'_g$-equivariant maps
\[H(g)^{\otimes S} \otimes \mathcal{P}(S, \cV)_{\geq 0}' \otimes (\det \bQ^S)^{\otimes n} \lra R^\cV,\]
where the target has the trivial $\Sigma_S$-action. If $c \in \cV$ is an allowed label for parts of size $k$, the labelled partition $\{(\{1,2, \ldots, k\}; c)\}$ of $\{1,2, \ldots, k\}$ gives a $\Sigma_k \times G'_g$-equivariant map $H(g)^{\otimes k} \otimes  (\det \bQ^k)^{\otimes n} \to R^\cV$ and so, forgetting the $\Sigma_k$-action, a $G'_g$-equivariant map
\[\kappa_c \colon  H(g)^{\otimes k} \lra R^\cV.\]
This construction is linear in $c$. We may record the $\Sigma_k$-equivariance of the original map by the identity
\begin{equation}\label{eq:rel0}
\kappa_c(v_{\sigma(1)} \otimes \cdots \otimes v_{\sigma(k)}) = \mr{sign}(\sigma)^n \cdot \kappa_c(v_1 \otimes \cdots \otimes v_k)
\end{equation}
for any $\sigma \in \Sigma_k$. Recall that the labelled partition $\{(\{1,2, \ldots, k\}; c)\}$ is given degree $|c| + n(k-2)$, so $\kappa_c(v_1 \otimes \cdots \otimes v_k)$ lies in this degree.

\subsection{Relations}We find relations between the $\kappa_c(v_1 \otimes \cdots \otimes v_k)$ by giving pairs of classes which map to the same element in $R^\cV$.

Let $a_i$ for $1 \leq i \leq 2g$ be a basis of $H(g)$ and $a_i^\#$ for $1 \leq i \leq 2g$ be the dual basis characterised by $\lambda(a_i^\#, a_j)=\delta_{ij}$, then the form $\omega$ dual to the pairing $\lambda$, determined by $(\lambda \otimes \mr{id})(- \otimes \omega) = \mr{id}(-)$, is given by $\omega = \sum_{i=1}^{2g} a_i \otimes a_i^\# \in H(g)^{\otimes 2}$.

Let $S = \{s_1, \ldots, s_p\}$ and $T = \{t_1, \ldots, t_q\}$ be finite sets, and consider enlarged sets $S' = S \sqcup \{s\}$ and $T' = \{t\} \sqcup T$. Let $v \in H(g)^{\otimes S}$ and $w \in H(g)^{\otimes T}$. In the coend defining $R^\cV$, the class
\begin{align*}
\sum_{i=1}^{2g} (v \otimes a_i \otimes a_i^\# \otimes w) \otimes \{(S', x), (T', y)\} \otimes (s_1 \wedge \cdots \wedge s_p \wedge s \wedge t \wedge t_1 \wedge \cdots \wedge t_q)^{\otimes n}\\
\in H(g)^{\otimes S' \sqcup T'} \otimes \mathcal{P}(S' \sqcup T', \cV)_{\geq 0}' \otimes (\det \bQ^{S' \sqcup T'})^{\otimes n}
\end{align*}
is identified with the class
\begin{align*}(v \otimes w) \otimes \{(S \sqcup T, x \cdot y)\} \otimes (s_1 \wedge \cdots \wedge s_p \wedge  t_1 \wedge \cdots \wedge t_q)^{\otimes n}\\
\in H(g)^{\otimes S \sqcup T} \otimes \mathcal{P}(S \sqcup T, \cV)_{\geq 0}' \otimes (\det \bQ^{S \sqcup T})^{\otimes n},
\end{align*}
which gives the identity
\begin{equation}\label{eq:relEdge}
 \sum_i \kappa_x(v \otimes a_i) \cdot  \kappa_y(a_i^\# \otimes w) = \kappa_{x \cdot y}(v \otimes w).
\end{equation}

Similarly, the class
\begin{align*}
\sum_i  (v \otimes a_i \otimes a_i^\#) \otimes \{(S \sqcup \{s,t\}, x)\} \otimes (s_1 \wedge \cdots \wedge s_p \wedge s \wedge t)^{\otimes n} \\
\in H(g)^{\otimes S \sqcup \{s, t\}} \otimes \mathcal{P}(S \sqcup \{s,t\}, \cV)_{\geq 0}' \otimes (\det \bQ^{S \sqcup \{s,t\}})^{\otimes n}
\end{align*}
is identified with the class
\[v \otimes \{(S, e\cdot x)\} \otimes (s_1 \wedge \cdots \wedge s_p)^{\otimes n}
\in H(g)^{\otimes S } \otimes \mathcal{P}(S , \cV)_{\geq 0}' \otimes (\det \bQ^{S})^{\otimes n},\]
which gives the identity
\begin{equation}\label{eq:relLoop}
\sum_i \kappa_x( v \otimes a_i \otimes a_i^\#) = \kappa_{e \cdot x}(v).
\end{equation}

\subsection{The ring presentation}

Our main result describing the ring $R^\cV$ is that the above gives a complete set of generators and relations for it in a stable range, as follows.

\begin{theorem}\label{thm:BigRingStr}
In a range of degrees tending to infinity with $g$, the graded-commutative ring $R^\cV$ is generated by the classes $\kappa_c(v_1 \otimes \cdots \otimes v_r)$ with $c$ a homogeneous element of $\cV$, $r \geq 0$, and $|c| + n(r-2) >0$, subject to
\begin{enumerate}[\indent (i)]
\item linearity in $c$ and in each $v_i$,

\item the symmetry relation \eqref{eq:rel0},

\item the contraction relations \eqref{eq:relEdge} and \eqref{eq:relLoop}.
\end{enumerate}
\end{theorem}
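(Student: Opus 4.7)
The plan is to compare the presented ring $\tilde R$ (defined by generators $\kappa_c(v_1 \otimes \cdots \otimes v_r)$ and relations (i)--(iii)) with the coend description of $R^\cV$, exploiting the fact that $R^\cV$ carries both a ring structure (via the lax symmetric monoidalities of $K^\vee$ and $\mathcal{P}(-, \cV)'_{\geq 0} \otimes \det^{\otimes n}$) and a categorical description through $\mathsf{d(s)Br}$. First I will verify that the evident homomorphism $\Phi \colon \tilde R \to R^\cV$, sending each generator $\kappa_c(v_1 \otimes \cdots \otimes v_r)$ to the class of the same name constructed earlier in the section, is well-defined. Linearity in each argument is built into the coend. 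The signed-symmetry relation \eqref{eq:rel0} records the action of $\Sigma_r \subset \mathsf{d(s)Br}([r],[r])$ by bijections, with the factor $\mathrm{sign}(\sigma)^n$ supplied by the tensor factor $\det^{\otimes n}$. The contraction relations \eqref{eq:relEdge} and \eqref{eq:relLoop} were derived in the text immediately preceding the theorem: they arise from the coend identifications induced by the single-matching morphisms $(\mathrm{inc}, \{x,y\}) \colon S \to S \setminus \{x,y\}$ in $\mathsf{d(s)Br}$, whose effect on $\mathcal{P}(-, \cV)'_{\geq 0}$ is described by the merge/Euler-class-insertion rules of Definition \ref{def:PartAlg}, and whose effect on $K^\vee$ is insertion of $\omega = \sum_i a_i \otimes a_i^\#$.

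For surjectivity of $\Phi$ I will use that $\mathcal{P}(-, \cV)'_{\geq 0}$ is generated, as a commutative monoid object in $\mathsf{Gr}(\bQ\text{-mod})^{\mathsf{d(s)Br}}$ under Day convolution, by its subfunctor of single-part labeled partitions. Because the coend $K^\vee \otimes^{\mathsf{d(s)Br}}(-)$ is strong symmetric monoidal (as $K$ is a tensor kernel), the ring $R^\cV$ is generated as a graded-commutative ring by the images of those single-part generators, which are precisely the classes $\kappa_c(v_1 \otimes \cdots \otimes v_r)$. The allowed pairs $(r, |c|)$ in the statement match exactly the constraints defining $\mathcal{P}(-, \cV)'_{\geq 0}$ in Definition \ref{def:PartAlg} (taking into account the exclusion of size-2 parts labelled by $1$, which corresponds to the relation \eqref{eq:relEdge} with $y = 1$ reducing such a part to an edge contraction).

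For injectivity, which is the key step, the plan is to lift $\tilde R$ itself to a coend. I will define a $\mathsf{d(s)Br}$-functor $\tilde A$ freely generated at each finite set $[r]$ by the data $\{\kappa_c(-)\}$ with the signed $\Sigma_r$-action, and then show that imposing on the coend $K^\vee \otimes^{\mathsf{d(s)Br}}(\tilde A \otimes \det^{\otimes n})$ precisely the identifications forced by single-matching morphisms (described in Definition \ref{def:PartAlg}) recovers $R^\cV$, because those identifications are the relations \eqref{eq:relEdge} and \eqref{eq:relLoop} respectively. Since every morphism of $\mathsf{d(s)Br}$ factors as a bijection followed by a sequence of single matchings, these exhaust all coend identifications, so $\Phi$ is an isomorphism up to the detection threshold. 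The stable-range qualifier enters through Corollary \ref{cor:TransfDetectsZero}: that detection principle allows one to test isomorphisms of $\mathsf{d(s)Br}$-functors by pairing against $K^\vee$, provided the functors are supported on sets of cardinality at most $g$, and $\mathcal{P}(-, \cV)'_{\geq 0}$ satisfies this below a degree that grows with $g$ (compare the quantitative support estimate referred to in the proof of Proposition \ref{prop:LsAreRegular}).

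The main obstacle will be the last paragraph: making precise that the coend relations are truly exhausted by the symmetry and single-matching identifications, with no hidden interactions coming from iterated compositions of matchings or from the Day-convolution multiplication. This reduces to a combinatorial coherence check, in which one verifies that the Day-convolution product on $\tilde A$ plus single matchings already encode merging of parts across two generators (relation \eqref{eq:relEdge}) and internal contraction within a single part (relation \eqref{eq:relLoop}), and that no further compatibility is needed. The stable range is then unavoidable, because the comparison between $\tilde A$ and $\mathcal{P}(-, \cV)'_{\geq 0}$ as $\mathsf{d(s)Br}$-functors is only detected by the coend against $K^\vee$ beneath a cardinality bound growing with $g$.
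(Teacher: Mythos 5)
Your well-definedness and surjectivity steps are fine and agree in spirit with what the paper does, but the heart of the theorem is injectivity, and there your proposal begs the question. The assertion that ``imposing on the coend precisely the identifications forced by single-matching morphisms recovers $R^\cV$, because those identifications are the relations \eqref{eq:relEdge} and \eqref{eq:relLoop}'' is exactly what has to be proved, and it requires two things you never supply: (a) an identification of the underlying graded vector space of the presented ring (the free graded-commutative algebra on the $\kappa_c(v_1\otimes\cdots\otimes v_r)$ modulo linearity and \eqref{eq:rel0}) with $\bigoplus_k H(g)^{\otimes k}\otimes_{\Sigma_k}\bigl(\mathcal{P}([k],\cV)'_{\geq 0}\otimes(\det\bQ^k)^{\otimes n}\bigr)$, which is a genuine Koszul-sign bookkeeping problem of the kind handled in Lemma \ref{lem:twisted-mmm-welldefined}; and (b) a proof that the \emph{ideal} generated by \eqref{eq:relEdge} and \eqref{eq:relLoop} coincides with the span of the coend identification elements, including the degenerate instances where a contraction would create a part excluded from $\mathcal{P}(-,\cV)'_{\geq 0}$ (a size-two part labelled by $1$, whose contraction produces the $g$-dependent scalar $(-1)^n\cdot 2g$ -- this is precisely why the paper distinguishes $\mathcal{P}'_{\geq 0}$ from $\mathcal{P}^{2g}_{\geq 0}$ and works with a Kan extension rather than a ``free'' functor $\tilde A$). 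You yourself flag this as ``a combinatorial coherence check'' and defer it; but that check is the entire content of the theorem, so as written there is no proof. Moreover the role you assign to Corollary \ref{cor:TransfDetectsZero} is misplaced: that corollary compares a map of $\mathsf{d(s)Br}$-functors with the induced map of coends, so it cannot be invoked until $R^\cV_\mr{pres}$ has been exhibited as a coend -- which is the unproven step -- and it is not where the stable range genuinely enters.

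For comparison, the paper's argument is quite different and avoids this coherence problem. It reduces to showing that $[\phi\otimes H(g)^{\otimes S}]^{G'_g}$ is an isomorphism for each finite set $S$ (a weight argument handles the passage back to $\phi$), introduces the graph space $\cG(S,\cV)$ of $\cV$-labelled graphs with legs $S$ modulo edge and loop contractions, and uses the first fundamental theorem of invariant theory (Theorem \ref{thm:FTInvariantTheory}) to produce a surjection $\cG(S,\cV)\twoheadrightarrow[R^\cV_\mr{pres}\otimes H(g)^{\otimes S}]^{G'_g}$; it then identifies the composite to $[R^\cV\otimes H(g)^{\otimes S}]^{G'_g}$ with the map $\psi^{\mathsf{Br}_{2g}}$ of Proposition \ref{prop:TransfOfTransf} under the isomorphism $\cG(S,\cV)\cong\mathcal{P}(S,\cV)_{\geq 0}\otimes(\det\bQ^S)^{\otimes n}$ obtained by contracting all internal edges, and closes with a dimension count using the finite-type hypothesis on $\cV$. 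The stable range arises there from Theorem \ref{thm:FTInvariantTheory} and Proposition \ref{prop:TransfOfTransf}, not from Corollary \ref{cor:TransfDetectsZero}. If you want to pursue your route, you must actually carry out (a) and (b) above with the sign and edge-case bookkeeping made explicit; absent that, the proposal is a plan rather than a proof.
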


The details of the proof of this theorem are somewhat technical, but the underlying idea is quite simple: here is a synopsis. Letting $R^\cV_\mr{pres}$ be the commutative ring given by the presentation in the statement of the theorem, the fact that these relations indeed hold in $R^\cV$ gives a morphism $\phi \colon R^\cV_\mr{pres} \to R^\cV$. Both source and target are graded algebraic representations of finite type, so in any finite range of degrees only finitely-many isomorphism types of irreducible representations appear, which may be described independently of $g$. As each irreducible is detected by applying $[- \otimes H(g)^{ \otimes S}]^{G'_g}$ for some $S$, it is enough to show that for each $S$ the map $[\phi \otimes H(g)^{ \otimes S}]^{G'_g}$ is an isomorphism in a range of degrees tending to infinity with $g$.

Using an idea introduced by Morita \cite{Morita}, and developed by Kawazumi--Morita \cite{KM}, Garoufalidis--Nakamura \cite{GN}, and Akazawa \cite{Akazawa}, we will describe a certain space $\cG(S, \cV)$ of graphs with legs $S$ and with internal vertices labelled by elements of $\cV$, up to a certain rule for contracting internal edges and contracting loops, and we will construct a map
\[\cG(S, \cV) \lra [R^\cV_\mr{pres} \otimes H(g)^{ \otimes S}]^{G'_g}\]
which will be shown to be an epimorphism using Theorem \ref{thm:FTInvariantTheory}. This is to be interpreted as $\kappa_c(v_1 \otimes \cdots \otimes v_r)$ representing an $r$-valent corolla labelled by $c$ with an ordering of the incident half-edges, \eqref{eq:rel0} describes the effect of reordering these half-edges, \eqref{eq:relEdge} says that an edge between two labelled corollas may be contracted to form a new corolla labelled by the product of the previous labels, and \eqref{eq:relLoop} says that a loop at a labelled corolla may be contracted to give a new corolla with its label multiplied by $e$.

\begin{figure}[h]
	\begin{tikzpicture}
	\begin{scope}[scale=1]
	\node at (-3,0) [left] {$\kappa_c(v_1,\ldots,v_7)$};
	\draw [<->] (-2.7,0) -- (-1.7,0);
	\foreach \i in {1,...,7}
	{
		\draw [thick,Mahogany] (0,0) -- ({360/7*\i}:.9);
		\draw [thick,Mahogany,dotted] (0,0) -- ({360/7*\i}:1.1);
		\node [Mahogany] at ({360/7*\i}:1.25) {\footnotesize \i};
	}
	\node at (0,0) [Mahogany] {$\bullet$};
	\node [Mahogany] at (.3,-.15) {$c$};
	\end{scope}
	\end{tikzpicture}
	\caption{A corolla with $7$ ordered incident edges, and vertex labelled by $c$.}
\end{figure}
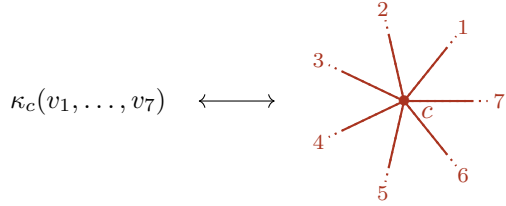

On the other hand by Proposition \ref{prop:TransfOfTransf} there is a  map
\[\psi^{\mathsf{(s)Br}_{2g}} \colon \cP(S, \cV)_{\geq 0} \otimes (\det \bQ^S)^{\otimes n}= i_*(\cP(S, \cV)'_{\geq 0} \otimes (\det \bQ^S)^{\otimes n}) \lra [R^\cV \otimes H(g)^{ \otimes S}]^{G'_g}\]
which is an isomorphism in a range of homological degrees tending to infinity with $g$. Contracting all internal edges will show that $\cG(S, \cV)$ is isomorphic to $\cP(S, \cV)_{\geq 0} \otimes (\det \bQ^S)^{\otimes n}$ and the following diagram commutes
\[\begin{tikzcd} \cG(S,\cV) \rar[two heads] \dar[swap]{\cong} &[20pt] {[R^\cV_\mr{pres} \otimes H(g)^{ \otimes S}]^{G'_g}} \dar{[\phi \otimes H(g)^{ \otimes S}]^{G'_g}} \\ 
\cP(S, \cV)_{\geq 0} \otimes (\det \bQ^S)^{\otimes n} \rar{\psi^{\mathsf{(s)Br}_{2g}}} & {[R^\cV \otimes H(g)^{\otimes S}]^{G'_g}}.\end{tikzcd}\]
Hence $[\phi \otimes H(g)^{ \otimes S}]^{G'_g}$ is an isomorphism in a range of homological degrees tending to infinity with $g$.

\begin{proof}[Proof of Theorem \ref{thm:BigRingStr}]
Let us suppose that the graded vector space $\cV$ has finite type (i.e.\ is finite dimensional in each degree); the general case follows from this by taking colimits over all finite type subspaces.

A \emph{marked oriented graph} with legs $S$ and vertices labelled by $\cV$ consists of the following data:
\begin{enumerate}[\indent (i)]
\item a totally ordered finite set $V$ (of vertices), a totally ordered finite set $H$ (of half-edges), and a monotone function $a \colon H \to V$ (encoding that a half-edge $h$ is incident to the vertex $a(h)$),\footnote{Given the monotonicity of $a$, the total orders of $V$ and $H$ are equivalent to ordering first the vertices and then the half-edges incident to each vertex.}
\item an ordered matching $m = \{(a_i, b_i)\}_{i \in I}$ of the set $H \sqcup S$ (encoding the oriented edges of the graph),
\item a function $c \colon V \to \cV$ with homogenous values, such that $|c(v)| + n(|a^{-1}(v)|-2) > 0$.
\end{enumerate}
We assign to a graph $\Gamma = (V, H, a, m, c)$ the degree
\[\mr{deg}(\Gamma) \coloneqq \prod_{v \in V} \left(|c(v)| + n(|a^{-1}(v)|-2)\right).\]
Two graphs $\Gamma = (V, H, a, m, c)$ and $\Gamma' = (V', H', a', m', c')$ are isomorphic if there are order-preserving bijections $V \cong V'$ and $H \cong H'$ which intertwine the functions $a$ and $a'$ and $c$ and $c'$, and send the matching $m$ to $m'$. An \emph{oriented graph} (with legs $S$ and vertices labelled by $\cV$) is an isomorphism class of marked oriented graphs. We let $\cC^\mr{or, pre}(S, \cV)$ denote the vector space with basis the oriented graphs with legs $S$ and vertices labelled by $\cV$, and $\cC^\mr{or}(S, \cV)$ denote the quotient vector space given by imposing linearity in the label $c(v)$ at each vertex $v \in v$. We consider these as graded vector spaces, with $[\Gamma]$ placed in degree $\mr{deg}(\Gamma)$.

If $[\Gamma]$ and $[\Gamma']$ are oriented graphs as above, and there are not-necessarily order-preserving bijections $f \colon H \to H'$ and $g \colon V \to V'$ such that $a' \circ f = g \circ a$ and $c' \circ g = c'$, and such that the matching $m'$ of $H' \sqcup S$ differs from $f(m)$ by reversing $k$ pairs, then we wish to declare such graphs equivalent up to a sign. Specifically we want to enforce
\[[\Gamma] = (-1)^{nk} \mr{sign}(f)\cdot \mr{sign}(g) [\Gamma']\]
where $\mr{sign}(g)$ and $\mr{sign}(f)$ are as follows:
\begin{enumerate}[\indent (i)]
\item Let the degree of a vertex $v \in V$ be $|c(v)| + n(|a^{-1}(v)|-2)$, and let $V_{\delta} \subset V$ be the subset of vertices of degree $\delta$. The bijection $g \colon V \to V'$ preserves degree, so induces bijections
 $g_{\delta} \colon V_{\delta} \to V'_{\delta}$. These sets are totally ordered, by restricting the total order from $V$ and $V'$, and so there is an associated sign $\mr{sign}(g_{\delta})$ of this permutation. Then
\[\mr{sign}(g) \coloneqq \prod_{\delta \text{ odd}}\mr{sign}(g_{\delta}).\]

\item For each $v \in V$ the function $f$ gives a bijection $a^{-1}(v) \to (a')^{-1}(g(v))$. These sets are totally ordered, by restricting the total order from $H$ and $H'$, and so there is an associated sign $\mr{sign}(f; v)$ of this permutation. Then
\[\mr{sign}(f) \coloneqq \prod_{v \in V} \mr{sign}(f; v)^n.\]
\end{enumerate}
We let the graded vector space $\cC(S, \cV)$ be the quotient of the graded vector space $\cC^\mr{or}(S, \cV)$ by the subspace generated by the homogeneous differences $[\Gamma] - (-1)^{nk} \mr{sign}(f)\cdot\mr{sign}(g) [\Gamma']$ for all such $[\Gamma]$'s and $[\Gamma']$'s. We further let $\cG(S, \cV)$ be the quotient of the graded vector space $\cC(S, \cV)$ by the space spanned by the differences $[\Gamma]- [\Gamma'']$ when $\Gamma = (V, H, a, m, c)$ and $\Gamma'' = (V'', H'', a'', m'', c'')$ are related by the following moves:
\begin{enumerate}[\indent (i)]
\item an edge contraction; that is, there are $x, y \in H$ which are adjacent with respect to the total order on $H$ and have $a(x) \neq a(y)$, such that $H'' = H \setminus \{x,y\}$ with the induced order, $V'' = V/(a(x) \sim a(y))$ with the induced order (as $a(x)$ and $a(y)$ must be adjacent with respect to the total order on $V$),
\[a'' \colon H'' \overset{\mr{inc}}\lra H \overset{a}\lra V \xrightarrow{\mr{quot}} V'',\]
which is again monotone with respect to these orders, $m = \{x,y\} \sqcup m''$, and $c''([a(x)]) = c(x) \cdot c(y)$.

\item a loop contraction; that is, there are $x, y \in H$ which are adjacent with respect to the total order on $H$ and have $a(x) = a(y)$, such that $H'' = H \setminus \{x,y\}$ with the induced order, $V'' = V$ with the same order, 
\[a'' \colon H'' \xrightarrow{\mr{inc}} H \overset{a}\lra V \overset{=}\lra V'',\]
which is again monotone with respect to these orders, $m = \{x,y\} \sqcup m''$, and $c''(a(x)) = c(x) \cdot e$.
\end{enumerate}

We now construct a map $\alpha \colon \cG(S, \cV) \lra [R^\cV_\mr{pres} \otimes H(g)^{ \otimes S}]^{G'_g}$. We do so by first associating to a graph $\Gamma = (V, H, a, m, c)$ the map
\[\bigotimes_{v \in V} \kappa_c \otimes H(g)^{ \otimes S} \colon \bigotimes_{v \in V} H(g)^{\otimes a^{-1}(v)} \otimes H(g)^{ \otimes S} \lra R^\cV_\mr{pres} \otimes H(g)^{ \otimes S}\]
and then applying this map to the $G'_g$-invariant vector given by
\[\bigotimes_{(a,b) \in m} \omega_{a,b} \in \bigotimes_{v \in V} H(g)^{\otimes a^{-1}(v)} \otimes H(g)^{ \otimes S},\]
to obtain $\alpha([\Gamma]) \in [R^\cV_\mr{pres} \otimes H(g)^{ \otimes S}]^{G'_g}$. This descends to a map from $\cG(S, \cV)$ by construction, because the relations in $R^\cV_\mr{pres}$ allow for the the symmetry, and edge and loop contraction relations we imposed on graphs.

Write $R^\cV_\mr{gen}$ for the graded commutative ring generated by the $\kappa_c(v_1 \otimes \cdots \otimes v_r)$ for $|c| + n(r-2)>0$, modulo linearity in each $v_i$. We may write this as
\[R^\cV_\mr{gen} = S^*\left(\bigoplus_{\stackrel{r \geq 0,}{\delta > -n(r-2)}} \cV_\delta \otimes H(g)^{\otimes r}[\delta + n(r-2)]\right),\]
and the construction above gives a map
\[\cC^\mr{or}(S, \cV) \lra [R^\cV_\mr{gen} \otimes H(g)^{\otimes S}]^{G'_g}.\]
By Theorem \ref{thm:FTInvariantTheory} this is an epimorphism. Imposing the symmetry relation \eqref{eq:rel0} and the contraction relations \eqref{eq:relEdge} and \eqref{eq:relLoop} corresponds to allowing local moves on graphs which correspond to the successive quotients $\cC(S, \cV)$ and $\cG(S, \cV)$, and so the map $\alpha \colon \cG(S, \cV) \to [R^\cV_\mr{pres} \otimes H(g)^{\otimes S}]^{G'_g}$ is obtained by taking the quotient of the above, and so is also an epimorphism.

In each homological degree the functor $\cP(-, \cV)'_{\geq 0} \otimes \det^{\otimes n}$ is non-zero only on sets of bounded cardinality, as each allowed part in the definition of $\cP(-, \cV)'_{\geq 0}$ has strictly positive homological degree (we discuss this more quantitatively in Section \ref{sec:SupportEstimate}). Thus by Proposition \ref{prop:TransfOfTransf} there is a map
\[\psi^{\mathsf{Br}_{2g}} \colon \cP(S, \cV)_{\geq 0} \otimes (\det \bQ^S)^{\otimes n}= i_*(\cP(S, \cV)'_{\geq 0} \otimes (\det \bQ^S)^{\otimes n}) \lra [R^\cV \otimes H(g)^{ \otimes S}]^{G'_g}\]
which is an epimorphism, and is an isomorphism in a range of homological degrees tending to infinity with $g$. The composition
\[\cG(S, \cV) \overset{\alpha}\lra [R^\cV_\mr{pres} \otimes H(g)^{\otimes S}]^{G'_g} \overset{\phi_*}\lra [R^\cV \otimes H(g)^{\otimes S}]^{G'_g}\]
is easily described in terms of the map $\psi^{\mathsf{Br}_{2g}}$. Using the contraction formulas, any graph is equivalent in $\cG(S, \cV)$ to a graph having no internal edges: such a graph has the form $(V, H, a, m, c)$ with $m$ a matching of $H \sqcup S$ having no pairs in $H$. In other words, $m$ is the data of an injection $\iota \colon H \hookrightarrow S$ and an ordered matching $m' = \{(a_i, b_i)\}$ of the complement $S \setminus \iota(H)$. Such a graph determines a labelled partition of $S$, with parts $\{a_i, b_i\}$ labelled by 1, and parts $\iota(a^{-1}(v))$ labelled by $c(v)$. It also determines an orientation of $\bQ^S$ as
\[\iota(h_1) \wedge \cdots \wedge \iota(h_r) \wedge a_1 \wedge b_1 \wedge \cdots \wedge a_s \wedge b_s \in \det(\bQ^S).\]
This describes the composition $\phi_* \circ \alpha$. It clearly shows that $\phi_* \circ \alpha$ is an epimorphism, as $\psi^{\mathsf{Br}_{2g}}$ is and every labelled partition is realised by a graph, namely a disjoint union of corollas. 

To show that $\phi_* \circ \alpha$ is a monomorphism, we now use our assumption that $\cV$ has finite type: then the vector spaces $\cG(S, \cV)$ and $\cP(S, \cV)_{\geq 0} \otimes (\det \bQ^S)^{\otimes n}$ do too, and so to see that $\phi_* \circ \alpha$ is a monomorphism it is enough to show that the dimension of $\cG(S, \cV)$ it at most that of $\cP(S, \cV)_{\geq 0}$ in each homological degree. To see this, contract all internal edges of each graph in $\cG(S, \cV)$: the result is a disjoint union of corollas with vertices with (certain) labels in $\cV$, and the dimension of this space in each degree is precisely the dimension of $\cP(S, \cV)_{\geq 0}$ in that degree. Thus even if certain disjoint unions of labelled corollas are equivalent in $\cG(S, \cV)$, its dimension is most that of $\cP(S, \cV)_{\geq 0} \otimes (\det \bQ^S)^{\otimes n}$. Thus $\phi_* \circ \alpha$ is an isomorphism in a range of degrees tending to infinity with $g$.

Finally, as $\alpha$ is an epimorphism it then follows that both $\alpha$ and $\phi_*$ are isomorphisms in a range of degrees tending to infinity with $g$. That is, for each finite set $S$ the map
\[{\phi_*} \colon [R^\cV_\mr{pres} \otimes H(g)^{\otimes S}]^{G'_g} \lra [R^\cV \otimes H(g)^{\otimes S}]^{G'_g}\]
is an epimorphism, and is an isomorphism in a range of degrees tending to infinity with $g$. The algebraic representation $R^\cV_\mr{pres}$ is generated by the classes $\kappa_c (H(g)^{\otimes r})$ of degree $|c| + n(r-2) > 0$, which can be detected by applying $[- \otimes H(g)^{\otimes r}]^{G_g'}$. Thus in degrees $* \leq d$ there is a finite list, independent of $g$, of irreducible representations $V_\lambda$ appearing in $R^\cV_\mr{pres}$, and hence which could appear in $\mr{Ker}(\phi)$. Thus if $\phi$ were not an isomorphism in degrees $* \leq d$ then this would be detected by applying $[- \otimes H(g)^{\otimes S}]^{G'_g}$ for a fixed finite collection of sets $S$, but by taking $g$ large enough this does not happen.
\end{proof}

\subsection{A smaller ring presentation}\label{sec:SmallPres}

Having understood the proof of Theorem \ref{thm:BigRingStr}, one can hope to simplify the presentation of the ring $R^\cV$ given there by manipulating labelled graphs. At the level of generators a simplification is quite obvious: graphically we may first replace an $r$-valent corolla labelled by $x$ by an $(r+1)$-valent corolla labelled by 1 joined to a univalent corolla labelled by $x$, and then by iterated expansions replace the $(r+1)$-valent vertex labelled by 1 by a trivalent tree with each vertex labelled by 1, see Figure \ref{fig:summary-small}. 

\begin{figure}[h]
	\begin{tikzpicture}
	\begin{scope}[scale=1]
		\foreach \i in {1,...,6}
	{
		\draw [thick,Mahogany] (0,0) -- ({360/6*\i}:.9);
		\draw [thick,Mahogany,dotted] (0,0) -- ({360/6*\i}:1.1);
	}
	\node at (0,0) [Mahogany] {$\bullet$};
	\node [Mahogany] at (.3,-.15) {$x$};
	\draw [<-] (2.7,0) -- (1.7,0);
	\begin{scope}[xshift=4.4cm]
		\foreach \i in {1,...,6}
	{
		\draw [thick,Mahogany] (0,0) -- ({360/7*\i}:.9);
		\draw [thick,Mahogany,dotted] (0,0) -- ({360/7*\i}:1.1);
	}
	\draw [thick,Mahogany] (0,0) -- (1,0);
	\node at (0,0) [Mahogany] {$\bullet$};
	\node  at (1,0) [Mahogany] {$\bullet$};
	\node at (1,0) [Mahogany,right] {$x$};
	\end{scope}
	\draw [<-] (7.1,0) -- (6.1,0);
	\begin{scope}[xshift=9.6cm]
	\begin{scope}[yshift=-.5cm,xshift=-1cm]
	\node at (0,0) [Mahogany] {$\bullet$};
	\foreach \i in {3,4}
	{
		\draw [thick,Mahogany] (0,0) -- ({360/7*\i}:.9);
		\draw [thick,Mahogany,dotted] (0,0) -- ({360/7*\i}:1.1);
	}
	\end{scope}
	\begin{scope}[yshift=-1cm,xshift=-.5cm]
	\node at (0,0) [Mahogany] {$\bullet$};
	\foreach \i in {5,6}
	{
		\draw [thick,Mahogany] (0,0) -- ({360/7*\i}:.9);
		\draw [thick,Mahogany,dotted] (0,0) -- ({360/7*\i}:1.1);
	}
	\end{scope}
	\begin{scope}[yshift=.5cm,xshift=-.3cm]
	\node at (0,0) [Mahogany] {$\bullet$};
	\foreach \i in {1,2}
	{
		\draw [thick,Mahogany] (0,0) -- ({360/7*\i}:.9);
		\draw [thick,Mahogany,dotted] (0,0) -- ({360/7*\i}:1.1);
	}
	\end{scope}
	\draw [thick,Mahogany] (-.3,.5) -- (0,0) -- (-.5,-.5) -- (-.5,-1) -- (-.5,-.5) -- (-1.,-.5);
	\node  at (-.5,-.5) [Mahogany] {$\bullet$};
	\draw [thick,Mahogany] (0,0) -- (1,0);
	\node at (0,0) [Mahogany] {$\bullet$};
	\node  at (1,0) [Mahogany] {$\bullet$};
	\node at (1,0) [Mahogany,right] {$x$};
	\end{scope}
	\end{scope}
	\end{tikzpicture}
	\caption{Replacing an $r$-valent corolla labelled by $x$ by an $(r+1)$-valent corolla labelled by $1$ connected to a univalent corolla labelled by $x$, and then expanding the $(r+1)$-valent vertex to a trivalent tree. We have suppressed the labels $1$ for clarity.}
	\label{fig:summary-small}
\end{figure}
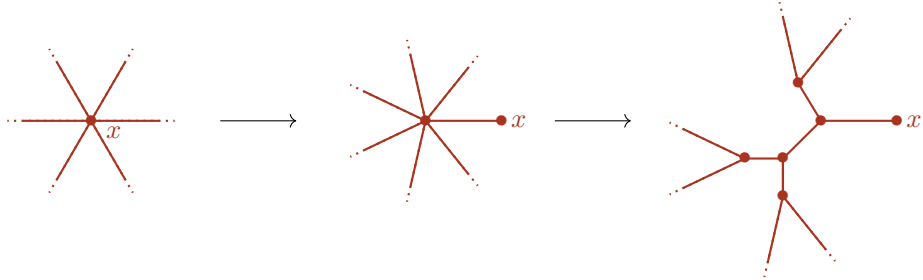

For this to be possible we need to know that if $x$ is a label of some corolla, and $|x|>0$, then the univalent vertex labelled by $x$ exists, i.e.\ $|x| + n(1-2) > 0$, or in other words $|x| > n$. In this section we will therefore suppose that $\cV = \bQ\{1\} \oplus \cV_{>n}$. In this case we see that the classes
\begin{itemize}
\item [(a)] $\kappa_c = \kappa_c(1)$  for $c \in \cV_{>2n}$, of degree $|c|-2n$,
\item [(b)] $\kappa_c(v_1)$  for $c \in \cV_{>n}$, of degree $|c|-n$, and
\item [(c)] $\kappa_1(v_1 \otimes v_2 \otimes v_3)$ of degree $n$,
\end{itemize}
are sufficient to generate $R^\cV$. The price to be paid for this smaller generating set is, as is to be expected, a somewhat more complicated set of relations. The reader will easily deduce from \eqref{eq:rel0}, \eqref{eq:relEdge} and \eqref{eq:relLoop} that along with linearity in $c$ and each $v_i$ the following relations hold among the generators listed above:

\begin{itemize}
\item [($\alpha$)] $\kappa_1(v_{\sigma(1)} \otimes v_{\sigma(2)} \otimes v_{\sigma(3)}) = \mr{sign}(\sigma)^n \cdot \kappa_1(v_1 \otimes v_2 \otimes v_3)$

\item [($\beta$)] $\kappa_{x \cdot y} = \sum_i \kappa_x(a_i) \cdot  \kappa_y(a_i^\#)$

\item [($\gamma$)] $\kappa_{x \cdot y}(v_1) = \sum_{i,j}  \kappa_1(v_1 \otimes a_j  \otimes a_i) \cdot \kappa_x(a_i^\#)\cdot \kappa_y(a_j^\#)$

\item [($\delta$)] $\sum_i \kappa_1( v_1 \otimes a_i \otimes a_i^\#) = \kappa_e(v_1)$

\item [($\epsilon$)] $\sum_i \kappa_1(v_1 \otimes v_2 \otimes a_i) \cdot \kappa_1(a_i^\# \otimes v_5 \otimes v_6) = \sum_i \kappa_1(v_1 \otimes v_5 \otimes a_i) \cdot \kappa_1(a_i^\# \otimes v_6 \otimes v_2).$
\end{itemize}

\begin{theorem}\label{thm:SmallRingStr}
Suppose that $\cV = \bQ\{1\} \oplus \cV_{>n}$. In a range of degrees tending to infinity with $g$, the graded-commutative ring $R^\cV$ is generated by the classes (a)--(c), with relations given by linearity in $c$ and each $v_i$ and the relations ($\alpha$)--($\epsilon$).
\end{theorem}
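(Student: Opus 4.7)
The plan is to prove Theorem~\ref{thm:SmallRingStr} by comparing the small presentation to the big one supplied by Theorem~\ref{thm:BigRingStr}. Let $R^\cV_\mr{small}$ denote the ring defined by the small presentation and $R^\cV_\mr{pres}$ the one defined by the big presentation. The first observation is that the relations ($\alpha$)--($\epsilon$) are instances or short consequences of the relations (i)--(iii) of Theorem~\ref{thm:BigRingStr}: ($\alpha$) is the case $r=3,\ c=1$ of \eqref{eq:rel0}; ($\beta$) is \eqref{eq:relEdge} applied with no $v_i$'s and a trivial split; ($\gamma$) is obtained by applying \eqref{eq:relEdge} twice, first splitting $\kappa_{x\cdot y}(v_1)$ through $\kappa_1(v_1\otimes -\otimes -)$ and then again; ($\delta$) is \eqref{eq:relLoop} with a single argument; and ($\epsilon$) is obtained by applying \eqref{eq:relEdge} to $\kappa_1(v_1 \otimes v_2 \otimes v_5 \otimes v_6)$ in the two inequivalent ways of splitting off two pairs of edges, yielding both sides of ($\epsilon$). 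This gives a canonical ring homomorphism $\pi \colon R^\cV_\mr{small} \to R^\cV_\mr{pres}$ sending (a), (b), (c) to the corresponding generators of the big presentation. Composing with the isomorphism $R^\cV_\mr{pres} \cong R^\cV$ in a stable range provided by Theorem~\ref{thm:BigRingStr}, it suffices to show that $\pi$ is an isomorphism in this same range.

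Next I would show that $\pi$ is surjective in a stable range. The assumption $\cV = \bQ\{1\} \oplus \cV_{>n}$ guarantees that every element $c \in \cV$ of positive degree satisfies $|c|>n$, so that the univalent class $\kappa_c(v_1)$ of type (b) is among the generators. Given an arbitrary generator $\kappa_c(v_1 \otimes \cdots \otimes v_r)$ of $R^\cV_\mr{pres}$, write $c = c \cdot 1$ and apply \eqref{eq:relEdge} repeatedly. For $r\geq 4$, splitting off two of the arguments produces
\[
\kappa_c(v_1\otimes\cdots\otimes v_r) = \sum_i \kappa_c(v_1\otimes\cdots\otimes v_{r-2}\otimes a_i)\cdot \kappa_1(a_i^\#\otimes v_{r-1}\otimes v_r),
\]
which reduces the arity by one at the cost of introducing a factor of type (c). Iterating until $r\leq 3$, and for $r=2,3$ with $c\in\cV_{>n}$ splitting off a univalent $\kappa_c(a_i)$ of type (b), yields an explicit polynomial expansion in the small generators.

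The injectivity of $\pi$ is the main obstacle. To prove it I would construct a two-sided inverse by defining, for each generator $\kappa_c(v_1\otimes\cdots\otimes v_r)$ of $R^\cV_\mr{pres}$, an element $\sigma(\kappa_c(v_1\otimes\cdots\otimes v_r)) \in R^\cV_\mr{small}$ via a fixed tree-expansion of the above form, and then verifying that $\sigma$ extends to a well-defined ring homomorphism, i.e.\ that the three families of defining relations of $R^\cV_\mr{pres}$ hold among the $\sigma$-images in $R^\cV_\mr{small}$. The checks proceed as follows: the loop relation \eqref{eq:relLoop} follows from ($\delta$) applied to the unique trivalent corolla at which the contracted loop appears. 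The edge-contraction relation \eqref{eq:relEdge} amounts, after expansion, to showing that the plumbing of two trivalent trees produces the tree for the combined corolla, which is a straightforward consequence of ($\beta$), ($\gamma$), and iterated uses of ($\epsilon$) to rebracket the trivalent tree. The symmetry relation \eqref{eq:rel0} for $r\geq 4$ is the hardest: one rewrites a permutation of leaves as a sequence of transpositions of adjacent leaves in the fixed tree and absorbs each by combining ($\alpha$) at a single trivalent vertex with the associativity relation ($\epsilon$) to move the transposition across the tree. This is essentially a Mac Lane–style coherence argument for the trivalent-tree operad, and amounts to noting that any two planar trivalent expansions of a given corolla are related by a sequence of moves ($\epsilon$). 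Once $\sigma$ is established as a ring homomorphism, both $\sigma\circ\pi$ and $\pi\circ\sigma$ act as the identity on the explicit generators of $R^\cV_\mr{small}$ and $R^\cV_\mr{pres}$ respectively, whence $\pi$ is an isomorphism in the stable range and Theorem~\ref{thm:SmallRingStr} follows.
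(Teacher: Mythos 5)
Your easy steps are fine and match the paper: the relations ($\alpha$)--($\epsilon$) are indeed formal consequences of \eqref{eq:rel0}, \eqref{eq:relEdge}, \eqref{eq:relLoop}, giving the map $\pi$, and your surjectivity argument by iterated tree-expansion is exactly the paper's (cf.\ Figure \ref{fig:summary-small}; the hypothesis $\cV=\bQ\{1\}\oplus\cV_{>n}$ is used in the same way). The gap is in your injectivity step. Defining $\sigma$ by a chosen trivalent expansion and asserting that the defining relations of $R^\cV_\mr{pres}$ hold among the $\sigma$-images ``by a Mac Lane--style coherence argument'' is not a routine verification --- it is essentially the whole content of the theorem. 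Concretely: your check of \eqref{eq:relLoop} is not just ``($\delta$) at the unique trivalent corolla'': after a fixed expansion the two contracted slots generally sit at \emph{different} trivalent vertices, producing an embedded cycle of length $\geq 2$ that must first be shortened by repeated applications of ($\epsilon$) before ($\delta$) can be used; your checks of \eqref{eq:rel0} and \eqref{eq:relEdge} in arbitrary arity require an induction over trees in which the intermediate contraction sums and the Koszul signs are tracked (for $n$ odd the generators $\kappa_1(v_1\otimes v_2\otimes v_3)$ have odd degree, and the factor $\mr{sign}(\sigma)^n$ must come out exactly), and none of this is carried out. Note also that if your $\sigma$ existed it would give an isomorphism $R^\cV_\mr{small}\cong R^\cV_\mr{pres}$ in \emph{all} degrees and for all $g$, strictly more than the stated theorem; that the paper only claims a stable range is a hint that it deliberately avoids exactly this coherence problem.

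The paper's route is genuinely different at this point and is designed to sidestep confluence/well-definedness questions. It tensors with $H(g)^{\otimes S}$ and takes $G'_g$-invariants, maps a space $\bar{\cG}(S,\cV)$ of labelled trivalent graphs (with only the local moves ($\alpha$)--($\epsilon$)) onto $[\bar{R}^\cV_\mr{pres}\otimes H(g)^{\otimes S}]^{G'_g}$ using invariant theory (Theorem \ref{thm:FTInvariantTheory}), and then proves the comparison map to $\cG(S,\cV)\cong\cP(S,\cV)_{\geq 0}\otimes(\det\bQ^S)^{\otimes n}$ is injective by a dimension count: the moves ($\epsilon$), ($\delta$), ($\gamma$), ($\beta$) reduce every graph to a disjoint union of the standard trees of Figure \ref{fig:StdTrees}, so $\dim\bar{\cG}(S,\cV)\leq\dim\cP(S,\cV)_{\geq 0}$ in each degree. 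Only ``the moves span'' is needed, never that reductions are unambiguous, and the stable range enters through Proposition \ref{prop:TransfOfTransf}. To repair your argument you would either have to prove the coherence statement honestly (in effect redoing this normal-form analysis and additionally establishing confluence), or switch to the detection-plus-counting strategy of the paper.
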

\begin{proof}
As in the proof of Theorem \ref{thm:BigRingStr}, let $\smash{\bar{R}^\cV_\mr{pres}}$ be the graded commutative ring given by the presentation in the statement of this theorem. Let $\bar{\cC}(S, \cV)$ denote the vector space of graphs analogous to ${\cC}(S, \cV)$, but starting with the subspace $\bar{\cC}^\mr{or, pre}(S, \cV) \subset {\cC}^\mr{or, pre}(S, \cV)$ spanned by those graphs which 
\begin{itemize}
\item[(a)] may have nilvalent vertices,
\item[(b)] may have univalent vertices,
\item[(c)] may have trivalent vertices labelled by 1,
\end{itemize}
but have no higher-valent vertices. Let $\bar{\cG}(S, \cV)$ denote the quotient of $\bar{\cC}(S, \cV)$ by the subspace spanned by differences $[\Gamma]-[\Gamma'']$ where $\Gamma''$ is obtained from $\Gamma$ by one of the local moves shown in Figure \ref{fig:graphrels}.

\begin{figure}[h]
	\begin{subfigure}{\textwidth}
	\centering
	\begin{tikzpicture}
	\begin{scope}[scale=0.6]
	\draw (0,0) circle (1.8cm);
	\foreach \i in {1,...,3}
	{
		\draw [thick,Mahogany] (0,0) -- ({360/3*\i}:1.8);
		\node at ({360/3*\i+25}:1.25) [Mahogany] {\small $\sigma(\i)$};
	}
	\node at (0,0) [Mahogany] {$\bullet$};
	\node at (.3,-.3) [Mahogany] {$x$};
	
	\node at (2.5,0) {=};
	
	\node at (4.2,0) {$\mr{sign}(\sigma)^n$};
	\begin{scope}[xshift=7.5cm]
	\draw (0,0) circle (1.8cm);
	\foreach \i in {1,...,3}
	{
		\draw [thick,Mahogany] (0,0) -- ({360/3*\i}:1.8);
		\node at ({360/3*\i+15}:1.3) [Mahogany] {$\i$};
	}
	\node at (0,0) [Mahogany] {$\bullet$};
	\node at (.3,-.3) [Mahogany] {$x$};
	\end{scope}
	\end{scope}
	\end{tikzpicture}
	\caption*{Relation ($\alpha$).}\label{fig:Move0}
	\end{subfigure}
	\vspace{1cm}
	
	\begin{subfigure}{.4\textwidth}
	\begin{tikzpicture}
	\begin{scope}[scale=0.6]
	\draw (0,0) circle (1.8cm);
	\node at (0.7,0) [Mahogany] {$\bullet$};
	\draw [thick,Mahogany] (-0.7,0) -- (0.7,0);
	\draw [->] [thick,Mahogany] (-0.7,0) -- (0,0);
	\node at (-0.7,0) [Mahogany] {$\bullet$};
	
	\node at (-1.2,0) [Mahogany] {$x$};
	\node at (1.2,0) [Mahogany] {$y$};
	
	\node at (2.5,0) {=};
	
	\begin{scope}[xshift=5cm]
	\draw (0,0) circle (1.8cm);

	\node at (0,0) [Mahogany] {$\bullet$};
		\node at (0.7,0) [Mahogany] {$x \cdot y$};

	\end{scope}
	\end{scope}
	\end{tikzpicture}
	\caption*{Relation ($\beta$).}\label{fig:Move1}
	\end{subfigure}
	\hspace{1cm}
	\begin{subfigure}{.4\textwidth}
	\begin{tikzpicture}
	\begin{scope}[scale=0.6]
	\draw (0,0) circle (1.8cm);
	\draw [->-=.5,thick,Mahogany] (0,-0.5) -- (-1,0.3);
	\draw [->-=.5,thick,Mahogany] (0,-0.5) -- (1,0.3);
	\node at (1,0.3) [Mahogany] {$\bullet$};
	\node at (-1,0.3) [Mahogany] {$\bullet$};
	\node at (0,-0.5) [Mahogany] {$\bullet$};
	\draw [thick,Mahogany] (0,-1.8) -- (0,-0.5);

	\node at (-1,0.3) [above,Mahogany] {$x$};
	\node at (1,0.3) [above,Mahogany] {$y$};
	
		\node at (0.2,-1) [Mahogany] {1};
	\node at (0.8,0) [Mahogany,below] {3};
	\node at (-0.8,0) [Mahogany,below] {2};

	\node at (2.5,0) {=};
	
	\begin{scope}[xshift=5cm]
	\draw (0,0) circle (1.8cm);
	\node at (0,-0.5) [Mahogany] {$\bullet$};
	\draw [thick,Mahogany] (0,-1.8) -- (0,-0.5);
	\node at (0,-0.5) [Mahogany,above] {$x \cdot y$};
	\end{scope}
	\end{scope}
	\end{tikzpicture}
	\caption*{Relation ($\gamma$).}
	\end{subfigure}
	\vspace{1cm}
	
	\begin{subfigure}{.4\textwidth}
	\begin{tikzpicture}
	\begin{scope}[scale=0.6]
	\draw (0,0) circle (1.8cm);
	\draw [-<-=.25,thick,Mahogany]  (0,.3) circle (0.8cm);
	\node at (0,-0.5) [Mahogany] {$\bullet$};
	\draw [thick,Mahogany] (0,-1.8) -- (0,-0.5);
	
	\node at (0.2,-1) [Mahogany] {1};
	\node at (0.8,-0.5) [Mahogany] {3};
	\node at (-0.8,-0.5) [Mahogany] {2};
	
	\node at (2.5,0) {=};
	
	\begin{scope}[xshift=5cm]
	\draw (0,0) circle (1.8cm);
	\node at (0,-0.5) [Mahogany] {$\bullet$};
	\draw [thick,Mahogany] (0,-1.8) -- (0,-0.5);
	\node at (0,-.5) [Mahogany,above] {$e$};
	\end{scope}
	\end{scope}
	\end{tikzpicture}
	\caption*{Relation ($\delta$)}
	\end{subfigure}
	\hspace{1cm}
	\begin{subfigure}{.4\textwidth}
	\begin{tikzpicture}
	\begin{scope}[scale=0.6]
	\draw (0,0) circle (1.8cm);
	\draw [thick,Mahogany] (-1,1.5) -- (0,0.7);
	\draw [thick,Mahogany] (0,0.7) -- (1,1.5);
	\node at (0,0.7) [Mahogany] {$\bullet$};
	\draw [->-=.6,thick,Mahogany] (0,0.7) -- (0,-0.7);
	\node at (0,-.7) [Mahogany] {$\bullet$};
	\draw [thick,Mahogany] (-1,-1.5) -- (0,-0.7);
	\draw [thick,Mahogany] (0,-0.7) -- (1,-1.5);
	\node at (-0.7,.8) [Mahogany] {1};
	\node at (0.7,.8) [Mahogany] {2};
	\node at (0.3,0.4) [Mahogany] {3};
	\node at (0.3,-0.4) [Mahogany] {4};
	\node at (0.7,-0.8) [Mahogany] {5};
	\node at (-0.7,-0.8) [Mahogany] {6};
	\node at (0,0.7) [Mahogany,above] {i};
	\node at (0,-0.7) [Mahogany,below] {ii};
	
	\node at (2.5,0) {=};
	
	\begin{scope}[xshift=5cm]
	\draw (0,0) circle (1.8cm);
	\draw [thick,Mahogany] (-1,1.5) -- (-0.7,0);
	\draw [thick,Mahogany] (0.7,0) -- (1,1.5);
	\node at (0.7,0) [Mahogany] {$\bullet$};
	\draw [->-=.6,thick,Mahogany] (-0.7,0) -- (0.7,0);
	\node at (-0.7,0) [Mahogany] {$\bullet$};
	\draw [thick,Mahogany] (-1,-1.5) -- (-0.7,0);
	\draw [thick,Mahogany] (0.7,0) -- (1,-1.5);
	\node at (-1.2,0.8) [Mahogany] {1};
	\node at (1.2,0.8) [Mahogany] {2};
	\node at (-0.4,0.3) [Mahogany] {3};
	\node at (0.4,0.3) [Mahogany] {4};
	\node at (1.2,-0.8) [Mahogany] {5};
	\node at (-1.2,-0.8) [Mahogany] {6};
	\node at (-1.05,0) [Mahogany] {i};
	\node at (1.1,0) [Mahogany] {ii};
	\end{scope}
	\end{scope}
	\end{tikzpicture}
	\caption*{Relation ($\epsilon$)}\label{fig:Move4}

\end{subfigure}
\caption{The relations ($\alpha$)-($\epsilon$) in graphical form.}
\label{fig:graphrels}
\end{figure}
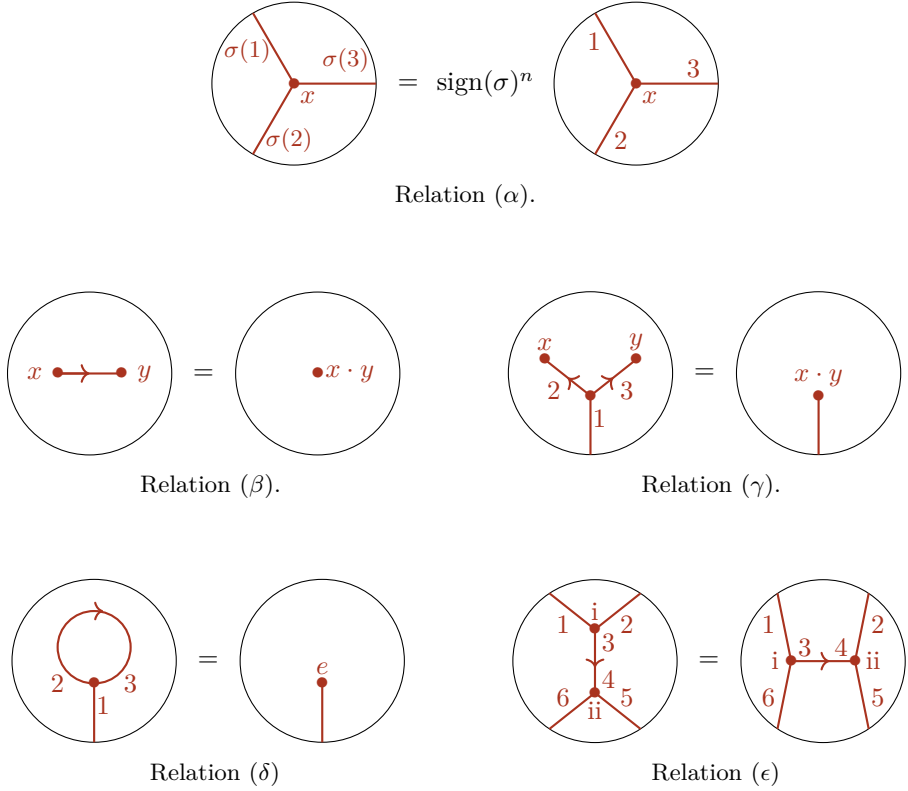

As in the proof of Theorem \ref{thm:BigRingStr} there is a map
\[\bar{\alpha} \colon \bar{\cG}(S, \cV) \lra [\bar{R}^\cV_\mr{pres} \otimes H(g)^{ \otimes S}]^{G'_g},\]
and it is again an epimorphism. Using the commutative diagram
\begin{equation*}
\begin{tikzcd}
\bar{\cG}(S, \cV) \rar[two heads]{\bar{\alpha}} \dar{\beta} & \left[\bar{R}^\cV_\mr{pres} \otimes H(g)^{\otimes S}\right]^{G'_g} \dar\\
{\cG}(S, \cV) \rar{\sim}[swap]{\bar{\alpha}} & \left[{R}^\cV_\mr{pres} \otimes H(g)^{\otimes S}\right]^{G'_g},
\end{tikzcd}
\end{equation*}
to finish the argument we must show that $\beta$ is an isomorphism, and to do so we may use the identification ${\cG}(S, \cV) \smash{\overset{\sim}\lra} \cP(S, \cV)_{\geq 0} \otimes (\det \bQ^S)^{\otimes n}$. We have already explained at the beginning of Section \ref{sec:SmallPres} why $\beta$ is an epimorphism, using the assumption $\cV = \bQ\{1\} \oplus \cV_{>n}$.

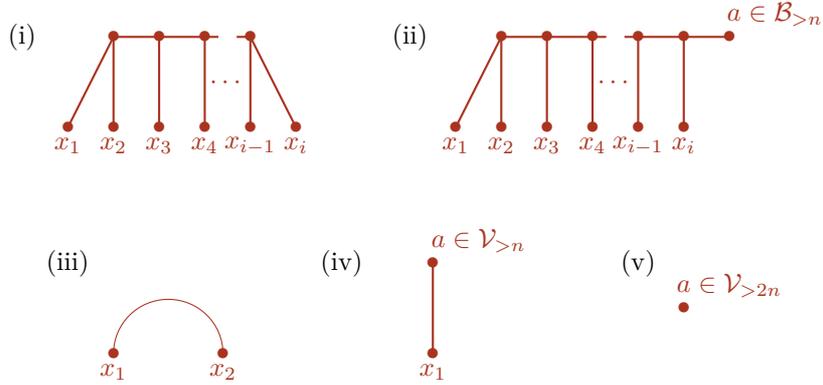
\begin{figure}[h]
	\begin{tikzpicture}
	\begin{scope}[scale=0.6]

	\node at (-3,-2) [Mahogany] {$\bullet$};
	\node at (-2,-2) [Mahogany] {$\bullet$};
	\node at (-1,-2) [Mahogany] {$\bullet$};
	\node at (0,-2) [Mahogany] {$\bullet$};
	\node at (1,-2) [Mahogany] {$\bullet$};
	\node at (2,-2) [Mahogany] {$\bullet$};

	\node at (-2,0) [Mahogany] {$\bullet$};
	\node at (-1,0) [Mahogany] {$\bullet$};
	\node at (0,0) [Mahogany] {$\bullet$};
	\node at (1,0) [Mahogany] {$\bullet$};

	\draw [thick,Mahogany] (-3,-2) -- (-2,0);
	\draw [thick,Mahogany] (-2,-2) -- (-2,0);
	\draw [thick,Mahogany] (-2,0) -- (-1,0);
	\draw [thick,Mahogany] (-1,-2) -- (-1,0);
	\draw [thick,Mahogany] (-1,0) -- (0,0);
	\draw [thick,Mahogany] (0,-2) -- (0,0);
	\draw [thick,Mahogany] (0,0) -- (0.3,0);
	\draw [thick,Mahogany] (0.7,0) -- (1,0);
	\draw [thick,Mahogany] (1,-2) -- (1,0);
	\draw [thick,Mahogany] (1,0) -- (2,-2);

	\node at (-3,-2.4) [Mahogany] {$x_1$};
	\node at (-2,-2.4) [Mahogany] {$x_2$};
	\node at (-1,-2.4) [Mahogany] {$x_3$};
	\node at (0,-2.4) [Mahogany] {$x_4$};
	\node at (1,-2.4) [Mahogany] {$x_{i-1}$};
	\node at (2,-2.4) [Mahogany] {$x_i$};
	\node at (0.5,-1) [Mahogany] {$\cdots$};

	\node at (-4,0) {(i)};

	\begin{scope}[xshift=8.5cm]
	\node at (-3,-2) [Mahogany] {$\bullet$};
	\node at (-2,-2) [Mahogany] {$\bullet$};
	\node at (-1,-2) [Mahogany] {$\bullet$};
	\node at (0,-2) [Mahogany] {$\bullet$};
	\node at (1,-2) [Mahogany] {$\bullet$};
	\node at (2,-2) [Mahogany] {$\bullet$};

	\node at (-2,0) [Mahogany] {$\bullet$};
	\node at (-1,0) [Mahogany] {$\bullet$};
	\node at (0,0) [Mahogany] {$\bullet$};
	\node at (1,0) [Mahogany] {$\bullet$};
	\node at (2,0) [Mahogany] {$\bullet$};
	\node at (3,0) [Mahogany] {$\bullet$};

	\draw [thick,Mahogany] (-3,-2) -- (-2,0);
	\draw [thick,Mahogany] (-2,-2) -- (-2,0);
	\draw [thick,Mahogany] (-2,0) -- (-1,0);
	\draw [thick,Mahogany] (-1,-2) -- (-1,0);
	\draw [thick,Mahogany] (-1,0) -- (0,0);
	\draw [thick,Mahogany] (0,-2) -- (0,0);
	\draw [thick,Mahogany] (0,0) -- (0.3,0);
	\draw [thick,Mahogany] (0.7,0) -- (1,0);
	\draw [thick,Mahogany] (1,-2) -- (1,0);
	\draw [thick,Mahogany] (1,0) -- (2,0);
	\draw [thick,Mahogany] (2,0) -- (3,0);
	\draw [thick,Mahogany] (2,0) -- (2,-2);

	\node at (4,0.5) [Mahogany] {$a \in \cB_{>n}$};	
	\node at (-3,-2.4) [Mahogany] {$x_1$};
	\node at (-2,-2.4) [Mahogany] {$x_2$};
	\node at (-1,-2.4) [Mahogany] {$x_3$};
	\node at (0,-2.4) [Mahogany] {$x_4$};
	\node at (1,-2.4) [Mahogany] {$x_{i-1}$};
	\node at (2,-2.4) [Mahogany] {$x_i$};
	\node at (0.5,-1) [Mahogany] {$\cdots$};

	\node at (-4,0) {(ii)};
	\end{scope}

	\begin{scope}[yshift=-5cm,xshift=-2cm]
	\node at (0,-2) [Mahogany] {$\bullet$};
	\node at (2.4,-2) [Mahogany] {$\bullet$};

	\draw [Mahogany] (0,-2) arc (180:0:1.2) ;

	\node at (0,-2.4) [Mahogany] {$x_1$};
	\node at (2.4,-2.4) [Mahogany] {$x_2$};

	\node at (-1,0) {(iii)};
	\end{scope}
	
	\begin{scope}[yshift=-5cm,xshift=4cm]
	\node at (1,-2) [Mahogany] {$\bullet$};
	\node at (1,0) [Mahogany] {$\bullet$};

	\draw [thick,Mahogany] (1,-2) -- (1,0);

	\node at (2,0.5) [Mahogany] {$a \in \cV_{>n}$};	
	\node at (1,-2.4) [Mahogany] {$x_1$};

	\node at (-1,0) {(iv)};
	\end{scope}

	\begin{scope}[xshift=10.5cm,yshift=-5cm]
	\node at (0,-1) [Mahogany] {$\bullet$};

	\node at (1,-0.5) [Mahogany] {$a \in \cV_{>2n}$};	

	\node at (-1,0) {(v)};
	\end{scope}
	\end{scope}
	\end{tikzpicture}
	\caption{Standard labelled trees for (i) $i \geq 3$, (ii) $i \geq 2$, (iii) $i=2$, (iv) $i=1$, (v) $i=0$.}\label{fig:StdTrees}
\end{figure}

To finish the argument, as in the proof of Theorem \ref{thm:BigRingStr} we may suppose that $\cV$ has finite type; then it is enough to show that in each homological degree the dimension of $\bar{\cG}(S, \cV)$ is \emph{at most} the dimension of ${\cG}(S, \cV)$.

First observe that any labelled graph in $\bar{\cG}(S, \cV)$ is equivalent to a labelled forest, as follows. If a connected graph has a cycle then it has an embedded cycle, in which case the relation ($\epsilon$) can be used to shorten the length of this embedded cycle, and this can be done until the graph has an embedded cycle of length 1: but then the relation ($\delta$) can be used to replace this loop with a leaf labelled by $e$. This reduces the first Betti number of the graph. Continuing in this way, we can eliminate all cycles. Furthermore, by applying relations ($\epsilon$), ($\gamma$), and ($\beta$) each labelled forest is equivalent to a disjoint union of labelled trees of the forms shown in Figure \ref{fig:StdTrees} (i)--(v). This means that in each homological degree the dimension of $\bar{\cG}(S, \cV)$ is at most that of $\mathcal{P}(S, \cV)_{\geq 0}$, which completes the argument. The case of general $\cV$ follows by taking colimits.
\end{proof}

\begin{remark}
We may phrase relation ($\epsilon$) as saying that the compositions
\[\begin{tikzcd}H(g)^{\otimes\{1,2,5,6\}} \ar[shift left=.5ex]{r}{f_I} \ar[shift left=-.5ex]{r}[swap]{f_H} & H(g)^{\otimes 3} \otimes H(g)^{\otimes 3} \rar{\kappa_1 \otimes \kappa_1} & R^\cV\end{tikzcd}\]
are equal, where
\begin{align*}
f_I(v_1 \otimes v_2 \otimes v_5 \otimes v_6) &= \sum_i (v_1 \otimes v_2 \otimes a_i) \otimes (a_i^\# \otimes v_5 \otimes v_6),\\
f_H(v_1 \otimes v_2 \otimes v_5 \otimes v_6) &= \sum_i (v_1 \otimes v_5 \otimes a_i) \otimes (a_i^\# \otimes v_6 \otimes v_2).
\end{align*}
Graphically this corresponds to ``$I=H$": it is somewhat complicated because we are trying to express the fact that edges may be contracted, while only allowing ourselves to consider trivalent graphs.

The class $\kappa_1(v_1 \otimes v_2 \otimes v_3)$ has degree $n$, so the map $\kappa_1 \otimes \kappa_1 \colon H(g)^{\otimes 3} \otimes H(g)^{\otimes 3} \to R^\cV$ factors through $\Lambda^2(H(g)^{\otimes 3})$ if $n$ is odd and through $\mr{Sym}^2(H(g)^{\otimes 3})$ is $n$ is even. Furthermore, by relation \eqref{eq:rel0} the map $\kappa_1 \colon H(g)^{\otimes 3} \to R^\cV$ factors through $\Lambda^3(H(g))$ if $n$ is odd and $\mr{Sym}^3(H(g))$ if $n$ is even. In total it factors through $\Lambda^2(\Lambda^3(H(g)))$ or $\mr{Sym}^2(\mr{Sym}^3(H(g)))$. The following lemma describes the image of the composition
\[H(g)^{\otimes\{1,2,5,6\}} \xrightarrow{f_I - f_H} H(g)^{\otimes 3} \otimes H(g)^{\otimes 3} \xrightarrow{\kappa_1 \otimes \kappa_1} \begin{cases}
\Lambda^2(\Lambda^3(H(g))),\\
\mr{Sym}^2(\mr{Sym}^3(H(g))),
\end{cases}\]
as a $\GG(\bQ)$-representation; the first case is due to Garoufalidis--Nakamura \cite{GN}, and the second case can be proved by the same method.

\begin{lemma}\label{lem:RepThy}
If $n$ is odd then $\Lambda^2(\Lambda^3(H(g)))= V_{1^6} + 2 V_{1^4} +3 V_{1^2} + 2 V_0 + V_{2^2,1^2} + V_{2^2} + V_{2,1^2}$ and
\[\mr{Im}\left[f_I - f_H \colon H(g)^{\otimes 4} \to \Lambda^2(\Lambda^3(H(g)))\right] \cong V_{2^2} + V_{1^2} + V_0.\]

\noindent If $n$ is even then $\mr{Sym}^2(\mr{Sym}^3(H(g))) = 3 V_0 + 4 V_2 + 2 V_{2^2} + V_{2^3} + V_{3,1} + 2 V_4 + V_{4,2} + V_6$ and 
\[\mr{Im}\left[f_I - f_H \colon H(g)^{\otimes 4} \to \mr{Sym}^2(\mr{Sym}^3(H(g)))\right] \cong V_2 + V_{3,1}.\makeatletter\displaymath@qed\]
\end{lemma}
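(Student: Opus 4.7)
The plan has two parts: decomposing the ambient spaces, and identifying the image of $f_I - f_H$. The decompositions are standard plethysm calculations: express $\Lambda^2(\Lambda^3 H(g))$ and $\mr{Sym}^2(\mr{Sym}^3 H(g))$ as $\mr{GL}(H(g))$-representations via the classical plethysms of Schur functors, then apply the stable-range branching rules from $\mr{GL}_{2g}$ to $\mr{Sp}_{2g}$ (King) or $\mr{O}_{g,g}$ (Littlewood). Both steps are mechanical and, for $g \geq 3$, yield the stated decompositions; they can be cross-checked by a computer algebra system.

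For the image identification, the key observation is that the source $H(g)^{\otimes 4}$ contains only $V_\lambda$ with $|\lambda| \leq 4$, which restricts the candidate isotypes in the ambient decomposition to $V_0, V_{1^2}, V_{1^4}, V_{2,1^2}, V_{2^2}$ (symplectic) or $V_0, V_2, V_4, V_{2^2}, V_{3,1}$ (orthogonal). For each candidate $V_\mu$, one considers the space of equivariant projections $\Lambda^2(\Lambda^3 H(g)) \to V_\mu$ (whose dimension equals the multiplicity of $V_\mu$ in the ambient space), and for each such projection $p$ computes $p \circ (f_I - f_H) \colon H(g)^{\otimes 4} \to V_\mu$ by evaluating on a highest weight vector. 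The symmetry $\omega_{x,y} = \epsilon\cdot\omega_{y,x}$ combined with the explicit definitions of $f_I$ and $f_H$ prescribes the behaviour of $f_I - f_H$ under $\Sigma_{\{1,2,5,6\}}$, which both rules out some candidate isotypes and pins down exactly which submodule of the $V_\mu$-isotype is hit when $V_\mu$ does appear.

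The main obstacle will be the bookkeeping needed to track the interactions of the outer symmetrization ($\mr{Sym}^2$ or $\Lambda^2$), the inner symmetrization ($\mr{Sym}^3$ or $\Lambda^3$), and the $\epsilon$-symmetry of $\omega$, especially for isotypes of multiplicity $\geq 2$ where one must distinguish among specific copies. For the symplectic case the authors invoke Garoufalidis--Nakamura~\cite{GN} where this bookkeeping is already carried out; for the orthogonal case the cleanest approach is to mirror that argument with careful attention to the sign changes arising from $\epsilon=+1$ instead of $\epsilon=-1$, or alternatively to verify the whole computation with a computer algebra system such as LiE or SageMath that handles plethysm, branching, and invariant theory directly.
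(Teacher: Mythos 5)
Your proposal is correct and takes essentially the same approach as the paper: the paper offers no argument beyond citing Garoufalidis--Nakamura for the odd ($\mathrm{Sp}$) case and asserting that the even ($\mathrm{O}$) case "can be proved by the same method", which is exactly the plethysm-plus-branching decomposition and highest-weight/equivariance analysis (with the weight-$\leq 4$ constraint from the source $H(g)^{\otimes 4}$) that you outline. Your sketch, including the option of machine verification for the orthogonal case, is a faithful expansion of that cited method.
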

\end{remark}

\subsection{Example: calculation in low degrees}\label{sec:exam-low-degrees} Above we gave a presentation of $R^\cV$; this ring is related to Torelli spaces by a ring homomorphism
\[\frac{R^\cV}{(\kappa_{\cL_i} \mid 4i-2n > 0)} \overset{\cong}{\lra} H^*(B\mr{Tor}(W_{g}, D^{2n});\bQ),\]
which is an isomorphism in a range of degrees tending to infinity with $g$, for $\cV = H^*(B\mr{SO}(2n) \langle n \rangle;\bQ)$. In this section we use the arguments of Theorem \ref{thm:BigRingStr} to compute $R^\cV$ explicitly in degrees $*<2n$ and relate it to pseudoisotopy theory and surgery theory. As usual we give $\cV$ its basis $\cB$ of monomials in Euler and Pontrjagin classes.

Let us define a graded vector space $\cP \coloneqq (\pi_*(BO) \otimes \bQ)^\vee_{>n}$, with basis $\cP\cB$ given by all Pontrjagin classes. If $c \in \cP\cB \cap \cB_{>n}$, we have defined earlier elements $\kappa_c(v) \in R^\cV$ of degree $|c|-n$. We extend this to $c \in \cP\cB \setminus\cB_{>n}$ by declaring $\kappa_c(v) = 0$ if $|c| > 4n$ (these classes will play no role in this computation, as their degree exceeds the range $*<2n$). Together with the classes $\kappa_1(v_1 \otimes v_2 \otimes v_3)$ in $R^\cV$ of degree $n$, these provide a homomorphism
\begin{equation}\psi \colon (\bQ \oplus Y[n]) \otimes S^*(H(g)[-n] \otimes \cP) \lra R^\cV \lra \frac{R^\cV}{(\kappa_{\cL_i} \mid 4i-2n > 0)},\end{equation}
where 
\begin{equation*}
Y[n] \coloneqq \begin{cases}
\Lambda^3(H(g))[n] & \text{$n$ odd} \\ 
\mr{Sym}^3(H(g))[n]  & \text{$n$ even}. 
\end{cases}
\end{equation*}
The following extends the computation of the cohomology of Torelli spaces in the range $*<n-1$ by the second author and Ebert using pseudoisotopy theory \cite{oscarjohannestorelli}.

\begin{proposition}\label{prop:low-degree-graphical} For $*<2n$ and $g$ sufficiently large, $\psi$ is an isomorphism.
\end{proposition}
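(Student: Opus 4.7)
The plan is to apply the small ring presentation of Theorem~\ref{thm:SmallRingStr}, which is valid since $\cV = H^*(B\mr{SO}(2n)\langle n\rangle;\bQ)$ satisfies $\cV=\bQ\{1\}\oplus\cV_{>n}$: the Euler class has degree $2n>n$ and every Pontrjagin class $p_i$ in $\cV$ has degree $4i>n$. In a stable range, $R^\cV$ is then presented by generators of type (a) $\kappa_c$, type (b) $\kappa_c(v_1)$, and type (c) $\kappa_1(v_1\otimes v_2\otimes v_3)$, subject to relations ($\alpha$)--($\epsilon$). The goal is to show that in degree $*<2n$ this presentation, after quotienting by $\{\kappa_{\cL_i}\mid 4i-2n>0\}$, collapses exactly to the claimed free graded-commutative algebra.

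For surjectivity of $\psi$ I would reason as follows. A product of two type (c) classes has degree $2n$, so in degree $<2n$ at most one type (c) factor can appear in a monomial, accounting for the $(\bQ\oplus Y[n])$ factor. A type (b) class $\kappa_c(v_1)$ with $c=c_1c_2$ a non-trivial product in $\cV_{>n}$ decomposes via ($\gamma$) into a $\kappa_1$ and lower-degree type (b) classes; the only primitive labels are $c=e$ (in which case $\kappa_e(v_1)$ lies in the span of type (c) classes by ($\delta$)) and $c=p_i\in\cP\cap\cV_{>n}$, producing the factor $H(g)[-n]\otimes\cP$. A type (a) class $\kappa_c$ in degree $<2n$ has $|c|\in(2n,4n)$ and $c$ a monomial in $e$ and the $p_i$; if $c$ has at least two positive-degree factors then ($\beta$) decomposes $\kappa_c$ as a product of type (b) classes, and the only remaining primitive cases are the classes $\kappa_{p_i}$ with $n/2<i<n$. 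These are eliminated by the relation $\kappa_{\cL_i}=0$: as in the proof of Theorem~\ref{thm:MainCalc}, the coefficient of $p_i$ in $\cL_i$ is non-zero, so $\kappa_{p_i}$ is expressed as a $\bQ$-linear combination of $\kappa_q$'s with $q$ a composite monomial of lower Pontrjagin classes, hence decomposable.

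For injectivity I would appeal to the identification of Proposition~\ref{prop:TransfOfTransf}: in a stable range, $[R^\cV\otimes H(g)^{\otimes S}]^{G_g'}$ is identified with $\cP(S,\cV)^{2g}_{\geq 0}\otimes(\det\bQ^S)^{\otimes n}$, and by Proposition~\ref{prop:LsAreRegular} the sequence $\{\kappa_{\cL_i}\}$ acts regularly on this, so passage to the quotient is clean. In degree $<2n$, degree bookkeeping restricts labelled partitions of $S$ to have parts of sizes at most $3$: parts of size $3$ must be labelled by $1\in\cV$, parts of size $1$ by elements of $\cV_{\geq n}$, and parts of size $0$ by elements of $\cV_{>2n}$, while parts of size $2$ carry labels in $\cV_{>0}$. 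Applying the reductions of the surjectivity step (which are $G_g'$-equivariant moves on labelled partitions coming from ($\beta$), ($\gamma$), ($\delta$) and the $\cL$-relations), every surviving labelled partition becomes a disjoint union of parts of size $3$ labelled by $1$ and parts of size $1$ labelled by a Pontrjagin class in $\cP\cap\cV$, with at most one of the former. These match bijectively with the $S$-graded isotypic pieces of $(\bQ\oplus Y[n])\otimes S^*(H(g)[-n]\otimes\cP)$, and by Proposition~\ref{prop:Recognition} $\psi$ is therefore an isomorphism of $G_g'$-representations in degree $*<2n$.

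The main obstacle will be handling the parts of size $2$ in labelled partitions, which do not correspond to generators of the target of $\psi$ but appear naturally in $\cP(S,\cV)_{\geq 0}$. Such a part labelled by $c\in\cV_{>0}$ arises graphically from two univalent vertices joined by an edge, and by relation ($\beta$) at the level of invariants it realises the class as a product $\kappa_{c_1}(v)\cdot\kappa_{c_2}(w)$ (after using the decomposition $c=c_1c_2$ in $\cV$). This is exactly the structure of the $S^*$-factor in the claimed target; combined with the regular-sequence property of Proposition~\ref{prop:LsAreRegular}, it ensures that the reduction to the quotient ring is consistent and introduces no further relations among the generators in the range $*<2n$.
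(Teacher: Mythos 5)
Your surjectivity step is essentially sound: in total degree $<2n$ a monomial can contain at most one $\kappa_1$-factor, the relations ($\beta$), ($\gamma$), ($\delta$) of Theorem \ref{thm:SmallRingStr} reduce composite labels to products of $\kappa_1$'s and primitive $\kappa_{p_i}(v)$'s, and the relations $\kappa_{\cL_i}=0$ dispose of the remaining primitive classes $\kappa_{p_i}$ with $2n<4i<4n$ because $p_i$ occurs in $\cL_i$ with non-zero coefficient. This is close in spirit to the reductions the paper carries out (the paper phrases them at the level of invariants rather than via the ring presentation).

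The injectivity half, however, has a genuine gap. Once you identify $[R^\cV\otimes H(g)^{\otimes S}]^{G'_g}$ with $\cP(S,\cV)_{\geq 0}^{2g}\otimes(\det \bQ^S)^{\otimes n}$ via Proposition \ref{prop:TransfOfTransf}, the labelled partitions \emph{are} a basis; the relations ($\beta$), ($\gamma$), ($\delta$) hold in the ring $R^\cV$ and are already encoded in that identification, so they are not ``equivariant moves on labelled partitions'' that can be used to shrink the basis further. In particular your claim that every surviving labelled partition is a disjoint union of size-$3$ parts labelled by $1$ and size-$1$ parts labelled by Pontrjagin classes is false: in degrees $<2n$ the basis also contains empty parts labelled by composite monomials ($p_ip_j$, $ep_i$, $p_ip_jp_k$), size-$1$ parts labelled by $e$ or by products $p_ip_j$, and size-$2$ parts labelled by $1$ or by $p_i$ (note that $\cP(S,\cV)^{2g}_{\geq 0}$ does allow size-$2$ parts labelled by $1$, contrary to your degree bookkeeping). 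What injectivity actually requires — and what the paper's proof supplies — is a bijection (equivalently a dimension count) between these surviving partition basis elements, after the $\cL_i$-relations kill exactly the empty parts labelled by a single $p_i$, and a combinatorial model for $[(\bQ\oplus Y[n])\otimes S^*(H(g)[-n]\otimes\cP)\otimes H(g)^{\otimes S}]^{G'_g}$ given by graphs whose components are single edges, trivalent stars, and lollipops, with at most one trivalent vertex; one also needs the observation that in degrees $<2n$ at most one such ``special'' component or part can occur, so that the component-wise correspondence globalises. Surjectivity alone does not yield this, and the concluding appeal to Proposition \ref{prop:Recognition} is misplaced: both sides of $\psi$ are algebraic representations, and what is needed is simply that isomorphism after applying $[-\otimes H(g)^{\otimes S}]^{G'_g}$ for all $S$ detects isomorphisms — which brings you back to the matching you have not carried out.
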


\begin{proof}That $\psi$ is an isomorphism for $*<2n$ and $g$ sufficiently large can be detected by tensoring with $H(g)^{\otimes S}$ and taking $G'_g$-invariants, for all finite sets $S$:
	\[ \left[(\bQ \oplus Y[n]) \otimes S^*(H(g)[-n] \otimes P) \otimes H(g)^{\otimes S} \right]^{G'_g} \lra \left[\frac{R^\cV}{(\kappa_{\cL_i} \mid 4i-2n > 0)} \otimes H(g)^{\otimes S} \right]^{G'_g}.\]
	
Let $\bar{\cG}(S,\cP)^{(1)}$ denote the vector space of graphs analogous to $\cG(S,\cV)$ or $\bar{\cG}(S,\cV)$ given as follows: we start with the subspace $\bar{\cC}^\mr{or}(S,\cP)^{(1)} \subset \cC^\mr{or}(S,\cP)$ spanned by those graphs which (a) may have univalent vertices, (b) may have a single trivalent vertex labeled by 1, but (c) have no other vertices. Then $\bar{\cG}(S,\cP)^{(1)}$ is the quotient of $\bar{\cC}^\mr{or}(S,\cP)^{(1)}$ by the differences $[\Gamma]-[\Gamma'']$ where $\Gamma''$ differs from $\Gamma$ only by the local move ($\alpha$) of Figure \ref{fig:graphrels}. The only connected components that occur in such graphs are as in Figure \ref{fig:low-deg-graphs}:
\begin{enumerate}[\indent (i)]
	\item a single edge with vertices labelled by $S$ or $\cP$,
	\item a trivalent vertex and univalent vertices labelled by $S$ or $\cP$,
	\item a ``lollipop'' with univalent vertex labelled by $S$ or $\cP$.
\end{enumerate}

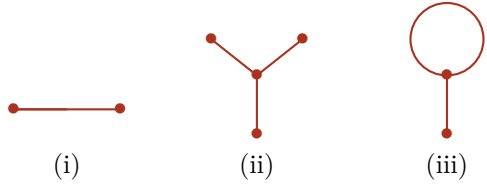
\begin{figure}[h]
	\begin{tikzpicture}
	\node at (0.7,0) [Mahogany] {$\bullet$};
	\draw [thick,Mahogany] (-0.7,0) -- (0.7,0);
	\draw [thick,Mahogany] (-0.7,0) -- (0,0);
	\node at (-0.7,0) [Mahogany] {$\bullet$};
	\node at (0,-.75) {(i)};
	
	\begin{scope}[xshift=2.5cm]
	\begin{scope}[scale=.6,yshift=1.25cm]
	\draw [thick,Mahogany] (0,-0.5) -- (-1,0.3);
	\draw [thick,Mahogany] (0,-0.5) -- (1,0.3);
	\node at (1,0.3) [Mahogany] {$\bullet$};
	\node at (-1,0.3) [Mahogany] {$\bullet$};
	\node at (0,-0.5) [Mahogany] {$\bullet$};
	\draw [thick,Mahogany] (0,-0.5) -- (0,-1.8);
	\node at (0,-1.8) [Mahogany] {$\bullet$};
	\end{scope}
	\node at (0,-.75) {(ii)};
	\end{scope}
	
	\begin{scope}[xshift=5cm]
	\begin{scope}[scale=.6,yshift=1.25cm]
	\draw [thick,Mahogany]  (0,.3) circle (0.8cm);
	\node at (0,-0.5) [Mahogany] {$\bullet$};
	\node at (0,-1.8) [Mahogany] {$\bullet$};
	\draw [thick,Mahogany] (0,-0.5) -- (0,-1.8);
	\end{scope}
	\node at (0,-.75) {(iii)};
	\end{scope}
	\end{tikzpicture}
	\caption{The graphs that appear in the proof of Proposition \ref{prop:low-degree-graphical}, suppressing the labels on univalent vertices.}	
	\label{fig:low-deg-graphs}
\end{figure}

 As in the proof of Theorem \ref{thm:BigRingStr}, there is a map
\[\bar{\cG}(S,\cP)^{(1)} \lra \left[(\bQ \oplus Y[n]) \otimes S^*(H(g)[-n] \otimes \cP) \otimes H(g)^{\otimes S} \right]^{G'_g}\]
which is an isomorphism in a range of degree increasing with $g$. 

Sending $p_i$ for $|p_i|>4n$ to $0$ gives a map $\cP \to \cV$, which induces the left vertical map in the commutative diagram
\[\begin{tikzcd} \bar{\cG}(S,\cP)^{(1)} \rar \dar & \left[(\bQ \oplus Y[n]) \otimes S^*(H(g)[-n] \otimes \cP) \otimes H(g)^{\otimes S} \right]^{G'_g} \dar \\
\cG(S,\cV) \rar & \left[R^\cV \otimes H(g)^{\otimes S}\right]^{G'_g}.\end{tikzcd}\]
In the proof of Theorem \ref{thm:BigRingStr}, we identified the the left-bottom corner with the vector space $\cP(S,\cV)_{\geq 0} \otimes (\det \bQ^S)^{\otimes n}$ of partitions of $S$ with parts labelled by elements of $\cV$, subject to certain conditions on the degrees of allowed labels. In the range $*<2n$, any labelled partition $(\{P_i\},\{c_i\})$ of degree $\sum n(|P_i|-2)+|c_i|$ is a disjoint union of the following indiscrete labelled partitions:
	\begin{enumerate}[\indent (i')]
		\item parts of size $0$ with label of degree $>2n$, 
		\item parts of size $1$ with label of degree $\geq n$,
		\item parts of size $2$ with label of degree $\geq 0$,
		\item parts of size $3$ with label of degree $\geq 0$.
	\end{enumerate}
The passage to the further quotient
\[\left[R^\cV \otimes H(g)^{\otimes S}\right]^{G'_g} \lra \left[\frac{R^\cV}{(\kappa_{\cL_i} \mid 4i-2n > 0)}\otimes H(g)^{\otimes S}\right]^{G'_g}\]
imposes the relation that a part of size $0$ with label $\cL_i$ is $0$.

	The map $ \bar{\cG}(S,\cP)^{(1)}  \to \cG(S,\cV)  \overset{\sim}{\lra} \cP(S,\cV)_{\geq 0} \otimes (\det \bQ^S)^{\otimes n}$ sends a graph to the partition of $S$ induced by the connected components of the graph, each with label given by the product of the labels in $\cV$ of its legs. In the range $*<2n$ and for $g$ sufficiently large. this map provides a bijective correspondence between connected components and indiscrete partitions as long as we set parts of size $0$ with label $\cL_i$ to $0$:
	\begin{itemize}
		\item The parts of type (i') arise as follows: those with label $p_ip_j$ come from graphs of type (i) with labels $p_i,p_j \in \cP$, those with label $p_ip_jp_k$ come from graphs of type (ii) with labels $p_i,p_j,p_k \in \cP$, and those with label $ep_i$ come graphs of type (iii) with label $p_i \in \cP$. Because for $2n<4i<4n$ the monomial $p_i$ has non-zero coefficient in $\cL_i$, these are all non-zero parts of type (i') in the range $*<2n$.
		\item A part of type (ii') comes from a graph of type (i) if its label is $p_i$, from a graph of type (ii) if its label is $p_ip_j$, and from a graph of type (iii) if its label is $e$.
		\item A part of type (iii') comes from either from a graph of type (i) with both labels in $S$, or a graph of type (ii) with two labels in $S$ and one in $\cP$.
		\item A part of type (iv') comes from a graph of type (iii) with all labels in $S$.
	\end{itemize}
	
	In degrees $*<2n$, a graph can contain at most a single connected component of type (ii) or (iii) and a partition can contain at most one part corresponding to such a connected component. Hence this bijective correspondence between connected components and indiscrete partitions gives rise to one between graphs and partitions.
\end{proof}

\begin{remark}
This computation is related to work of Berglund and Madsen on block diffeomorphisms \cite{berglundmadsen2}. Let $\smash{\widetilde{\mr{Diff}}}(W_{g}, D^{2n})$ denote the simplicial group of block diffeomorphisms of $W_{g}$ fixing $D^{2n} \subset W_g$ pointwise, which can be identified with block diffeomorphisms of $W_{g,1} \coloneqq W_g \setminus \mathrm{int}(D^{2n})$ fixing $\partial W_{g,1}$ pointwise. This has a map to the path components of the homotopy automorphisms of $W_{g,1}$ fixing $\partial W_{g,1}$ pointwise, whose kernel we shall denote by $\smash{\widetilde{\mr{Diff}}}_J(W_{g}, D^{2n})$.

The action of a homotopy automorphism of $W_{g,1}$ on $H(g)$ preserves both the intersection form and its quadratic refinement, so there is a further map $\pi_0(\mr{haut}_\partial(W_{g,1})) \to G'_g$. Berglund and Madsen prove that the action of $\pi_0(\mr{haut}_\partial(W_{g,1}))$ on $H^*(B\smash{\widetilde{\mr{Diff}}}_J(W_{g}, D^{2n});\bQ)$ factors over $G'_g$. Since the map $\pi_0(\mr{haut}_\partial(W_{g,1})) \to G'_g$ has finite kernel, it follows from a Serre spectral argument that the inclusion induces an isomorphism of $G'_g$-representations 
	\[H^*(B\widetilde{\mr{Tor}}(W_{g}, D^{2n});\bQ) \overset{\cong}\lra H^*(B\widetilde{\mr{Diff}}_J(W_{g}, D^{2n});\bQ)\]
	if we let $\widetilde{\mr{Tor}}(W_{g}, D^{2n})$ be the subgroup of the block diffeomorphisms of those components that map to the identity in $G'_g$. Furthermore, Berglund and Madsen prove there is an isomorphism of $G'_g$-representations
	\[H^*(B\widetilde{\mr{Diff}}_J(W_{g}, D^{2n});\bQ) \cong H^*_\mr{CE}(\mathfrak{g}_g) \otimes S^*(H(g) \otimes P),\]
	where $H^*_\mr{CE}(\mathfrak{g}_g)$ denotes Chevalley--Eilenberg cohomology of a certain graded Lie algebra $\mathfrak{g}_g$ and $P$ is $(\pi_*(G/O) \otimes \bQ)^\vee[-n]_{>0}$, which can be identified with $(\pi_*(BO) \otimes \bQ)^\vee[-n]_{>0}$ using the rational homotopy equivalence $G/O \to BO$. As $H^*_\mr{CE}(\mathfrak{g}_g) \otimes S^*(H(g) \otimes P)$ is an algebraic representation of $G'_g$, the map
	\[H^*(B\widetilde{\mr{Tor}}(W_{g}, D^{2n});\bQ) \lra H^*(B\mr{Tor}(W_{g}, D^{2n});\bQ)\]
	factors over $H^*(B\mr{Tor}(W_{g}, D^{2n});\bQ)^\mr{alg}$.
	
	Since we can define twisted Miller--Morita--Mumford classes on block bundles, cf.~Remark \ref{rem:bundle-types}, the homomorphism
	\begin{equation}\label{eqn:block-diff-alg} H^*(B\widetilde{\mr{Tor}}(W_{g}, D^{2n});\bQ) \lra H^*(B\mr{Tor}(W_{g}, D^{2n});\bQ)^\mr{alg}\end{equation}
	is surjective for $g$ sufficiently large. In degrees $*<2n$, the groups $H^*_\mr{CE}(\mathfrak{g}_g)$ are concentrated in total degrees $0$ and $n$, and given by $\bQ$ and $Y[n]$ respectively. Using Proposition \ref{prop:low-degree-graphical}, we see that in this range the map \eqref{eqn:block-diff-alg} is a surjection between vector spaces of the same dimension and hence an isomorphism.\end{remark}

\section{Additive structure}\label{sec:Char}

Given Theorem \ref{thm:MainCalc} it is reasonable to ask for an explicit description of the multiplicities in $H^*(B\mr{Tor}(W_{g}, D^{2n}); \bQ)$ of the various irreducible algebraic $G'_g$-representations, which by Theorem \ref{thm:RepsOfSpAndO2} are the $V_\lambda(H(g))$'s. This can be reduced to a manipulation of Schur functions: by Theorem \ref{thm:MainCalc} and the final part of Proposition \ref{prop:Recognition}, in the stable range the multiplicity of $V_\lambda(H(g))$ in $H^*(B\mr{Tor}(W_{g}, D^{2n}); \bQ)$ is the same as the multiplicity of $S^\lambda$ in the $\Sigma_q$-representation
\[\bQ \otimes_{\bQ[\kappa_{\cL_i} \, \mid \, 4i-2n > 0]} \mathcal{P}([q], \cV)_{\geq 0}' \otimes (\det \bQ^{[q]})^{\otimes n},\]
and this can be analysed quite effectively with the theory of symmetric functions.

\subsection{Recollection on symmetric functions}
We follow the exposition of Garoufalidis--Getzler \cite[Section 2]{GG}. Let $\Lambda$ denote the ring of symmetric functions, the inverse limit $\lim_k \bZ[x_1, \ldots, x_k]^{\Sigma_k}$ formed in the category of graded rings where the $x_i$ are placed in grading 1. Write $\Lambda_q$ for the piece of grading $q$. Let $\hat{\Lambda} = \prod_q \Lambda_q$ denote the completion of $\Lambda$ with respect to the filtration induced by this grading. As usual denote by $e_k$ the $k$th elementary symmetric function, by $h_k$ the $k$th complete symmetric function, and by $p_k$ the $k$th power sum function. For example, $e_2 = \sum_{i<j} x_i x_j$, $h_2 = \sum_{i \leq j} x_i x_j$ and $p_2 = \sum_i x_i^2$. Both the $e_k$ and the $h_k$ provide a set of polynomial generators for $\Lambda$, and the $p_k$ form a set of polynomial generators for $\Lambda \otimes \bQ$.

\subsubsection{Symmetric groups}

For a group $G$, let $R(G)$ denote the group-completion of the monoid of isomorphism classes of finite-dimensional $G$-representations under direct sum. Similarly, let $R(\cat{FB})$ denote the group-completion of the monoid of isomorphism classes of objects of $(\bQ\text{-mod}^d)^\cat{FB}$, i.e.~representations of the category $\cat{FB}$ into finite-dimensional (which is the same as dualisable) vector spaces, under objectwise direct-sum. This has the structure of a commutative ring given by Day convolution of functors. There are restriction maps $R(\cat{FB}) \to R(\Sigma_q)$ for each $q$, and taking them all together gives an isomorphism
\[R(\cat{FB}) \lra \prod_{q \geq 0} R(\Sigma_q).\]
The preimage of $\bigoplus_{q \geq 0} R(\Sigma_q)$ under this map consists of (differences of) finite length representations of $\cat{FB}$. As a Day convolution of finite length functors again has finite length, there is an induced multiplication.

There are homomorphisms of abelian groups
\begin{align*}\mr{ch}_q \colon R(\Sigma_q) &\lra \Lambda_q \\
V &\longmapsto \mr{ch}_q(V)\coloneqq\sum_{|\lambda|=q} \chi_V(\cO_\lambda) \frac{p_{\lambda_1} \cdots p_{\lambda_\ell}}{1^{\lambda_1} \lambda_1! 2^{\lambda_2} \lambda_2! \cdots \ell^{\lambda_\ell} \lambda_\ell!},\end{align*}
where $\chi_V(\cO_\lambda)$ is the value of the character of $V$ on the conjugacy class $\cO_\lambda$ of cycle type $\lambda$. For example, given a partition $\lambda$ of $q$ the irreducible representation $S^\lambda$ of $\Sigma_q$ is sent by $\mr{ch}_q$ to the Schur function $s_\lambda$. In particular, the trivial representation is sent to $h_q$ and the sign representation $\det$ to $e_q$.

These homomorphisms are in fact isomorphisms and combine to give a ring isomorphism \[\mr{ch} \colon \bigoplus_q R(\Sigma_q) \lra \Lambda\] when the domain is given the product by Day convolution. Similarly, they combine to give a ring isomorphism \[\mr{ch} \colon R(\cat{FB}) =\prod_{q \geq 0} R(\Sigma_q) \lra \hat{\Lambda}.\]
As the $S^\lambda$ give a $\bZ$-basis for $\bigoplus_q R(\Sigma_q)$, the $s_\lambda$ give a $\bZ$-basis for $\Lambda$.

More generally, if $gR(\cat{FB})$ denotes the group-completion of the monoid of isomorphism classes of objects of $\cat{Gr}(\bQ\text{-mod}^d)^\cat{FB}$, i.e.~representations of the category $\cat{FB}$ into non-negatively graded vector spaces which are finite-dimensional in each degree, then we have a ring isomorphism $gR(\cat{FB}) \cong R(\cat{FB})[[t]]$ by extracting homogeneous pieces. This gives an isomorphism $\mr{ch} \colon gR(\cat{FB}) \overset{\sim}\to \hat{\Lambda}[[t]]$.

The category $\cat{Gr}(\bQ\text{-mod}^d)^\cat{FB}$ has another monoidal structure, the composition product $\circ$, given by
\[(F \circ G)(q) = \bigoplus_{n=0}^\infty F(n) \otimes_{\Sigma_n} \left(\bigoplus_{k_1, \ldots, k_n \geq 0,\sum k_i = q} \mathrm{Ind}^{\Sigma_q}_{\Sigma_{k_1} \times \cdots \times \Sigma_{k_n}} G(k_1) \otimes \ldots \otimes G(k_n)\right).\]
This construction is formed in the symmetric monoidal category $\cat{Gr}(\bQ\text{-mod}^d)$, whose symmetry includes a sign given by the Koszul sign rule. Under the isomorphism above, this induces an associative product $\circ$ on $\hat{\Lambda}[[t]]$. On $\hat{\Lambda}$ this is given by plethysm of symmetric functions, and its extension to $\smash{\hat{\Lambda}}[[t]]$ is characterised by $p_k \circ x = x^k$ for all $x$, and $t$-linearity in the first variable.

\subsubsection{An involution}

There is an involution $\omega \colon \Lambda \to \Lambda$ given by $\omega(e_k) = h_k$. It is easy to see that this satisfies $\omega(p_k) = (-1)^{k-1} p_k$ for all $k$, and hence that under the isomorphisms $\mr{ch}_q$ it corresponds to tensoring with the sign representation of $\Sigma_q$.

\subsubsection{Representations of $G'_g$} 
Recall that $H(g)$ denotes $2g$-dimensional rational vector space equipped with an $\epsilon$-symmetric form $\lambda \colon H(g) \otimes H(g) \to \bQ$, for $\epsilon \in \{-1,1\}$, and $\mr{O}_\epsilon(H(g)) \subset \mr{GL}(H(g))$ denotes the subgroup of those linear isomorphisms which preserve $\lambda$. In the branching rule for $\mr{O}_\epsilon(H(g)) \subset \mr{GL}(H(g))$, the irreducible $\mr{GL}(H(g))$-representation $S_\lambda(H(g))$ restricted to $\mr{O}_\epsilon(H(g))$ decomposes as
\[\mr{Res}^{\mr{GL}(H(g))}_{\mr{O}_\epsilon(H(g))}(S_\lambda(H(g))) \cong V_\lambda(H(g)) \oplus \bigoplus_{\substack{\mu \\ |\mu| < |\lambda|}} a_{\lambda, \mu} V_\lambda(H(g))\]
for certain multiplicities $a_{\lambda, \mu}$ (and which may be given in terms of Littlewood--Richardson coefficients). We may recursively define elements $s_{\langle \lambda \rangle}$ of $\Lambda$ by
\[s_{\langle \lambda \rangle} \coloneqq s_{\lambda } - \sum_{\substack{\mu \\ |\mu| < |\lambda|}} a_{\lambda, \mu} s_{\langle \mu \rangle}.\]
By the upper-triangularity of this definition, the $s_{\langle \lambda \rangle}$ also form a $\bZ$-basis for $\Lambda$, and there is therefore an automorphism of abelian groups 
\begin{align*}
D \colon \Lambda &\lra \Lambda\\
s_\lambda & \longmapsto s_{\langle \lambda \rangle}.
\end{align*}

There are ring homomorphisms $\Lambda \to R(G'_g)$ given by sending $e_k$ to $\Lambda^k(H(g))$, which therefore send $s_{\langle\lambda\rangle}$ to $V_\lambda(H(g))$.

\subsection{Evaluating the character}\label{sec:EvChar}

We can obtain the Poincar{\'e} series in $\Lambda[[t]]$ (and hence in $R(G'_g)[[t]]$) of the graded $G'_g$-representation $H^*(B\mr{Tor}(W_{g}, D^{2n}); \bQ)^\mr{alg}$ as $g \to \infty$ as follows. By interpreting the calculation in Theorem \ref{thm:MainCalc} using the last part of Proposition \ref{prop:Recognition} we see that this Poincar{\'e} series is given by applying $D$ to the character of
\[\bigoplus_{q \geq 0} \left(\bQ \otimes_{\bQ[\kappa_{\cL_i} \, \mid \, 4i-2n >0]} (\mathcal{P}([q], \cV)_{\geq 0}' \otimes (\det \bQ^q)^{\otimes n}) \right) \in \bigoplus_{q \geq 0} R(\Sigma_q).\]
Using the fact that $\mr{ch}$ is a ring homomorphism and sends the operation of tensoring with $\det$ to the involution $\omega$, we see that the Poincar{\'e} series is obtained by applying $D$ to 
\begin{equation}\label{eq:ev-char}
\omega^n \left(\sum_{q \geq 0} \mr{ch}_q\left(\bQ \otimes_{\bQ[\kappa_{\cL_i} \, \mid \, 4i-2n >0]}\mathcal{P}([q], \cV)_{\geq 0}' \right)\right) \in \Lambda[[t]].\end{equation}

To evaluate this, note that $\mathcal{P}([q], \cV)_{\geq 0}'$ is a free $\cP(\varnothing, \cV)_{\geq 0} = \bQ[\kappa_c \, \mid \, c \in \cB_{>2n}]$-module, and so by the proof of Theorem \ref{thm:MainCalc} it is a free $\bQ[\kappa_{\cL_i} \, \mid \, 4i-2n > 0]$-module (we already used this observation in the proof of Proposition \ref{prop:LsAreRegular}). Thus we have
\[\sum_{q \geq 0}\mr{ch}_q\left(\bQ \otimes_{\bQ[\kappa_{\cL_i} \, \mid \, 4i-2n >0]}\mathcal{P}([q], \cV)_{\geq 0}' \right) = \left(\prod_{4i>2n} (1-t^{4i-2n}) \right)\cdot\sum_{q \geq 0}\mr{ch}_q\left(\mathcal{P}([q], \cV)_{\geq 0}' \right).\]

To understand the second factor, we observe that the graded representation
\[\bigoplus_{q\geq 0} \mathcal{P}([q], \cV)_{\geq 0}' \in \left(\bigoplus_{q \geq0} R(\Sigma_q)\right)[[t]]\]
may be expressed, in the larger ring $R(\cat{FB})[[t]]$, in terms of a composition product $\underline{\bQ} \circ B$, where $\underline{\bQ}$ and $B$ are as follows:
\begin{enumerate}[\indent (i)]
\item $\underline{\bQ}$ denotes the graded representation whose $q$th component of is the trivial 1-dimensional representation (in degree 0) for all $q$.

\item $B$ denotes the graded representation whose $q$th component is the trivial $\Sigma_q$-representation with basis the set of allowed labels in $\cB$ for parts of size $q$, where a label $c$ is given degree $|c| + n(q-2)$. A labelling of a partition of the finite set $[q]$ by elements of $\cB$ is allowed here if each part of size 0 has label of degree $>2n$, each part of size $1$ has label of degree $\geq n$, and no parts of size $2$ are labelled by $1 \in \cB$. That is, $B(q)$ is the graded vector space with basis $\cB$ if $q>2$ and with smaller basis according to the aforementioned conditions for $q = 0, 1,2$.
\end{enumerate}

Recall that $\mr{ch}_q$ of the trivial representation is $h_q$, so we get that
\[\mr{ch}(\underline{\bQ}) = \sum_{q=0}^\infty h_q \in \hat{\Lambda}[[t]]\]
and (writing $P(V) \in \bZ[[t]]$ for the Poincar{\'e} series of a graded vector space $V$)
\[\mr{ch}(B) = h_0 P(\cV_{>2n}) t^{-2n} + h_1 P(\cV_{\geq n})t^{-n} + h_2 P(\cV_{>0}) + \sum_{q=3}^\infty h_q P(\cV) t^{n(q-2)} \in t\Lambda[[t]].\]
We may easily analyse these Poincar{\'e} series, as $\cV=\bQ[e, p_{\lceil \tfrac{n+1}{4}\rceil}, \ldots, p_{n-1}]$ so we have
\[P(\cV) = \frac{1}{1-t^{2n}} \cdot \prod_{i=\lceil \tfrac{n+1}{4}\rceil}^{n-1} \frac{1}{1-t^{4i}}\]
so we can write 
\begin{align*}
P(\cV_{>2n}) &= P(\cV) - (1+t^{2n} + t^{4\lceil \tfrac{n+1}{4}\rceil} + \cdots + t^{4\lfloor \tfrac{n}{2}\rfloor}),\\
P(\cV_{\geq n}) &= P(\cV)-1 \quad \text{ and } \quad P(\cV_{>0}) = P(\cV)-1.
\end{align*}
Thus we may write $\mr{ch}(B)$ as
\begin{equation*}
\mr{ch}(B) = \frac{1}{t^{2n}}\left(P(\cV)\left(\sum_{q=0}^\infty h_q t^{nq}\right) - (h_0 (1+t^{2n} + t^{4\lceil \tfrac{n+1}{4}\rceil} + \cdots + t^{4\lfloor \tfrac{n}{2}\rfloor})+ h_1 t^{n} + h_2 t^{2n})\right).
\end{equation*}
As the composition product is sent to plethysm by $\mr{ch}$, we have
\[\sum_{q \geq 0} \mr{ch}_q(\mathcal{P}([q], \cV)_{\geq 0}' ) = \left(\sum_{q=0}^\infty h_q\right) \circ \mr{ch}(B).\]
(Note that as $h_q \circ -$ sends $t\Lambda[[t]]$ into $t^q\Lambda[[t]]$, and $\mr{ch}(B) \in t\Lambda[[t]]$, this plethysm does actually land in $\Lambda[[t]]$.)

\subsection{Example: dimension 6} \label{sec:6-dim-example}As an example consider the case $2n=6$, and compute the character of $H^*(B\mr{Tor}(W_{g}, D^6);\bQ)^\mr{alg}$ for $g \gg 0$ by evaluating \eqref{eq:ev-char} and applying $D$. In this case, we have 
	\[\cV = H^*(B\mr{SO}(6)\langle 3 \rangle;\bQ) = \bQ[p_1, p_2, e],\]
and so $P(\cV) = \frac{1}{1-t^6} \cdot \frac{1}{1-t^4} \cdot \frac{1}{1-t^8} = 1+t^4+t^6+2t^8+t^{10}+3t^{12}+O(t^{14})$. Thus we have
\begin{align*}
\mr{ch}(B) &= \frac{1}{t^6}\left(\frac{1}{1-t^6} \cdot \frac{1}{1-t^4} \cdot \frac{1}{1-t^8}\left(\sum_{q=0}^\infty h_q t^{3q}\right) - (h_0(1+t^6+t^4) + h_1 t^3 + h_2 t^6)\right) \\
&= h_1t+2h_0t^2+(h_3+h_1)t^3+(h_2+h_0)t^4 + O(t^5).
\end{align*}
(The following calculations were performed in {\tt Sage} \cite{Sage}.) Applying $\sum_{q=0}^\infty h_q \circ -$ to this, then expressing the answer in terms of $s_\lambda$'s gives
\[1+s_1t + (2+s_2)t^2 + (3s_1+2 s_3)t^3 + (4+s_{1^2}+4s_2+s_{3,1}+2 s_4)t^4 + O(t^5).\]
Applying $\omega$ and then $D$ sends $s_\lambda$ to $s_{\langle \lambda' \rangle}$, so transforms this to
\[1+s_{\langle 1 \rangle} t + (2+s_{\langle 1^2 \rangle})t^2 + (3s_{\langle 1 \rangle}+2 s_{\langle 1^3 \rangle})t^3 + (4+s_{\langle 2 \rangle}+4s_{\langle 1^2 \rangle}+s_{\langle 2, 1^2 \rangle}+2 s_{\langle 1^4 \rangle})t^4 + O(t^5).\]
Multiplying by $\prod_{4i>6} (1-t^{4i-2n}) = 1 - t^2 + O(t^5)$ gives the result,
\[1+s_{\langle 1 \rangle} t + (1+s_{\langle 1^2 \rangle})t^2 + (2s_{\langle 1 \rangle}+2 s_{\langle 1^3 \rangle})t^3 + (2+s_{\langle 2 \rangle}+3s_{\langle 1^2 \rangle}+s_{\langle 2, 1^2 \rangle}+2 s_{\langle 1^4 \rangle})t^4 + O(t^5),\]
so for $2n=6$ and large enough $g$ we can read off
\begin{align*}
H^1(B\mr{Tor}(W_{g}, D^6); \bQ)^\mr{alg} &\cong V_1,\\
H^2(B\mr{Tor}(W_{g}, D^6); \bQ)^\mr{alg} &\cong V_{1^2} + V_0,\\
H^3(B\mr{Tor}(W_{g}, D^6); \bQ)^\mr{alg} &\cong 2 V_{1^3} + 2 V_1,\\
H^4(B\mr{Tor}(W_{g}, D^6); \bQ)^\mr{alg} &\cong 2 V_{1^4} + V_{2, 1^2} + 3 V_{1^2} + V_2 + 2 V_0.
\end{align*}

\section{Variants}\label{sec:variants}

There are two close variants of $\diff(W_{g}, D^{2n})$, namely the group $\diff^+(W_g, *)$ of those orientation-preserving diffeomorphisms of $W_g$ which preserve a point $* \in W_g$, and the group $\diff^+(W_g)$ of all orientation-preserving diffeomorphisms. Each of these has its associated Torelli subgroup, denoted in the evident way, and we will briefly explain how the cohomology of $B\mr{Tor}^+(W_g, *)$ and $B\mr{Tor}^+(W_g)$ may be deduced from our previous calculations.

Firstly, there is a fibration sequence
\begin{equation}\label{eq:Fib1}
B\mr{Tor}(W_{g}, D^{2n}) \lra B\mr{Tor}^+(W_g, *) \lra B \mathrm{GL}_{2n}(\bR) \simeq B\mr{SO}(2n)
\end{equation}
where the right-hand map is given by taking the derivative at the marked point. This is a fibration of spaces with $G'_g$-action, giving an induced action on rational cohomology. The statement of the following result is best understood by consulting its proof.

\begin{lemma}\label{lem:Fib1}
The fibration \eqref{eq:Fib1} satisfies the Leray--Hirsch property on maximal algebraic subrepresentations in the stable range.
\end{lemma}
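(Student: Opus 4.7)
The plan is to apply the Leray--Hirsch theorem to the Serre spectral sequence of \eqref{eq:Fib1}, restricted to the maximal algebraic $G'_g$-subrepresentations, by lifting generators of the fibre's algebraic cohomology to the total space.

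First, the universal $W_g$-bundle $\pi\colon \cE \to B\mr{Tor}^+(W_g, *)$ admits a canonical section $s$ coming from the marked point, and its vertical tangent bundle is classified by a map $\ell\colon \cE \to B\mr{SO}(2n)$ whose composition $\ell\circ s$ with the section is precisely the structure map of \eqref{eq:Fib1}. This is exactly the data required to run the construction of Section \ref{sec:TwistedMMM}, yielding twisted Miller--Morita--Mumford classes
\[\kappa_c(v_1 \otimes \cdots \otimes v_r) \in H^*(B\mr{Tor}^+(W_g, *); \cH(g)^{\otimes r})\]
for any $c \in \cV \coloneqq H^*(B\mr{SO}(2n);\bQ)$, $r \geq 0$, and $v_1, \ldots, v_r \in H(g)$. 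The fact that $\ell \circ s$ fails to be nullhomotopic is irrelevant at this stage: that nullhomotopy was used in Section \ref{sec:functoriality-brauer} only to extend these classes to a functor on the (signed) Brauer category, which we do not need for the present argument.

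Next, pullback along the inclusion $i \colon B\mr{Tor}(W_g, D^{2n}) \to B\mr{Tor}^+(W_g, *)$ identifies our classes with the twisted MMM classes studied in Section \ref{sec:cohomologytorelli}, because the two universal bundles and their sections pull back compatibly (the extra disc tangential structure on the fibre is simply forgotten). By Theorem \ref{thm:MainCalc} the ring $H^*(B\mr{Tor}(W_g, D^{2n});\bQ)^\mr{alg}$ is generated in the stable range by such classes (for $c$ in the subset $\cB \subset \cV$), so the induced $G'_g$-equivariant ring homomorphism
\[i^* \colon H^*(B\mr{Tor}^+(W_g, *); \bQ)^\mr{alg} \lra H^*(B\mr{Tor}(W_g, D^{2n});\bQ)^\mr{alg}\]
is surjective in that range.

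Finally, since $B\mr{SO}(2n)$ is simply connected and carries the trivial $G'_g$-action on its rational cohomology (inherited from the analogous $\mr{Diff}$-fibration $B\mr{Diff}(W_g, D^{2n}) \to B\mr{Diff}^+(W_g, *) \to B\mr{SO}(2n)$, which carries no $G'_g$-action at all), the Serre spectral sequence of \eqref{eq:Fib1} is a multiplicative spectral sequence of $G'_g$-representations with
\[E_2^{p,q} = H^p(B\mr{SO}(2n);\bQ) \otimes H^q(B\mr{Tor}(W_g, D^{2n});\bQ).\]
Restricting to the algebraic $G'_g$-subrepresentation of the second tensor factor, the surjectivity of $i^*$ shows that all of $E_2^{0,q,\mr{alg}}$ consists of permanent cycles; by the Leibniz rule combined with the $H^*(B\mr{SO}(2n);\bQ)$-module structure, this then forces the whole algebraic part of $E_2^{*,*}$ to consist of permanent cycles, giving the desired Leray--Hirsch property. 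The main obstacle is bookkeeping---verifying the agreement of our MMM classes under pullback and tracking $G'_g$-equivariance throughout to conclude triviality of the $G'_g$-action on $H^*(B\mr{SO}(2n);\bQ)$---rather than any substantially new input beyond Theorem \ref{thm:MainCalc}.
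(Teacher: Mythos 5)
Your route is genuinely different from the paper's. The paper first applies Leray--Hirsch \emph{upstairs}, to the fibration $B\mr{Diff}(W_{g}, D^{2n}) \to B\mr{Diff}^+(W_g, *) \to B\mr{SO}(2n)$ with coefficients $\cH(g)_\bQ^{\otimes S}$: there Theorem \ref{thm:Iso} says the entire twisted cohomology of the fibre is generated by twisted Miller--Morita--Mumford classes defined on the total space, so the classical collapse argument applies with no representation theory, yielding the free-module statement \eqref{eq:RelPoint}; it then reruns the argument of Theorem \ref{thm:MainCalc} (Margulis, Borel vanishing, Lemma \ref{lem:SScollapse}, Proposition \ref{prop:Recognition}) for $B\mr{Tor}^+(W_g,*)$. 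You instead invoke Theorem \ref{thm:MainCalc} for the disc version first and then argue directly in the Serre spectral sequence of \eqref{eq:Fib1}. The first two-thirds of your argument are fine: the twisted classes are indeed defined on $B\mr{Tor}^+(W_g,*)$ from the section and vertical tangent bundle alone, they restrict to the classes of Section \ref{sec:cohomologytorelli} (the labels there are Euler/Pontrjagin monomials, so the $\theta$-structure is irrelevant), and hence $i^*$ is surjective onto $H^*(B\mr{Tor}(W_g,D^{2n});\bQ)^\mr{alg}$ in the stable range, with image in the algebraic part.

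The gap is in the last step. Knowing that the algebraic part of $E_2$ consists of permanent cycles is not by itself the Leray--Hirsch property: in the classical situation the lifted classes span \emph{all} of $H^*(F)$, so every class is a permanent cycle and the spectral sequence collapses entirely; here you only control a summand, and a priori a differential could originate in the non-algebraic complement of $H^q(B\mr{Tor}(W_g,D^{2n});\bQ)$ and hit the algebraic part in higher filtration, shrinking it at $E_\infty$; you also need to know that passing to maximal algebraic subrepresentations is compatible with the filtration of $H^*(B\mr{Tor}^+(W_g,*);\bQ)$ (an extension of an algebraic by a non-algebraic representation could otherwise have smaller algebraic part than its associated graded). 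Both points are fixable, but require an argument you have not given: one needs finite-dimensionality of $H^q(B\mr{Tor}(W_g,D^{2n});\bQ)$ in the range (the result of \cite{kupersdisk} for $2n\geq 6$, the standing hypothesis for $2n=2$), Theorem \ref{thm.margulis}, and the splitting of extensions of almost algebraic representations noted after Theorem \ref{thm.borelvanishingweak}, so that all representations in sight are semisimple; then $G'_g$-equivariance of the differentials forbids any map from the non-algebraic isotypic pieces onto algebraic ones, and the filtration splits equivariantly. With that addition (and an equivariant choice of lifts, again using semisimplicity) your argument goes through; as written, the inference from ``permanent cycles'' to ``the desired Leray--Hirsch property'' is incomplete.
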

\begin{proof}
Consider the Serre spectral sequence $\{E_r^{p,q}\}$ for the fibration sequence
\[B\mr{Diff}(W_{g}, D^{2n}) \lra B\mr{Diff}^+(W_g, *) \lra B \mathrm{GL}_{2n}(\bR) \simeq B\mr{SO}(2n)\]
with $\cH(g)_\bQ^{\otimes S}$-coefficients. By Theorem \ref{thm:Iso} $H^*(B\mr{Diff}(W_{g}, D^{2n}); \cH(g)_\bQ^{\otimes S})$ is generated by twisted Miller--Morita--Mumford classes, and by construction these are defined in $H^*(B\mr{Diff}^+(W_{g}, *); \cH(g)_\bQ^{\otimes S})$, so this spectral sequence satisfies the Leray--Hirsch property and collapses at $E_2$. This gives an isomorphism
\begin{equation}\label{eq:RelPoint}
H^*(B\mr{Diff}^+(W_{g}, *); \cH(g)_\bQ^{\otimes S}) \cong H^*(B\mr{SO}(2n);\bQ) \otimes \cP(S, \cV)_{\geq 0} \otimes (\det \bQ^S)^{\otimes n}
\end{equation}
of $H^*(B\mr{SO}(2n);\bQ)$-modules.

If the cohomology of $B\mr{Tor}(W_{g}, D^{2n})$ is finite-dimensional in degrees $* < N$ then by the Serre spectral sequence for \eqref{eq:Fib1} that of $B\mr{Tor}^+(W_{g}, *)$ is too. Repeating the argument of Theorem \ref{thm:MainCalc} with the input \eqref{eq:RelPoint} shows that there is a map
\[H^*(B\mr{SO}(2n);\bQ) \otimes \frac{i^*(K^\vee) \otimes^{\mathsf{d(s)Br}} \left(\mathcal{P}(-, \cV)_{\geq 0}'  \otimes {\det}^{\otimes n}\right)}{(\kappa_{\cL_i} \, \mid \, 4i-2n > 0)} \lra H^*(B\mr{Tor}^+(W_{g}, *);\bQ)^\mr{alg}\]
of $H^*(B\mr{SO}(2n);\bQ)$-modules which is an isomorphism in degrees $* \leq N$ and a monomorphism in degree $N+1$.
\end{proof}

Secondly, there is a fibration sequence
\begin{equation}\label{eq:Fib2}
W_g \lra B\mr{Tor}^+(W_g, *) \overset{\pi}\lra B\mr{Tor}^+(W_g),
\end{equation}
which may be identified with the universal $W_g$-bundle over $B\mr{Tor}^+(W_g)$.

\begin{lemma}\label{lem:Fib2}
The fibration \eqref{eq:Fib2} satisfies the Leray--Hirsch property, as long as $n$ is even or $g \neq 1$.
\end{lemma}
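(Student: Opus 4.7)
The plan is to show that the rational Serre spectral sequence for the fibration \eqref{eq:Fib2} collapses at $E_2$, which (as the Torelli action on $H_*(W_g;\bZ)$ is trivial, making all local coefficient systems trivial) is equivalent to the Leray--Hirsch property. Since $H^*(W_g;\bQ)$ is concentrated in degrees $0$, $n$, and $2n$, the only potentially nonzero differentials originating on the vertical axis $E_r^{0, *}$ are $d_{n+1} \colon E_{n+1}^{0, n} \to E_{n+1}^{n+1, 0}$, $d_{n+1} \colon E_{n+1}^{0,2n} \to E_{n+1}^{n+1,n}$, and $d_{2n+1}\colon E_{2n+1}^{0,2n} \to E_{2n+1}^{2n+1, 0}$, and by the multiplicativity of the spectral sequence it suffices to kill these three.

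First I would use the vertical tangent bundle $T_\pi$ to lift the top fibre class. Its Euler class restricts to $\chi(W_g) \cdot v_{\mr{fibre}}$ on each fibre, and under the hypothesis the Euler characteristic $\chi(W_g) = 2 + (-1)^n \cdot 2g$ is non-zero, so
\[v \coloneqq \chi(W_g)^{-1} \cdot e(T_\pi) \in H^{2n}(B\mr{Tor}^+(W_g, *);\bQ)\]
restricts to $v_{\mr{fibre}}$. Thus $v_{\mr{fibre}}$ is a permanent cycle, killing the two differentials originating on $E^{0,2n}$.

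To kill $d_{n+1}$ on $H^n(W_g;\bQ)$ I would fix a hyperbolic basis $\{a_1, b_1, \ldots, a_g, b_g\}$ coming from the connected-sum decomposition, so that $a_i a_j = b_i b_j = 0$ and $a_i b_j = \delta_{ij} v_{\mr{fibre}}$. The graded Leibniz rule applied to $a_i a_j = 0$ then gives
\[d_{n+1}(a_i) \otimes a_j + (-1)^n d_{n+1}(a_j) \otimes a_i = 0\]
in $E_{n+1}^{n+1, n} = H^{n+1}(B\mr{Tor}^+(W_g);\bQ) \otimes H^n(W_g;\bQ)$. For $g \geq 2$ I would take $i \neq j$ and exploit linear independence of $a_i$ and $a_j$ to kill both $d_{n+1}(a_i)$ and $d_{n+1}(a_j)$, and repeat over all such pairs (including the $b$'s); for $g = 1$ with $n$ even, applying Leibniz to $a_1 \cdot a_1 = 0$ instead gives $2\, d_{n+1}(a_1) \otimes a_1 = 0$, hence $d_{n+1}(a_1) = 0$, and likewise for $b_1$.

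The main obstacle, and the reason for the hypothesis, is the excluded case $g = 1$ with $n$ odd: there $\chi(W_1) = 0$ so $v$ is unavailable by the above construction, and simultaneously the only non-vacuous product relation $a_1 b_1 = v_{\mr{fibre}}$ merely expresses $d_{n+1}$ on $H^n(W_1;\bQ)$ in terms of $d_{n+1}(v_{\mr{fibre}})$, which cannot be controlled independently by this argument.
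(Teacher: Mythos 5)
Your proof is correct, and for half of the argument it follows the paper exactly: both you and the paper note that the local system is trivial, that collapse at $E_2$ gives Leray--Hirsch, and both lift the top fibre class using $e(T_\pi)$, which restricts to $\chi(W_g)$ times the generator and so is non-zero precisely when $n$ is even or $g \neq 1$. Where you diverge is in the treatment of the middle row: the paper does not touch $H^n(W_g;\bQ)$ directly, but instead uses the projection formula $\pi_!\bigl(e(T_\pi)\cdot \pi^*(x)\bigr) = \chi(W_g)\cdot x$ to conclude that $\pi^*$ is injective, hence that no differential can land in the $0$th row; together with the permanence of the top row this kills everything, since all remaining differentials ($d_{n+1}$ on the $n$th row and $d_{2n+1}$ on the $2n$th row) target the bottom row. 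You instead kill the transgression $d_{n+1}\colon E_{n+1}^{0,n}\to E_{n+1}^{n+1,0}$ by hand, using the Leibniz rule on the vanishing products $a_ia_j$, $b_ib_j$ in the hyperbolic basis of $H^*(\#^g S^n\times S^n;\bQ)$ and linear independence (with the $a_1^2=0$ trick when $g=1$, $n$ even). Both steps are valid; your identification of the relevant pages and the sign bookkeeping check out, and the edge-homomorphism argument for permanence of $v_{\mr{fibre}}$ is standard. The paper's route is more uniform --- a single Gysin identity disposes of all differentials into the bottom row without invoking the cup-product structure of the fibre, and it would apply verbatim to any fibre of non-zero Euler characteristic --- while yours is more explicit about the mechanism and makes transparent exactly why the excluded case $g=1$, $n$ odd fails on the middle row as well as the top one. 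Two minor points: you silently omit $g=0$ (vacuous for the middle row, and $\chi(W_0)=2\neq 0$ handles the top), and your reduction ``it suffices to kill the three differentials on the vertical edge'' deserves the one-line justification that the bottom row consists of permanent cycles and products of permanent cycles are permanent cycles; neither affects correctness.
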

\begin{proof}
This spectral sequence has three rows, the 0th, $n$th, and $2n$th. The fundamental group of $B\mr{Tor}^+(W_g)$ acts trivially on the cohomology of the fibre $W_g$, by definition of the Torelli group, so this spectral sequence has a product structure. To show that the Leray--Hirsch property is satisfied we must show that it collapses at the $E_2$-page. The Euler class $e(T_\pi)$ of the vertical tangent bundle of this fibre bundle restricts to a non-zero class in $H^{2n}(W_g;\bQ)$ under the stated conditions, meaning that there can be no differentials out of the $2n$th row. On the other hand, we have
\[\pi_!(e(T_\pi) \cdot \pi^*(x)) = \chi(W_g) \cdot x\]
showing that $\pi^*$ is injective under the stated conditions, meaning that there can be no differentials into the 0th row.
\end{proof}

Combining these two results with the method described in Section \ref{sec:EvChar}, one can extract the Poincar{\'e} series in $\Lambda[[t]]$ of $H^*(B\mr{Tor}^+(W_g, *);\bQ)^\mr{alg}$ or $H^*(B\mr{Tor}^+(W_g);\bQ)^\mr{alg}$, in the stable range. Describing these as rings seems to be an interesting problem: the ring $H^*(B\mr{Tor}^+(W_g, *);\bQ)^\mr{alg}$ can be addressed with the methods of this paper, but the ring $H^*(B\mr{Tor}^+(W_g);\bQ)^\mr{alg}$ seems to be difficult to describe well.

\section{Discussion of the case $2n=2$}\label{sec:Dim2} 

Above we gave two techniques to make our computation of the cohomology of Torelli spaces more explicit: Section \ref{sec:GraphicalCalc} gives a presentation of cohomology ring and Section \ref{sec:Char} tells us how to compute the characters of the cohomology groups. We shall now apply both to the case $2n=2$.

\subsection{Additive structure}
Johnson has shown \cite{JohnsonAb} that $H^1(B\mr{Tor}_\partial(W_{g}, D^2); \bQ) \cong \Lambda^3(H(g))$ for $g \geq 3$, which is finite dimensional, so Theorem \ref{thm:MainCalc} gives an isomorphism in degrees $* \leq 2$ and a monomorphism in degree $*=3$. We may therefore use this result to calculate $H^2(B\mr{Tor}(W_{g}, D^2); \bQ)^\mr{alg}$ as a $\mathrm{Sp}_{2g}(\bZ)$-representation for $g \gg 0$, and to estimate $H^3(B\mr{Tor}(W_{g}, D^2); \bQ)^\mr{alg}$ from below.

\begin{theorem}\label{thm:Calc-2d}
For $g \gg 0$ we have
\begin{equation*}
H^2(B\mr{Tor}(W_{g}, D^2); \bQ)^\mr{alg} \cong  2 V_{1^2} + V_{2,1^2}+ 2V_{1^4}  + V_{2^2,1^2} + V_{1^6}.
\end{equation*}
For $g \gg 0$ we have
\begin{align*}
H^3(B\mr{Tor}(W_{g}, D^2); \bQ)^\mr{alg} &\geq  V_1 + V_{2,1} + 3V_{1^3} + 2V_{2^2, 1}+3V_{2,1^3}+V_{3,2,1^2} \\
&\quad \quad + 2V_{2^3, 1}+ V_{3, 2^3} + 4V_{1^5} + 2V_{2^2,1^3}+V_{3^2, 1^3}\\
& \quad\quad\quad   +2V_{2,1^5}+V_{2^3, 1^3}+ 2V_{1^7} + V_{2^2,1^5} + V_{1^9},
\end{align*}
with equality if $H^2(B\mr{Tor}(W_{g}, D^2); \bQ)$ is finite-dimensional for $g \gg 0$.
\end{theorem}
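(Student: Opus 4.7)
Our plan combines the conditional form of Theorem \ref{thm:MainCalc} with the character recipe of Section \ref{sec:Char}. Johnson's result \cite{JohnsonAb} that $H^1(B\mr{Tor}(W_g, D^2); \bQ) \cong \Lambda^3 H(g)$ for $g \geq 3$ establishes finite-dimensionality in degree $1$, so the hypothesis of the conditional second part of Theorem \ref{thm:MainCalc} is satisfied with $N = 2$. This yields an isomorphism between the algebraic quotient of the source and $H^*(B\mr{Tor}(W_g,D^2);\bQ)^{\mr{alg}}$ in degrees $\leq 2$, together with a monomorphism in degree $3$, for $g \gg 0$: these immediately give the stated formula for $H^2$ and the claimed lower bound for $H^3$ once we have calculated the source. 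Under the additional hypothesis that $H^2$ is finite-dimensional for $g \gg 0$, we re-apply Theorem \ref{thm:MainCalc} with $N = 3$ to upgrade the monomorphism in degree $3$ to an isomorphism, yielding equality.

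It remains to compute the source in degrees $\leq 3$. Here $\cV = H^*(B\mr{SO}(2);\bQ) = \bQ[e]$ contains no Pontrjagin generators (the range $\lceil(n+1)/4\rceil,\ldots,n-1$ is empty for $n=1$), but using $p_1 = e^2$ in $H^*(B\mr{SO}(2);\bQ)$ and the Berglund--Bergstr\"om nonvanishing theorem \cite{BB}, each Hirzebruch class $\cL_i$ with $i \geq 1$ is a nonzero multiple of $e^{2i}$, so the relations $\kappa_{\cL_i}=0$ become $\kappa_{e^{2i}}=0$. By Proposition \ref{prop:LsAreRegular} the effect on the character series is multiplication by $\prod_{i \geq 1}(1 - t^{4i-2})$, which is congruent to $1 - t^2$ modulo $t^4$. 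Following Section \ref{sec:EvChar}, one finds
\[
\mr{ch}(B) \equiv (h_1 + h_3)\,t + (h_0 + h_2 + h_4)\,t^2 + (h_1 + h_3 + h_5)\,t^3 \pmod{t^4},
\]
then computes the plethysm $\sum_q h_q \circ \mr{ch}(B)$ modulo $t^4$, multiplies by $(1-t^2)$, and applies $\omega$ (which conjugates partitions, since $n=1$ is odd) followed by the branching functor $D$ of Section \ref{sec:EvChar}.

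In degree $2$ the only contributing values of $q$ are $\{0,2,4,6\}$: the $q=0$ generator $\kappa_{e^2}$ is cancelled by the $(1-t^2)$ factor against the degree-$0$ trivial representation, while the remaining contributions $2s_2$, $2s_4 + s_{3,1}$, and $h_2 \circ h_3$ transform under $\omega \circ D$ to give the stated decomposition of $H^2$. The degree-$3$ calculation is entirely analogous, now with contributions from $q$ up to $9$ and involving further plethysms such as $h_3 \circ h_3$ together with products of symmetric functions arising from $h_2 \circ \mr{ch}(B)$ and $h_3 \circ \mr{ch}(B)$. The main obstacle is organisational: the degree-$3$ answer contains many irreducibles, each requiring a routine but tedious Schur decomposition followed by symplectic branching, and is most efficiently carried out using a computer algebra system, as was done for $2n=6$ in Section \ref{sec:6-dim-example}.
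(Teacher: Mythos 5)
Your proposal follows essentially the same route as the paper: use Johnson's finiteness of $H^1$ to invoke the conditional $2n=2$ case of Theorem \ref{thm:MainCalc} (isomorphism in degrees $\leq 2$, monomorphism in degree $3$, upgraded to an isomorphism in degree $3$ under the $H^2$-finiteness hypothesis), and then compute the source by the character method of Section \ref{sec:EvChar} — your $\mr{ch}(B)$, the $(1-t^2)$ factor from freeness over $\bQ[\kappa_{\cL_i}]$, and the degree-$2$ bookkeeping ($q\in\{0,2,4,6\}$, with $2s_2$, $2s_4+s_{3,1}$, $h_2\circ h_3$ and the cancelled $\kappa_{e^2}$) all agree with the paper's Sage-assisted calculation. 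The only cosmetic slips are that Berglund--Bergström is not needed for $n=1$ (the $p_1^i$-coefficient of $\cL_i$ suffices) and the phrase ``$\omega\circ D$'' should be read as $\omega$ first, then $D$, as you state earlier.
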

\begin{proof}
We use the method described in Section \ref{sec:EvChar}, in the case $2n=2$, and rely on the notation from that section. In this case we have
\[\cV = H^*(B\mr{SO}(2n);\bQ) = \bQ[e]\]
and so $P(\cV) = \frac{1}{1-t^2} = 1 + t^2  + O(t^4)$. Thus we have
\begin{align*}
\mr{ch}(B) &= \frac{1}{t^2}\left(\frac{1}{1-t^2}(\sum_{q=0}^\infty h_q t^q) - (h_0(1+t^2) + h_1 t + h_2 t^2)\right) \\
&= (h_3+h_1) t+(h_2+h_4+1)t^2+(h_5+ h_3+h_1) t^3 + O(t^4).
\end{align*}
(The following calculations were again performed in {\tt Sage} \cite{Sage}.) Applying $\sum_{q=0}^\infty h_q \circ -$ to this, then expressing the answer in terms of $s_\lambda$'s gives
\begin{align*}
&1 + (s_3+s_1)t + (2s_4+s_{4,2}+s_{3,1}+1+2s_2+s_6) t^2\\
&\quad + (s_{9}+3s_{4,1}+s_{4,2,1}+s_{7,2}+2s_{3,2}+4s_3+2s_7+2s_1+2s_{5,2}\\
&\quad\quad +s_{6,3}+4s_5+s_{5,2,2}+2s_{6,1}+s_{2,1}+2s_{4,3}+s_{4,4,1})t^3 + O(t^4).
\end{align*}
Applying $\omega$ and then $D$ sends $s_\lambda$ to $s_{\langle \lambda' \rangle}$ so transforms this to
\begin{align*}
&1 + (s_{\langle 1^3 \rangle}+s_{\langle 1\rangle})t + (2s_{\langle 1^4 \rangle}+s_{\langle 2^2, 1^2\rangle}+s_{\langle 2, 1^2 \rangle}+1+2 s_{\langle 1^2 \rangle}+s_{\langle 1^6\rangle}) t^2\\
&\quad + (s_{\langle 1^9\rangle}+3s_{\langle 2, 1^3\rangle}+s_{\langle 3,2,1^2\rangle}+s_{\langle 2^2, 1^5\rangle}+2s_{\langle 2^2, 1\rangle}+4s_{\langle 1^3\rangle}+2s_{\langle 1^7\rangle}+2s_{\langle 1\rangle}+2s_{\langle 2^2, 1^3\rangle}\\
&\quad\quad +s_{\langle 2^3, 1^3\rangle} +4s_{\langle 1^5 \rangle}+s_{\langle 3^2, 1^3 \rangle}+2s_{\langle 2, 1^5\rangle}+s_{\langle 2,1\rangle}+2s_{\langle 2^3, 1\rangle}+s_{\langle 3, 2^3\rangle})t^3 + O(t^4).
\end{align*}
Multiplying by $\prod_{4i>2}(1-t^{4i-2n}) = 1-t^2 + O(t^4)$ gives the result,
\begin{align*}
&1+ (s_{\langle 1^3\rangle}+s_{\langle 1\rangle})t +(2s_{\langle 1^4\rangle}+s_{\langle 2^2, 1^2\rangle}+s_{\langle 2,1^2\rangle}+2s_{\langle 1^2\rangle}+s_{\langle 1^6\rangle})t^2\\
&\quad + (s_{\langle 1^9\rangle}+3s_{\langle 2,1^3\rangle}+s_{\langle 3,2,1^2\rangle}+s_{\langle 2^2, 1^5\rangle}+2s_{\langle 2^2, 1\rangle}+3s_{\langle 1^3\rangle}+2s_{\langle 1^7\rangle}+s_{\langle 1\rangle}+2s_{\langle 2^2, 1^3\rangle}\\
&\quad\quad +s_{\langle 2^3, 1^3\rangle}+4s_{\langle 1^5\rangle}+s_{\langle 3^2, 1^3\rangle}+2s_{\langle 2, 1^5\rangle}+s_{\langle 2,1\rangle}+2s_{\langle 2^3, 1\rangle}+s_{\langle 3, 2^3\rangle})t^3 + O(t^4).
\end{align*}

Extracting the coefficient of $t$ we obtain
\[H^1(B\mr{Tor}_\partial(W_{g}, D^2); \bQ)^\mr{alg} \cong V_1 + V_{1^3} \cong \Lambda^3(H(g)),\]
compatible with Johnson's theorem. Extracting the coefficients of $t^2$ and $t^3$ gives the two claimed calculations.
\end{proof}

By Lemma \ref{lem:Fib1}, in the stable range the Poincar{\'e} series for $B\mathrm{Tor}^+(W_g, *)$ is obtained by multiplying that for $B\mathrm{Tor}(W_{g},D^2)$ by the Poincar{\'e} series for $B\mr{SO}(2)$, namely $\frac{1}{1-t^2} = 1 + t^2 + O(t^4)$, so it is
\begin{align*}
&1+(s_{\langle 1^3 \rangle}+s_{\langle 1 \rangle})t+(1+s_{\langle 1^6 \rangle}+s_{\langle 2^2, 1^2 \rangle}+2s_{\langle 1^4 \rangle}+s_{\langle 2, 1^2 \rangle}+2s_{\langle 1^2 \rangle})t^2\\
&\,\,\, +(s_{\langle 1^9 \rangle}+s_{\langle 2^2, 1^5 \rangle}+2s_{\langle 1^7 \rangle}+2s_{\langle 2^3, 1^3 \rangle}+2s_{\langle 2, 1^5 \rangle}+s_{\langle 3^2, 1^3 \rangle}+2s_{\langle 2^2, 1^3 \rangle}+4s_{\langle 1^5 \rangle}\\
&\,\,\,\,\,\,\,\,+s_{\langle 3, 2^3 \rangle}+2s_{\langle 2^3, 1 \rangle}+s_{\langle 3,2,1^2 \rangle}+3s_{\langle 2, 1^3 \rangle}+2s_{\langle 2^2, 1 \rangle}+4s_{\langle 1^3 \rangle}+s_{\langle 2,1 \rangle}+2s_{\langle 1 \rangle})t^3 + O(t^4).
\end{align*}
Considering the proof of Lemma \ref{lem:Fib1} carefully, it follows that $H^3(B\mathrm{Tor}^+(W_g, *);\bQ)^\mr{alg}$ \emph{contains} the indicated $\mr{Sp}_{2g}(\bZ)$-representation.

By Lemma \ref{lem:Fib2}, in the stable range the Poincar{\'e} series for $B\mathrm{Tor}^+(W_g)$ is obtained by dividing that for $B\mathrm{Tor}^+(W_{g}, *)$ by the Poincar{\'e} series for $W_g$, namely $1+ s_{\langle 1 \rangle}t + t^2$. The inverse of this series is $1-s_{\langle 1\rangle}t+(s_{\langle 1^2\rangle} + s_{\langle 2 \rangle})t^2- (s_{\langle 1 \rangle}+s_{\langle 1^3 \rangle} + 2s_{\langle 2, 1 \rangle} + s_{\langle 3 \rangle}) t^3 + O(t^4)$, so in the stable range the Poincar\'e series for  $B\mathrm{Tor}^+(W_g)$ is 
\begin{align*}
&1 + s_{\langle 1^3 \rangle}t + (s_{\langle 1^2 \rangle}+s_{\langle 1^4 \rangle}+s_{\langle 1^6 \rangle}+s_{\langle 2^2, 1^2 \rangle})t^2\\
&\,\,\, + (s_{\langle 1^3\rangle} + s_{\langle 1\rangle} + 2s_{\langle 1^5 \rangle} + s_{\langle 1^7 \rangle} + s_{\langle 1^9\rangle} + s_{\langle 2,1^3 \rangle} + s_{\langle 2, 1^5\rangle} + s_{\langle 2^2, 1\rangle}\\
&\,\,\,\,\,\,\,\, + s_{\langle 2^2, 1^3\rangle} + s_{\langle 2^2, 1^5\rangle} + s_{\langle 2^3, 1\rangle} + 2 s_{\langle 2^3, 1^3\rangle} + s_{\langle 3, 2^3\rangle} + s_{\langle 3^2, 1^3\rangle} )t^3 + O(t^4).
\end{align*}
Considering the proof of Lemma \ref{lem:Fib2}, it follows that $H^3(B\mathrm{Tor}^+(W_g);\bQ)^\mr{alg}$ \emph{contains} the indicated $\mr{Sp}_{2g}(\bZ)$-representation.

It is interesting to compare these results with the literature. The work of Johnson \cite{JohnsonAb} (or our theory) provides a $\mathrm{Sp}_{2g}(\bZ)$-equivariant isomorphism 
\[\tau \colon H_1(B\mathrm{Tor}^+(W_g);\bQ) \lra V_{1^3},\] 
the \emph{Johnson homomorphism}. This provides a $\mathrm{Sp}_{2g}(\bZ)$-equivariant ring homomorphism $\tau^* \colon \Lambda^* V_{1^3} \to H^*(B\mathrm{Tor}^+(W_g);\bQ)$. Hain has shown in \cite{HainTorelli} that its image in degree 2 is precisely $V_{1^2}+V_{1^4}+V_{1^6}+V_{2^2, 1^2}$, and this may be recovered from our calculation above along with the discussion of the ring structure in the following section. Sakasai has shown in \cite{SakasaiThirdTorelli} that its image in degree 3 is either \[V_{1^3} + 2V_{1^5} + V_{1^7} + V_{1^9} + V_{2,1^3} + V_{2, 1^5} + V_{2^2, 1} + V_{2^2, 1^3} + V_{2^2, 1^5} + V_{2^3, 1} + V_{ 2^3, 1^3} + V_{3, 2^3} + V_{3^2, 1^3}\] or the same with $V_1$ added on. Furthermore, he shows that the $V_1$-term is present if and only if
\begin{equation}\label{eq:Sakasai}
\kappa_{e^3} - (2-2g)e^2 \neq 0 \in H^4(B\mathrm{Tor}^+(W_g, *);\bQ).
\end{equation}

\begin{remark}
Sakasai's espression has one fewer copies of $V_{2^3, 1^3}$ than our expression, and in fact the decomposition of $\Lambda^3 (V_{1^3})$ into irreducibles contains a single $V_{2^3, 1^3}$. There is however no contradiction: this simply expresses the fact that the ring $H^*(B\mathrm{Tor}^+(W_g);\bQ)^\mr{alg}$ is not generated by the image of the Johnson homomorphism. 
\end{remark}

Using our results we are able to resolve the ambiguity in Sakasai's result, and hence show that the inequation \eqref{eq:Sakasai} holds. By our graphical interpretation, the image of the composition
\[\Lambda^3(V_{1^3}) \overset{\tau^*}\lra H^3(B\mathrm{Tor}^+(W_g);\bQ) \lra H^3(B\mathrm{Tor}(W_g, D^2);\bQ)\]
after applying $[- \otimes V_1]^{\mr{Sp}_{2g}(\bZ)}$ is the subspace of those elements which can be represented by trivalent graphs with one leg, three internal vertices, and no loops. There is such a graph, displayed in Figure \ref{fig:sakasai-graph}, which gives a map $V_1 \to \Lambda^3(V_{1^3})$ which when composed with the above and contracting all internal edges and loops is seen to be $\pm\kappa_{e^2} \colon V_1 \to  H^3(B\mathrm{Tor}(W_g, D^2);\bQ)$ which we have shown to be non-zero. It follows that $\tau^*(\Lambda^3(V_{1^3}))$ does indeed contain a copy of $V_1$, resolving the ambiguity in Sakasai's result.

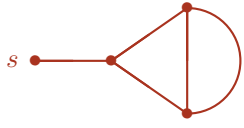
\begin{figure}[h]
	\begin{tikzpicture}
	\node at (0, 0.7) [Mahogany] {$\bullet$};
	\draw [thick,Mahogany] (0, 0.7) -- (0, -0.7);
	\draw [thick,Mahogany] (0, 0.7) -- (0,0);
	\node at (0, -0.7) [Mahogany] {$\bullet$};
	\node at (-1, 0) [Mahogany] {$\bullet$};
	\node at (-2, 0) [Mahogany] {$\bullet$};
	\node at (-2.3, 0) [Mahogany] {$s$};
	\draw [thick,Mahogany] (-2,0) -- (-1,0);
	\draw  [thick,Mahogany] (-2,0) -- (-1.5,0);
	
	\draw [thick,Mahogany] (-1,0) -- (0,0.7);
	\draw  [thick,Mahogany] (-1,0) -- (-0.5, 0.35);
	
	\draw [thick,Mahogany] (-1,0) -- (0,-0.7);
	\draw  [thick,Mahogany] (-1,0) -- (-0.5, -0.35);
	
	\draw [thick,Mahogany] (0,-0.7) arc (-90:90:0.7) ;
	\end{tikzpicture}
	\caption{A trivalent graph with one leg, three internal vertices, and no loops.}
	\label{fig:sakasai-graph}
\end{figure}

One of the referees has pointed out a further conclusion implicit in the above argument: 

\begin{corollary}
On the universal $W_g$-bundle $\pi \colon E \to B\mathrm{Tor}(W_g, D^2)$ the class $e(T_\pi E)^2$ is non-zero.
\end{corollary}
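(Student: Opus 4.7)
My plan is to deduce the corollary directly from the non-vanishing of the twisted Miller--Morita--Mumford class $\kappa_{e^2}$ in $H^3(B\mr{Tor}(W_g, D^2);\bQ)$, which was just established in the discussion leading up to the corollary.

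Specifically, I would recall from Section~\ref{sec:TwistedMMM} that for any $v \in H^1(W_g;\bQ)$, the defining formula is
\[
\kappa_{e^2}(v) \;=\; \pi_!\bigl(\iota(v) \cdot e(T_\pi E)^2\bigr) \;\in\; H^3(B\mr{Tor}(W_g, D^2);\bQ).
\]
If $e(T_\pi E)^2$ were to vanish in $H^4(E;\bQ)$, the right-hand side would vanish for every $v \in H^1(W_g;\bQ)$, so the $\mr{Sp}_{2g}(\bZ)$-equivariant map $\kappa_{e^2} \colon V_1 \to H^3(B\mr{Tor}(W_g, D^2);\bQ)$ would be identically zero. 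This contradicts the immediately preceding argument, which combined the graphical identification of a $V_1$-summand of $\tau^*(\Lambda^3(V_{1^3}))$ with $\pm \kappa_{e^2}$ together with Theorem~\ref{thm:Calc-2d} and the degree-$3$ monomorphism portion of Theorem~\ref{thm:MainCalc} (applicable because $H^1(B\mr{Tor}(W_g, D^2);\bQ)$ is finite-dimensional by Johnson's theorem) to show that $\kappa_{e^2}$ is in fact non-zero.

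I do not expect any serious obstacle: the corollary is essentially an unpacking of the definition of the twisted Miller--Morita--Mumford class, and its content is already bundled into the non-vanishing of $\kappa_{e^2}$ that we have invoked in resolving Sakasai's ambiguity. The only conceptual point worth stressing is that $\kappa_{e^2}$ is defined directly on $B\mr{Tor}(W_g, D^2)$ (not on $B\mr{Tor}^+(W_g, *)$ or $B\mr{Tor}^+(W_g)$), so the passage to the universal $W_g$-bundle $E \to B\mr{Tor}(W_g, D^2)$ is tautological and no comparison between the various models of the Torelli space needs to be made.
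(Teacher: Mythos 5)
Your argument is correct and is essentially identical to the paper's own proof: both deduce $e(T_\pi E)^2 \neq 0$ from the non-vanishing of $\kappa_{e^2}(v) = \pi_!\bigl(e(T_\pi E)^2\cdot\iota(v)\bigr)$ established in the preceding discussion resolving Sakasai's ambiguity. Nothing further is needed.
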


\begin{proof}
For a non-zero $v \in V_1$ the class $\kappa_{e^2}(v) \in H^3(B\mathrm{Tor}(W_g, D^2);\bQ)$ is non-zero: by construction this is given by applying the Gysin map to the class $e(T_\pi E)^2 \cdot \iota(v)$, so in particular $e(T_\pi E)^2 \neq 0$.
\end{proof}

\subsection{Ring structure} 

Let us now use the results of Section \ref{sec:GraphicalCalc} to compute the algebraic part of $H^*(B\mr{Tor}(W_{g}, D^2); \bQ)$ in the stable range, \emph{assuming the conjecture that these cohomology groups are finite dimensional in a range of degrees tending to infinity with $g$}. 

In this case $H^*(B;\bQ) = H^*(B\mr{SO}(2);\bQ)= \bQ[e]$. Combining Theorem \ref{thm:MainCalc} and Theorem \ref{thm:SmallRingStr} we see that the ring $H^*(B\mr{Tor}(W_{g}, D^2); \bQ)^\mr{alg}$ is generated by $\kappa_{e^i}$ for $i \geq 2$, $\kappa_{e^i}(v_1)$ for $i \geq 1$, and $\kappa_1(v_1 \otimes v_2 \otimes v_3)$. By relation ($\beta$), $\kappa_{e^i}$ is decomposable for $i \geq 2$ so can be eliminated from the generators. By relation ($\gamma$), $\kappa_{e^i}(v_1)$ is decomposable for $i \geq 2$, so can be eliminated from the generators.  By relation ($\delta$), $\kappa_e(v_1) = \kappa_1(v_1 \otimes \omega)$ so this can also be eliminated from the generators. This leaves just the classes $\kappa_1(v_1 \otimes v_2 \otimes v_3)$ as generators. By relation ($\alpha$) these provide a copy of the graded representation $\Lambda^3V_1[1]$, so there is a surjection
\[\Lambda^*[\Lambda^3V_1[1]] \lra H^*(B\mr{Tor}(W_{g}, D^2); \bQ)^\mr{alg}.\]
The relations ($\epsilon$) span a certain subspace
\[V_{2^2} + V_{1^2} + V_0 \leq \Lambda^2(\Lambda^3V_1) = V_{1^6} + 2 V_{1^4} +3 V_{1^2} + 2 V_0 + V_{2^2,1^2} + V_{2^2} + V_{2,1^2}\]
described in Lemma \ref{lem:RepThy}. The induced map
\[\frac{\Lambda^*[\Lambda^3V_1[1]]}{(\text{the relations ($\epsilon$)}, \kappa_{\cL_1}, \kappa_{\cL_2}, \ldots)} \lra H^*(B\mr{Tor}(W_{g}, D^2); \bQ)^\mr{alg}\]
is an isomorphism, which may be seen as follows:

As in the proof of Proposition \ref{prop:low-degree-graphical}, injectivity may be checked by tensoring with $V_1^{\otimes S}$ and taking $\mr{Sp}_{2g}(\bZ)$-invariants for all finite sets $S$. Note that $\kappa_{\cL_i} = c_i \kappa_{e^{2i}}$ for a non-zero scalar $c_i$. Using the graphical formalism of the proof of Theorems \ref{thm:BigRingStr} and \ref{thm:SmallRingStr} the left-hand side is given by the space of trivalent graphs with orientation data, and legs in bijection with $S$, modulo ($\epsilon$) and the graphs containing a connected component with no legs and even first Betti number (these are the $\kappa_{e^{2i}}$'s). The right-hand side is given by partitions of $S$ with parts labelled by $e^i$'s, where parts of size zero cannot be labelled by a $e^{i}$ for $i$ even or 1, and parts of size 1 cannot be labelled by $e^0$. The map is given by sending a graph to the induced partition of $S$ given by connected components of the graph, and a part is given label $e^i$ if the first Betti number of the connected graph corresponding to that part is $i$. Given these descriptions it is easy to see the map is injective as in the profs of Theorems \ref{thm:BigRingStr} and \ref{thm:SmallRingStr}.

\begin{remark}
Adding $\kappa_{\cL_1}$ to the relations ($\epsilon$) gives
a certain subspace $V_{2^2} + V_{1^2} + 2V_0 \leq \Lambda^2(\Lambda^3V_1)$ where all summands apart from $V_{1^2}$ are unambiguous, and under the decomposition $\Lambda^3V_1 = V_{1^3} \oplus V_1$ the copy of $V_{1^2}$ is such that it has nontrivial projection to both $\Lambda^2(V_{1^3})$ and $\Lambda^2 (V_1)$. The quadratic (graded) commutative algebra 
\begin{equation}\label{eq:quad-dual}\frac{\Lambda^*[\Lambda^3V_1[1]]}{(2V_0 + V_{1^2} + V_{2^2})}\end{equation}
is precisely the quadratic dual of the quadratic presentation obtained by Hain \cite{HainTorelli} (see Habegger--Sorger \cite{HabeggerSorger} for this case) of the Mal'cev Lie algebra $\mathfrak{t}_{g,1}$ associated to the group $T_{g,1} \coloneqq \pi_0(\mr{Tor}(W_{g}, D^2))$. If the Lie algebra $\mathfrak{t}_{g,1}$ is Koszul, its continuous Lie algebra cohomology is given by \eqref{eq:quad-dual}, and this is also the cohomology of the Mal'cev completion $\smash{\widehat{T}}_{g,1}$. Thus the natural map
\[H^*(\widehat{T}_{g,1};\bQ) \lra H^*(T_{g,1};\bQ)^\mr{alg}\]
would be surjective with kernel the ideal $(\kappa_{\cL_2}, \kappa_{\cL_3}, \ldots)$, in a stable range.
\end{remark}

\section{Explicit ranges} \label{sec:ranges} 

The ranges of cohomological degrees in which our results apply come from three places:
\begin{enumerate}[\indent (i)]
	\item homological stability results for the $B\diff(W_{g}, D^{2n})$,
	\item the Borel vanishing theorem, i.e.\ Theorem \ref{thm.borelvanishingweak},
	\item the stability range for the invariant theory of $\mr{Sp}_{2g}(\bC)$ and $\mr{O}_{2g}(\bC)$.
\end{enumerate}
We expect that the currently known ranges for (i) are likely not optimal, so we have preferred not to state a particular range in our results. In this section we explain the ranges which can be deduced from the current state of the art.

\subsection{Ranges for Theorem \ref{thm:Iso}} 
In the proof of Theorem \ref{thm:Iso}, homological stability results and stable homology computations for $B\mr{Diff}^{\theta \times Y}(W_{g}, D^{2n} ; \ell_{D^{2n}})_{\ell_g}$ are used. In particular, we used that there is a map
\[\alpha \colon B\mr{Diff}^{\theta \times Y}(W_{g}, D^{2n} ; \ell_{D^{2n}})_{\ell_g} \lra \Omega^\infty_0(\mathrm{MT}\theta \wedge Y_+)\]
which is an isomorphism on cohomology in range of degrees tending to infinity with $g$, which can be found in \cite{grwstab1,grwstab2} for $2n \geq 6$ and \cite{boldsen,oscarresolutions} for $2n=2$. The case that is used in the remainder of the paper is that of $\theta \colon B\mr{SO}(2n)\langle n \rangle \to B\mr{SO}(2n)$. In this case, the known ranges are $* \leq \frac{g-3}{2}$ when $2n \geq 6$ and $* \leq \frac{2g-2}{3}$ when $2n=2$; these will also be the ranges for Theorem \ref{thm:Iso}.

\subsection{Ranges for Theorem \ref{thm:MainCalc}} 
The Borel vanishing theorem and its consequences are used in the proof of Theorem \ref{thm:MainCalc}, which also relies on Theorem \ref{thm:Iso}. Explicit ranges for Theorem \ref{thm.borelvanishingweak} already appeared in Borel's original papers, but in Theorem \ref{thm.borelvanishingweak} we have given an improved version which was stated in \cite{HainTorelli} without proofs, and proven in \cite{tshishikuBorel} (this is likely optimal \cite{tshishiku}). This range is linear of slope 1 in $g$, which is larger than the range for Theorem \ref{thm:Iso}. Thus the range in Theorem \ref{thm:MainCalc} is $* \leq \frac{g-3}{2}$ for $2n \geq 6$ (and $* \leq \frac{2g-2}{3}$ for $2n=2$ in the range in which it applies).

\subsection{Ranges for Theorem \ref{thm:Calc-2d}} 
The first part of Theorem \ref{thm:Calc-2d} relies on Theorem \ref{thm:MainCalc}, and also uses Johnson's computation of $H^1$ of the Torelli group as input. To get the maximal algebraic subrepresentation of $H^2$, in addition to requiring $g \geq 3$ for Johnson's result we also need $g \geq 3$ for Borel vanishing in degrees $\leq 2$, and $g \geq 4$ to get the input from Theorem \ref{thm:Iso}. The conclusion is that the first part of Theorem \ref{thm:Calc-2d} holds for $g \geq 4$.

\subsection{Ranges for Proposition \ref{prop:TransfOfTransf} and Corollary \ref{cor:TransfDetectsZero}}\label{sec:PropTransfOfTransf}
The range in Proposition \ref{prop:TransfOfTransf} has a slightly different source, namely the range given by the first and second fundamental theorems of invariant theory for $\OO_{g,g}$ and $\Sp_{2g}$ as we have encoded in Theorem \ref{thm:FTInvariantTheory}. In fact, the range in Theorem \ref{thm:FTInvariantTheory} for $\Sp_{2g}$ is $2g \geq |S|$ as we have stated, but can be improved to $4g \geq |S|$ for $\OO_{g,g}$. This may be deduced from a careful reading of Section 11.6.3 of \cite{Procesi}. Thus when $n$ is even the conclusion of Proposition \ref{prop:TransfOfTransf} can be relaxed to ``when evaluated on sets $S$ with $|S| \leq 4g-N+1$". There is a similar modest improvement to Corollary \ref{cor:TransfDetectsZero}.

\subsection{Ranges for Theorem \ref{thm:BigRingStr}}\label{sec:SupportEstimate} The source of the map $\phi \colon R^\cV_\mr{pres} \to R^\cV$ is generated by the classes $\kappa_c(v_1 \otimes \cdots \otimes v_r)$ of degree $|c| + n(r-2)>0$. Such classes are detected by applying $[- \otimes H(g)^{\otimes r}]^{G'_g}$: let us say they have \emph{weight} $r$. In the cases of interest the space of labels has the form $\cV = \bQ\{1\} \oplus \cV_{>n}$. The smallest homological degree for such classes of weight $\leq 1$ is therefore 1, for weight 2 is $n$, and for weight $r \geq 3$ is $n(r-2)$. As $n \geq 3$, it follows that in homological degree $d$ the kernel $\mr{Ker}(\phi)$ has weight $\leq d$, i.e.~it vanishes if and only if in this degree $[\phi \otimes H(g)^{\otimes S}]^{G'_g}$ is injective for all sets $S$ with $|S| \leq d$.

The proof of Theorem \ref{thm:BigRingStr} uses Proposition \ref{prop:TransfOfTransf} to determine the range of $S$'s and homological degrees in which
\[\mathcal{P}(S, \cV)_{\geq 0} \otimes  {\det(\bQ^S)}^{\otimes  n} \lra [R^\cV \otimes H(g)^{\otimes S}]^{G'_g},\]
is an isomorphism. By a similar count to the above, in degree $d$ the functor $\mathcal{P}(-, \cV)_{\geq 0}'$ vanishes on sets $T$ with $|T| \geq d+1$, and so by Proposition \ref{prop:TransfOfTransf} this map is an isomorphism in degree $d$ as long as $|S| \leq 2g-d$. By the discussion in the previous paragraph we only need it to be an isomorphism for $|S| \leq d$, so in total need $d \leq g$.

Thus for Theorem \ref{thm:BigRingStr} to hold in degrees $* \leq d$ it is enough that $g \geq d$. Combining it with the discussion in Section \ref{sec:PropTransfOfTransf}, if $n$ is even it is enough that $2g \geq d$.

This discussion also makes explicit the range in Proposition \ref{prop:LsAreRegular}.

\bibliographystyle{amsalpha}
\bibliography{cell}

\vspace{.5cm}

\end{document}